\theoremstyle{definition}
\newtheorem{assumption}{Assumption}
\def\email#1{\it #1\par}
\providecommand{\otherindexspace}[1]{}
\newtheorem{theorem}{Theorem}[section]
\newtheorem{lemma}[theorem]{Lemma}
\newtheorem{proposition}[theorem]{Proposition}
\newtheorem{remark}[theorem]{Remark}
\newtheorem{definition}[theorem]{Definition}
\newtheorem{corollary}[theorem]{Corollary}
\definecolor{falured}{rgb}{0.5, 0.09, 0.09}
\numberwithin{equation}{section}
\title{Control and optimal stopping Mean Field Games: a linear programming approach}
\author{ Roxana Dumitrescu \thanks{Department of Mathematics, King's College London, Strand, London, WC2R 2LS, United Kingdom, Email: \email roxana.dumitrescu@kcl.ac.uk} \and Marcos Leutscher  \thanks{CREST, ENSAE, Institut Polytechnique de Paris, 5 avenue Henry Le Chatelier, 91120 Palaiseau, France, Email: \email marcos.leutscherdelasnieves@ensae.fr} \and Peter Tankov \thanks{CREST, ENSAE, Institut Polytechnique de Paris, 5 avenue Henry Le Chatelier, 91120 Palaiseau, France, Email: \email peter.tankov@ensae.fr}}
\date{}
\begin{document}
\maketitle
\begin{abstract}
{We develop the \textit{linear programming approach} to mean-field games in a general setting. This relaxed control approach allows to prove existence results under weak assumptions, and lends itself well to numerical implementation. We consider mean-field game problems where the \textit{representative agent} chooses both the optimal control and the optimal time to exit the game, where the instantaneous reward function and the coefficients of the state process may depend on the distribution of the other agents. Furthermore, we establish the equivalence between mean-field games equilibria obtained by the linear programming approach and the ones obtained via the controlled/stopped martingale approach, another relaxation method used in a few previous papers in the case when there is only control.}

\end{abstract}

\textbf{Key words}: Mean-field games, optimal stopping,  continuous control, relaxed solutions,
infinite-dimensional linear programming, controlled/stopped martingale problem\\

\textbf{AMS: 91A55, 91A13, 60G40}

\section{Introduction}

Mean-Field Games (MFGs) represent the limit version of stochastic differential games with a large number of agents, symmetric interactions and negligible individual influence of each player on the others. This theory has been introduced independently by Lasry and Lions \cite{lasry2006a,lasry2006b,lasry2007} and Huang, Malhamé and Caines \cite{huang2006}. Since the $N$-player game is rarely tractable, MFGs provide a useful tool for approximating the $N$-player Nash equilibria.


{\color{black}In this paper, we develop the \textit{linear programming approach} to mean-field games in a general setting. The linear programming approach is a control relaxation technique, which allows to prove existence results under weak assumptions and lends itself well to numerical implementation. It is well known in the field of stochastic control, but has been introduced to MFGs only recently in \cite{bdt2020}. That paper considers MFGs of optimal stopping (where each agent only decides when to stop) and under restrictive assumptions, in particular the coefficients of the state process of the representative agent do not depend on the distribution of the other agents. The goal of this paper is to present the linear programming approach in a much wider context of control and stopping MFG, with coefficients depending on the measure and under weaker assumptions than in \cite{bdt2020}, as well as to establish the equivalence of the linear programming approach with the other control relaxation approaches. }

Our aim is to study MFGs in a general setting, including optimal stopping, continuous control and absorption. To explain the concept, assume that we have a `large' number $N\in \mathbb N^*$ of players. Each agent $k\in\{1,\dots, N\}$ has a private state process $X^{k, N}=(X_t^{k, N})_{t\in[0, T]}$, whose dynamics are given by the stochastic differential equation (SDE)
\begin{equation*}
dX_{t}^{k, N} = b\left(t, X_{t}^{k, N}, m_t^N, \alpha^k_t\right) dt + \sigma\left(t, X_{t}^{k, N}, m_t^N, \alpha^k_t\right) dW_{t}^{k},
\end{equation*}
where $W^1, \dots, W^N$ are independent Brownian motions, $\alpha^k=(\alpha^k_t)_{t\in[0, T]}$ is the control process taking values in a closed subset $A\subset \mathbb R$, chosen by agent $k$ and $m_t^N$ is the empirical occupation measure of the players still present in the game and their controls: 
\begin{equation*}
m_t^N(dx, da)=\frac{1}{N}\sum_{k=1}^N\delta_{\left(X_{t}^{k, N}, \alpha^k_t\right)}(dx, da)\mathds 1_{t\leq \tau^k \wedge \tau_{\mathcal{O}}^{k}}.
\end{equation*}
Here $\tau^k$ is the stopping time, valued in $[0, T]$, chosen by player $k$, and 
$$\tau_{\mathcal{O}}^{k}=\inf\{t\geq 0: X_t^{k, N}\notin \mathcal{O}\},$$
denotes the first exit time of agent $k$ from an open  subset $\mathcal{O} \subset \mathbb{R}$, with the convention $\inf \emptyset=\infty$.  

Each agent $k$ seeks to choose an optimal stopping time $\tau^k$ and an optimal control $\alpha^k$ to maximize the reward functional defined as follows:
\begin{equation*}
\mathbb E \left[ \int_0^{\tau^k\wedge \tau_{\mathcal{O}}^{k}} f\left(t, X_{t}^{k, N}, m_t^N, \alpha^k_t\right) dt + g\left(\tau^k\wedge \tau_{\mathcal{O}}^{k}, X_{\tau^k\wedge \tau_{\mathcal{O}}^{k}}^{k, N}, \mu^N\right) \right],
\end{equation*}
where $\mu^N$ is the empirical joint distribution of the stopping time and the state process at the time of stopping:
$$\mu^N(dt, dx)=\frac{1}{N}\sum_{k=1}^N\delta_{\left(\tau^k\wedge \tau_{\mathcal{O}}^{k}, X_{\tau^k\wedge \tau_{\mathcal{O}}^{k}}^{k, N}\right)}(dt, dx).$$

The objective functionals and the dynamics of the agents are coupled through the empirical measures $(m_t^N)_{t\in[0, T]}$ and $\mu^N$, so that it is natural to look for  a Nash equilibrium.
When the number of players $N$ goes to infinity, we expect by a ``propagation of chaos'' type result that the empirical occupation measures converge to a \textit{deterministic flow of subprobability measures} $(m_t)$, while the empirical joint distributions of the stopping time/state process when each player exits the game (via discretionary stopping or absorption), converges to a \textit{deterministic limiting probability measure} $\mu$. In our setting the MFG problem therefore reads as follows: 
\begin{enumerate}[(i)]
\item Fix $(\mu, (m_t)_{t\in[0,T]})$ and find the solution to the mixed control / optimal stopping problem
\begin{equation}\label{opti_intro}
\begin{aligned}
\sup_{\tau,\alpha } & \quad \mathbb{E}\left[\int_{0}^{\tau\wedge \tau_\mathcal{O}^{\alpha, m}} f\left(t, X^{\alpha, m}_t, m_t, \alpha_t\right) dt + g\left(\tau\wedge \tau_{\mathcal{O}}^{\alpha, m}, X^{\alpha,m}_{\tau\wedge \tau_{\mathcal{O}}^{\alpha, m}}, \mu\right)\right],\\
\text{s.t. } & \quad dX_{t}^{\alpha, m}=b\left(t, X_{t}^{\alpha, m}, m_t, \alpha_t \right) dt+\sigma\left(t, X_{t}^{\alpha, m}, m_t, \alpha_t \right) dW_{t},
\end{aligned}
\end{equation}
where $\tau_\mathcal{O}^{\alpha, m}=\inf\{t\geq 0:X_t^{\alpha, m}\notin \mathcal{O}\}$.
\item Given the solution $(\tau^{\mu, m}, \alpha^{\mu, m})$ of the problem \eqref{opti_intro} for the agent facing a mean-field $(\mu, (m_t)_{t\in[0,T]})$, find $(\mu, (m_t)_{t\in[0,T]})$  such that
\begin{equation*}
m_{t}(B)=\mathbb{P}\left[(X_{t}^{\alpha^{\mu,m}, m}, \alpha^{\mu,m}_t) \in B, t\leq \tau^{\mu, m}\wedge \tau_{\mathcal{O}}^{\alpha^{\mu, m}, m}\right], \quad B \in \mathcal{B}(\bar{\mathcal{O}} \times A),\quad t \in[0, T],
\end{equation*}
and 
$$\mu=\mathcal{L}\left(\tau^{\mu, m}\wedge \tau_{\mathcal{O}}^{\alpha^{\mu, m}, m}, X^{\alpha^{\mu,m}, m}_{\tau^{\mu, m}\wedge \tau_{\mathcal{O}}^{\alpha^{\mu, m}, m}}\right).$$
\end{enumerate}

In the literature on MFGs, there are two main approaches to prove existence of an MFG Nash equilibrium. The first approach, developed by Lasry and Lions \cite{lasry2007}, is an analytic one and consists in finding a Nash equilibrium by solving a coupled system of nonlinear partial differential equations: a Hamilton-Jacobi-Bellman equation (backward in time) satisfied by the value function of the representative agent for a given distribution  and a Fokker-Planck-Kolmogorov equation (forward in time) describing the evolution of the density when the optimal control is used. The second approach, introduced by Carmona and Delarue \cite{carmona2012,carmona2018}, is based on the stochastic maximum principle which reduces the problem to a system of coupled forward-backward stochastic differential equations of McKean-Vlasov type.

In the standard stochastic control theory, the \textit{controlled martingale problem approach} (see e.g. \cite{elkaroui1987}, \cite{haussmann1990} and \cite{elkaroui2015}\footnote{We thank Xiaolu Tan for fruitful discussions on the paper \cite{elkaroui2015}.}) is a powerful tool allowing to simplify the existence proofs, by compactification of the stochastic control problem. In the original MFG framework (regular control, without optimal stopping), the controlled martingale problem approach was first used in \cite{lacker2015} to show the existence of a mean field game equilibrium under general assumptions. Further developments have been made in the case of mean field games with branching (\cite{claisse2019}) or mean field games with singular controls (\cite{fu2017}). Another relaxation technique used in the classical stochastic control theory is based on the \textit{linear programming formulation} (see e.g. \cite{stockbridge1998,stockbridge2002,stockbridge2017}). In the context of mean-field games, this method has only been used in the case of optimal stopping in \cite{bdt2020}. 

Mean field games of optimal stopping are a very recent trend in the MFG literature. More generally, only a few papers study mean field games with possible exit of the players leading to a decrease of the total mass of the players still in the game. We refer here to the MFGs with absorption (see  e.g. \cite{campi2018}) and the MFGs of optimal stopping, introduced in the case of bank run models in \cite{nutz2018,carmona2017}, studied using an analytic approach in \cite{bertucci2017}, and in a more general framework in \cite{bdt2020}.


In this paper, we extend the linear programming approach initiated in \cite{bdt2020} to a more general setting including mixed optimal stopping/control, allowing for measure depending coefficients, {\color{black}and involving weaker assumptions.}
Furthermore, we clarify the relationship  between  \textit{linear programming MFG equilibria} and \textit{\color{black}MFG equilibria in the controlled/stopped martingale problem approach} (also known as the \textit{weak formulation}), and {\color{black}state precise conditions of equivalence of the two approaches.} In the pure control case, {\color{black}this equivalence enables us to recover directly the result of existence of a Markovian equilibrium shown in \cite{lacker2015} by using the Markovian projection technique.}  In addition, our method allows us to establish the existence of mixed solutions in the sense of \cite{bertucci2017}, under a more general framework (in particular, with coefficients depending on both the control and the measure which was not the case in \cite{bertucci2017}).

The existence theorems of MFG equilibria obtained through {\color{black}the controlled martingale problem approach} are in general rather abstract and provide little insight into the computation of MFG solutions. However, the linear programming approach we develop leads to a tractable method of computing the MFG equilibria, which has been instrumental in several concrete applications (see e.g. \cite{adt2020,bdt2020b}).

The paper is organized as follows. In Section 2,  we first study the single-agent problem under the linear programming formulation: we show the existence of a solution and  {\color{black}prove its equivalence with the various weak formulations, as well as with PDEs.} In Section 3, we solve the MFG problem and relate the notions of linear programming equilibria, weak equilibria and mixed solutions. In the Appendix we give some technical results and in particular we make the connection between the linear programming and the weak formulations, extending some of the existing results in the literature to less regular coefficients (see e.g. \cite{stockbridge1998,stockbridge2002,stockbridge2012,stockbridge2017}).

\paragraph{Notation.}
For any topological space $E$ we denote by $\mathcal{B}(E)$ the Borel $\sigma$-algebra, by $\mathcal{P}(E)$ the set of probability measures on $(E, \mathcal{B}(E))$ and by $\mathcal{M}(E)$ the set of positive finite measures on $(E, \mathcal{B}(E))$. We endow $\mathcal{P}(E)$ and $\mathcal{M}(E)$ with the topology of weak convergence and the associated Borel $\sigma$-algebra. We denote by $C(E)$ the set of continuous functions from $E$ to $\mathbb R$ and by $C_b(E)$ the space of continuous and bounded functions from $E$ to $\mathbb R$ which is endowed with the supremum norm
$$\|\varphi\|_\infty=\sup_{x\in E}|\varphi(x)|.$$
Let $T>0$ be a terminal time horizon, $\mathcal{O}$ an open subset of $\mathbb R$ with closure $\bar{\mathcal{O}}$ and $A$ be a compact subset of $\mathbb{R}$. We denote by $C_b^{1, 2}([0, T]\times \bar{\mathcal{O}})$ the set of functions $u\in C_b([0, T]\times \bar{\mathcal{O}})$ such that $\partial_t u, \partial_x u, \partial_{xx}u \in C_b([0, T]\times \bar{\mathcal{O}})$. We denote by $\mathbb R_+$ the set $[0, +\infty[$. For a given process $(Y_t)$ and a Borel subset $B$ of $\mathbb R$, we define the random time
$$\tau_{B}^Y(\omega):=\inf\{t\geq 0: Y_t(\omega)\notin B\},$$
with the convention $\inf\emptyset =\infty$.

Let $V_0$ be the space of flows of measures on $\bar{\mathcal{O}} \times A$, $\left(m_{t}\right)_{t\in [0, T]}$, such that: for every $t \in[0, T]$, $m_{t}$ is a Borel finite signed measure on $\bar{\mathcal{O}} \times A$, for every $B \in \mathcal{B}(\bar{\mathcal{O}} \times A)$, the mapping $t \mapsto m_{t}(B)$ is measurable, and $\int_{0}^{T} |m_{t}|(\bar{\mathcal{O}} \times A) dt < \infty$, where $|m_{t}|$ is the variation of $m_{t}$.

We define $V_1$ as the quotient space given by $V_0$ and the almost everywhere equivalence relation on $[0, T]$, that is, if, $dt$-almost everywhere on $[0,T]$, the measures $m_{t}^1$ and $m_{t}^2$ coincide, the measure flows $(m^1_t)_{t\in [0, T]}$ and $(m^2_t)_{t\in [0, T]}$ are considered equivalent. $V_1$ endowed with the usual sum and scalar multiplication is a vector space, where the zero vector is given by the family of null measures $(\mathbf 0)_{t\in [0, T]}$. To each $(m_t)_{t\in [0, T]}\in V_1$ we associate a Borel finite signed measure on $[0, T]\times\bar{\mathcal{O}}\times A$ defined by $m_t(dx, da)dt$ and we endow $V_1$ with the topology of weak convergence of the associated measures. We denote by $V$ the set of measure flows $(m_t)_{t\in [0, T]}\in V_1$ such that $dt$-a.e. $m_t$ is a positive measure. We note that $V_1$ is a Hausdorff locally convex topological vector space and $V$ is metrizable (we refer to Appendix \ref{sec A} for more details).

Let $W=(W_t)_{t\in [0, T]}$ be a standard Brownian motion on a complete probability space $(\Omega,\mathcal{F}, \mathbb P)$. We denote by $\mathbb{F}^W$ the filtration given by $\mathcal{F}_{t}=\sigma\left(W_{s}, 0 \leq s \leq t\right) \vee \mathcal{N}$, where $\mathcal{N}$ denotes the $\mathbb P$-null sets of $\mathcal F$. Denote by $\mathcal{T}$ the set of stopping times with respect to this filtration with values in $[0, T]$. Let $\mathbb{A}$ be the set of $\mathbb{F}^W$-progressively measurable control processes taking values in $A$.

In the paper we adopt the following terminology: controls of the type $\alpha_t$ with values in $A$ are called \textit{strict controls}; controls of the form  $\alpha_t=\alpha(t, X_t)$, with $\alpha$ a given  measurable function  are called \textit{Markovian strict controls}; controls of the form $\nu_t$ (respectively $\nu_{t, X_t}$ for some kernel $(\nu_{t,x})$) with values in $\mathcal{P}(A)$ are called \textit{relaxed controls} (respectively \textit{Markovian relaxed controls}). Relaxed controls are related to mixed strategies in game theory and consist in randomizing the action, which allows to embed the controls in a well behaved space. More precisely, instead of choosing an action valued in $A$, the agent chooses an action in $\mathcal{P}(A)$.

\section{Single agent problem}

In this section, we study the linear programming formulation of the mixed optimal stopping/stochastic control problem in the case when there is no interaction. In the following section, these results will be used in the MFG setting. We adopt here the following definitions and assumptions.

\begin{definition}
We denote by $\mathcal{S}$ the set of bounded measurable functions $h:[0, T]\times \bar{\mathcal{O}}\times A\rightarrow \mathbb R$ such that $h(t, \cdot)\in C(\bar{\mathcal{O}}\times A)$ for each $t\in [0, T]$.
\end{definition}

Throughout this section, unless specified otherwise, we will impose the following assumption.

\begin{assumption}\label{as_single_agent}\leavevmode
\begin{enumerate}[(1)]
\item The functions $b(t,x,a):[0, T]\times \mathbb R\times A\rightarrow \mathbb R$ and $\sigma(t,x,a):[0, T]\times \mathbb R\times A\rightarrow \mathbb R_+$ are measurable, bounded and Lipschitz in $x$ uniformly on $(t, a)$.

\item The functions $b|_{[0, T]\times \bar{\mathcal{O}}\times A}$ and $\sigma|_{[0, T]\times \bar{\mathcal{O}}\times A}$ are in $\mathcal{S}$. 

\item The function $f:[0, T]\times \bar{\mathcal{O}}\times A\rightarrow \mathbb R$ is measurable, bounded and for each $t\in [0, T]$, $f(t, \cdot)$ is upper semicontinuous and the function  $g:[0, T]\times \bar{\mathcal{O}}\rightarrow \mathbb R$ is upper semicontinuous and bounded from above.
    
\item {\color{black}$m_0^*\in \mathcal P(\mathcal O)$ satisfies} $\int_\mathcal{O} \ln(1+|x|)m_0^*(dx)<\infty$.
\end{enumerate}
\end{assumption}

Consider the classical mixed stochastic control/optimal stopping problem
\begin{equation}\label{optistop}
\begin{aligned}
\max_{\tau \in \mathcal{T},  \alpha \in \mathbb{A}} & \mathbb{E}\left[\int_{0}^{\tau \wedge \tau_{\mathcal{O}}^{X^\alpha}} f\left(t, X^\alpha_{t}, \alpha_t \right) dt  + g\left(\tau \wedge \tau_{\mathcal{O}}^{X^\alpha},  X^\alpha_{\tau \wedge \tau_{\mathcal{O}}^{X^\alpha}}\right)\right],\\
\text{s.t. } \quad & dX^\alpha_{t}=b\left(t, X^\alpha_{t},\alpha_t \right) dt+\sigma\left(t, X^\alpha_{t}, \alpha_t\right) dW_{t},\\
& X_0^\alpha \sim m_0^*,
\end{aligned}
\end{equation}
which will be called the \textit{strong problem} for the single agent.\\

We shall now provide the \textit{linear programming} formulation which consists in introducing the occupation measures and the forward equation satisfied by them.

\begin{definition}[\textit{Linear Programming (LP) formulation}]
Let $\mathcal{R}$ be the set of pairs $(\mu, m)\in \mathcal{P}([0, T]\times \bar{\mathcal{O}})\times V$, such that for all $u\in C_b^{1, 2}([0, T]\times \bar{\mathcal{O}})$,
\begin{equation}\label{constraint_fp}
\int_{[0, T]\times \bar{\mathcal{O}}} u(t, x)\mu(dt, dx)= \int_\mathcal{O} u(0, x)m_0^*(dx) + \int_0^T \int_{\bar{\mathcal{O}} \times A} \left(\frac{\partial u}{\partial t} +\mathcal L u\right) (t, x, a)m_t(dx,da)dt,
\end{equation}
where
$$\mathcal L u(t, x, a):= b(t, x, a)\frac{\partial u}{\partial x}(t, x) + \frac{\sigma^2}{2}(t, x, a)\frac{\partial^2 u}{\partial x^2}(t, x).$$
Define now the map $\Gamma: \mathcal{R}\rightarrow \mathbb R \cup \{-\infty\}$ as follows:
$$\Gamma (\mu, m)= \int_0^T \int_{\bar{\mathcal{O}}\times A} f(t, x,a) m_t(dx,da)dt + \int_{[0, T]\times \bar{\mathcal{O}}} g(t, x) \mu(dt, dx).$$
The \textit{linear programming optimization problem} takes the form
\begin{equation}\label{optistoprelax}
\max_{(\mu, m)\in\mathcal{R}} \; \Gamma (\mu, m).
\end{equation}
The value for the LP formulation is defined by 
\begin{equation}\label{value lp}
V^{LP} := \sup_{(\mu, m)\in \mathcal{R}} \Gamma (\mu, m).
\end{equation}
\end{definition}

\begin{remark}
The set $\mathcal{R}$ is nonempty. In fact, if we define
$$\mu(B\times C):=\delta_0(B)m_0^*(C\cap \mathcal{O}), \quad B\in \mathcal{B}([0, T]), \quad C\in \mathcal{B}(\bar{\mathcal{O}}),$$
then $(\mu, (\mathbf 0)_t)\in \mathcal{R}$, where $\mathbf 0$ denotes the null measure on $\bar{\mathcal{O}}\times A$.
\end{remark}

\begin{remark}
By the disintegration theorem, {\color{black}for each $(m_t)_{t\in[0,T]} \in V$,} there exists a mapping $\nu_{t,x}:[0,T]\times \bar{\mathcal{O}} \to \mathcal P(A)$ such that for each $B\in \mathcal B(A)$, the function $(t, x)\mapsto \nu_{t, x}(B)$ is $\mathcal{B}([0, T]\times \bar{\mathcal{O}})$-measurable, and 
$$m_t(dx, da)dt=\nu_{t, x}(da)m_t(dx, A)dt,$$
where $m_t(dx, A):=\int_A m_t(dx, da)$. 
\end{remark}

\begin{remark} Throughout the paper, solutions of the LP problem taking the form $m_t(dx, da)=\delta_{\alpha(t, x)}(da)m_t(dx, A)$ for some measurable function $\alpha:[0, T]\times \bar{\mathcal{O}}\rightarrow A$ are called \textit{strict control LP solutions}.
\end{remark}

\subsection{Existence of a solution for the \textit{linear programming} problem}\label{sec 2.1}

Let us first study some preliminary properties of the set $\mathcal{R}$. \paragraph{Preliminary properties of the set of constraints $\mathcal{R}$.} 
We start by showing the following admissibility result.
\begin{proposition}[\textit{Admissibility of the occupation measures}]\label{SDE to measures}
Let $(\Omega, \mathcal{F}, \mathbb F, \mathbb P)$ be a filtered probability space, $\tau$ an $\mathbb F$-stopping time such that $\tau\leq T$ $\mathbb P$-a.s., $\nu$ an $\mathbb{F}$-progressively measurable process with values in $\mathcal{P}(A)$, $M$ a continuous $\mathbb F$-martingale measure such that $M^\tau$ has intensity $\nu_t(da)\mathds{1}_{t\leq \tau}dt$, $X$ an $\mathbb F$-adapted process such that
$$dX_t= \int_A b(t, X_t, a)\nu_t(da)dt + \int_A \sigma(t, X_t, a)M(dt, da),\quad t\leq \tau, \quad \mathbb P \circ X_0^{-1}= m_0^*.$$
Define now the measures
$$\mu =\mathbb P \circ \left(\tau\wedge \tau_{\mathcal{O}}^X, X_{\tau\wedge \tau_{\mathcal{O}}^X}\right)^{-1},$$
$$m_t(B\times C)= \mathbb E^{\mathbb P}\left[ \mathds{1}_B(X_t)\nu_{t}(C) \mathds{1}_{t\leq \tau\wedge \tau_{\mathcal{O}}^X}\right],  \quad B\in \mathcal{B}(\bar{\mathcal{O}}), \quad C\in \mathcal{B}(A), \quad t\in [0, T].$$
Then $(\mu, m)\in \mathcal{R}$.
\end{proposition}

We refer to Appendix \ref{sec B.1} for the definition of martingale measures and their properties.

\begin{proof}
Let $u\in C_b^{1, 2}([0, T]\times \bar{\mathcal{O}})$. Applying Itô's formula,
\begin{align*}
u\left(\tau \wedge \tau_{\mathcal{O}}^X, X_{\tau \wedge \tau_{\mathcal{O}}^X}\right) &= u(0, X_0) + \int_0^{\tau \wedge \tau_{\mathcal{O}}^X} \int_A \left(\frac{\partial u}{\partial t} +\mathcal L u\right) (t, X_t, a)\nu_{t}(da)dt \\
& \quad + \int_0^{\tau \wedge \tau_{\mathcal{O}}^X} \int_A \left(\sigma \frac{\partial u}{\partial x}\right)(t, X_t, a) M(dt, da).
\end{align*}
Now taking the expectation and using the fact that $\sigma\partial_x u $ is bounded, we get $(\mu, m)\in \mathcal{R}$.
\end{proof}

We now show that from the forward equation $\eqref{constraint_fp}$, we can deduce that for almost every $t \in [0,T]$, $m_t$ is a subprobability measure.

\begin{lemma}[\textit{Subprobability property of the flow of measures}]\label{lemma1}
Let $(\mu, m)\in \mathcal{R}$, then $m_t(\bar{\mathcal{O}}\times A)\leq 1$ $t$-a.e. on $[0, T]$.
\end{lemma}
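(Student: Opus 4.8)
The plan is to test the forward equation \eqref{constraint_fp} against functions that depend only on the time variable. For such a $u$ the operator $\mathcal{L}u$ vanishes identically (since $\partial_x u \equiv \partial_{xx} u \equiv 0$), so the spatial structure of the constraint drops out and one is left with a purely one-dimensional identity relating the total mass $\phi(t) := m_t(\bar{\mathcal{O}}\times A)$ to the time-marginal $\bar\mu(dt) := \mu(dt, \bar{\mathcal{O}})$ of $\mu$. Since $(\mu, m)\in\mathcal{R}\subset \mathcal{P}([0,T]\times\bar{\mathcal{O}})\times V$, the measure $\bar\mu$ is a probability measure on $[0,T]$, and because $m_t$ is $dt$-a.e. a positive measure with $\int_0^T m_t(\bar{\mathcal{O}}\times A)\,dt<\infty$, the function $\phi$ belongs to $L^1([0,T])$. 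First I would record the reduced identity: specializing \eqref{constraint_fp} to $u\in C^1([0,T])$, regarded as an element of $C_b^{1,2}([0,T]\times\bar{\mathcal{O}})$ that is constant in $x$, and using $m_0^*\in\mathcal{P}(\mathcal{O})$ to get $\int_\mathcal{O} u(0,x)\,m_0^*(dx)=u(0)$, yields
$$\int_{[0,T]} u(t)\,\bar\mu(dt) = u(0) + \int_0^T u'(t)\,\phi(t)\,dt \qquad \text{for all } u\in C^1([0,T]).$$

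Next I would exploit the nonnegativity of $\bar\mu$ through a well-chosen family of test functions. For an arbitrary continuous $v\geq 0$ on $[0,T]$, set $u(t):=\int_t^T v(r)\,dr$, which is $C^1$ with $u'=-v$, $u(0)=\int_0^T v$, and $u\geq 0$. Substituting into the reduced identity and noting that $u\geq 0$ together with $\bar\mu\geq 0$ forces the left-hand side to be nonnegative, I obtain
$$\int_0^T v(t)\,\phi(t)\,dt \;\leq\; \int_0^T v(t)\,dt,$$
that is, $\int_0^T v(t)\,(\phi(t)-1)\,dt \le 0$ for every continuous $v\ge 0$ on $[0,T]$.

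Finally I would upgrade this integral inequality to the pointwise bound $\phi\le 1$ $dt$-a.e.\ by a standard density argument: if $\{\phi>1\}$ had positive Lebesgue measure, one could approximate the indicator of a suitable subset of it by nonnegative continuous functions to contradict the inequality. (Alternatively, one can push the identity further and identify $\phi(t)=\bar\mu((t,T])$ for a.e.\ $t$ by a uniqueness argument based on the fact that $\{u':u\in C^1([0,T])\}=C([0,T])$, from which $\phi\le\bar\mu([0,T])=1$ follows immediately.) I do not expect a genuine obstacle here; the only care needed is bookkeeping—checking that a time-only $u$ is admissible in \eqref{constraint_fp}, that $\bar\mu$ is a probability measure and $\phi\in L^1([0,T])$, and the measure-theoretic passage from the tested inequality to the almost-everywhere statement. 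The algebraic heart of the argument is the single integration against the nonnegative measure $\bar\mu$.
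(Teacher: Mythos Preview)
Your proposal is correct and follows essentially the same route as the paper: both specialize the constraint \eqref{constraint_fp} to the time-only test function $u(t)=\int_t^T v(s)\,ds$ with $v\ge 0$ continuous to obtain $\int_0^T v(t)\,m_t(\bar{\mathcal{O}}\times A)\,dt\le \int_0^T v(t)\,dt$, and then pass to the a.e.\ bound via an $L^1$ approximation of indicators. Your parenthetical alternative---identifying $\phi(t)=\bar\mu((t,T])$ a.e.\ from the reduced identity---is a valid (and slightly sharper) finish that the paper does not pursue.
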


\begin{proof}
For every test function $u(t, x)=\int_{t}^{T} f(s) ds$ with $f$ a non-negative bounded continuous function, we have
\begin{equation*}
\int_{0}^{T} f(t) m_t(\bar{\mathcal{O}}\times A)dt \leq \int_{0}^{T} f(t) dt,
\end{equation*}
since $\mathcal{L}u=0$. Let $B=\{ t\in[0, T]: m_t(\bar{\mathcal{O}}\times A)>1 \}\in \mathcal B ([0, T])$ (because $t\mapsto m_t(\bar{\mathcal{O}}\times A)$ is measurable). We define $f:[0, T]\rightarrow \mathbb R$ such that $f(t)=\mathds{1}_B(t)$. Since $C([0,T])$ is dense in $L^1([0, T])$, there exists a sequence $(f_n)_{n\geq 1}\subset C([0, T])$ converging to $f$ in $L^1([0, T])$. We define a new sequence $(\overline{f}_n)_{n\geq 1}\subset C([0, T])$ as
\begin{equation*}
\overline{f}_{n}(t) :=\min \left\{\max \left\{f_{n}(t), 0\right\}, 1\right\}, \quad t \in[0, T].
\end{equation*}
Since for all $n\geq 1$,
\begin{equation*}
\int_0^T|\overline{f}_n(t)-f(t)|dt\leq \int_0^T|f_n(t)-f(t)|dt,
\end{equation*}
we conclude that $(\overline{f}_n)_{n\geq 1}$ converges to $f$ in $L^1([0, T])$. Up to taking a subsequence, we suppose without loss of generality that $(\overline{f}_n)_{n\geq 1}$ converges to $f$ $t$-almost everywhere on $[0, T]$. On the other hand, for all $t\in [0, T]$ and all $n\geq 1$,
\begin{equation*}
|\overline{f}_n(t)(1-m_t(\bar{\mathcal{O}}\times A))|\leq 1 + m_t(\bar{\mathcal{O}}\times A).
\end{equation*}
By dominated convergence,
\begin{align*}
0 &\geq \int_{0}^{T} \mathds{1}_B(t)(1- m_t(\bar{\mathcal{O}}\times A)) dt = \int_{0}^{T} f(t)(1- m_t(\bar{\mathcal{O}}\times A)) dt\\
&=\lim_{n\rightarrow\infty} \int_{0}^{T} \overline{f}_{n}(t)(1- m_t(\bar{\mathcal{O}}\times A)) dt \geq 0.
\end{align*}
Since $t\mapsto \mathds{1}_B(t)(1- m_t(\bar{\mathcal{O}}\times A))$ is non-positive, we conclude that $m_t(\bar{\mathcal{O}}\times A)\leq 1$ $t$-almost everywhere on $[0, T]$.
\end{proof}

The next Lemma extends Lemma 3.3.ii. in \cite{bdt2020} to our general framework and since the proof is different, we give it in detail. Before presenting this result, we first recall the definition of the space of functions of bounded variation.
\begin{definition}[\textit{The space of bounded variation functions $\operatorname{BV}(]0, T[)$}]
A function $\varphi$ is said to be of bounded variation on the open interval $]0, T[$, denoted $\varphi\in \operatorname{BV}(]0, T[)$, if $\varphi\in L^1(]0, T[)$ and
$$V(\varphi, ]0, T[):=\sup\left\{ \int_0^T\psi'(t)\varphi(t)dt: \psi\in C_c^1(]0, T[), \; \|\psi\|_\infty\leq 1 \right\}<\infty,$$
where $C_c^1(]0, T[)$ denotes the set of $C^1$ functions on $]0, T[$ with compact support. The space $\operatorname{BV}(]0, T[)$ is endowed with the norm
$$\|\varphi\|_{\operatorname{BV}}:=\|\varphi\|_1 + V(\varphi, ]0, T[),$$
where $\|\cdot\|_1$ denotes the usual $L^1$-norm.
\end{definition}

\begin{lemma}[\textit{A bounded variation property}]\label{lemma2}
Let $h \in C^{1,2}_b([0, T]\times \bar{\mathcal{O}})$ and $(\mu, m)\in \mathcal{R}$. Then for every $\psi \in C^{1}([0, T])$,
\begin{equation*}
\int_{0}^{T} \psi^{\prime}(t)\left(\int_{\bar{\mathcal{O}}\times A} h(t, x) m_{t}(dx, da)\right) dt \leq C\|\psi\|_{\infty},
\end{equation*}
for some $C=C(b, \sigma, h) > 0$. In particular,
$$t\mapsto \int_{\bar{\mathcal{O}}\times A} h(t, x) m_{t}(dx, da)\in \operatorname{BV}(]0, T[),$$
and
$$\left\|\int_{\bar{\mathcal{O}}\times A} h(\cdot, x) m_{\cdot}(dx, da)\right\|_{\operatorname{BV}}\leq T\|h\|_\infty + C.$$
\end{lemma}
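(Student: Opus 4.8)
The plan is to test the forward equation \eqref{constraint_fp} against the separated function $u(t,x) = \psi(t)\,h(t,x)$, where $\psi \in C^{1}([0,T])$ and $h \in C^{1,2}_b([0,T]\times\bar{\mathcal{O}})$. First I would verify that $u$ is an admissible test function, i.e. $u \in C^{1,2}_b([0,T]\times\bar{\mathcal{O}})$: this holds because $\psi$ and $\psi'$ are continuous on the compact interval $[0,T]$, hence bounded, while $h,\partial_t h,\partial_x h,\partial_{xx}h$ are bounded by hypothesis, so that $\partial_t u = \psi' h + \psi\,\partial_t h$, $\partial_x u = \psi\,\partial_x h$ and $\partial_{xx}u = \psi\,\partial_{xx}h$ are all continuous and bounded. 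Since $\mathcal{L}$ acts only through the spatial derivatives, $\mathcal{L}u = \psi\,\mathcal{L}h$.

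Substituting $u=\psi h$ into \eqref{constraint_fp} and isolating the term carrying $\psi'$ then yields
\begin{align*}
\int_0^T \psi'(t)\left(\int_{\bar{\mathcal{O}}\times A} h(t,x)\,m_t(dx,da)\right)dt &= \int_{[0,T]\times\bar{\mathcal{O}}} \psi(t)h(t,x)\,\mu(dt,dx) - \int_{\mathcal{O}}\psi(0)h(0,x)\,m_0^*(dx)\\
&\quad - \int_0^T\!\!\int_{\bar{\mathcal{O}}\times A}\psi(t)\left(\partial_t h + \mathcal{L}h\right)(t,x,a)\,m_t(dx,da)\,dt.
\end{align*}
I would then bound the three terms on the right. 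The first is at most $\|\psi\|_\infty\|h\|_\infty$ because $\mu\in\mathcal{P}([0,T]\times\bar{\mathcal{O}})$ is a probability measure; the second is likewise at most $\|\psi\|_\infty\|h\|_\infty$ because $m_0^*\in\mathcal{P}(\mathcal{O})$. For the third, the integrand $\partial_t h + \mathcal{L}h = \partial_t h + b\,\partial_x h + \tfrac{\sigma^2}{2}\,\partial_{xx}h$ is bounded, since $h\in C^{1,2}_b$ and $b,\sigma$ are bounded by Assumption \ref{as_single_agent}(1); combining this with the subprobability property $m_t(\bar{\mathcal{O}}\times A)\le 1$ from Lemma \ref{lemma1}, which gives $\int_0^T m_t(\bar{\mathcal{O}}\times A)\,dt\le T$, the third term is at most $T\,\|\partial_t h + \mathcal{L}h\|_\infty\,\|\psi\|_\infty$. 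Setting $C := 2\|h\|_\infty + T\,\|\partial_t h + \mathcal{L}h\|_\infty$, which depends only on $b,\sigma,h$, establishes the claimed inequality.

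For the $\operatorname{BV}$ conclusion, set $\phi(t) := \int_{\bar{\mathcal{O}}\times A} h(t,x)\,m_t(dx,da)$. Lemma \ref{lemma1} gives $|\phi(t)|\le\|h\|_\infty$ for a.e. $t$, so $\phi\in L^1(]0,T[)$ with $\|\phi\|_1\le T\|h\|_\infty$. Applying the inequality to an arbitrary $\psi\in C^1_c(]0,T[)$ with $\|\psi\|_\infty\le 1$ (for which the boundary term vanishes, though this is immaterial since it is already absorbed into $C$) bounds $\int_0^T \psi'\,\phi\,dt\le C$; taking the supremum over such $\psi$ gives $V(\phi,]0,T[)\le C$, whence $\|\phi\|_{\operatorname{BV}} = \|\phi\|_1 + V(\phi,]0,T[)\le T\|h\|_\infty + C$. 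The argument is largely routine, and the only point requiring genuine care is the admissibility of the product test function together with the boundedness of $\mathcal{L}h$: this is exactly where the boundedness of $b,\sigma$ enters, and it is worth noting that no regularity of the coefficients in $t$ is used, which is precisely what allows the argument to go through in the present general framework without passing through densities as in \cite{bdt2020}.
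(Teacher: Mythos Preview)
Your proof is correct and follows essentially the same route as the paper: test the constraint \eqref{constraint_fp} with the product $u=\psi h$ (the paper uses $u=-\psi h$, but since \eqref{constraint_fp} is an equality this is immaterial), isolate the $\psi'$ term, and bound the remaining pieces using that $\mu$ and $m_0^*$ are probabilities, that $b,\sigma$ and the derivatives of $h$ are bounded, and that $m_t(\bar{\mathcal O}\times A)\le 1$ from Lemma \ref{lemma1}. Your write-up is in fact more explicit than the paper's, giving the constant $C$ and spelling out the $\operatorname{BV}$ conclusion.
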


\begin{proof}
We consider the test function
\begin{equation*}
u(t, x) = -\psi(t) h(t,x) .
\end{equation*}
We have $u \in C^{1,2}_b([0, T] \times \bar{\mathcal{O}})$. Now, using the constraint \eqref{constraint_fp}, the fact that $\mu$ belongs to  $\mathcal{P}([0,T] \times \bar{\mathcal{O}})$ and bounding $h$, its derivatives, the diffusion coefficients and the measures $(m_t)$ (Lemma \ref{lemma1}) by constants, we get
\begin{equation}\label{bv norm}
\int_{0}^{T} \psi^{\prime}(t)\left(\int_{\bar{\mathcal{O}}\times A} h(t,x) m_{t}(dx, da)\right) dt \leq C\|\psi\|_{\infty},
\end{equation}
for some $C=C(b, \sigma, h) > 0$. We conclude that 
$$t\mapsto \int_{\bar{\mathcal{O}}\times A} h(t, x) m_{t}(dx, da)\in \operatorname{BV}(]0, T[).$$
The estimate on the $\operatorname{BV}$-norm comes from Lemma \ref{lemma1} and taking the supremum in \eqref{bv norm} over the set of $\psi\in C^1_c(]0, T[)$ such that $\|\psi\|_\infty\leq 1$.
\end{proof}
We now provide the following convergence result. {\color{black}We recall that $m^n\rightarrow m$ in $V$ if $m^n_t(dx, da)dt$ converges weakly to $m_t(dx, da)dt$.}

\begin{lemma}[\textit{An $L^1$ convergence result}]\label{lemma2bis}
Let $h \in C_b([0, T]\times \bar{\mathcal{O}})$ and $(\mu^n, m^n)_{n\geq 1}\subset \mathcal{R}$ such that $m^n\rightarrow m$ in $V$. Then,
\begin{equation*}
\int_{\bar{\mathcal{O}}\times A} h(\cdot, x) m_{\cdot}^n(dx, da)\underset{n\rightarrow\infty}{\longrightarrow} \int_{\bar{\mathcal{O}}\times A} h(\cdot, x) m_{\cdot}(dx, da)
\end{equation*}
in $L^1([0, T])$.
\end{lemma}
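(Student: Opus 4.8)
The plan is to set $g_n(t) := \int_{\bar{\mathcal{O}}\times A} h(t,x)\, m^n_t(dx,da)$ and $g(t) := \int_{\bar{\mathcal{O}}\times A} h(t,x)\, m_t(dx,da)$; by Lemma \ref{lemma1} the measures $m^n_t, m_t$ are subprobabilities $t$-a.e., so $\|g_n\|_\infty, \|g\|_\infty \le \|h\|_\infty$ and all these functions lie in $L^1([0,T])$. The difficulty is that Lemma \ref{lemma2} only applies to $h \in C^{1,2}_b$, whereas here $h$ is merely continuous and bounded; the strategy is therefore to prove the statement first for smooth test functions, where a uniform bounded-variation estimate yields $L^1$-compactness, and then to transfer it to general $h$ by a uniform approximation combined with a tightness argument handling the possible unboundedness of $\bar{\mathcal{O}}$.

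For the smooth case, fix $\tilde h \in C^{1,2}_b([0,T]\times\bar{\mathcal{O}})$ and write $\tilde g_n, \tilde g$ for the associated functions. By Lemma \ref{lemma2} the sequence $(\tilde g_n)$ is bounded in $\operatorname{BV}(]0, T[)$, uniformly in $n$, since the constant there depends only on $b,\sigma,\tilde h$. The compact embedding $\operatorname{BV}(]0, T[)\hookrightarrow L^1(]0, T[)$ (Helly's selection theorem) then makes $(\tilde g_n)$ relatively compact in $L^1([0,T])$. To identify the limit, I use that $m^n\to m$ in $V$ means $m^n_t(dx,da)\,dt \to m_t(dx,da)\,dt$ weakly, so for every $\zeta\in C([0,T])$, testing against $(t,x,a)\mapsto \zeta(t)\tilde h(t,x)$ gives $\int_0^T \zeta(t)\tilde g_n(t)\,dt \to \int_0^T \zeta(t)\tilde g(t)\,dt$. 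Hence any $L^1$-limit $\ell$ of a subsequence satisfies $\int_0^T \zeta\, \ell\,dt = \int_0^T \zeta\, \tilde g\,dt$ for all $\zeta\in C([0,T])$, forcing $\ell = \tilde g$; by relative compactness the whole sequence converges, i.e. $\tilde g_n \to \tilde g$ in $L^1([0,T])$.

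To pass to general $h$, fix $\varepsilon>0$. Since $(m^n_t(dx,da)dt)_n$ converges weakly, this family together with its limit is relatively compact, hence tight by Prokhorov's theorem, and all these measures have total mass $\le T$; as the only noncompact direction is $x\in\bar{\mathcal{O}}\subseteq\mathbb R$, there is $R>0$ with $m^n(\{|x|>R\})<\varepsilon$ and $m(\{|x|>R\})<\varepsilon$ for all $n$. On the compact set $[0,T]\times(\bar{\mathcal{O}}\cap[-R,R])$ the function $h$ is uniformly continuous, so by mollification there is $\tilde h\in C^{1,2}_b$ with $\|\tilde h\|_\infty\le\|h\|_\infty$ and $\sup_{[0,T]\times(\bar{\mathcal{O}}\cap[-R,R])}|h-\tilde h|<\varepsilon$. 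Writing $g_n-g = (g_n-\tilde g_n)+(\tilde g_n-\tilde g)+(\tilde g-g)$ and bounding
\[
\|g_n-\tilde g_n\|_{L^1} \le \int_0^T\!\!\int_{\bar{\mathcal{O}}\times A}|h-\tilde h|(t,x)\,m^n_t(dx,da)\,dt \le \varepsilon\,(T+2\|h\|_\infty),
\]
where the contribution of $\{|x|\le R\}$ is at most $\varepsilon T$ and that of $\{|x|>R\}$ at most $2\|h\|_\infty\varepsilon$, and arguing identically for $\|\tilde g-g\|_{L^1}$, the smooth case gives $\limsup_n \|g_n-g\|_{L^1} \le 2\varepsilon(T+2\|h\|_\infty)$. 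Letting $\varepsilon\to 0$ concludes.

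I expect the main obstacle to be the combination of two points: first, extracting \emph{strong} $L^1$ convergence from the mere weak convergence in $V$, which must go through the uniform $\operatorname{BV}$ bound of Lemma \ref{lemma2} and the compact embedding into $L^1$ (together with the weak-convergence identification of the limit); and second, the reduction from smooth to continuous $h$, where the possible unboundedness of $\bar{\mathcal{O}}$ prevents a direct sup-norm approximation and requires the tightness-based localization above.
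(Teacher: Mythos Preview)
Your proof is correct and follows the same core strategy as the paper: use the uniform $\operatorname{BV}$ bound from Lemma~\ref{lemma2} together with the compact embedding $\operatorname{BV}\hookrightarrow L^1$ to handle $\tilde h\in C^{1,2}_b$, identify the limit via weak convergence in $V$, and then pass to general $h\in C_b$ by approximation.

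The only noteworthy difference lies in the approximation step. The paper invokes a direct sup-norm approximation $\|h^\ast-h\|_\infty<\varepsilon/(3T)$ with $h^\ast\in C^{1,2}_b([0,T]\times\bar{\mathcal O})$ (citing Proposition~26.2 in \cite{jameson1974}), and otherwise proceeds exactly as you do via a subsequence/subsubsequence formulation. You instead localize using the tightness of the weakly convergent family $(m^n_t(dx,da)\,dt)_n$, approximate $h$ uniformly only on the compact $\{|x|\le R\}$, and control the tails by $2\|h\|_\infty\,\varepsilon$. Your route is more self-contained and explicitly addresses the possible unboundedness of $\bar{\mathcal O}$ (where a global sup-norm approximation by $C^{1,2}_b$ functions is not automatic, since such functions are Lipschitz in $x$); the paper's route is shorter if one accepts the cited density result.
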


\begin{proof}
It is sufficient to show that given an arbitrary subsequence we can extract a subsubsequence converging to the above limit in $L^1([0, T])$. Consider a subsequence $(\mu^{n_k}, m^{n_k})_{k\geq 1}$. For all $k\geq 1$, by Lemma \ref{lemma2}, $t\mapsto m_t^{n_k}(\bar{\mathcal{O}}\times A)\in \operatorname{BV}(]0, T[)$ and
$$\sup_{k\geq 1}\|m^{n_k}_\cdot(\bar{\mathcal{O}}\times A)\|_{BV}<\infty.$$
By Theorem 3.23 in \cite{ambrosio2000}, up to a subsequence still denoted with $n_k$, the sequence of mappings $\left(m_\cdot^{n_k}(\bar{\mathcal{O}}\times A)\right)_{k \geq 1}$ converges in $L^{1}([0, T])$ to some mapping $z$. By weak convergence of measures and density of $C([0,T])$ on $L^1([0, T])$, we conclude that $m_t(\bar{\mathcal{O}}\times A)=z(t)$ $t$-a.e. on $[0, T]$. Since by Lemma \ref{lemma1}, $m^{n_k}_t(\bar{\mathcal{O}}\times A)\leq 1$ $t$-a.e. on $[0, T]$, then $m_t(\bar{\mathcal{O}}\times A)\leq 1$ $t$-a.e. on $[0, T]$. We fix some arbitrary $\varepsilon>0$. By Proposition 26.2 in \cite{jameson1974}, there exists $h^*\in  C_b^{1, 2}([0, T]\times \bar{\mathcal{O}})$  such that 
$$\|h^*-h\|_\infty < \frac{\varepsilon}{3 T}$$
Since $h^*\in C^{1, 2}_b([0, T]\times\bar{\mathcal{O}})$, we can use the same argument as before and conclude that up to another subsequence still denoted with $n_k$, there exists $k_0\geq 1$ such that for all $k\geq k_0$,
$$\int_0^T \left|\int_{\bar{\mathcal{O}}\times A} h^*(t, x) m^{n_k}_t(dx, da) -  \int_{\bar{\mathcal{O}}\times A} h^*(t, x) m_t(dx, da)\right|dt < \frac{\varepsilon}{3}.$$
From the above estimates, we obtain for all $k\geq k_0$
$$\int_0^T \left|\int_{\bar{\mathcal{O}}\times A} h(t, x) m^{n_k}_t(dx, da) -  \int_{\bar{\mathcal{O}}\times A} h(t, x) m_t(dx, da)\right|dt < \varepsilon.$$
\end{proof}

Since the elements of $V$ are identified with measures whose marginals with respect to the time variable are absolutely continuous with respect to the Lebesgue measure, we can expect less regularity on the time component of the test functions, as it can be seen in the following lemma.

\begin{lemma}[\textit{Stable convergence}]\label{lemma3}
Let $(\mu^n, m^n)_{n\geq 1} \subset \mathcal{R}$ such that $m^n\rightarrow m$ in $V$. Then $m^n_t(dx, da)dt\rightarrow m_t(dx, da)dt$ in the stable topology, that is, for any $h\in\mathcal{S}$, 
\begin{equation*}
\lim_{n\rightarrow\infty}\int_0^T\int_{\bar{\mathcal{O}}\times A} h(t, x, a)m_t^n(dx, da)dt =  \int_0^T \int_{\bar{\mathcal{O}}\times A} h(t, x, a)m_t(dx, da)dt.
\end{equation*}
\end{lemma}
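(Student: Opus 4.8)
The plan is to deduce the stable (Carathéodory) convergence from the weak convergence $m^n\to m$ in $V$ by upgrading the class of test functions in two respects: from jointly continuous integrands to integrands that are merely measurable in $t$ (but still continuous in $(x,a)$), and from a compact state space to the possibly unbounded $\bar{\mathcal O}$. Write $E=\bar{\mathcal O}\times A$, $\gamma_n(dt,dx,da)=m^n_t(dx,da)\,dt$ and $\gamma(dt,dx,da)=m_t(dx,da)\,dt$, so that $m^n\to m$ in $V$ means exactly that $\gamma_n\to\gamma$ weakly as finite measures on the Polish space $[0,T]\times E$, i.e. against $C_b([0,T]\times E)$. Two facts will be used repeatedly: by Lemma \ref{lemma1} each $\gamma_n$ has time-marginal dominated by Lebesgue measure, $m^n_t(E)\le 1$ for a.e.\ $t$, and this bound passes to the weak limit $m$; and, being weakly convergent, the family $(\gamma_n)$ is tight by Prokhorov's theorem.

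First I would reduce to a compact state space. Since $[0,T]$ and $A$ are compact, tightness of $(\gamma_n)$ yields, for every $\varepsilon>0$, a radius $R$ with $\bar{\mathcal O}_R:=\bar{\mathcal O}\cap[-R,R]$ such that $\sup_n \gamma_n([0,T]\times(\bar{\mathcal O}\setminus\bar{\mathcal O}_R)\times A)<\varepsilon$, and likewise for $\gamma$. As $h$ is bounded, the contribution of $\{x\notin\bar{\mathcal O}_R\}$ to both $\int h\,d\gamma_n$ and $\int h\,d\gamma$ is at most $\varepsilon\|h\|_\infty$ uniformly in $n$, so it suffices to control the integrals on the compact set $K:=\bar{\mathcal O}_R\times A$.

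The heart of the argument is the passage from joint continuity to Carathéodory integrands. The restriction of $h\in\mathcal S$ to $[0,T]\times K$ is Carathéodory with $K$ compact metric, so by the Scorza--Dragoni theorem, for every $\delta>0$ there is a compact $C_\delta\subset[0,T]$ with $|[0,T]\setminus C_\delta|<\delta$ on which $h$ is jointly continuous; by Tietze's extension theorem I extend it to $\tilde h\in C_b([0,T]\times E)$ with $\|\tilde h\|_\infty\le\|h\|_\infty$ and $\tilde h=h$ on $C_\delta\times K$. Then $h$ and $\tilde h$ differ only on $([0,T]\setminus C_\delta)\times E$ and on the tail $\{x\notin\bar{\mathcal O}_R\}$, and the mass of the first region is controlled uniformly by the time-marginal bound, $\gamma_n(([0,T]\setminus C_\delta)\times E)=\int_{[0,T]\setminus C_\delta} m^n_t(E)\,dt\le\delta$, and similarly for $\gamma$. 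Hence $\int|h-\tilde h|\,d\gamma_n\le 2\|h\|_\infty(\delta+\varepsilon)$ uniformly in $n$, and the same for $\gamma$. Since $\tilde h\in C_b([0,T]\times E)$, convergence in $V$ gives $\int\tilde h\,d\gamma_n\to\int\tilde h\,d\gamma$, and combining the three estimates yields $\limsup_n|\int h\,d\gamma_n-\int h\,d\gamma|\le 4\|h\|_\infty(\delta+\varepsilon)$; letting $\delta,\varepsilon\downarrow 0$ concludes.

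The main obstacle is exactly this upgrade: weak convergence in $V$ only sees continuous integrands, whereas stable convergence tolerates arbitrary discontinuity in time. The mechanism that resolves it is the combination of Scorza--Dragoni, which confines the time-discontinuity of $h$ to a set of small Lebesgue measure, with the uniform domination $m^n_t(E)\le 1$ of Lemma \ref{lemma1}, which converts ``small in Lebesgue time-measure'' into ``small in $\gamma_n$-mass, uniformly in $n$.'' An equivalent route, closer to the preceding lemmas, is to approximate $h$ on $K$: since $C(K)$ is separable, $t\mapsto h(t,\cdot)$ is strongly measurable by Pettis' theorem and is an a.e.\ uniform limit of $C(K)$-valued simple functions $\sum_i\mathds{1}_{B_i}(t)\psi_i(x,a)$; each product is handled by first extending the convergence against $C_b([0,T])$ test functions to bounded measurable ones, using density of $C_b([0,T])$ in $L^1([0,T])$ together with the uniform bound on $t\mapsto\int_K \psi_i\,dm^n_t$, while the uniform time-marginal bound again controls the approximation error uniformly in $n$.
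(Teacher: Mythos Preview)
Your argument is correct, and it takes a genuinely different route from the paper's proof. The paper invokes an abstract characterization of stable convergence due to Jacod and M\'emin (their Corollary~2.9): given weak convergence of $m^n_t(dx,da)dt$, stable convergence follows once one shows that the time-marginals $m^n_t(\bar{\mathcal O}\times A)\,dt$ are relatively compact in $\mathcal M([0,T])$ for the topology generated by \emph{bounded measurable} (not just continuous) functions on $[0,T]$. This last point is obtained from Lemma~\ref{lemma2bis}, which in turn relies on the bounded-variation estimate of Lemma~\ref{lemma2}. Your proof bypasses both the BV machinery and the external reference: you use only the uniform bound $m^n_t(\bar{\mathcal O}\times A)\le 1$ from Lemma~\ref{lemma1}, together with Scorza--Dragoni and Tietze, to approximate $h\in\mathcal S$ uniformly (off a small-time set) by a jointly continuous function, and then control the error uniformly in $n$ via the time-marginal bound.

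What each approach buys: your argument is more elementary and self-contained, and it isolates precisely the two ingredients that make stable convergence work here---tightness in the space variable and uniform domination of the time-marginals by Lebesgue measure---without passing through BV compactness. The paper's route is economical in a different sense: Lemma~\ref{lemma2bis} (the $L^1$ convergence of $t\mapsto\int h(t,x)\,m^n_t(dx,da)$) is needed independently in several later proofs (Theorem~\ref{single agent linear}, Lemma~\ref{lemma gronwall}, Proposition~\ref{Rcont}), so once it is established, feeding it into the Jacod--M\'emin criterion yields the present lemma almost for free. Your ``alternative route'' via $C(K)$-valued simple functions is in fact closer in spirit to the paper's argument, since the step of extending time-test functions from $C_b([0,T])$ to bounded measurable ones is exactly what the $L^1$ convergence of Lemma~\ref{lemma2bis} provides.
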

\begin{proof}
We are going to use Corollary 2.9 of \cite{jacod1981}. We already know by definition of the convergence in $V$ that $m^n_t(dx, da)dt\rightharpoonup m_t(dx, da)dt$, where we use the standard notation $\rightharpoonup$ for the weak convergence. We need to prove that $(m^n_t(\bar{\mathcal{O}}\times A)dt)_{n\geq 1}$ is relatively compact in $\mathcal{M}([0, T])$ endowed with the weak topology generated by the bounded and measurable functions from $[0, T]$ to $\mathbb R$. Since $\mathcal{B}([0, T])$ is countably generated, by Proposition 2.10 in \cite{jacod1981}, this topology is metrizable, hence it is sufficient to show that for every subsequence of $(m^n(\bar{\mathcal{O}}\times A)dt)_{n\geq 1}$, there exists a subsubsequence converging for the weak topology generated by the bounded and measurable functions from $[0, T]$ to $\mathbb R$. 
Let $(m^{n_k}(\bar{\mathcal{O}}\times A)dt)_{k\geq 1}$ be a subsequence of $(m^n(\bar{\mathcal{O}}\times A)dt)_{n\geq 1}$. Then $(m^{n_k})_{k\geq 1}$ converges also to $m$ in V. By Lemma \ref{lemma2bis}, $\left(m^{n_{k}}_\cdot(\bar{\mathcal{O}}\times A)\right)_{k \geq 1}$ converges in $L^{1}([0, T])$ to $m_\cdot(\bar{\mathcal{O}}\times A)$. Finally, for any function $\phi:[0, T]\rightarrow \mathbb R$ bounded and measurable,
$$\left|\int_0^T \phi(t)m^{n_{k}}_t(\bar{\mathcal{O}}\times A)dt -  \int_0^T \phi(t)m_t(\bar{\mathcal{O}}\times A)dt\right|\leq \|\phi\|_\infty \|m^{n_{k}}_{\cdot}(\bar{\mathcal{O}}\times A) - m_{\cdot}(\bar{\mathcal{O}}\times A)\|_1\underset{k\rightarrow\infty}{\longrightarrow} 0.$$
\end{proof}

We now prove the compactness of the set of constraints $\mathcal{R}$, which extends Lemma 3.5. in \cite{bdt2020} to our setting. The proof is more involved and we present it here for sake of clarity.

\begin{theorem}\label{R compact}
The set $\mathcal{R}$ is compact.
\end{theorem}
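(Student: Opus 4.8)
The plan is to prove \emph{sequential} compactness, which is enough since $\mathcal{P}([0,T]\times\bar{\mathcal{O}})$ and $V$ are both metrizable. So I take a sequence $(\mu^n,m^n)_{n\ge1}\subset\mathcal{R}$ and extract a subsequence converging in the product topology to some $(\mu,m)\in\mathcal{R}$. The argument splits into (a) relative compactness via tightness, (b) identification of the limit inside $\mathcal{P}([0,T]\times\bar{\mathcal{O}})\times V$, and (c) passage to the limit in the constraint \eqref{constraint_fp}.

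For (a), the only non-compact direction is $x\in\bar{\mathcal{O}}$, since $[0,T]$ and $A$ are compact, so I need a uniform control of the mass at infinity, and this is where Assumption \ref{as_single_agent}(4) enters. I would build functions $\phi_k\in C^2(\mathbb R)$, nonnegative and bounded with bounded first and second derivatives, increasing pointwise to $x\mapsto\ln(1+x^2)$ and satisfying $\sup_k(\|\phi_k'\|_\infty+\|\phi_k''\|_\infty)<\infty$ (for instance $\phi_k(x)=\eta(x/k)\ln(1+x^2)$ for a smooth cutoff $\eta$). Since $b,\sigma$ are bounded, $\|\mathcal L\phi_k\|_\infty\le C$ uniformly in $k$. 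Testing \eqref{constraint_fp} with $u(t,x)=\phi_k(x)$ and with $u(t,x)=(T-t)\phi_k(x)$, and using that $m_t$ is a subprobability measure for a.e.\ $t$ (Lemma \ref{lemma1}), I obtain after letting $k\to\infty$ by monotone convergence
$$\int_{[0,T]\times\bar{\mathcal{O}}}\!\!\ln(1+x^2)\,\mu^n(dt,dx)\le M,\qquad \int_0^T\!\!\int_{\bar{\mathcal{O}}\times A}\!\!\ln(1+x^2)\,m^n_t(dx,da)\,dt\le M,$$
with $M$ depending only on $b,\sigma,T$ and $\int\ln(1+|x|)\,m_0^*(dx)$, hence uniform in $n$. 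By Markov's inequality this gives uniform tightness of $\{\mu^n\}$ and of $\{m^n_t(dx,da)\,dt\}$ (whose total masses are $\le T$ by Lemma \ref{lemma1}), so Prokhorov's theorem yields a subsequence, still indexed by $n$, with $\mu^n\to\mu$ weakly and $m^n_t(dx,da)\,dt$ converging weakly to some finite measure $\Lambda$ on $[0,T]\times\bar{\mathcal{O}}\times A$. Tightness also ensures $\mu$ is a probability measure.

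For (b), I must check that $\Lambda=m_t(dx,da)\,dt$ for some $m\in V$, i.e.\ that its time-marginal is absolutely continuous. By Lemma \ref{lemma2} with $h\equiv1$ the densities $t\mapsto m^n_t(\bar{\mathcal{O}}\times A)$ are bounded in $\operatorname{BV}(]0,T[)$ uniformly in $n$, so by the $\operatorname{BV}$-compactness argument used in the proof of Lemma \ref{lemma2bis} a further subsequence converges in $L^1([0,T])$ to some $z$ with $0\le z\le 1$. As the time-marginal is continuous for weak convergence, the time-marginal of $\Lambda$ coincides with $z(t)\,dt$; disintegrating $\Lambda$ along it shows $\Lambda=m_t(dx,da)\,dt$ for some $m\in V$, that is $m^n\to m$ in $V$.

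For (c), the crucial step, I pass to the limit in \eqref{constraint_fp}. The left-hand side converges because $u\in C^{1,2}_b\subset C_b$ and $\mu^n\to\mu$ weakly. On the right-hand side the integrand $\partial_t u+\mathcal L u=\partial_t u+b\,\partial_x u+\tfrac{\sigma^2}{2}\partial_{xx}u$ is bounded and continuous in $(x,a)$ for each $t$, but only measurable in $t$ because $b,\sigma$ need not be time-continuous; hence it lies in $\mathcal{S}$ but not in $C_b([0,T]\times\bar{\mathcal{O}}\times A)$, so plain weak convergence does not suffice. This is precisely the obstacle removed by Lemma \ref{lemma3}: since $m^n\to m$ in $V$, the convergence also holds in the stable topology, i.e.\ against every $h\in\mathcal{S}$, and applying it with $h=\partial_t u+\mathcal L u$ lets me pass to the limit. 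The resulting identity is exactly \eqref{constraint_fp} for $(\mu,m)$, so $(\mu,m)\in\mathcal{R}$ and $\mathcal{R}$ is compact. I expect the main difficulty to be this time-irregularity of the coefficients, which forces the stable-convergence machinery of Lemmas \ref{lemma2}--\ref{lemma3} in place of ordinary weak convergence; the secondary difficulty is the tightness estimate on the possibly unbounded domain $\bar{\mathcal{O}}$, handled by the logarithmic Lyapunov function.
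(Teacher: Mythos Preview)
Your proof is correct and follows essentially the same strategy as the paper: tightness via a logarithmic Lyapunov function combined with Lemma~\ref{lemma1}, Prokhorov, identification of the limit as an element of $V$, and passage to the limit in \eqref{constraint_fp} via the stable convergence of Lemma~\ref{lemma3}. The only notable difference is your step (b): you deduce absolute continuity of the time marginal of $\Lambda$ from the uniform $\operatorname{BV}$ bound on $t\mapsto m^n_t(\bar{\mathcal{O}}\times A)$ and $L^1$ compactness, whereas the paper obtains it more directly by testing \eqref{constraint_fp} with $u(t,x)=\int_t^T\varphi(s)\,ds$ for nonnegative $\varphi\in C([0,T])$ and passing to the limit, which immediately yields $\int\varphi(t)\,\Lambda(dt,\bar{\mathcal{O}}\times A)\le\int_0^T\varphi(t)\,dt$. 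Both routes are valid; the paper's is slightly more self-contained here since it avoids invoking the $\operatorname{BV}$ machinery a second time.
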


\begin{proof}
Since $\mathcal{R}\subset\mathcal{P}([0, T]\times\bar{\mathcal{O}})\times V$, which is metrizable, it suffices to show that $\mathcal{R}$ is sequentially compact. Consider a sequence $(\mu^n, m^n)_{n\geq 1}\subset \mathcal{R}$. For $k\geq 1$, define the test function $u_k(t, x)=(T+1-t)\phi_k(x)$, where
$$
\phi_k(x)=\ln \left\{1+|x|^{3}\left(\frac{3 x^{2}}{5 k^{2}}-\frac{3|x|}{2 k}+1\right)\right\} \mathds{1}_{|x| \leq k}+\ln \left\{1+\frac{k^{3}}{10}\right\} \mathds{1}_{|x|>k}.
$$
For each $k\geq 1$, $\phi_k\in C^{1, 2}_b([0, T]\times \bar{\mathcal{O}})$ and $\phi_k$ is non-negative.
We have 
$$\int_\mathcal{O} u_k(0, x)m_0^*(dx) + \int_0^T \int_{\bar{\mathcal{O}}\times A} \left(\frac{\partial u_k}{\partial t} +\mathcal L u_k\right) (t, x, a)m_t^n(dx, da)dt\geq 0,$$
which implies,
$$\int_{\bar{\mathcal{O}}\times A} \phi_k(x)m^{n}_t(dx, da)dt\leq  (T+1)\int_\mathcal{O}\phi_k(x)m_0^*(dx) + \int_0^T \int_{\bar{\mathcal{O}}\times A} \mathcal L u_k(t, x, a)m_t^n(dx, da)dt.$$
One can show that there exists a constant $C\geq 0$ independent from $k$ such that $\phi_k'$ and $\phi''_k$ are bounded by $C$. By Lemma \ref{lemma1}, for all $n\geq 1$, $m^n_t(\bar{\mathcal{O}}\times A)\leq 1$ $t$-a.e. on $[0, T]$, which implies that there exists a constant $C'\geq 0$ independent from $n$ and $k$ such that for all $n\geq 1$ and $k\geq 1$,
$$\int_{\bar{\mathcal{O}}\times A} \phi_k(x)m^{n}_t(dx, da)dt\leq  (T+1)\int_\mathcal{O}\phi_k(x)m_0^*(dx) + C'.$$
Now, since $(\phi_k)_{k\geq 1}$ is a non-decreasing sequence converging to $\phi(x)=\ln(1+|x|^3)$, by monotone convergence theorem, we get for all $n\geq 1$
$$\int_{\bar{\mathcal{O}}\times A} \phi(x)m^{n}_t(dx, da)dt\leq  (T+1)\int_\mathcal{O}\phi(x)m_0^*(dx) + C'.$$
Letting $\nu^n(dt, dx, da)=m_t^n(dx, da)dt\in \mathcal{M}([0, T]\times\bar{\mathcal{O}}\times A)$, we conclude
$$\sup_{n\geq 1}\int_{[0, T]\times\bar{\mathcal{O}}\times A} \phi(x)\nu^n(dt, dx, da)<\infty $$
Since $\phi$ is non-negative and for all $r\geq 0$, the set
$$\{(t, x, a)\in [0, T]\times\bar{\mathcal{O}}\times A: \phi(x)\leq r \}=[0, T]\times (\bar{\mathcal{O}}\cap [-(e^r-1)^{1/3}, (e^r-1)^{1/3}]) \times A$$
is compact, we conclude that $(\nu^n)_{n\geq 1}$ is tight. Since by Lemma \ref{lemma1}, $\nu^n([0, T]\times\bar{\mathcal{O}}\times A)\leq T$, by Prokhorov's Theorem (Theorem 8.6.2 in \cite{bogachev2007} (Volume 2)), there exists $\nu\in \mathcal{M}([0, T]\times\bar{\mathcal{O}}\times A)$ such that, up to a subsequence, $\nu^n\rightharpoonup \nu$. Using the test function $u(t, x)=\int_t^T \varphi(t)dt$ with $\varphi$ a non-negative continuous function, for all $n\geq 1$
$$\int_{[0, T]\times\bar{\mathcal{O}}\times A} \varphi(t)\nu^n(dt, dx, da)\leq \int_0^T \varphi(t)dt.$$
Taking $n\rightarrow\infty$, we conclude that $\int_{\bar{\mathcal{O}}\times A} \nu(dt, dx, da)$ is absolutely continuous with respect to the Lebesgue measure on $[0, T]$, which allows the disintegration $\nu(dt, dx, da)=m_t(dx, da)dt$ for some $m\in V$. We conclude that $m^n\rightarrow m$ in $V$. Now, using the same test function $u_k$,
\begin{align*}
\int_{[0, T]\times \bar{\mathcal{O}}} u_k(t, x)\mu^n(dt, dx) &\leq C' + (T+1)\int_\mathcal{O} u_k(0, x)m_0^*(dx) - \int_0^T \int_{\bar{\mathcal{O}}\times A} \phi_k (x)m_t^n(dx, da)dt\\
&\leq C' + (T+1)\int_\mathcal{O} \phi_k(x)m_0^*(dx).
\end{align*}
By the monotone convergence theorem and using that $u_k(t, x)\geq \phi_k(x)$,
$$\sup_{n\geq 1} \int_{[0, T]\times \bar{\mathcal{O}}} \phi(x)\mu^n(dt, dx)<\infty,$$
which proves that $(\mu^n)_{n\geq 1}$ is tight. By Prokhorov's theorem there exists $\mu\in \mathcal{P}([0, T]\times \bar{\mathcal{O}})$ such that, up to another subsequence, $\mu^n\rightharpoonup \mu$. Let $u\in C_b^{1, 2}([0, T]\times \bar{\mathcal{O}})$. Taking limits in 
$$\int_{[0, T]\times \bar{\mathcal{O}}} u(t, x)\mu^n(dt, dx)= \int_\mathcal{O} u(0, x)m_0^*(dx) + \int_0^T \int_{\bar{\mathcal{O}}\times A} \left(\frac{\partial u}{\partial t} +\mathcal L u\right) (t, x, a)m_t^n(dx, da)dt,$$
and using that,
$$\frac{\partial u}{\partial t} +\mathcal L u\in \mathcal{S},$$
we get by Lemma \ref{lemma3}
$$\int_{[0, T]\times \bar{\mathcal{O}}} u(t, x)\mu(dt, dx)= \int_\mathcal{O} u(0, x)m_0^*(dx) + \int_0^T \int_{\bar{\mathcal{O}}\times A} \left(\frac{\partial u}{\partial t} +\mathcal L u\right) (t, x, a)m_t(dx, da)dt,$$
which shows that $(\mu, m)\in \mathcal{R}$ and hence $\mathcal{R}$ is compact.
\end{proof}

\paragraph{The existence result.}

We now give the main result of this subsection, which consists in showing that there exists an admissible maximizer $(\mu^\star, m^\star) \in \mathcal{R}$ for $\Gamma$.
\begin{theorem}[\textit{Existence of a solution for the LP problem}]\label{single agent linear}
There exists a solution to the linear programming problem for the single agent.
\end{theorem}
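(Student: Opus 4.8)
The plan is to prove existence of a maximizer for $\Gamma$ over $\mathcal{R}$ by the direct method of the calculus of variations: establish that $\mathcal{R}$ is compact (already done in Theorem \ref{R compact}) and that $\Gamma$ is upper semicontinuous on $\mathcal{R}$, so that the supremum is attained. Since $\mathcal{R}$ is compact and sequentially compact (being a subset of the metrizable space $\mathcal{P}([0,T]\times\bar{\mathcal{O}})\times V$), it suffices to take a maximizing sequence $(\mu^n, m^n)_{n\geq 1}\subset\mathcal{R}$ with $\Gamma(\mu^n, m^n)\to V^{LP}$, extract a convergent subsequence $(\mu^n, m^n)\to(\mu^\star, m^\star)\in\mathcal{R}$, and show $\limsup_n \Gamma(\mu^n, m^n)\leq \Gamma(\mu^\star, m^\star)$, which forces $\Gamma(\mu^\star, m^\star)=V^{LP}$.

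So the real work is proving upper semicontinuity of the two terms defining $\Gamma$. I would treat them separately. For the running-cost term $\int_0^T\int_{\bar{\mathcal{O}}\times A} f\, m_t(dx,da)\,dt$, I want to exploit that by Assumption \ref{as_single_agent}(3), $f$ is bounded and $f(t,\cdot)$ is upper semicontinuous on the compact set $\bar{\mathcal{O}}\times A$ for each $t$, and I want to use the stable convergence established in Lemma \ref{lemma3}. The standard approach is to approximate $f$ from above by a decreasing sequence of functions $f_j\in\mathcal{S}$ (each continuous in $(x,a)$, e.g. via inf-convolution in the space variables $f_j(t,x,a)=\sup_{(y,b)}\{f(t,y,b)-j(|x-y|+|a-b|)\}$, or its upper-semicontinuous-preserving analogue) with $f_j\downarrow f$ pointwise. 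For each fixed $j$, Lemma \ref{lemma3} gives convergence of $\int f_j\,m^n_t\,dt \to \int f_j\,m_t\,dt$; since $f\leq f_j$ one bounds $\limsup_n\int f\,m^n \leq \int f_j\,m$, and then letting $j\to\infty$ and using monotone/dominated convergence together with $m_t(\bar{\mathcal{O}}\times A)\leq 1$ from Lemma \ref{lemma1} yields $\limsup_n\int f\,m^n\leq\int f\,m$.

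For the terminal term $\int_{[0,T]\times\bar{\mathcal{O}}} g\,\mu(dt,dx)$, here $g$ is upper semicontinuous and bounded from above on $[0,T]\times\bar{\mathcal{O}}$, and $\mu^n\rightharpoonup\mu^\star$ weakly in $\mathcal{P}([0,T]\times\bar{\mathcal{O}})$. Upper semicontinuity of the integral against a weakly convergent sequence of probability measures is a classical Portmanteau-type fact: if $g$ is bounded above and upper semicontinuous, then $\limsup_n\int g\,d\mu^n\leq\int g\,d\mu^\star$. I would again approximate $g$ from above by bounded continuous functions $g_j\downarrow g$, apply weak convergence for each $g_j$, and pass to the limit. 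The mild technical point is that $g$ is only bounded from above, not below, so one must truncate from below (replace $g$ by $g\vee(-M)$) and argue that the truncation only decreases the integrals in the favorable direction, or directly invoke the Portmanteau theorem for upper semicontinuous functions bounded above. Combining the two upper-semicontinuity estimates gives $\limsup_n\Gamma(\mu^n,m^n)\leq\Gamma(\mu^\star,m^\star)$.

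The main obstacle I anticipate is handling the running-cost term cleanly: $f$ is merely upper semicontinuous in $(x,a)$ and measurable (not continuous) in $t$, so ordinary weak convergence of $m^n_t(dx,da)\,dt$ is not enough and one genuinely needs the stable convergence of Lemma \ref{lemma3} — this is exactly why that lemma, valid for the larger class $\mathcal{S}$ of functions merely measurable in $t$, was proved. The delicate step is constructing the approximating family $f_j$ so that each $f_j$ lies in $\mathcal{S}$ (bounded, measurable in $t$, continuous in $(x,a)$) and decreases to $f$; the inf-convolution in the $(x,a)$ variables only, performed pointwise in $t$, does this while preserving measurability in $t$. Everything else — compactness, the probability-measure bound on $\mu$, the subprobability bound on $m_t$ — is already in hand from the preceding results, so the proof reduces to assembling these upper-semicontinuity arguments and invoking compactness.
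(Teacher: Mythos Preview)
Your proposal is correct and follows essentially the same route as the paper: take a maximizing sequence, extract a convergent subsequence by compactness of $\mathcal{R}$ (Theorem \ref{R compact}), and verify upper semicontinuity of the two pieces of $\Gamma$ separately, using stable convergence (Lemma \ref{lemma3}) for the running term and Portmanteau for the terminal term. The only cosmetic difference is that the paper handles the upper semicontinuous $f$ by shifting to $f+\|f\|_\infty\geq 0$ and invoking Proposition 2.11 of \cite{jacod1981} directly (then subtracting back via the $L^1$ convergence of the total masses from Lemma \ref{lemma2bis}), whereas you spell out the underlying approximation by $f_j\in\mathcal{S}$ with $f_j\downarrow f$ via sup-convolution in the $(x,a)$ variables; these are the same argument at different levels of abstraction.
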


\begin{proof}
Let $(\mu^n, m^n)_{n\geq 1}\subset \mathcal{R}$ be a maximizing sequence, that is
$$\lim_{n\rightarrow\infty}\Gamma(\mu^n, m^n)=\sup_{(\mu, m)\in \mathcal{R}}\Gamma(\mu, m).$$
By Theorem \ref{R compact}, we get that up to a subsequence, $(\mu^n, m^n)_{n\geq 1}$ converges to some $(\mu^\star, m^\star)\in \mathcal{R}$. By Lemma \ref{lemma3}, $m_t^n(dx, da)dt\rightarrow m_t^\star(dx, da)dt$ in the stable topology. By Lemma \ref{lemma2bis} we have that $m_\cdot^n(\bar{\mathcal{O}}\times A)\rightarrow m_\cdot^\star(\bar{\mathcal{O}}\times A)$ in $L^1([0, T])$. By Proposition 2.11 in \cite{jacod1981}, 
$$\limsup_{n\rightarrow\infty} \int_0^T\int_{\bar{\mathcal{O}}\times A}(f(t, x, a)+\|f\|_\infty)m_t^n(dx, da)dt\leq \int_0^T\int_{\bar{\mathcal{O}}\times A}(f(t, x, a)+\|f\|_\infty)m_t^\star(dx, da)dt.$$
Now since $\|f\|_\infty\int_0^T m_t^n(\bar{\mathcal{O}}\times A)dt\rightarrow \|f\|_\infty\int_0^T m_t^\star(\bar{\mathcal{O}}\times A)dt$, we get
$$\limsup_{n\rightarrow\infty} \int_0^T\int_{\bar{\mathcal{O}}\times A}f(t, x, a)m_t^n(dx, da)dt\leq \int_0^T\int_{\bar{\mathcal{O}}\times A}f(t, x, a)m_t^\star(dx, da)dt.$$
On the other hand, since $\mu^n\rightharpoonup \mu^\star$ and $g$ is upper semicontinuous and bounded above, then Portmanteau theorem implies
$$\limsup_{n\rightarrow\infty}\int_{[0, T]\times \bar{\mathcal{O}}}g(t, x)\mu^n(dt, dx)\leq \int_{[0, T]\times \bar{\mathcal{O}}}g(t, x)\mu^\star(dt, dx).$$
We conclude that
$$\Gamma(\mu^\star, m^\star)= \sup_{(\mu, m)\in \mathcal{R}}\Gamma(\mu, m).$$
\end{proof}
\begin{remark}
In the case when there is no control and only optimal stopping, the above existence result holds under weaker assumptions on the coefficients and reward functions compared to \cite{bdt2020}.
\end{remark}

The following result is well known in the literature (see \cite{elkaroui1987,haussmann1990,stockbridge2012,lacker2015}) but we give a proof for sake of completeness. 

\begin{proposition}[\textit{Existence of a strict control LP solution}]\label{strict cont} 
Suppose that for all $(t, x)\in [0, T]\times \bar{\mathcal{O}}$, the subset
$$K(t, x):=\{(b(t, x, a), \sigma^2(t, x, a), z):a\in A, z\leq f(t, x, a)\}$$
of $\mathbb R \times\mathbb R_+ \times \mathbb R$ is convex, then there exists a strict control LP solution. 
\end{proposition}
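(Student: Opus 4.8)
The plan is to start from a maximizer $(\mu^\star,m^\star)\in\mathcal R$ of the linear programming problem, whose existence is guaranteed by Theorem \ref{single agent linear}, and to replace its relaxed control by a strict (Dirac) control without lowering the value of $\Gamma$. Disintegrating $m^\star$ as in the Remark following the definition of $\mathcal R$, write $m^\star_t(dx,da)=\nu_{t,x}(da)\,m^\star_t(dx,A)$ with $(t,x)\mapsto\nu_{t,x}$ a measurable $\mathcal P(A)$-valued kernel, and set
\[
\bar b(t,x)=\int_A b(t,x,a)\,\nu_{t,x}(da),\quad \overline{\sigma^2}(t,x)=\int_A \sigma^2(t,x,a)\,\nu_{t,x}(da),\quad \bar f(t,x)=\int_A f(t,x,a)\,\nu_{t,x}(da).
\]
The key observation is that both the constraint \eqref{constraint_fp} and the running part of $\Gamma$ see $m^\star$ only through these averages: since $\partial_t u+\mathcal L u=\partial_t u+b\,\partial_x u+\tfrac12\sigma^2\partial_{xx}u$ is affine in $(b,\sigma^2)$, replacing $\nu_{t,x}$ by any $A$-valued control reproducing $\bar b$ and $\overline{\sigma^2}$ leaves \eqref{constraint_fp} unchanged, while a control reproducing or exceeding $\bar f$ does not decrease $\Gamma$. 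Hence it suffices to find a measurable $\alpha:[0,T]\times\bar{\mathcal O}\to A$ with $b(t,x,\alpha(t,x))=\bar b(t,x)$, $\sigma^2(t,x,\alpha(t,x))=\overline{\sigma^2}(t,x)$ and $f(t,x,\alpha(t,x))\ge\bar f(t,x)$.

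Pointwise existence of such a point follows from the convexity hypothesis once we upgrade it to \emph{closed} convexity. Indeed, by Assumption \ref{as_single_agent} the maps $b(t,x,\cdot),\sigma^2(t,x,\cdot)$ are continuous and $f(t,x,\cdot)$ is upper semicontinuous on the compact set $A$, and a routine compactness argument shows $K(t,x)$ is closed: if $(\beta_n,s_n,z_n)\to(\beta,s,z)$ with witnesses $a_n\to a$, continuity gives $\beta=b(t,x,a)$, $s=\sigma^2(t,x,a)$ and upper semicontinuity gives $z\le f(t,x,a)$. Since $(b,\sigma^2,f)(t,x,a)\in K(t,x)$ for every $a\in A$, the image of $\nu_{t,x}$ under $a\mapsto(b,\sigma^2,f)(t,x,a)$ is a probability measure supported in $K(t,x)$ with barycenter $(\bar b(t,x),\overline{\sigma^2}(t,x),\bar f(t,x))$; as $K(t,x)$ is closed and convex, a Hahn--Banach separation argument places this barycenter in $K(t,x)$. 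By the very definition of $K(t,x)$ this yields, for each $(t,x)$, some $a^*\in A$ with $b(t,x,a^*)=\bar b(t,x)$, $\sigma^2(t,x,a^*)=\overline{\sigma^2}(t,x)$ and $\bar f(t,x)\le f(t,x,a^*)$. Thus the correspondence
\[
\Phi(t,x)=\{a\in A:\ b(t,x,a)=\bar b(t,x),\ \sigma^2(t,x,a)=\overline{\sigma^2}(t,x),\ f(t,x,a)\ge\bar f(t,x)\}
\]
has nonempty sections, and these are closed (hence compact) by the same semicontinuity.

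The remaining step, and the main obstacle, is to extract a \emph{measurable} selector $\alpha(t,x)\in\Phi(t,x)$: pointwise solvability is immediate, but joint measurability is not. Here I would invoke a measurable selection theorem (Filippov's implicit function lemma, or Kuratowski--Ryll-Nardzewski), after checking that the averaged coefficients $\bar b,\overline{\sigma^2},\bar f$ are Borel in $(t,x)$ (they are integrals of jointly measurable functions against the measurable kernel $\nu_{t,x}$) and that $b,\sigma^2,f$ are jointly measurable, so that the graph of $\Phi$ is a measurable set with nonempty closed sections, which furnishes a measurable (at worst universally measurable) selector $\alpha$. Setting $\tilde m_t(dx,da):=\delta_{\alpha(t,x)}(da)\,m^\star_t(dx,A)$, the computation sketched in the first paragraph gives $(\mu^\star,\tilde m)\in\mathcal R$ and $\Gamma(\mu^\star,\tilde m)\ge\Gamma(\mu^\star,m^\star)=V^{LP}$; since $(\mu^\star,\tilde m)\in\mathcal R$ also forces $\Gamma(\mu^\star,\tilde m)\le V^{LP}$, equality holds and $(\mu^\star,\tilde m)$ is the desired strict control LP solution. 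Besides the selection theorem, the only other place where Assumption \ref{as_single_agent} is genuinely used is the closedness of $K(t,x)$, which is precisely what makes the barycenter argument go through.
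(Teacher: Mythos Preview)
Your proof is correct and follows essentially the same approach as the paper: start from an LP maximizer, disintegrate, show the averaged triple lies in the closed convex set $K(t,x)$, then select measurably. Where you sketch the closedness of $K(t,x)$ directly and invoke Hahn--Banach for the barycenter, the paper cites Proposition~3.5 of \cite{haussmann1990} and Theorem~I.6.13 of \cite{warga1972} respectively; where you appeal to Filippov or Kuratowski--Ryll-Nardzewski, the paper uses Theorem~A.9 of \cite{haussmann1990} for the measurable selector. These are the same ideas packaged differently, and your hedging to ``at worst universally measurable'' is unnecessary since the cited selection result delivers a Borel selector.
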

\begin{proof}
Let $(\mu^\star, m^\star)$ be a maximizer of the LP problem which exists by Theorem \ref{single agent linear}. Let $\nu_{t, x}^\star$ such that 
$$m_t^\star(dx, da)dt=\nu_{t, x}^\star(da)m_t^\star(dx, A)dt.$$ 
Let $(t, x, a)\in [0, T]\times \bar{\mathcal{O}}\times A$ be arbitrary. We have that
$$\left(b(t, x, a), \sigma^2(t, x, a), f(t, x, a) \right)\in K(t, x).$$
As in Proposition 3.5 of \cite{haussmann1990} one can prove that $K(t, x)$ is closed. Now, by Theorem I.6.13 (p. 145) in \cite{warga1972},
$$\left(\int_A b(t, x, a)\nu_{t, x}^\star(da), \int_A\sigma^2(t, x, a)\nu_{t, x}^\star(da), \int_A f(t, x, a)\nu_{t, x}^\star(da) \right)\in K(t, x).$$
By definition of $K(t, x)$ and Theorem A.9 in \cite{haussmann1990} there exists a measurable function $(t, x)\mapsto \alpha(t, x)$ such that
$$\int_A b(t, x, a)\nu_{t, x}^\star(da)=b(t, x, \alpha(t, x)), \quad \int_A\sigma^2(t, x, a)\nu_{t, x}^\star(da)=\sigma^2(t, x, \alpha(t, x)),$$
$$\int_A f(t, x, a)\nu_{t, x}^\star(da)\leq f(t, x, \alpha(t, x)).$$
Define $\bar m_t(dx, da)=\delta_{\alpha(t, x)}(da)m_t^\star(dx, A)$ for each $t\in [0, T]$. We conclude that $(\mu^\star, \bar m)\in \mathcal{R}$ and $\Gamma(\mu^\star, m^\star)\leq \Gamma(\mu^\star, \bar m)$.
\end{proof}

\subsection{Relation with the \textit{weak formulation}}

Following the literature on the linear programming formulation of stochastic control problems for Markov processes, we now prove prove that solving the linear program allows to construct a solution to the weak problem. The terminology \textit{weak} is introduced in analogy to the notion of \textit{weak solution} of an SDE, the idea being to consider the probabilistic set-up as part of the solution. The weak formulation is of two types, depending on the type of control, either strict control (valued in $A$) or relaxed control (valued in $\mathcal{P}(A)$).

\begin{assumption}\label{as exit time}
\item We assume here that one of the following statements holds:
\begin{enumerate}[(1)]
\item \textit{Unattainable boundary}: $b$, $\sigma$ and $\mathcal{O}$ are such that, for every filtered probability space $(\Omega, \mathcal{F}, \mathbb F, \mathbb P)$, $\mathbb F$-stopping time $\tau$ such that $\tau\leq T$ $\mathbb P$-a.s., $\mathbb{F}$-progressively measurable process $\nu$ with values in $\mathcal{P}(A)$, continuous $\mathbb F$-martingale measure $M$ such that $M^\tau$ has intensity $\nu_t(da)\mathds{1}_{t\leq\tau}dt$, and $\mathbb F$-adapted process $X$ such that
$$dX_t= \int_A b(t, X_t, a)\nu_t(da)dt + \int_A \sigma(t, X_t, a)M(dt, da),\quad t\leq \tau, \quad \mathbb P \circ X_0^{-1}= m_0^*,$$
we have
$$\mathbb P \left(\tau_\mathcal{O}^{\tilde X}\geq T \right)=1,$$
where $\tilde X=X_{\cdot\wedge \tau}$.
\item \textit{Attainable boundary}: $\sigma$ does not depend on the control $a$ and there exists $c_\sigma>0$ such that for all $(t,x)\in [0, T]\times \mathbb R$, $\sigma(t,x)\geq c_\sigma$.
\end{enumerate}
\end{assumption}

We now give the weak formulations (with strict optimal stopping/control, resp.~with strict optimal stopping and relaxed control) of the single agent problem.

\begin{definition}[\textit{Weak formulation with strict optimal stopping/control}]
Define $\mathcal{U}^W$ as the set of tuples $U=(\Omega, \mathcal F, \mathbb F, \mathbb P, W, \alpha, \tau, X)$ such that $(\Omega, \mathcal{F}, \mathbb F, \mathbb P)$ is a filtered probability space, $W$ is an $\mathbb F$-Brownian motion, $\alpha$ is an $\mathbb{F}$-progressively measurable process with values in $A$, $\tau$ is an $\mathbb F$-stopping time such that $\tau\leq T$ $\mathbb P$-a.s., $X$ is an $\mathbb F$-adapted process such that
$$dX_t= b(t, X_t, \alpha_t)dt + \sigma(t, X_t, \alpha_t)dW_t,\quad t\leq \tau, \quad \mathbb P \circ X_0^{-1}= m_0^*.$$
Let $\mathcal{H}^W:\mathcal{U}^W\rightarrow \mathbb R$ be defined by
$$\mathcal{H}^W(U)=\mathbb{E}^{\mathbb P}\left[\int_{0}^{\tau \wedge \tau_{\mathcal{O}}^{X}} f\left(t, X_{t}, \alpha_t \right) dt  + g\left(\tau\wedge \tau_{\mathcal{O}}^{X}, X_{\tau\wedge \tau_{\mathcal{O}}^{X}}\right)\right]$$
for all $U=(\Omega, \mathcal F, \mathbb F, \mathbb P, W, \alpha, \tau, X)\in\mathcal{U}^W$. The value for the weak formulation with strict control/optimal stopping is defined by 
\begin{equation}\label{value weak strict}
V^W := \sup_{U\in \mathcal{U}^W} \mathcal{H}^W(U).    
\end{equation}
Moreover, $U^\star \in \mathcal{U}^{W}$ is a solution of the \textit{weak problem with strict optimal stopping/control} if 
$$\mathcal{H}^{W}(U^\star)=V^{W}.$$
\end{definition}

\begin{definition}[\textit{Weak formulation with strict optimal stopping and relaxed control}]
Define $\mathcal{U}^R$ as the set of tuples $U=(\Omega, \mathcal F, \mathbb F, \mathbb P, M, \nu, \tau, X)$ such that $(\Omega, \mathcal{F}, \mathbb F, \mathbb P)$ is a filtered probability space, $\tau$ is an $\mathbb F$-stopping time such that $\tau\leq T$ $\mathbb P$-a.s., $\nu$ is an $\mathbb{F}$-progressively measurable process with values in $\mathcal{P}(A)$, $M$ is a continuous $\mathbb F$-martingale measure such that $M^\tau$ has intensity $\nu_t(da)\mathds{1}_{t\leq\tau}dt$, $X$ is an $\mathbb F$-adapted process such that
$$dX_t= \int_A b(t, X_t, a)\nu_t(da)dt + \int_A \sigma(t, X_t, a)M(dt, da),\quad t\leq \tau, \quad \mathbb P \circ X_0^{-1}= m_0^*.$$
Let $\mathcal{H}^R:\mathcal{U}^R\rightarrow \mathbb R$ defined by
$$\mathcal{H}^R(U)=\mathbb{E}^{\mathbb P}\left[\int_{0}^{\tau \wedge \tau_{\mathcal{O}}^{X}} \int_A f\left(t, X_{t}, a \right) \nu_t(da)dt  + g\left(\tau\wedge \tau_{\mathcal{O}}^{X}, X_{\tau\wedge \tau_{\mathcal{O}}^{X}}\right)\right]$$
for all $U=(\Omega, \mathcal F, \mathbb F, \mathbb P, M, \nu, \tau, X)\in\mathcal{U}^R$. The value for the weak formulation with strict optimal stopping and relaxed control is defined by 
\begin{equation}\label{value weak relaxed}
V^R := \sup_{U\in \mathcal{U}^R} \mathcal{H}^R(U).    
\end{equation}
Moreover, $U^\star \in \mathcal{U}^{R}$ is a solution of the \textit{weak problem with strict optimal stopping and relaxed control} if 
$$\mathcal{H}^{R}(U^\star)=V^{R}.$$
\end{definition}

\begin{theorem}[\textit{Existence of a weak solution with Markovian relaxed control}]\label{single agent weak}
Suppose that Assumption \ref{as exit time} is also in force. Then there exists a solution to the weak problem with Markovian relaxed control.
\end{theorem}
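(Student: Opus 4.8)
The plan is to build an optimal weak solution directly out of the linear programming optimizer, and then to certify its optimality by comparing the two values through the admissibility result. Concretely, by Theorem \ref{single agent linear} there exists a maximizer $(\mu^\star, m^\star)\in\mathcal{R}$ with $\Gamma(\mu^\star, m^\star)=V^{LP}$. Using the disintegration of $m^\star$ recalled above, write $m^\star_t(dx,da)dt=\nu_{t,x}(da)\,m^\star_t(dx,A)\,dt$ for a measurable kernel $(t,x)\mapsto\nu_{t,x}\in\mathcal P(A)$; this is the candidate Markovian relaxed control.

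The heart of the argument is the construction of a probabilistic realization of $(\mu^\star, m^\star)$, i.e.\ a superposition principle for the controlled/stopped martingale problem, as developed in the Appendix (extending the results of Stockbridge to our less regular coefficients). The plan is to invoke this correspondence to produce a tuple $U^\star=(\Omega,\mathcal F,\mathbb F,\mathbb P,M,\nu,\tau,X)\in\mathcal{U}^R$, where $M$ is a continuous $\mathbb F$-martingale measure with intensity $\nu_{t,X_t}(da)\mathds 1_{t\le\tau}dt$, the process $X$ solves the relaxed dynamics with the Markovian control $\nu_t=\nu_{t,X_t}$, and whose induced occupation measures are exactly $(\mu^\star, m^\star)$. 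This is precisely where Assumption \ref{as exit time} is needed: under the \emph{unattainable boundary} case (1) one has $\tau_{\mathcal{O}}^{X}\ge T$ $\mathbb P$-a.s., so exit occurs only through the discretionary time $\tau$ and the boundary plays no role in matching $\mu^\star$; under the \emph{attainable boundary} case (2), the uniform ellipticity $\sigma\ge c_\sigma>0$ together with the control-independence of $\sigma$ guarantee that the stopped martingale problem is well posed up to the boundary and that $\mu^\star$ genuinely represents the joint law of $(\tau\wedge\tau_{\mathcal{O}}^{X},X_{\tau\wedge\tau_{\mathcal{O}}^{X}})$.

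With $U^\star$ in hand, optimality follows from a two-sided value comparison. By Proposition \ref{SDE to measures}, every $U\in\mathcal{U}^R$ induces a pair $(\mu_U,m_U)\in\mathcal{R}$, and directly from the definitions of $\mathcal H^R$ and $\Gamma$ one has $\mathcal H^R(U)=\Gamma(\mu_U,m_U)$; taking the supremum over $\mathcal{U}^R$ yields $V^R\le V^{LP}$. Conversely, the constructed $U^\star$ has induced measures $(\mu^\star,m^\star)$, whence $\mathcal H^R(U^\star)=\Gamma(\mu^\star,m^\star)=V^{LP}\ge V^R$. Combining the two inequalities gives $\mathcal H^R(U^\star)=V^R$, so $U^\star$ is a solution of the weak problem with Markovian relaxed control. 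I expect the main obstacle to be the second step: building an \emph{adapted} process and a martingale measure that reproduce the prescribed occupation measures while faithfully encoding the absorption at $\partial\mathcal{O}$. The two regimes of Assumption \ref{as exit time} are exactly what make this superposition tractable, the ellipticity controlling the boundary behaviour and the control-independence of $\sigma$ ruling out degeneracies in the intensity of $M$ near $\partial\mathcal{O}$.
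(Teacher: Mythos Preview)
Your proposal is correct and follows essentially the same route as the paper: take an LP maximizer $(\mu^\star,m^\star)$, disintegrate $m^\star$ to obtain the Markovian kernel $\nu_{t,x}$, invoke the SDE representation theorem from the Appendix (which indeed requires Assumption~\ref{as exit time}) to build $U^\star\in\mathcal U^R$ with occupation measures $(\mu^\star,m^\star)$, and conclude optimality via Proposition~\ref{SDE to measures}. The only cosmetic difference is that the paper compares $U^\star$ directly against an arbitrary competitor $U'$ rather than passing through the sandwich $V^R\le V^{LP}=\mathcal H^R(U^\star)\le V^R$, but these are the same argument.
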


\begin{proof}
Let $(\mu^\star, m^\star)$ be a maximizer of the LP problem which exists by Theorem \ref{single agent linear}. Let $\nu_{t, x}^\star$ such that 
$$m_t^\star(dx, da)dt=\nu_{t, x}^\star(da)m_t^\star(dx, A)dt.$$
By Theorem \ref{SDE rep}, there exist a filtered probability space $(\Omega, \mathcal{F}, \mathbb F, \mathbb P)$, an $\mathbb F$-adapted process $X$, an $\mathbb F$-stopping time $\tau$ such that $\tau\leq \tau_{\mathcal{O}}^X\wedge T$ $\mathbb P$-a.s., a continuous $\mathbb F$-martingale measure $M$ with intensity $\nu_{t, X_t}^\star(da)\mathds{1}_{t\leq \tau}dt$, such that
$$X_{t\wedge \tau}= \int_0^{t\wedge \tau}\int_A b(t, X_t, a)\nu_{t, X_t}(da)dt + \int_0^{t\wedge \tau}\int_A \sigma(t, X_t, a)M(dt, da), \quad \mathbb P \circ X_0^{-1}= m_0^*,$$
$$\mu^\star =\mathbb P \circ (\tau, X_\tau)^{-1},$$
$$m_t^\star(B\times C)= \mathbb E^{\mathbb P}\left[ \mathds{1}_B(X_t)\nu_{t, X_t}^\star(C) \mathds{1}_{t\leq \tau}\right], \quad B\in \mathcal{B}(\bar{\mathcal{O}}), \quad C\in \mathcal{B}(A), \quad t-a.e.$$

Let $(\Omega', \mathcal{F}', \mathbb F', \mathbb P')$ be another filtered probability space, $\tau'$ an $\mathbb F'$-stopping time such that $\tau'\leq T$ $\mathbb P'$-a.s., $\nu'$ an $\mathbb F'$-progressively measurable process with values in $\mathcal{P}(A)$, $M'$ a continuous $\mathbb F'$-martingale measure such that $(M')^\tau$ has intensity $\nu_t'(da)\mathds{1}_{t\leq\tau'}dt$, $X'$ an $\mathbb F'$-adapted process such that
$$dX_t'= \int_A b(t, X_t', a)\nu_t'(da)dt + \int_A\sigma(t, X_t', a)M'(dt, da),\quad t\leq \tau', \quad \mathbb P'\circ (X_0')^{-1}= m_0^*.$$
Define for $t\in [0, T]$
$$m_t'(B \times C)=\mathbb E^{\mathbb P'}\left[\mathds 1_B(X'_t) \nu_t'(C) \mathds 1_{t\leq \tau'\wedge \tau_{\mathcal{O}}^{X'}}\right], \quad B\in \mathcal{B}(\bar{\mathcal{O}}), \quad C\in \mathcal{B}(A),$$
$$\mu'=\mathbb P' \circ \left(\tau'\wedge \tau_{\mathcal{O}}^{X'}, X'_{\tau'\wedge \tau_{\mathcal{O}}^{X'}}\right)^{-1}.$$
By Proposition \ref{SDE to measures}, $(\mu', m')\in \mathcal{R}$. Since $(\mu^\star, m^\star)$ is a maximizer of the LP problem, $\Gamma(\mu', m')\leq \Gamma(\mu^\star, m^\star)$, which means
\begin{align*}
&\mathbb{E}^{\mathbb P'}\left[\int_{0}^{\tau'\wedge \tau_{\mathcal{O}}^{X'} }\int_A f\left(t, X'_{t},a\right)\nu_t'(da)dt  + g\left(\tau'\wedge \tau_{\mathcal{O}}^{X'}, X'_{\tau'\wedge \tau_{\mathcal{O}}^{X'}}\right)\right] \\
&\leq \mathbb{E}^{\mathbb P}\left[\int_{0}^{\tau\wedge \tau_{\mathcal{O}}^{X}} \int_A f\left(t, X_{t},a\right)\nu_{t,X_t}(da)dt  + g\left(\tau\wedge \tau_{\mathcal{O}}^{X}, X_{\tau\wedge \tau_{\mathcal{O}}^{X}}\right)\right].
\end{align*}
\end{proof}

\begin{corollary}[\textit{Existence of a weak solution with markovian strict control}]\label{weak sol strict cont} 
Suppose that for all $(t, x)\in [0, T]\times \bar{\mathcal{O}}$, the subset
$$K(t, x):=\{(b(t, x, a), \sigma^2(t, x, a), z):a\in A, z\leq f(t, x, a)\}$$
of $\mathbb R \times\mathbb R_+ \times \mathbb R$ is convex and Assumption \ref{as exit time} is in force. Then there exists a weak solution with markovian strict control.
\end{corollary}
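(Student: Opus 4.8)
The plan is to combine the strict-control LP solution produced by Proposition \ref{strict cont} with the weak-solution construction carried out in the proof of Theorem \ref{single agent weak}. Under the convexity hypothesis on $K(t,x)$, Proposition \ref{strict cont} provides a maximizer $(\mu^\star, \bar m)\in\mathcal{R}$ of the linear program of the special form $\bar m_t(dx,da)=\delta_{\alpha(t,x)}(da)\,m_t^\star(dx,A)$ for a measurable map $\alpha:[0,T]\times\bar{\mathcal{O}}\to A$. In particular the disintegration kernel associated with $\bar m$ is the Dirac kernel $\nu_{t,x}^\star=\delta_{\alpha(t,x)}$, and $\Gamma(\mu^\star,\bar m)=V^{LP}$.

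First I would feed this strict-control LP solution into Theorem \ref{SDE rep}, exactly as in the proof of Theorem \ref{single agent weak} (this is where Assumption \ref{as exit time} enters, guaranteeing $\tau\leq\tau_{\mathcal{O}}^X\wedge T$), to obtain a filtered probability space $(\Omega,\mathcal{F},\mathbb{F},\mathbb{P})$, an $\mathbb{F}$-adapted process $X$, an $\mathbb{F}$-stopping time $\tau$ and a continuous $\mathbb{F}$-martingale measure $M$ with intensity $\delta_{\alpha(t,X_t)}(da)\mathds{1}_{t\leq\tau}\,dt$, whose occupation measures reproduce $(\mu^\star,\bar m)$. The key observation is that, because this intensity is a Dirac mass, the martingale-measure integral collapses to an ordinary stochastic integral: the continuous martingale $N_t:=\int_0^t\int_A\sigma(s,X_s,a)\,M(ds,da)$ has quadratic variation $\langle N\rangle_t=\int_0^t\sigma^2(s,X_s,\alpha(s,X_s))\,ds$, so by martingale representation there is a Brownian motion $W$ with $N_t=\int_0^t\sigma(s,X_s,\alpha(s,X_s))\,dW_s$. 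Setting $\alpha_t:=\alpha(t,X_t)$, the state equation becomes $dX_t=b(t,X_t,\alpha_t)\,dt+\sigma(t,X_t,\alpha_t)\,dW_t$ on $\{t\leq\tau\}$, so the tuple $U^\star=(\Omega,\mathcal{F},\mathbb{F},\mathbb{P},W,\alpha,\tau,X)$ belongs to $\mathcal{U}^W$ with a control $\alpha_t=\alpha(t,X_t)$ that is Markovian and strict.

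It then remains to establish optimality. For an arbitrary competitor $U'=(\Omega',\mathcal{F}',\mathbb{F}',\mathbb{P}',W',\alpha',\tau',X')\in\mathcal{U}^W$, I would regard the strict control $\alpha'$ as the Dirac relaxed control $\delta_{\alpha'_t}$ and form the occupation measures $(\mu',m')$ as in Proposition \ref{SDE to measures}, which yields $(\mu',m')\in\mathcal{R}$. A direct application of Fubini's theorem identifies $\mathcal{H}^W(U')=\Gamma(\mu',m')$ and likewise $\mathcal{H}^W(U^\star)=\Gamma(\mu^\star,\bar m)$. Since $(\mu^\star,\bar m)$ maximizes $\Gamma$ over $\mathcal{R}$, we obtain $\mathcal{H}^W(U')=\Gamma(\mu',m')\leq\Gamma(\mu^\star,\bar m)=\mathcal{H}^W(U^\star)$; taking the supremum over $U'\in\mathcal{U}^W$ gives $V^W=\mathcal{H}^W(U^\star)$, so $U^\star$ solves the weak problem with Markovian strict control.

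The hard part will be the reduction of the martingale measure to a single scalar Brownian integral, i.e.\ verifying that the Dirac intensity forces $M$ to be driven by one Brownian motion. This must be handled according to the two cases of Assumption \ref{as exit time}: under case (2), where $\sigma\geq c_\sigma>0$, one simply sets $W_t=\int_0^t\sigma(s,X_s,\alpha_s)^{-1}\,dN_s$ on the given space; under case (1), where $\sigma$ may vanish, a standard enlargement of the probability space is needed to produce $W$, which is harmless since weak solutions allow an arbitrary probabilistic set-up. The remaining steps are essentially bookkeeping, repeating the argument of Theorem \ref{single agent weak} with the Dirac kernel $\delta_{\alpha(t,x)}$ in place of a general $\nu_{t,x}^\star$.
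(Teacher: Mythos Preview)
Your proposal is correct and follows essentially the same route as the paper, which simply invokes Proposition~\ref{strict cont}, Theorem~\ref{SDE rep}, and the argument of Theorem~\ref{single agent weak}. Note, however, that what you flag as ``the hard part'' --- reducing the martingale measure to a Brownian motion when the kernel is a Dirac --- is already contained in the final clause of Theorem~\ref{SDE rep} (proved there via Theorem~\ref{EK}), so no separate quadratic-variation or case analysis on Assumption~\ref{as exit time} is needed; you can simply cite that clause.
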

\begin{proof}
The proof follows by Proposition \ref{strict cont}, Theorem \ref{SDE rep} and the same argument as in Theorem \ref{single agent weak}.
\end{proof}

\subsection{Equivalence of different formulations of the controlled/stopped diffusion processes problem and relation with PDEs}

{\color{black}
In this part, we aim to show the equivalence between the different formulations. The values for the linear programming and weak formulations are already defined, so we define now the value for the strong formulation.
\begin{definition}[\textit{Strong formulation}]
Let $t\in [0, T]$, we denote by $\mathbb{F}^t$ the filtration given by $\mathcal{F}_{s}^t=\sigma\left(W_{r}-W_t, t \leq r \leq s\right) \vee \mathcal{N}$, $s\geq t$. Denote by $\mathcal{T}_t$ the set of stopping times with respect to this filtration with values in $[t, T]$. Let $\mathbb{A}_t$ be the set of $\mathbb{F}^t$-progressively measurable control processes taking values in $A$. The value function for the strong formulation is given by 
\begin{equation}\label{value_fct}
v(t, x)=\sup _{\tau \in \mathcal{T}_t, \alpha \in \mathbb{A}_t} \mathbb{E}\left[\int_{t}^{\tau \wedge \tau_{\mathcal{O}}^{t, x, \alpha}} f\left(s, X_{s}^{t, x, \alpha}, \alpha_s\right) ds + g\left(\tau \wedge \tau_{\mathcal{O}}^{t, x, \alpha}, X_{\tau \wedge \tau_{\mathcal{O}}^{t, x, \alpha}}^{t, x, \alpha}\right)\right],
\end{equation}
with $(t, x) \in[0, T] \times \mathbb{R}$, $\tau_{\mathcal{O}}^{t, x, \alpha} :=\inf \left\{s \geq t : \, X_{s}^{t, x, \alpha} \notin \mathcal{O}\right\}$ and $(X^{t, x, \alpha}_s)_{s\in [t, T]}$ is the unique  strong solution of the following stochastic differential equation:
$$X^{t, x, \alpha}_{s}=x + \int_t^s b\left(r, X^{t, x, \alpha}_{r},\alpha_r \right) dr+\int_t^s\sigma\left(r, X^{t, x, \alpha}_{r}, \alpha_r\right) dW_{r}, \quad s\in [t, T].$$
We also define 
\begin{equation}\label{value strong}
V^S:=\int_{\mathcal{O}}v(0, x)m_0^*(dx),    
\end{equation}
which represents the value for the \textit{strong formulation}.
\end{definition}}

\paragraph{The case $\mathcal{O}=\mathbb R$.}

We show that the values at time zero  associated to the different formulations (LP, weak and strong) are equal. {\color{black}In this paragraph, instead of Assumption \ref{as_single_agent}, we impose the following assumption:}
{\color{black}
\begin{assumption}\label{as values}
Suppose $\mathcal{O}=\mathbb R$, and let the following conditions hold true:
\begin{enumerate}[(1)]
\item The coefficients $b:[0, T]\times \mathbb R\times A\rightarrow \mathbb R$ and $\sigma:[0, T]\times \mathbb R\times A\rightarrow \mathbb R_+$ are measurable and Lipschitz in $x$ uniformly on $(t, a)$.
\item The functions $b$, $\sigma$ and $f$ are in $\mathcal{S}$. 
\item The final payoff function $g$ is bounded, measurable and continuous in $x$ for each $t$. 
\end{enumerate}
\end{assumption}}
We give now the definition of the \textit{strong formulation} of the mixed stochastic control/optimal stopping problem.

\begin{theorem}[\textit{Equality of the values of the different formulations}]\label{eq values}
Let Assumption \ref{as values} hold true. Then the values associated to the formulations \eqref{value lp}, \eqref{value weak strict}, \eqref{value weak relaxed} and \eqref{value strong} are equal:
$$V^{S}=V^{W}=V^{R}=V^{LP}.$$
\end{theorem}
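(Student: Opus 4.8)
The plan is to prove the chain of equalities $V^S = V^W = V^R = V^{LP}$ by establishing inequalities that close into a cycle. The natural ordering is to show $V^S \le V^W \le V^R \le V^{LP}$ and then close the loop with $V^{LP} \le V^S$. Each of the first three inequalities should follow from a containment of admissible sets: every strong solution, built from the canonical filtration $\mathbb{F}^W$ of a Brownian motion, is in particular a weak solution, so $V^S \le V^W$; every strict-control weak solution induces a relaxed-control weak solution by taking the Dirac relaxed control $\nu_t = \delta_{\alpha_t}$ and realizing the martingale measure $M$ from the Brownian motion via $M(dt,da) = dW_t\,\delta_{\alpha_t}(da)$, which gives $\mathcal{H}^W(U) = \mathcal{H}^R(U')$ and hence $V^W \le V^R$; and every relaxed weak solution $U \in \mathcal{U}^R$ produces an admissible pair $(\mu',m') \in \mathcal{R}$ with $\mathcal{H}^R(U) = \Gamma(\mu',m')$ by Proposition \ref{SDE to measures}, giving $V^R \le V^{LP}$. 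Since $\mathcal{O} = \mathbb{R}$ under Assumption \ref{as values}, the exit time $\tau_{\mathcal{O}}^X = \infty$ plays no role, which removes the boundary subtleties that would otherwise require Assumption \ref{as exit time}.

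The substantive inequality is the closing one, $V^{LP} \le V^S$. First I would reduce to the strong formulation via dynamic programming: the function $v(t,x)$ defined in \eqref{value_fct} is the value of a standard mixed control/optimal stopping problem, and under Assumption \ref{as values} it should be a (viscosity, or suitably weak) solution of the associated variational inequality, equivalently satisfying the dynamic programming inequalities
\begin{equation*}
\frac{\partial v}{\partial t}(t,x) + \mathcal{L}^a v(t,x) + f(t,x,a) \le 0 \quad \text{for all } a \in A, \qquad v(t,x) \ge g(t,x),
\end{equation*}
in an appropriate sense, with $v(T,\cdot) = g(T,\cdot)$. The idea is then to integrate these inequalities against an arbitrary admissible pair $(\mu,m) \in \mathcal{R}$: using $v$ (or a smooth approximation $v_\varepsilon \in C^{1,2}_b$) as a test function in the constraint \eqref{constraint_fp} and combining with the two DPP inequalities should yield
\begin{equation*}
\Gamma(\mu,m) = \int_0^T\!\!\int f\, dm_t\, dt + \int g\, d\mu \le \int_{\mathbb{R}} v(0,x)\, m_0^*(dx) = V^S.
\end{equation*}
Taking the supremum over $(\mu,m) \in \mathcal{R}$ gives $V^{LP} \le V^S$ and closes the cycle.

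The hard part will be the regularity and approximation in this last step: $v$ is generally not $C^{1,2}_b$, so it cannot be used directly as a test function in \eqref{constraint_fp}, and $g$ is only continuous in $x$ (not smooth), so the obstacle inequality $v \ge g$ must be handled carefully where the measure $\mu$ concentrates. I expect to need a mollification argument producing $v_\varepsilon \in C^{1,2}_b$ together with controlled error terms, or alternatively to invoke the equivalence between the LP formulation and the controlled/stopped martingale problem established in the Appendix (the results extending \cite{stockbridge1998,stockbridge2002,stockbridge2017}), which would let me bypass the PDE and instead realize any $(\mu,m) \in \mathcal{R}$ as the occupation measures of an explicit relaxed weak solution. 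In the latter route the clean statement $V^{LP} \le V^R$ comes for free, and one only needs $V^R \le V^S$, which reduces to showing that a relaxed control can be approximated in reward by strict Markovian controls admissible in the strong problem — the chattering lemma or the convexity argument of Proposition \ref{strict cont} being the relevant tool. I would favour the martingale-problem route since it keeps all estimates within the measure-theoretic framework already developed in Section \ref{sec 2.1} and avoids delicate viscosity-solution verification under the weak regularity of $f$ and $g$.
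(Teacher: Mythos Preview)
Your decomposition into the chain $V^S \le V^W \le V^R \le V^{LP}$ plus a closing inequality is correct, and your argument for $V^R = V^{LP}$ via Proposition~\ref{SDE to measures} and the Appendix representation result (Theorem~\ref{SDE rep}) matches the paper's Step~1 exactly.

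For the remaining equalities $V^S = V^W = V^R$, however, the paper takes a different route from both of your proposals. Rather than proving anything directly, it invokes Theorem~4.5 of El Karoui and Tan \cite{elkaroui2015}, which establishes precisely this equivalence in a general path-dependent setting via compactification and measurable selection on a canonical space. The paper's contribution in this step is just to embed the mixed stopping/control problem into their framework: it augments the state with the running reward $Z_t = \int_0^t f(s,X_s,\alpha_s)\,ds$, checks that the augmented SDE has strong solutions, defines the path functional $\Phi(t,\mathbf{y}) = \mathbf{y}^2_{t\wedge T} + g(t\wedge T, \mathbf{y}^1_{t\wedge T})$, and verifies the continuity and boundedness hypotheses of the cited theorem.

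Your route~(b) (chattering lemma) is morally what underlies El Karoui--Tan, so it is not wrong, but carrying it out under Assumption~\ref{as values} (coefficients only measurable in $t$) is non-trivial and would essentially require reproducing a substantial portion of \cite{elkaroui2015}; note also that Proposition~\ref{strict cont} is not available here since no convexity of $K(t,x)$ is assumed. Your route~(a) (viscosity/DPP with mollification of $v$) is more delicate still: with $g$ merely measurable in $t$ and $f$ only in $\mathcal{S}$, standard viscosity theory does not apply cleanly, and controlling the mollification errors against an arbitrary $(\mu,m)\in\mathcal{R}$ would require estimates the paper never develops. The paper avoids both difficulties by citing out.
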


\begin{proof}
The proof is organized in two steps.\\

\textit{Step 1.} We first show that $V^{R}=V^{LP}$.\\
Note that since $\mathcal{O}=\mathbb R$, Assumption \ref{as exit time} is satisfied. By Proposition \ref{SDE to measures}, for each $U\in \mathcal{U}^R$, there exists $(\mu, m)\in \mathcal{R}$ such that $\mathcal{H}^R(U)=\Gamma (\mu, m)$. Therefore, we get
$$V^R\leq V^{LP}.$$
Moreover, by Theorem \ref{SDE rep}, for each $(\mu, m)\in \mathcal{R}$ there exists  $U\in \mathcal{U}^R$ satisfying $\Gamma (\mu, m)=\mathcal{H}^R(U)$, leading to
$$V^{LP}\leq V^R.$$

\textit{Step 2.} We prove that $V^{S}=V^{W}=V^{R}$.\\ This result follows by Theorem 4.5. in \cite{elkaroui2015}, which uses an equivalent formulation (see p. 18 in \cite{elkaroui2015}), consisting in fixing a canonical space\footnote{The canonical space used in \cite{elkaroui2015} is given by $[0, \infty]\times \mathbb D(\mathbb R_+, \mathbb R^d)\times \mathbb M$, where $\mathbb D(\mathbb R_+, \mathbb R^d)$ is the set of càdlàg paths from $\mathbb R_+$ to $\mathbb R^d$ and $\mathbb M$ is the set of all $\sigma$-finite (Borel) measures on $\mathbb R_+\times A$ whose marginal distribution on $\mathbb R_+$ is the Lebesgue measure. The first space in the product is for the stopping time, the second for the state process, and the third for the relaxed control. Since our state process is continuous, we can replace $\mathbb D(\mathbb R_+, \mathbb R^d)$ by $\mathbb C(\mathbb R_+, \mathbb R^d)$, which is the space of continuous paths.} and optimizing on a set of probability measures. To apply Theorem 4.5. in \cite{elkaroui2015}, we check that the assumptions are satisfied.
Define for $(t, \mathbf y, a)\in \mathbb R_+ \times \mathbb C(\mathbb R_+, \mathbb R^2)\times A$,
$$
\mu(t, \mathbf{y}, a):=
\left(
\begin{matrix}
b(t, \mathbf{y}_t^1, a)\\
f(t, \mathbf{y}_t^1, a)
\end{matrix}
\right)
\mathds{1}_{[0, T]}(t),
$$
$$
\tilde \sigma(t, \mathbf{y}, a):=
\left(
\begin{matrix}
\sigma(t, \mathbf{y}_t^1, a)\\
0
\end{matrix}
\right)
\mathds{1}_{[0, T]}(t).
$$
For each $(t, \mathbf{y}, \alpha)\in \mathbb R_+ \times \mathbb C(\mathbb R_+, \mathbb R^2) \times \mathbb A $, there exists a unique strong solution of the SDE 
$$Y_s^{t, \mathbf{y}, \alpha}=\mathbf{y}_t + \int_t^s \mu(r, Y_{r\wedge \cdot}^{t, \mathbf{y}, \alpha}, \alpha_r)dr + \int_t^s \tilde \sigma(r, Y_{r\wedge \cdot}^{t, \mathbf{y}, \alpha}, \alpha_r)dW_r,$$
with initial condition $Y_s^{t, \mathbf{y}, \alpha}:=\mathbf{y}_s$ for all $s\in [0, t]$. In fact, one can find a strong solution for the first component using the assumptions on $b$ and $\sigma$, and since the second component is fully determined by the first one, we get the existence. We denote by $X$ the first component and by $Z$ the second component. Therefore, the associated controlled/stopped martingale problem has a solution. Note that the coefficients are continuous in the control variable for any $(t, \mathbf{y})\in \mathbb R_+ \times \mathbb C(\mathbb R_+, \mathbb R^2)$. For $(t, \mathbf{y})\in [0,\infty] \times \mathbb C(\mathbb R_+, \mathbb R^2)$ let
$$\Phi(t, \mathbf{y})=\left(\mathbf{y}^2_{t\wedge T} + g\left(t\wedge T, \mathbf{y}_{t\wedge T}^1\right)\right).$$
Fix $(t, \mathbf{x})\in \mathbb R_+ \times \mathbb C(\mathbb R_+, \mathbb R)$ and $\mathbf{y}=(\mathbf{x}, \mathbf{0})\in \mathbb C(\mathbb R_+, \mathbb R^2)$, then we have
\begin{align*}
\sup _{\tau \in \mathcal{T}_t, \alpha \in \mathbb{A}_t} \mathbb E \left[ \Phi(\tau, Y^{t, \mathbf{y}, \alpha}_\cdot) \right] 
& = \sup _{\tau \in \mathcal{T}_t, \alpha \in \mathbb{A}_t} \mathbb E \left[ \int_t^{\tau \wedge T} f(s, X^{t, \mathbf{x}, \alpha}_s, \alpha_s)ds + g\left( \tau \wedge T, X^{t, \mathbf{x}, \alpha}_{\tau \wedge T}\right) \right]\\
& = v(t, x).
\end{align*}
Moreover, for each $t\in [0, \infty]$, $\mathbf{y}\mapsto \Phi(t, \mathbf{y}_{t\wedge \cdot})$ is continuous ($\mathbb C(\mathbb R_+, \mathbb R^2)$ is endowed with the topology of uniform convergence on compact subsets of $\mathbb R_+$). Since $f$ and $g$ are bounded, the last assumption of Theorem 4.5. in \cite{elkaroui2015} is satisfied. Then applying Theorem 4.5 in \cite{elkaroui2015} and integrating at time $t=0$ with respect to $m_0^*$ (see Theorem 3.1 (ii) in \cite{elkaroui2015}), we get 
$$V^S = V^W = V^R.$$
The result follows.
\end{proof}

\paragraph{The case $\mathcal{O}$ bounded.}
{\color{black}In this paragraph, instead of Assumption \ref{as_single_agent}, we impose the following assumption:}
{\color{black}
\begin{assumption}\label{as PDE} \leavevmode
\begin{enumerate}[(1)]
\item The domain $\mathcal{O}$ is a bounded open domain of class $C^2$.
\item $\sigma$ does not depend on the control $a$ and is continuous on $[0, T]\times \bar{\mathcal{O}}$. Moreover, there exists $c_\sigma>0$ such that for all $(t, x)\in [0, T]\times \bar{\mathcal{O}}$, $\sigma(t, x)\geq c_\sigma$.
\item The coefficients $b:[0, T]\times \mathbb R\times A\rightarrow \mathbb R$ and $\sigma:[0, T]\times \mathbb R\rightarrow \mathbb R_+$ are measurable, bounded and Lipschitz in $x$ uniformly on the other variables.
\item $f$ is measurable, bounded and continuous on $\bar{\mathcal{O}}$, uniformly with respect to $t$ and $a$.
\item For fixed $(t,x)\in [0, T]\times \bar{\mathcal{O}}$, $a\mapsto b(t,x,a)$ and $a\mapsto f(t,x,a)$ are continuous.
\item $g\in C^{1, 2}_b([0, T]\times \bar{\mathcal{O}})$ and $g(t, x)=0$ for $(t, x)\in (0, T)\times \partial \mathcal{O}$.
\item If $(\mu, m)\in \mathcal{R}$, then $m_t(dx, A)dt$ admits an square integrable density with respect to the Lebesgue measure on $[0, T]\times \bar{\mathcal{O}}$.
\end{enumerate} 
\end{assumption}}

\begin{remark}
In Appendix \ref{sec D} we give sufficient conditions under which (7) in the above assumption is satisfied.
\end{remark}


Let us recall an existence theorem for the strong formulation. The theorem is a particular case of Theorem 3.2, Chapter 4, in \cite{bensoussan1982}.

\begin{theorem}\label{theoremstrong}
Let Assumption \ref{as PDE} be satisfied. Let $v$ be the value function defined in \eqref{value_fct}. Then $v$ is the unique solution belonging to $C([0, T]\times \bar{\mathcal{O}})\cap W^{1, 2, 2}((0, T)\times \mathcal{O})$\footnote{The Sobolev space $W^{1, 2, 2}((0, T)\times \mathcal{O})$ represents the set of functions $u$ such that $u$, $\partial_t u$, $\partial_x u$, $\partial_{xx}u \in L^2((0, T)\times \mathcal{O})$, where the derivatives are understood in the sense of distributions.}, satisfying the following Hamilton-Jacobi-Bellman Variational Inequality (HJBVI)
\begin{equation}\label{HJBVI}
\begin{aligned} 
\min \left(-\frac{\partial v}{\partial t}(t, x)-\sup_{a\in A}\left[\mathcal{L} v(t, x, a) + f(t, x, a)\right], v(t, x)- g(t, x)\right)=0,&\quad (t, x) \in(0, T) \times \mathcal{O}, \\ 
v(t, x)=0,&\quad (t, x) \in(0, T)\times \partial \mathcal{O}, \\ 
v(T, x)=g(T, x),&\quad x \in \mathcal{O}. \end{aligned}
\end{equation}
Moreover, optimal controls are given by
\begin{equation}\label{optimal_control}
\alpha^\star_t(x):=\alpha\left(t, X_t^{x, \alpha^\star}\right), \quad\text{where} \quad \alpha(t, x)\in \arg\max_{a\in A}\left[\mathcal{L} v(t, x, a) + f(t, x, a)\right],  
\end{equation}
\begin{equation}\label{optimal_stop}
\tau^\star(x) :=\inf \left\{0 \leq t \leq T : v\left(t, X_{t}^{x, \alpha^\star}\right)=g\left(t, X_{t}^{x, \alpha^\star}\right)\right\}.
\end{equation}
\end{theorem}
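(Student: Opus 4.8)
The plan is to recast the variational inequality \eqref{HJBVI} as an obstacle problem for a uniformly parabolic semilinear operator and then invoke the existence and uniqueness theory of \cite{bensoussan1982}. The key structural observation is that, under Assumption \ref{as PDE}(2), $\sigma$ does not depend on the control, so the second-order part of $\mathcal{L}$ is uncontrolled and the supremum over $a$ enters only through the first-order Hamiltonian
$$H(t, x, p) := \sup_{a\in A}\left[b(t, x, a)\, p + f(t, x, a)\right].$$
First I would check that $H$ has the regularity required by the PDE theory: since $A$ is compact and $a\mapsto b(t,x,a), f(t,x,a)$ are continuous (Assumption \ref{as PDE}(5)), the supremum is attained; moreover $H$ is measurable in $(t,x)$, continuous in $x$, and, because $b$ is bounded, globally Lipschitz in $p$ uniformly in $(t,x)$. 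With this, \eqref{HJBVI} reads as $\min(-\partial_t v - \frac{\sigma^2}{2}\partial_{xx}v - H(t,x,\partial_x v),\, v-g)=0$ on $(0,T)\times\mathcal{O}$, together with the homogeneous lateral condition $v=0$ on $(0,T)\times\partial\mathcal{O}$ and the terminal condition $v(T,\cdot)=g(T,\cdot)$.

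Next I would verify that Assumption \ref{as PDE} supplies exactly the hypotheses of Theorem 3.2, Chapter 4, in \cite{bensoussan1982}: uniform ellipticity from $\sigma\geq c_\sigma>0$, continuity and boundedness of $\sigma$, a bounded measurable drift, a bounded domain of class $C^2$, and an obstacle $g\in C^{1,2}_b$ compatible with the boundary data (the condition $g=0$ on $(0,T)\times\partial\mathcal{O}$ ensures agreement with the homogeneous lateral condition). Invoking that theorem then yields a unique $w\in C([0,T]\times\bar{\mathcal{O}})\cap W^{1,2,2}((0,T)\times\mathcal{O})$ solving the HJBVI, together with the probabilistic identification of $w$ with the value function \eqref{value_fct}, so that $v=w$.

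For the optimal controls I would extract a measurable Markovian selector $\alpha(t,x)\in\argmax_{a\in A}[\mathcal{L}v(t,x,a)+f(t,x,a)]$ by a measurable selection theorem (Berge or Filippov), which is legitimate because $\partial_x v$ and $\partial_{xx}v$ are defined almost everywhere once $v\in W^{1,2,2}$. The optimality of $(\alpha^\star,\tau^\star)$ in \eqref{optimal_control}--\eqref{optimal_stop} then follows by applying the generalized It\^o formula for $W^{1,2,2}$ functions to $v(s, X_s^{x,\alpha^\star})$: on $[0,\tau^\star]$ the parabolic part of the variational inequality is saturated, so the drift of $v(s,X_s^{x,\alpha^\star})$ equals $-f$ under $\alpha^\star$ and is dominated by $-f$ under any admissible $(\tau,\alpha)$, while the obstacle constraint $v\geq g$ and the definition of $\tau^\star$ give $v(\tau^\star, X_{\tau^\star}^{x,\alpha^\star})=g(\tau^\star, X_{\tau^\star}^{x,\alpha^\star})$.

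The main obstacle, beyond the bookkeeping of matching Assumption \ref{as PDE} to the precise hypotheses of \cite{bensoussan1982}, is the verification step: one must justify It\^o's formula for the merely Sobolev-regular $v$ (Krylov's formula for $W^{1,2,2}$ functions, whose applicability hinges on the uniform ellipticity $\sigma\geq c_\sigma$), and confirm that the selector $\alpha(t,x)$ produces a well-posed state process against which the comparison with an arbitrary admissible pair $(\tau,\alpha)$ can be carried out. These two points, the nondegeneracy-driven regularity upgrade and the selection of a measurable optimizer, are where the argument is genuinely delicate; the remainder is a direct translation of the classical obstacle-problem theory.
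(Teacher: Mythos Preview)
Your proposal is correct and follows the same route as the paper: the paper does not give an independent proof of this theorem but simply states it as ``a particular case of Theorem 3.2, Chapter 4, in \cite{bensoussan1982}'', with a remark that the continuity-in-$t$ assumption on $b$ and $f$ used there can be relaxed to measurability since only measurability of the Hamiltonian is needed for the measurable selection step. Your write-up is in fact more detailed than what the paper provides, spelling out the reduction to an obstacle problem for a uniformly parabolic operator with uncontrolled diffusion, the regularity of the Hamiltonian $H(t,x,p)$, and the Krylov--It\^o verification argument; all of this is precisely what underlies the cited result in \cite{bensoussan1982}.
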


\begin{remark}
Observe that in \cite{bensoussan1982}, they suppose that $b$ and $f$ are continuous on $t$. This assumption is used in their proof to establish continuity of the Hamiltonian, however we need only measurability on the Hamiltonian to use the measurable selection theorem.
\end{remark}

The next Theorem is a slight extension of Theorem 5.2 in \cite{bdt2020}. For sake of clarity we give the proof in Appendix \ref{sec E}.

\begin{theorem}\label{PDE}
Suppose Assumption \ref{as PDE} is in force. Then, the following are true
\begin{enumerate}[(1)]
\item $V^S=V^{LP}$.
\item Let $(\mu^\star, m^\star)$ be a maximizer of the LP program. Then $m^\star$ satisfies
\begin{enumerate}[(a)]
\item 
$$\int_{\mathcal{S}\times A} \left(f+\frac{\partial g}{\partial t}+\mathcal{L} g\right)(t, x, a) m_t^\star(dx, da) dt = 0,$$
with $\mathcal{S}:=\{(t, x)\in [0, T]\times \mathcal{O}: v(t, x)=g(t, x)\}$.
\item 
$$-\int_{\mathcal{C}\times A} f(t, x, a)m_t^\star(dx, da)dt= \int_{\mathcal{C}\times A} \left(\frac{\partial v}{\partial t}+\mathcal{L} v\right)(t, x, a)m_t^\star(dx, da)dt,$$
where $\mathcal{C}:=([0, T]\times \mathcal{O})\setminus \mathcal{S}$.
\item For all $C^\infty$ functions $\phi$ such that $\operatorname{supp} (\phi)\subset \mathcal{C}$, the following holds
\begin{equation}\label{phi_fp_eq}
\int_{\mathcal{O}} \phi(0, x) m_{0}^{*}(dx)+\int_{0}^{T} \int_{\mathcal{O}\times A}\left(\frac{\partial \phi}{\partial t}+\mathcal{L} \phi\right)(t, x, a) m_t^\star(dx, da) dt = 0.
\end{equation}
\end{enumerate}
Note that (2)(c) holds true if and only if $\mu^\star(\mathcal{C})=0$, which is also equivalent to $\mu^\star(\mathcal{S}\cup ([0, T]\times \partial \mathcal{O}))=1$.
\end{enumerate}
\end{theorem}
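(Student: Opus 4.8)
The plan is to prove part (1) as a pair of inequalities and then read off part (2) from the equality cases. For $V^{S}\le V^{LP}$, Theorem~\ref{theoremstrong} provides an optimal Markovian feedback $\alpha(t,x)$ as in \eqref{optimal_control} together with the stopping rule \eqref{optimal_stop}; running the closed-loop diffusion with $X_0\sim m_0^*$ and forming the occupation measures as in Proposition~\ref{SDE to measures} produces a pair $(\mu,m)\in\mathcal{R}$ whose reward $\Gamma(\mu,m)$ equals $\int_{\mathcal{O}} v(0,x)m_0^*(dx)=V^{S}$ by the optimality in Theorem~\ref{theoremstrong}, whence $V^{LP}\ge V^{S}$. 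For the reverse inequality I would use $v$ itself as a test function in \eqref{constraint_fp}. From the HJBVI \eqref{HJBVI}, $v$ satisfies, a.e.\ on $(0,T)\times\mathcal{O}$, both $\partial_t v+\mathcal{L}v(t,x,a)+f(t,x,a)\le 0$ for every $a\in A$ and $v\ge g$, together with $v(T,\cdot)=g(T,\cdot)$ and $v=g=0$ on $(0,T)\times\partial\mathcal{O}$. Granting admissibility of $v$, the identity \eqref{constraint_fp} combined with these two inequalities yields, for every $(\mu,m)\in\mathcal{R}$, the chain $\int g\,d\mu\le\int v\,d\mu=V^{S}+\int(\partial_t v+\mathcal{L}v)\,dm\le V^{S}-\int f\,dm$, i.e.\ $\Gamma(\mu,m)\le V^{S}$, so $V^{LP}\le V^{S}$.

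The main obstacle is that $v\in C([0,T]\times\bar{\mathcal{O}})\cap W^{1,2,2}((0,T)\times\mathcal{O})$ is not of class $C^{1,2}_b$ and hence not directly an admissible test function, so the above must be justified by approximation. I would approximate $v$ by $v_n\in C^{1,2}_b([0,T]\times\bar{\mathcal{O}})$ with $v_n\to v$ in $W^{1,2,2}$ (via a $C^2$-extension off the domain followed by mollification). Because the space dimension is one, the parabolic Sobolev embedding gives $v_n\to v$ uniformly, which controls the terms $\int v_n\,d\mu$ and $\int_{\mathcal{O}} v_n(0,\cdot)\,m_0^*$. For the generator term, write $\mathcal{L}v_n(t,x,a)=b(t,x,a)\partial_x v_n+\tfrac12\sigma^2(t,x)\partial_{xx}v_n$; since $\partial_x v_n$ and $\partial_{xx}v_n$ depend only on $(t,x)$, integrating against $m_t(dx,da)dt$ reduces to integrating against the time-space marginal $m_t(dx,A)dt$, which by Assumption~\ref{as PDE}(7) has a square-integrable density $p$. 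Cauchy--Schwarz together with the boundedness of $b$ and $\sigma$ then gives $\int(\partial_t v_n+\mathcal{L}v_n)\,dm\to\int(\partial_t v+\mathcal{L}v)\,dm$. This square-integrability assumption is precisely what renders $v$ an admissible test function and is the crux of the argument.

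For part (2), let $(\mu^\star,m^\star)$ be a maximizer, so that $\Gamma(\mu^\star,m^\star)=V^{S}$ forces equality throughout the chain above. Equality in $\int g\,d\mu^\star\le\int v\,d\mu^\star$ gives $\int(v-g)\,d\mu^\star=0$; since $v\ge g$ with equality exactly on $\mathcal{S}\cup([0,T]\times\partial\mathcal{O})\cup(\{T\}\times\mathcal{O})$, this yields $\mu^\star(\mathcal{C})=0$, equivalently $\mu^\star(\mathcal{S}\cup([0,T]\times\partial\mathcal{O}))=1$. Inserting any $\phi\in C^\infty$ with $\operatorname{supp}\phi\subset\mathcal{C}$ into \eqref{constraint_fp} and using that $\mu^\star$ does not charge $\mathcal{C}$ (so $\int\phi\,d\mu^\star=0$) produces \eqref{phi_fp_eq}, i.e.\ (2)(c); conversely, (2)(c) holding for all such $\phi$ forces $\mu^\star|_{\mathcal{C}}=0$ through \eqref{constraint_fp}, which gives the stated equivalences.

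Equality in the generator inequality gives $\int(\partial_t v+\mathcal{L}v+f)\,dm^\star=0$; since the integrand is $\le0$ and $m^\star$ is absolutely continuous with respect to Lebesgue measure by Assumption~\ref{as PDE}(7), it vanishes $m^\star$-a.e., which is exactly statement (2)(b) when restricted to $\mathcal{C}$. Finally, on the coincidence set $\mathcal{S}=\{v=g\}$ the weak derivatives of $v$ and $g$ agree a.e.\ (two $W^{1,2,2}$ functions have equal first and second weak derivatives a.e.\ on the set where they coincide), so $\partial_t v+\mathcal{L}v=\partial_t g+\mathcal{L}g$ a.e.\ on $\mathcal{S}$; substituting this into the a.e.\ vanishing of $\partial_t v+\mathcal{L}v+f$ on $\mathcal{S}$ yields (2)(a). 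Throughout, the absolute continuity of $m^\star$ ensures it ignores $[0,T]\times\partial\mathcal{O}$ and lets the pointwise a.e.\ identities be integrated over $\mathcal{S}\times A$ and $\mathcal{C}\times A$.
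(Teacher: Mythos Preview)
Your proof is correct and follows essentially the same route as the paper's. Part~(1) is handled identically: the occupation measures of the optimal closed-loop policy give $V^{S}\le V^{LP}$, and the approximation of $v$ in $W^{1,2,2}\cap C$ combined with the $L^2$ density from Assumption~\ref{as PDE}(7) extends \eqref{constraint_fp} to $v$, yielding $V^{LP}\le V^{S}$ via the HJBVI. For part~(2), the paper argues in a slightly different order: it first establishes (2)(a) through a chain of inequalities (using $g$ as a $C^{1,2}_b$ test function and the vanishing of the weak derivatives of $v-g$ on $\mathcal{S}$), and only then deduces (2)(b) and $\mu^\star(\mathcal{C})=0$ from the equalities in that chain. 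Your ordering---first extracting $\mu^\star(\mathcal{C})=0$ from $\int(v-g)\,d\mu^\star=0$, then (2)(c), then the pointwise vanishing of $\partial_t v+\mathcal L v+f$ $m^\star$-a.e., then (2)(a) and (2)(b)---is arguably more transparent, but the ingredients (the two saturated inequalities and the a.e.\ equality of derivatives of $v$ and $g$ on $\mathcal{S}$) are exactly the same.
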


\begin{proposition}
Let Assumption \ref{as PDE} hold true, and assume that for each $(t, x)\in [0, T]\times \bar{\mathcal{O}}$, the subset 
$$K(t, x):=\{(b(t, x, a), \sigma^2(t, x, a), z):a\in A, z\leq f(t, x, a)\}$$ 
of $\mathbb R \times\mathbb R_+ \times \mathbb R$ is convex. Let $(\mu^\star, m^\star)$ an LP solution, then, there exists a measurable function $(t, x)\mapsto\alpha^\star(t,x)$ such that $\bar{m}_t(dx) \equiv m^\star_t(dx,A)$ satisfies the following system:
$$
\begin{cases}
\int_{\mathcal{S}} \left(f+\frac{\partial g}{\partial t}+\mathcal{L} g\right)(t, x,\alpha^\star(t,x)) \bar{m}_t(dx) dt = 0,\\
\alpha^\star(t, x)\in \arg\max_{a\in A}\left[\mathcal{L} v(t, x, a) + f(t, x, a)\right] \quad \bar{m}_t(dx) dt-a.e. \text{ on } \mathcal{C}, \nonumber \\
\int_{\mathcal{O}} \phi(0, x) m_{0}^{*}(dx)+\int_{0}^{T} \int_{\mathcal{O}\times A}\left(\frac{\partial \phi}{\partial t}+\mathcal{L} \phi\right)(t, x, \alpha^\star(t, x)) \bar{m}_t(dx) dt = 0, \\ \text{for all } C^\infty \text{ functions } \phi \text{ such that } \operatorname{supp} (\phi)\subset \mathcal{C}.
\end{cases}
$$
\end{proposition}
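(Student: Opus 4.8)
The plan is to combine the strict-control reduction of Proposition \ref{strict cont} with the first-order conditions of Theorem \ref{PDE}, and then to upgrade the averaged optimality in Theorem \ref{PDE}(2)(b) to pointwise optimality by a sign argument. Starting from the LP maximizer $(\mu^\star, m^\star)$, I would disintegrate $m^\star_t(dx,da)\,dt = \nu^\star_{t,x}(da)\,\bar m_t(dx)\,dt$ with $\bar m_t(dx)=m^\star_t(dx,A)$. Since $K(t,x)$ is convex, Proposition \ref{strict cont} supplies a measurable selection $\alpha^\star:[0,T]\times\bar{\mathcal{O}}\to A$ satisfying $\int_A b\,\nu^\star=b(\cdot,\alpha^\star)$, $\int_A \sigma^2\,\nu^\star=\sigma^2(\cdot,\alpha^\star)$ and $\int_A f\,\nu^\star\le f(\cdot,\alpha^\star)$, and such that $\tilde m_t(dx,da):=\delta_{\alpha^\star(t,x)}(da)\,\bar m_t(dx)$ gives $(\mu^\star,\tilde m)\in\mathcal{R}$ with $\Gamma(\mu^\star,\tilde m)\ge\Gamma(\mu^\star,m^\star)$. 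As $(\mu^\star,m^\star)$ is optimal and $(\mu^\star,\tilde m)\in\mathcal{R}$, the pair $(\mu^\star,\tilde m)$ is itself an LP maximizer, sharing the same $x$-marginal flow $\bar m$.

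Next I would apply Theorem \ref{PDE} to the maximizer $(\mu^\star,\tilde m)$. Because $\tilde m$ carries the Dirac mass $\delta_{\alpha^\star(t,x)}(da)$ in the control variable, every integral of the form $\int_{\cdot\times A}\Psi(t,x,a)\,\tilde m_t(dx,da)\,dt$ collapses to $\int_{\cdot}\Psi(t,x,\alpha^\star(t,x))\,\bar m_t(dx)\,dt$. Specializing statements (2)(a) and (2)(c) of Theorem \ref{PDE} in this way yields at once the first and the third equations of the claimed system.

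The remaining, and most delicate, equation is the arg-max membership on $\mathcal{C}$. I would derive it from statement (2)(b), which after the same collapse reads
\[
\int_{\mathcal{C}}\Big[(\partial_t v+\mathcal{L}v)(t,x,\alpha^\star(t,x))+f(t,x,\alpha^\star(t,x))\Big]\,\bar m_t(dx)\,dt=0.
\]
On the continuation region $\mathcal{C}=\{v>g\}$ the HJBVI \eqref{HJBVI} forces the PDE branch $\partial_t v(t,x)+\sup_{a\in A}[\mathcal{L}v(t,x,a)+f(t,x,a)]=0$ for Lebesgue-a.e.\ $(t,x)\in\mathcal{C}$; hence for the particular value $a=\alpha^\star(t,x)$ the bracketed integrand above is $\le 0$ at Lebesgue-a.e.\ point. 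By Assumption \ref{as PDE}(7), $\bar m_t(dx)\,dt$ admits a density with respect to Lebesgue measure, so the integrand is $\bar m_t(dx)\,dt$-a.e.\ nonpositive while its integral vanishes; therefore it is zero $\bar m_t(dx)\,dt$-a.e.\ on $\mathcal{C}$. Combining this equality with the HJB identity gives $\mathcal{L}v(t,x,\alpha^\star(t,x))+f(t,x,\alpha^\star(t,x))=\sup_{a\in A}[\mathcal{L}v(t,x,a)+f(t,x,a)]$ a.e., which is precisely $\alpha^\star(t,x)\in\arg\max_{a\in A}[\mathcal{L}v(t,x,a)+f(t,x,a)]$.

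The main obstacle is exactly this last step: one must be mindful that, with $v\in W^{1,2,2}$, the distributional derivatives render the HJB identity (and hence the sign of the integrand) valid only Lebesgue-a.e., and then invoke the absolute continuity of Assumption \ref{as PDE}(7) to transfer nonpositivity to a $\bar m_t(dx)\,dt$-a.e.\ statement. The nonpositive-integrand-with-zero-integral argument is what upgrades the integrated identity (2)(b) to genuine pointwise optimality of $\alpha^\star$; the other two equations are direct specializations once the strict-control maximizer $(\mu^\star,\tilde m)$ has been produced.
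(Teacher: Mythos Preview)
Your argument is correct and follows the same approach as the paper, which simply writes that the result ``follows by Theorem \ref{PDE} and a similar argument as in the proof of Proposition \ref{strict cont}.'' You have filled in the details faithfully: the strict-control reduction from Proposition \ref{strict cont} produces the optimizer $(\mu^\star,\tilde m)$ with Dirac control, Theorem \ref{PDE}(2)(a),(c) then give the first and third equations verbatim, and your sign argument (nonpositive integrand by the HJBVI on $\mathcal C$, zero integral from (2)(b), hence zero a.e., with the transfer from Lebesgue-a.e.\ to $\bar m_t(dx)dt$-a.e.\ via Assumption \ref{as PDE}(7)) is exactly the right way to extract the pointwise arg-max condition that the paper leaves implicit.
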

\begin{proof}
Follows by Theorem \ref{PDE} and a similar argument as in the proof of Proposition \ref{strict cont}.
\end{proof}

\section{MFG problem}

Throughout this section, we let the following assumptions hold true.

\begin{assumption}\label{as_mfg}\leavevmode
\begin{enumerate}[(1)]
\item The functions $b:[0, T]\times \mathbb R\times \mathcal{M}(\bar{\mathcal{O}}\times A)\times A\rightarrow \mathbb R$ and $\sigma:[0, T]\times \mathbb R\times \mathcal{M}(\bar{\mathcal{O}}\times A)\times A\rightarrow \mathbb R_+$ are Lipschitz in $x$ uniformly on $(t, m, a)$.

\item For all $(t, x, z, m, a)\in [0, T]\times \mathbb R\times \bar{\mathcal{O}}\times \mathcal{M}(\bar{\mathcal{O}}\times A)\times A$, 
$$b(t, x, m, a)= \bar b\left(t, x, \int_{\bar{\mathcal{O}} \times A} \hat b (t,y)m(dy,du), a\right),$$
$$\sigma(t, x, m,a)=\bar \sigma\left(t, x, \int_{\bar{\mathcal{O}} \times A} \hat \sigma (t,y)m(dy,du), a\right),$$
$$f(t, z, m, a)= \bar f\left(t, z, \int_{\bar{\mathcal{O}} \times A} \hat f (t,y)m(dy,du), a\right),$$
where $\bar b:[0, T]\times \mathbb R\times \mathbb R^{d}\times A\rightarrow\mathbb R$, $\bar \sigma: [0, T]\times \mathbb R\times \mathbb R^{d}\times A\rightarrow\mathbb R_+$ and $\bar f: [0, T]\times \bar{\mathcal{O}}\times \mathbb R^{d}\times A\rightarrow\mathbb R$, for some $d\in \mathbb N^*$. We assume that $\bar b$, $\bar \sigma$ and $\bar f$ are bounded, measurable and continuous for each fixed $t\in [0, T]$ and that the functions $\hat b:[0, T]\times \bar{\mathcal{O}} \rightarrow \mathbb R^{d}$, $\hat \sigma: [0, T]\times \bar{\mathcal{O}} \rightarrow \mathbb R^{d}$ and $\hat f: [0, T]\times \bar{\mathcal{O}} \rightarrow \mathbb R^{d}$ are continuous and bounded.

\item The function $g:[0, T]\times \bar{\mathcal{O}}\times \mathcal{P}([0, T]\times \bar{\mathcal{O}})\rightarrow \mathbb R$ is such that for all $ (t, x, \mu)\in [0, T]\times \bar{\mathcal{O}}\times \mathcal{P}([0, T]\times \bar{\mathcal{O}})$
$$g(t, x, \mu)=\bar g \left(t, x, \int_{[0, T]\times \bar{\mathcal{O}}}\hat g(s, y)\mu(ds, dy)\right),$$
where $\bar g:[0, T]\times \bar{\mathcal{O}}\times \mathbb R^{d}\rightarrow \mathbb R$ and $\hat g:[0, T]\times \bar{\mathcal{O}}\rightarrow \mathbb R^{d}$ are continuous and bounded.

\item The initial measure $m_0^*$ satisfies $\int_\mathcal{O} |x|^2m_0^*(dx)<\infty$.

\item One of the following statements is true:
\begin{enumerate}
\item The coefficients $b$ and $\sigma$ do not depend on the measure.
\item \textit{Unattainable boundary}: $b$, $\sigma$ and $\mathcal{O}$ are such that, for every filtered probability space $(\Omega, \mathcal{F}, \mathbb F, \mathbb P)$, $\mathbb F$-stopping time $\tau$ such that $\tau\leq T$ $\mathbb P$-a.s., $\mathbb F$-progressive measurable process $\nu$ with values in $\mathcal{P}(A)$, $\mathbb F$-martingale measure $M$ such that $M^\tau$ has intensity $\nu_t(da)\mathds{1}_{t\leq \tau}dt$, $m\in V$ and $\mathbb F$-adapted process $X$ such that
$$dX_t= \int_A b(t, X_t, m_t, a)\nu_t(da)dt + \int_A \sigma(t, X_t, m_t, a)M(dt, da),\quad t\leq \tau, \quad \mathbb P \circ X_0^{-1}= m_0^*,$$
we have
$$\mathbb P \left(\tau_\mathcal{O}^{\tilde X}\geq T \right)=1,$$
where $\tilde X=X_{\cdot\wedge\tau}$.
\item \textit{Attainable boundary}: $\mathcal{O}$ is an open interval, $\sigma$ does not depend on the control $a$ and for all $(t,x, m)\in [0, T]\times \mathbb R\times \mathcal{M}(\bar{\mathcal{O}}\times A)$, $\sigma(t,x, m)\geq c_\sigma$ for some $c_\sigma>0$. 
\end{enumerate}
\end{enumerate}
\end{assumption}

\paragraph{The strong and LP MFG formulations.}

Let us first provide the \textit{strong formulation of the MFG problem}.
\begin{definition}[\textit{Strong formulation of the MFG problem}]\leavevmode
\begin{enumerate}
\item First step: fix $\mu\in \mathcal{P}([0, T]\times \bar{\mathcal{O}})$ and $m\in V$ and find the solution to the mixed control problem
\begin{equation}\label{opti1}
\begin{aligned}
\max_{\tau \in \mathcal{T}, \alpha \in \mathbb{A}} &  \mathbb{E}\left[\int_{0}^{\tau\wedge \tau_\mathcal{O}^{\alpha, m}} f\left(t, X^{\alpha, m}_t, m_t, \alpha_t\right) dt + g\left(\tau\wedge \tau_\mathcal{O}^{\alpha, m}, X^{\alpha,m}_{\tau\wedge \tau_\mathcal{O}^{\alpha, m}}, \mu\right)\right],\\
\text{s.t. } \quad & dX_{t}^{\alpha, m}=b\left(t, X_{t}^{\alpha, m}, m_t, \alpha_t \right) dt+\sigma\left(t, X_{t}^{\alpha, m}, m_t, \alpha_t \right) dW_{t},\\
& X_0^{\alpha, m} \sim m_0^*,
\end{aligned}
\end{equation}
where $\tau_\mathcal{O}^{\alpha, m}=\inf\{t\geq 0:X_t^{\alpha, m}\notin \mathcal{O}\}$.
\item Given the mixed  optimal stopping-control $(\tau^{\mu, m}, \alpha^{\mu, m})$ (solution of the problem (\ref{opti1})) for the agent with initial distribution $m_0^*$ facing a mean-field $(\mu, m)$, the second step consists in finding $\mu\in\mathcal{P}([0, T]\times \bar{\mathcal{O}})$ and the family of distributions $m\in V$  such that
\begin{equation*}
m_{t}(B)=\mathbb{P}\left[(X_{t}^{\alpha^{\mu,m}, m}, \alpha^{\mu,m}_t) \in B, t\leq \tau^{\mu, m}\wedge \tau_{\mathcal{O}}^{\alpha^{\mu, m}, m}\right], \quad B \in \mathcal{B}(\bar{\mathcal{O}} \times A),\quad t \in[0, T],
\end{equation*}
and 
$$\mu=\mathcal{L}\left(\tau^{\mu, m}\wedge \tau_{\mathcal{O}}^{\alpha^{\mu, m}, m}, X^{\alpha^{\mu,m}, m}_{\tau^{\mu, m}\wedge \tau_{\mathcal{O}}^{\alpha^{\mu, m}, m}}\right).$$
\end{enumerate}
\end{definition}

We now give the formulation of the \textit{linear programming MFG problem}. To this end, we first provide a preliminary definition.

\begin{definition}
Let $\mathcal{R}_0$ be the set of pairs $(\mu, m)\in \mathcal{P}([0, T]\times \bar{\mathcal{O}})\times V$, such that for all $u\in C_b^{1, 2}([0, T]\times \bar{\mathcal{O}})$,
\begin{align*}
\int_{[0, T]\times \bar{\mathcal{O}}} u(t, x)\mu(dt, dx)&\leq \int_\mathcal{O} u(0, x)m_0^*(dx) + \int_0^T \int_{\bar{\mathcal{O}}\times A} \frac{\partial u}{\partial t} (t, x)m_t(dx, da)dt \\
&\quad + C(u) \int_0^T m_t(\bar{\mathcal{O}}\times A)dt,
\end{align*}
where $C(u)$ is the supremum of $|\mathcal{L}u|$ over $[0, T]\times \bar{\mathcal{O}}\times \mathcal{M}(\bar{\mathcal{O}}\times A)\times A$, with
$$\mathcal{L}u(t, x, m, a)=b(t, x, m, a)\partial_x u(t, x) + \frac{\sigma^2}{2}(t, x, m, a)\partial_{xx} u(t, x).$$
\end{definition}

\begin{definition}[\textit{LP formulation of the MFG problem}]
Fix a pair $(\bar \mu, \bar m)\in \mathcal{P}([0, T]\times \bar{\mathcal{O}})\times V$ and define $\mathcal{R}[\bar m]$ as the set of pairs $(\mu, m)\in \mathcal{P}([0, T]\times \bar{\mathcal{O}})\times V$, such that for all $u\in C_b^{1, 2}([0, T]\times \bar{\mathcal{O}})$,
$$\int_{[0, T]\times \bar{\mathcal{O}}} u(t, x)\mu(dt, dx)= \int_\mathcal{O} u(0, x)m_0^*(dx) + \int_0^T \int_{\bar{\mathcal{O}}\times A} \left(\frac{\partial u}{\partial t} +\mathcal L u\right) (t, x, \bar m_t, a)m_t(dx, da)dt.$$
Let $\Gamma[\bar \mu, \bar m]: \mathcal{R}_0\rightarrow \mathbb R$ be defined as 
$$\Gamma[\bar \mu, \bar m] (\mu, m)= \int_0^T \int_{\bar{\mathcal{O}}\times A} f(t, x, \bar m_t, a) m_t(dx, da)dt + \int_{[0, T]\times \bar{\mathcal{O}}} g(t, x, \bar \mu) \mu (dt, dx).$$
We say that $(\mu^\star, m^\star)\in \mathcal{P}([0, T]\times \bar{\mathcal{O}})\times V$ is an LP MFG Nash equilibrium if $(\mu^\star, m^\star)\in \mathcal{R}[m^\star]$ and for all $(\mu, m)\in \mathcal{R}[m^\star]$,
$$\Gamma[\mu^\star, m^\star] (\mu, m)\leq \Gamma[\mu^\star, m^\star] (\mu^\star, m^\star).$$
The real number $\Gamma[\mu^\star, m^\star] (\mu^\star, m^\star)$ is called Nash value.
\end{definition}

\begin{remark}
Note that for all $\bar m\in V$, $\mathcal{R}[\bar m]$ has the same structure as $\mathcal{R}$ of the previous section, thus it satisfies the same properties. Moreover, the set $\mathcal{R}_0$ has been introduced in order to be able to apply the fixed point arguments specific to the MFG setting; more precisely, it satisfies all properties as the set $\mathcal{R}$ (see theorem below) and contains all the sets $\mathcal{R}[m]$ for $m\in V$.
\end{remark}

\begin{theorem}[\textit{Properties of the set $\mathcal{R}_0$}]\label{R0}
The set $\mathcal{R}_0$ is compact, convex, nonempty, contains the set $\mathcal{R}[m]$ for all $m\in V$, and Lemmas \ref{lemma1}, \ref{lemma2bis} and \ref{lemma3} are still valid if one replaces $\mathcal{R}$ with $\mathcal{R}_0$.
\end{theorem}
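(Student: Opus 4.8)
The plan is to prove the four claimed properties (compactness, convexity, nonemptiness, containment of each $\mathcal{R}[m]$) and then to verify that Lemmas \ref{lemma1}, \ref{lemma2bis} and \ref{lemma3} transfer from $\mathcal{R}$ to $\mathcal{R}_0$. I would begin with the easy structural facts. For \emph{nonemptiness}, I would reuse the construction from the remark following the LP definition: take $\mu(B\times C)=\delta_0(B)m_0^*(C\cap\mathcal{O})$ together with the null flow $(\mathbf 0)_t$; the defining inequality of $\mathcal{R}_0$ then reduces, for every test function $u$, to $\int u(0,x)m_0^*(dx)\le \int u(0,x)m_0^*(dx)$, which holds trivially since the two remaining terms vanish. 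For \emph{convexity}, I would observe that the defining relation of $\mathcal{R}_0$ is, for each fixed $u\in C_b^{1,2}$, a \emph{linear} inequality in the pair $(\mu,m)$: both sides depend affinely on $(\mu,m)$, so the solution set is an intersection of half-spaces and hence convex (note this is precisely why $\mathcal{R}_0$ is introduced with an inequality and with $\mathcal{L}u$ bounded by the constant $C(u)$ rather than integrated against the measure-dependent coefficients, which would destroy linearity). For the \emph{containment} $\mathcal{R}[m]\subset\mathcal{R}_0$, I would take $(\mu,m')\in\mathcal{R}[m]$ and bound the term $\int_0^T\int \mathcal{L}u(t,x,m_t,a)\,m'_t(dx,da)\,dt$ from above by $C(u)\int_0^T m'_t(\bar{\mathcal{O}}\times A)\,dt$ using the definition of $C(u)$ as the supremum of $|\mathcal{L}u|$; moving the $\partial_t u$ term to the relevant side yields exactly the inequality defining $\mathcal{R}_0$.

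The main work, and the main obstacle, is \emph{compactness}. Here I would mimic the proof of Theorem \ref{R compact} rather than quote it, since the defining relation is now an inequality and the coefficients carry a measure argument. The key point is that the two crucial structural inputs still hold: first, the subprobability property $m_t(\bar{\mathcal{O}}\times A)\le 1$ $t$-a.e.\ (Lemma \ref{lemma1}), whose proof uses only test functions $u(t,x)=\int_t^T f(s)\,ds$ for which $\mathcal{L}u\equiv 0$, so the inequality direction in $\mathcal{R}_0$ is exactly what is needed and the argument goes through verbatim; and second, the tightness estimate driving the proof of Theorem \ref{R compact}, which uses the non-negative test functions $u_k(t,x)=(T+1-t)\phi_k(x)$. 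For these $u_k$ one has $\mathcal{L}u_k$ uniformly bounded in $k$ (the bounds on $\phi_k',\phi_k''$ are uniform and $b,\sigma$ are bounded), so replacing the exact forward identity by the $\mathcal{R}_0$-inequality only changes constants: the same uniform bound $\sup_n\int \phi(x)\,\nu^n(dt,dx,da)<\infty$ and $\nu^n([0,T]\times\bar{\mathcal{O}}\times A)\le T$ follow, giving tightness of $(\nu^n)$ and (via the same half-space/Prokhorov argument applied to $u(t,x)=\int_t^T\varphi(s)\,ds$) tightness of $(\mu^n)$.

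Passing to the limit then requires closedness of $\mathcal{R}_0$ under the joint convergence $m^n\to m$ in $V$ and $\mu^n\rightharpoonup\mu$. For this I would invoke Lemma \ref{lemma3} (stable convergence) exactly as in Theorem \ref{R compact} to pass to the limit in $\int_0^T\int(\partial_t u)\,m_t^n\,dt$ and in $\int_0^T m_t^n(\bar{\mathcal{O}}\times A)\,dt$ (the latter via Lemma \ref{lemma2bis}), while the $\mu$-term passes to the limit by weak convergence; since each term in the $\mathcal{R}_0$-inequality is either continuous or upper/lower-semicontinuous in the appropriate direction under these convergences, the inequality is preserved, so $(\mu,m)\in\mathcal{R}_0$. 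Finally, for the transfer of Lemmas \ref{lemma1}, \ref{lemma2bis} and \ref{lemma3}, I would note that each of these lemmas relies only on the subprobability bound and on the $\operatorname{BV}$ estimate of Lemma \ref{lemma2}; the subprobability bound holds on $\mathcal{R}_0$ as argued above, and the $\operatorname{BV}$ estimate (Lemma \ref{lemma2}) uses test functions $u(t,x)=-\psi(t)h(t,x)$ whose $\mathcal{L}u$ is again controlled by the $\mathcal{R}_0$-inequality via $C(u)$, so the bounded-variation bound survives with possibly enlarged constants and the subsequent $L^1$- and stable-convergence arguments carry over unchanged. The subtle point to check carefully is that the inequality (rather than equality) does not break the two-sided estimate needed for the $\operatorname{BV}$ bound in Lemma \ref{lemma2}; I expect this to be resolved by applying the inequality to both $u$ and $-u$, which is legitimate since $\mathcal{L}(-u)$ is bounded by the same $C(u)$.
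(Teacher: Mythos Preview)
Your proposal is correct and follows exactly the approach the paper intends: the paper's own proof is the single sentence ``The same proofs of Section \ref{sec 2.1} can be applied,'' and what you have written is a careful unpacking of precisely how those proofs carry over to the inequality constraint defining $\mathcal{R}_0$. In particular, your observation that the two-sided $\operatorname{BV}$ estimate in Lemma \ref{lemma2} is recovered by applying the $\mathcal{R}_0$-inequality to both $u=-\psi h$ and $-u=\psi h$ (using $C(-u)=C(u)$) is exactly the point one must check, and your treatment of the tightness, closedness, convexity, containment, and nonemptiness claims is accurate.
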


\begin{proof}
The same proofs of Section \ref{sec 2.1} can be applied. 
\end{proof}

\begin{definition}
Define the set valued mapping $\mathcal{R}^\star:\mathcal{R}_0 \rightarrow 2^{\mathcal{R}_0}$ as
$$\mathcal{R}^\star(\bar \mu, \bar m)= \mathcal{R}[\bar m].$$
Define $\Theta:\mathcal{R}_0 \rightarrow 2^{\mathcal{R}_0}$ as 
$$\Theta(\bar\mu,\bar m)=\underset{( \mu,  m) \in \mathcal{R}^\star(\bar \mu, \bar m)}{\arg \max } \Gamma [\bar \mu, \bar m]( \mu,  m).$$
\end{definition}

\begin{remark}
Note that the set of LP MFG Nash equilibria coincides with the set of fixed points of $\Theta$.
\end{remark}

\subsection{Existence of LP MFG Nash equilibria}

We shall first provide some convergence results, which will be useful in the proof of existence of LP MFG Nash equilibria.

\begin{lemma}\label{lemma gronwall}
Let $(\Omega, \mathcal{F}, \mathbb{F}, \mathbb{P})$ be a filtered probability space. Let $\tau$ be a bounded $\mathbb{F}$-stopping time and let $M$ be an $\mathbb{F}$-martingale measure with intensity $q_t(da)\mathds{1}_{t\leq \tau}dt$, where $(q_t)_{t\in [0, T]}$ is an $\mathbb{F}$-predictable process with values in $\mathcal{P}(A)$. Consider $(\bar \mu^n, \bar m^n)_{n\geq 1}\subset \mathcal{R}_0$ such that $\bar m^n\rightarrow \bar m$ in $V$ and let $X$ and $(X^n)_{n\geq 1}$ be $\mathbb F$-adapted processes satisfying, 
$$dX_t= \int_A b(t, X_t, \bar m_t, a)q_t(da)dt + \int_A\sigma (t, X_t, \bar m_t, a)M(dt, da),\quad t \leq \tau, \quad X_0\sim m_0^*.$$
$$dX_t^n=\int_Ab(t, X_t^n, \bar m_t^n, a)q_t(da)dt + \int_A\sigma (t, X_t^n, \bar m_t^n, a)M(dt, da),\quad t \leq \tau, \quad X_0^n= X_0.$$
Then, up to a subsequence,
$$\mathbb E^{\mathbb P} \left[\sup_{t\leq T}|X_{t\wedge\tau}^n - X_{t\wedge\tau}|^2\right]\underset{n\rightarrow\infty}{\longrightarrow} 0.$$
\end{lemma}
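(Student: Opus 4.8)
The plan is to run a Gronwall-type stability estimate on the difference process, after first extracting a subsequence along which the measure-dependence of the coefficients becomes negligible. The key structural input is Assumption \ref{as_mfg}(2): the dependence of $b$ and $\sigma$ on the measure enters only through the bounded linear functionals $z_t^{b,n}:=\int_{\bar{\mathcal{O}}\times A}\hat b(t,y)\bar m_t^n(dy,du)$ and $z_t^{\sigma,n}:=\int_{\bar{\mathcal{O}}\times A}\hat\sigma(t,y)\bar m_t^n(dy,du)$. Since the components of $\hat b$ and $\hat\sigma$ lie in $C_b([0,T]\times\bar{\mathcal{O}})$ and $\bar m^n\to\bar m$ in $V$ with $(\bar\mu^n,\bar m^n)\subset\mathcal R_0$, Lemma \ref{lemma2bis} (valid on $\mathcal R_0$ by Theorem \ref{R0}) gives $z^{b,n}\to z^b$ and $z^{\sigma,n}\to z^\sigma$ in $L^1([0,T])$; passing to a subsequence I may assume both converge for a.e.\ $t$. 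Because $\bar b$ and $\bar\sigma$ are continuous in their spatial and measure variables for each fixed $t$, it follows that for a.e.\ $t$ and every fixed $(x,a)$, $b(t,x,\bar m_t^n,a)\to b(t,x,\bar m_t,a)$ and likewise for $\sigma$.

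Next I would set $\tilde X:=X_{\cdot\wedge\tau}$, $\tilde X^n:=X^n_{\cdot\wedge\tau}$, $\Delta:=\tilde X^n-\tilde X$, and $\phi^n(t):=\mathbb E[\sup_{s\le t}|\Delta_{s\wedge\tau}|^2]$, which is finite since $b,\sigma$ are bounded. Writing the difference of the two equations and splitting each coefficient difference into a spatial (Lipschitz) part plus a measure part, e.g.\ $b(s,X^n_s,\bar m_s^n,a)-b(s,X_s,\bar m_s,a)=[b(s,X^n_s,\bar m_s^n,a)-b(s,X_s,\bar m_s^n,a)]+[b(s,X_s,\bar m_s^n,a)-b(s,X_s,\bar m_s,a)]$, I control the first bracket by $L|\Delta_s|$ using the uniform Lipschitz property of Assumption \ref{as_mfg}(1). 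For the stochastic integral I would invoke the Burkholder--Davis--Gundy inequality together with the intensity $q_s(da)\mathds 1_{s\le\tau}ds$ of $M$ (Appendix \ref{sec B.1}), and for the drift the Cauchy--Schwarz and Jensen inequalities to pull $q_s(da)$ inside the square. This yields
\begin{equation*}
\phi^n(t)\le C_1\int_0^t\phi^n(s)\,ds + C_2\,\mathbb E\!\left[\int_0^{T}\!\int_A\left(|b(s,X_s,\bar m_s^n,a)-b(s,X_s,\bar m_s,a)|^2+|\sigma(s,X_s,\bar m_s^n,a)-\sigma(s,X_s,\bar m_s,a)|^2\right)q_s(da)\,ds\right],
\end{equation*}
for constants $C_1,C_2$ depending only on $T$, the Lipschitz constant and the BDG constant.

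Denoting the second term by $R^n$, Gronwall's lemma gives $\phi^n(T)\le R^n e^{C_1T}$, so it remains to show $R^n\to0$. Here I would argue by dominated convergence on $\Omega\times[0,T]$: the integrand is bounded by $2\|b\|_\infty^2+2\|\sigma\|_\infty^2$, and for a.e.\ $(\omega,s)$ it tends to $0$. Indeed, freezing the (random but fixed) point $x_0=X_s(\omega)$ and using the a.e.\ convergence $z^{\cdot,n}_s\to z^{\cdot}_s$ from the first step, the continuity of $\bar b,\bar\sigma$ in the measure variable at fixed $t=s$ gives $b(s,x_0,\bar m_s^n,a)\to b(s,x_0,\bar m_s,a)$ pointwise in $a$, and a further dominated convergence in $a$ against $q_s(da)$ kills the inner integrals. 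Hence $R^n\to0$ and $\phi^n(T)\to0$, which is the claim.

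The main obstacle is the coefficient-convergence step, and the subtlety is precisely that $x$ ranges over the unbounded set $\mathbb R$, so one cannot hope for convergence uniform in $x$. The device that makes the argument work is to evaluate the measure-difference of the coefficients at the limiting process $X_s$ rather than at $X_s^n$ — which is exactly why the spatial Lipschitz split is performed through the middle term $b(s,X_s,\bar m_s^n,a)$ — so that only continuity at the single frozen point $X_s(\omega)$ is needed, applied $(\omega,s)$-pointwise inside the dominated convergence argument. The remaining care is bookkeeping: interchanging the stopping $\tau$ with the BDG estimate, and ensuring the a.e.-in-$t$ convergence of the functionals survives the two successive subsequence extractions (one for $\hat b$, one for $\hat\sigma$), which one handles by passing to a common subsequence.
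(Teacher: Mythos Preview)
Your proof is correct and follows essentially the same route as the paper: the Lipschitz split through the middle term $b(s,X_s,\bar m_s^n,a)$, the BDG estimate against the intensity $q_s(da)\mathds 1_{s\le\tau}ds$, the Gronwall closure, and the domination for the residual are all identical. The only cosmetic difference is in showing $R^n\to 0$: the paper invokes the auxiliary stable-convergence Lemma \ref{slutsky_stable} (using directly the $L^1$ convergence of the functionals $z^{b,n},z^{\sigma,n}$), whereas you pass to a subsequence for a.e.\ convergence and run a nested dominated-convergence argument --- both are valid and yield the same conclusion.
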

\begin{proof}
We will denote by $C>0$ any constant independent from $n$. To simplify the formulas, in this proof we shall use the following shorthand notation: $b_n(t, x, a):=b(t,x, \bar m^n_t, a)$, $b_0(t, x, a):=b(t, x, \bar m_t, a)$, $\sigma_n(t, x, a):=\sigma(t,x, \bar m^n_t, a)$ and $\sigma_0(t, x, a):=\sigma(t, x, \bar m_t, a)$. Let $0\leq s\leq t\leq T$. We have
\begin{align*}
|X_{s\wedge \tau}^n-X_{s\wedge \tau}|^2 &\leq C \left[ \left(\int_0^{s\wedge \tau} \int_A(b_n(r, X^n_r, a)-b_0(r, X_r, a))q_r(da)dr \right)^2\right. \\
&\quad \left.+ \left(\int_0^{s\wedge \tau} \int_A (\sigma_n(r, X^n_r, a)-\sigma_0(r, X_r, a))M(dr, da) \right)^2 \right].
\end{align*}
Using Burkholder-Davis-Gundy inequality, we get
\begin{align*}
&\mathbb E^{\mathbb P}\left[\sup_{s\leq t}\left(\int_0^{s\wedge \tau}\int_A (\sigma_n(r, X^n_r, a)-\sigma_0(r, X_r, a))M(dr, da) \right)^2 \right]\\
&\leq C \mathbb E^{\mathbb P}\left[ \int_0^{t\wedge \tau}\int_A(\sigma_n(r, X^n_r, a)-\sigma_0(r, X_r, a))^2q_r(da)dr \right].
\end{align*}
Define 
$$g_n(t)= \mathbb E^{\mathbb P} \left[ \sup_{s\leq t}|X_{s\wedge \tau}^n -X_{s\wedge \tau}|^2 \right].$$
From the above estimates,
\begin{align*}
g_n(t) &\leq C\mathbb E^{\mathbb P} \left[ \int_0^{t\wedge \tau}\int_A (b_n(r, X^n_r, a)-b_0(r, X_r, a))^2q_r(da)dr\right. \\
&\quad \left.+ \int_0^{t\wedge \tau}\int_A(\sigma_n(r, X^n_r, a)-\sigma_0(r, X_r, a))^2q_r(da)dr\right].
\end{align*}
Now, by the Lipschitz assumption on $b$,
\begin{align*}
&\int_0^{t\wedge \tau} \int_A (b_n(r, X^n_r, a)-b_0(r, X_r, a))^2q_r(da)dr \\
&=  \int_0^{t\wedge \tau} \int_A(b_n(r, X^n_r, a)- b_n(r, X_r, a) + b_n(r, X_r, a)- b_0(r, X_r, a))^2q_r(da)dr\\
&\leq C \left[ \int_0^{t\wedge \tau}\int_A (b_n(r, X^n_r, a)- b_n(r, X_r, a))^2 q_r(da)dr \right.\\
&\quad \left. + \int_0^{t\wedge \tau}\int_A (b_n(r, X_r, a)- b_0(r, X_r, a))^2q_r(da)dr\right]\\
&\leq C \left[ \int_0^{t} \sup_{r\leq s} |X_{r\wedge \tau}^n-X_{r\wedge \tau}|^2 ds +  \int_0^{t\wedge \tau}\int_A (b_n(r, X_r, a)- b_0(r, X_r, a))^2q_r(da)dr\right].
\end{align*}
Similarly,
\begin{align*}
&\int_0^{t\wedge \tau} \int_A (\sigma_n(r, X^n_r, a)-\sigma_0(r, X_r, a))^2q_r(da)dr \\
&\leq C \left[ \int_0^{t} \sup_{r\leq s} |X_{r\wedge \tau}^n-X_{r\wedge \tau}|^2 ds +  \int_0^{t\wedge \tau}\int_A (\sigma_n(r, X_r, a)- \sigma_0(r, X_r, a))^2q_r(da)dr\right].
\end{align*}
We get finally,
$$g_n(t)\leq C \left( \int_0^t g_n(s)ds + B_n + S_n \right),$$
where 
$$B_n :=\mathbb E^{\mathbb P} \left[ \int_0^T\int_A (b_n(r, X_r, a)- b_0(r, X_r, a))^2q_r(da)dr \right],$$
$$S_n :=\mathbb E^{\mathbb P} \left[ \int_0^T\int_A (\sigma_n(r, X_r, a)- \sigma_0(r, X_r, a))^2q_r(da)dr \right].$$
By Gronwall's inequality,
\begin{equation}\label{gronwall}
g_n(T)\leq C \left(B_n + S_n\right)e^{CT}.
\end{equation}
Let us show that $B_n\rightarrow 0$ as $n\rightarrow 0$. We fix $\omega\in \Omega$. We are going to use Lemma \ref{slutsky_stable} for this fixed $\omega$ and then use dominated convergence for the expectation. We set $\Theta=[0, T]$, $\mathcal{X}=A$, $\eta(dr)=dr$,
$$\psi^n(r)=\int_{\bar O \times A}\hat b(r, y)\bar m^n_r(dy, du),\quad \psi(r)=\int_{\bar O \times A}\hat b(r, y)\bar m_r(dy, du),$$
$\nu^n_r(da)=q_r(\omega)(da),$
$$\varphi(r, a, y)=[\bar b (r, X_r(\omega), y, a)-\bar b (r, X_r(\omega), \psi(r), a)]^2.$$
By Theorem \ref{R0} and Lemma \ref{lemma2bis}, $\psi^n$ converges to $\psi$ in $L^1([0, T];\mathbb R^{d})$. Since the hypothesis of Lemma \ref{slutsky_stable} are satisfied, we get for all $\omega\in \Omega$,
$$I_n(\omega)=\int_0^T\int_A (b_n(r, X_r(\omega), a)- b_0(r, X_r(\omega), a))^2q_r(\omega)(da)dr\underset{n\rightarrow\infty}{\longrightarrow}0.$$
Since $\bar b$ is bounded and $q_r$ are probabilities, we get by the dominated convergence theorem $B_n \underset{n\rightarrow\infty}{\longrightarrow}0$. The convergence of $S_n$ to $0$ follows by the same arguments. Taking $n\rightarrow\infty$ in \eqref{gronwall} we get the result.
\end{proof}

We now prove the continuity of the set $\mathcal{R}^\star$ in the sense of set-valued mappings.
\begin{proposition}[\textit{Continuity of $\mathcal{R}^\star$}]\label{Rcont}
The set-valued mapping $\mathcal{R}^\star$ is continuous (in the sense of Definition \ref{Defcont}).
\end{proposition}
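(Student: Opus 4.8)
The plan is to establish that $\mathcal{R}^\star$ is both upper and lower hemicontinuous in the sense of Definition \ref{Defcont}. Since $\mathcal{R}^\star(\bar\mu,\bar m)=\mathcal{R}[\bar m]$ depends only on the second argument, and since $\mathcal{R}_0$ is compact and metrizable by Theorem \ref{R0}, I would argue throughout with sequences: it suffices to fix a convergent sequence $(\bar\mu^n,\bar m^n)\to(\bar\mu,\bar m)$ in $\mathcal{R}_0$ (so in particular $\bar m^n\to\bar m$ in $V$) and verify the two sequential properties. A preliminary observation is that, by Lemma \ref{lemma2bis} applied to the bounded continuous functions $\hat b,\hat\sigma,\hat f$, along a subsequence the quantities $\psi^n_b(t):=\int_{\bar{\mathcal{O}}\times A}\hat b(t,y)\bar m^n_t(dy,du)$ converge in $L^1([0,T])$, hence $t$-a.e., to $\psi_b(t):=\int_{\bar{\mathcal{O}}\times A}\hat b(t,y)\bar m_t(dy,du)$, and similarly for the terms built from $\hat\sigma$ and $\hat f$. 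This is the mechanism through which $b(\cdot,\bar m^n_\cdot,\cdot)$ and $\sigma(\cdot,\bar m^n_\cdot,\cdot)$ converge to $b(\cdot,\bar m_\cdot,\cdot)$ and $\sigma(\cdot,\bar m_\cdot,\cdot)$.

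For upper hemicontinuity I would take $(\mu^n,m^n)\in\mathcal{R}[\bar m^n]$ with $(\mu^n,m^n)\to(\mu,m)$ and show $(\mu,m)\in\mathcal{R}[\bar m]$ by passing to the limit in the constraint equation. The term $\int u\,d\mu^n$ converges by weak convergence of $\mu^n$, and $\int_0^T\int_{\bar{\mathcal{O}}\times A}\partial_t u\, m^n_t(dx,da)\,dt$ converges by the stable convergence of Lemma \ref{lemma3}, since $\partial_t u\in\mathcal{S}$. The only delicate term is $\int_0^T\int_{\bar{\mathcal{O}}\times A}\mathcal{L}u(t,x,\bar m^n_t,a)\,m^n_t(dx,da)\,dt$, which carries a double dependence on $n$. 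I would split it as
$$\int_0^T\int_{\bar{\mathcal{O}}\times A}\big[\mathcal{L}u(t,x,\bar m^n_t,a)-\mathcal{L}u(t,x,\bar m_t,a)\big]\,m^n_t(dx,da)\,dt+\int_0^T\int_{\bar{\mathcal{O}}\times A}\mathcal{L}u(t,x,\bar m_t,a)\,m^n_t(dx,da)\,dt,$$
where the second integral converges to the desired limit by Lemma \ref{lemma3}, because $\mathcal{L}u(\cdot,\cdot,\bar m_\cdot,\cdot)\in\mathcal{S}$ (the coefficients are continuous in $(x,a)$ for fixed $t$ by Assumption \ref{as_mfg}(2) and $\partial_x u,\partial_{xx}u$ are bounded and continuous). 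The first integral I would bound, using $m^n_t(\bar{\mathcal{O}}\times A)\le1$ (Lemma \ref{lemma1}) and the boundedness of $\partial_x u,\partial_{xx}u$, by $\int_0^T\int_{\bar{\mathcal{O}}\times A}|\bar b(t,x,\psi^n_b(t),a)-\bar b(t,x,\psi_b(t),a)|\,m^n_t(dx,da)\,dt$ plus the analogous $\sigma^2$-term, and show these vanish. Here I would use that $(m^n_t(dx,da)\,dt)_n$ is tight, being weakly convergent on the Polish space $[0,T]\times\bar{\mathcal{O}}\times A$: on a large compact set in $x$ the local uniform continuity of $\bar b$ together with $\psi^n_b(t)\to\psi_b(t)$ a.e.\ and dominated convergence annihilate the integral, while the complement carries uniformly small mass.

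For lower hemicontinuity I would fix $(\mu,m)\in\mathcal{R}[\bar m]$ and construct an approximating sequence. Using the SDE representation of Theorem \ref{SDE rep}, I realize $(\mu,m)$ on a filtered probability space carrying a martingale measure $M$ with intensity $q_t(da)\mathds 1_{t\le\tau}dt$, a stopping time $\tau$ and a state process $X$ driven by the $\bar m$-coefficients. Keeping the same $M$, $q$ and $\tau$, I define $X^n$ as the solution of the same equation with coefficients evaluated at $\bar m^n$; by Proposition \ref{SDE to measures} the occupation measures $(\mu^n,m^n)$ of $(X^n,q,\tau)$ belong to $\mathcal{R}[\bar m^n]$, and Lemma \ref{lemma gronwall} yields $\mathbb E^{\mathbb P}[\sup_{t\le T}|X^n_{t\wedge\tau}-X_{t\wedge\tau}|^2]\to0$ along a subsequence. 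It then remains to deduce $(\mu^n,m^n)\to(\mu,m)$: the $L^2$ (hence a.s.\ along a subsequence) convergence of the paths together with dominated convergence handles the integrands, the only issue being the exit times $\tau_{\mathcal{O}}^{X^n}$. Under the unattainable-boundary regime of Assumption \ref{as_mfg}(5) one has $\tau\wedge\tau_{\mathcal{O}}^{X^n}=\tau=\tau\wedge\tau_{\mathcal{O}}^{X}$, so no issue arises; under the attainable-boundary regime, where $\mathcal{O}$ is an interval and $\sigma\ge c_\sigma>0$, I would invoke the a.s.\ continuity of the first exit time as a functional of the path, guaranteed by nondegeneracy of $\sigma$ (which prevents the limit path from touching $\partial\mathcal{O}$ without crossing), to obtain $\tau_{\mathcal{O}}^{X^n}\to\tau_{\mathcal{O}}^{X}$ a.s.

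The main obstacle is twofold and sits at the two coupling points. In the upper-hemicontinuity step it is the simultaneous variation of the integrating measures $m^n$ and of the coefficients through $\bar m^n$; the tightness-plus-local-uniform-continuity device above is what decouples them without requiring continuity of $\bar b,\bar\sigma$ uniformly in the possibly unbounded variable $x$. In the lower-hemicontinuity step it is the transfer of the convergence $X^n\to X$ to the stopped and absorbed occupation measures, that is, the a.s.\ continuity of the exit times in the attainable-boundary case, which genuinely relies on the nondegeneracy hypothesis on $\sigma$.
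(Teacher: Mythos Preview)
Your overall strategy matches the paper's: closed graph for upper hemicontinuity, SDE representation plus Gronwall (Lemma \ref{lemma gronwall}) for lower hemicontinuity. For Step 1 your splitting-plus-tightness argument is a correct hands-on version of Lemma \ref{slutsky_stable}, which the paper invokes directly to pass to the limit in $\int_0^T\int_{\bar{\mathcal O}\times A}(\partial_x u\, b)(t,x,\bar m^n_t,a)\,m^n_t(dx,da)\,dt$ and its $\sigma^2$ analogue; both routes work.

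The genuine gap is in your attainable-boundary treatment of Step 2. The process $X$ coming from Theorem \ref{SDE rep} is only controlled up to $\tau$, with $\tau\le\tau_{\mathcal O}^X$; the stopped path $X_{\cdot\wedge\tau}$ is constant after $\tau$, so if $X_\tau\in\partial\mathcal O$ your heuristic ``the limit path cannot touch without crossing'' fails, and the claim $\tau_{\mathcal O}^{X^n}\to\tau_{\mathcal O}^{X}$ is not well-posed as stated. The paper handles this by constructing an extension $X^0$ on all of $[0,T]$ solving the $\bar m$-SDE driven by the same Brownian motion (in case (5)(c) the martingale measure reduces to a Brownian motion since $\sigma$ is uncontrolled), so that $X^0_t=X_t$ on $\{t\le\tau\}$ and hence $\tau_{\mathcal O}^{X^0}\ge\tau$. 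It likewise defines $X^n$ on all of $[0,T]$ and then applies a concrete hitting-time stability result (Theorem 5.1 and Remark 5.4 of \cite{shevchenko2015}, using $\varphi(t,x)=(T-t)(x-c_1)(x-c_2)$ for $\mathcal O=(c_1,c_2)$) to obtain $\tau_{\mathcal O}^{X^n}\wedge T\to\tau_{\mathcal O}^{X^0}\wedge T$ in probability, whence $\tau\wedge\tau_{\mathcal O}^{X^n}\to\tau$ in $L^1$. Your bare invocation of ``a.s.\ continuity of the exit time'' does not supply this extension step.

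A further minor point: once $m^n\to m$ in $V$ is established, the paper obtains $\mu^n\rightharpoonup\mu$ not by controlling $X^n_{\tau\wedge\tau_{\mathcal O}^{X^n}}$ directly but by passing to the limit in the constraint equation for $(\mu^n,m^n)\in\mathcal R[\bar m^n]$, reusing the Step 1 machinery; this is cleaner than the probabilistic route you sketch.
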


\begin{proof}
\textit{Step 1.} We first prove  \textit{the upper hemicontinuity} (in the sense of Definition \ref{def up}). By the Closed Graph Theorem (see Theorem \ref{upperhemi_closedgraph}), it suffices to show that $\mathcal{R}^*$ has closed graph. Let $(\mu^n, m^n)\in \mathcal{R}^\star(\bar\mu^n, \bar m^n)=\mathcal{R}[\bar m^n]$ such that $\mu^n\rightharpoonup \mu$, $m^n\rightarrow m$ in $V$, $\bar \mu^n\rightharpoonup \bar \mu$ and $\bar m^n\rightarrow \bar m$ in $V$. For all $n\geq 1$ and $u\in C_b^{1, 2}([0, T]\times \bar{\mathcal{O}})$,
\begin{align*}
\int_{[0, T]\times\bar{\mathcal{O}}} u(t, x)\mu^n(dt, dx) &= \int_\mathcal{O} u(0, x)m_0^*(dx)  + \int_0^T \int_{\bar{\mathcal{O}} \times A} \left(\frac{\partial u}{\partial t} +\mathcal L u\right) (t, x, \bar m_t^n, a)m_t^n(dx, da)dt.
\end{align*}
By Theorem \ref{R0} and Lemma \ref{lemma3}, we get the stable convergence of $m^n_t(dx, da)dt$ to $m_t(dx, da)dt$. In particular, 
$$\int_0^T \int_{\bar{\mathcal{O}} \times A} \frac{\partial u}{\partial t} (t, x)m_t^n(dx, da)dt \underset{n\rightarrow\infty}{\longrightarrow} \int_0^T \int_{\bar{\mathcal{O}} \times A} \frac{\partial u}{\partial t} (t, x)m_t(dx, da)dt.$$
By Theorem \ref{R0} and Lemma \ref{lemma2bis}, 
$$\psi^n(t)=\int_{\bar O \times A}\hat b(t, y)\bar m^n_t(dy, du)\underset{n\rightarrow\infty}{\longrightarrow} \psi(t)=\int_{\bar O \times A}\hat b(t, y)\bar m_t(dy, du)$$
in $L^1([0, T];\mathbb R^{d})$. We conclude by Lemma \ref{slutsky_stable} that
$$\int_0^T \int_{\bar{\mathcal{O}} \times A} \left(\frac{\partial u}{\partial x}b\right)(t, x, \bar m_t^n, a) m_t^n(dx, da)dt  \underset{n\rightarrow\infty}{\longrightarrow} \int_0^T \int_{\bar{\mathcal{O}} \times A} \left(\frac{\partial u}{\partial x}b\right)(t, x, \bar m_t, a) m_t(dx, da)dt.$$
By the same argument,
$$\int_0^T \int_{\bar{\mathcal{O}} \times A} \left(\frac{\partial^2 u}{\partial x^2}\frac{\sigma^2}{2}\right)(t, x, \bar m_t^n, a) m_t^n(dx, da)dt  \underset{n\rightarrow\infty}{\longrightarrow} \int_0^T \int_{\bar{\mathcal{O}} \times A} \left(\frac{\partial^2 u}{\partial x^2}\frac{\sigma^2}{2}\right)(t, x, \bar m_t, a) m_t(dx, da)dt.$$
The above results, together with the convergence  $$\int_{[0, T]\times \bar{\mathcal{O}}} u(t, x)\mu^n(dt, dx) \underset{n\rightarrow\infty}{\longrightarrow} \int_{[0, T]\times \bar{\mathcal{O}}} u(t, x)\mu(dt, dx),$$ 
lead to
$$\int_{[0, T]\times \bar{\mathcal{O}}} u(t, x)\mu(dt, dx)= \int_\mathcal{O} u(0, x)m_0^*(dx) + \int_0^T \int_{\bar{\mathcal{O}} \times A} \left(\frac{\partial u}{\partial t} +\mathcal L u\right) (t, x, \bar m_t, a)m_t(dx, da)dt,$$
which means that $(\mu, m)\in \mathcal{R}[\bar m]=\mathcal{R}^\star(\bar \mu, \bar m)$.\\

\textit{Step 2.} We now prove \textit{the lower hemicontinuity } (in the sense of Definition \ref{def low}). Consider a sequence $(\bar \mu^n, \bar m^n)_{n\geq 1}\subset \mathcal{R}_0$ such that $(\bar \mu^n, \bar m^n)\rightarrow (\bar \mu, \bar m)$ and let $(\mu, m)\in \mathcal{R}^\star(\bar \mu, \bar m) = \mathcal{R}[\bar m]$. We need to prove that up to a subsequence, we can find $(\mu^n, m^n)_{n\geq 1}\subset\mathcal{R}_0$ such that $(\mu^n, m^n)\in \mathcal{R}^\star(\bar \mu^n, \bar m^n) = \mathcal{R}[\bar m^n]$ and $(\mu^n, m^n)\rightarrow (\mu, m)$. This result is trivial if Assumption \ref{as_mfg} (5)(a) holds true, therefore consider in the sequel the cases (5)(b) or (5)(c). Let $\nu_{t, x}(da)$ be such that 
$$m_t(dx, da)dt=\nu_{t, x}(da)m_t(dx, A)dt.$$
By Theorem \ref{SDE rep}, there exists a filtered probability space $(\Omega, \mathcal{F}, \mathbb F, \mathbb P)$, an $\mathbb F$-adapted process $X$, an $\mathbb F$-stopping time $\tau$ such that $\tau\leq T\wedge\tau_\mathcal{O}^X$ $\mathbb P$-a.s., an $\mathbb F$-martingale measure $M$ with intensity $\nu_{t, X_t}(da)\mathds{1}_{t\leq \tau}dt$, such that
$$X_{t\wedge \tau}= \int_0^{t\wedge \tau}\int_A b(t, X_t, \bar m_t, a)\nu_{t, X_t}(da)dt + \int_0^{t\wedge \tau}\int_A \sigma(t, X_t,\bar m_t, a)M(dt, da), \quad \mathbb P \circ X_0^{-1}= m_0^*,$$
$$\mu =\mathbb P \circ (\tau, X_\tau)^{-1},$$
$$m_t(B\times C)= \mathbb E^{\mathbb P}\left[ \mathds{1}_B(X_t)\nu_{t, X_t}(C) \mathds{1}_{t\leq \tau}\right],  \quad B\in \mathcal{B}(\bar{\mathcal{O}}), \quad C\in \mathcal{B}(A), \quad t-a.e.$$
On the same filtered probability space, define 
$$m_t^n(B \times C):=\mathbb E^\mathbb P\left[\mathds 1_B(X_t^n)  \nu_{t,X_t}(C) \mathds 1_{t\leq \tau\wedge \tau_{\mathcal{O}}^{X^n}}\right], \quad \mu^n:=\mathbb P \circ\left(\tau\wedge \tau_{\mathcal{O}}^{X^n}, X_{\tau\wedge \tau_{\mathcal{O}}^{X^n}}^n\right)^{-1},$$
where $X^n$ denotes the unique strong solution of 
$$dX_t^n=\int_A b(t, X_t^n, \bar m_t^n, a)\nu_{t,X_t}(da)dt + \int_A \sigma (t, X_t^n, \bar m_t^n, a)M(dt,da), \quad X_0^n= X_0.$$
Note that existence and uniqueness follow by the Lipschitz and boundedness condition on the coefficients and the square integrability of $m_0^*$. We have that $(\mu^n, m^n)\in \mathcal{R}[\bar m^n]=\mathcal{R}^\star(\bar \mu^n, \bar m^n)$ by a similar argument as in Proposition \ref{SDE to measures}. Let us now prove that $m^n\rightarrow m$ in $V$. By Remark 8.3.1 and Exercise 8.10.71 in \cite{bogachev2007} (Volume 2), it is sufficient to use bounded and Lipschitz functions as test functions. Consider a bounded and Lipschitz function $\phi:[0, T]\times \bar{\mathcal{O}}\times A\rightarrow \mathbb R$ and denote by $C$ the maximum between $\|\phi\|_\infty$ and the Lipschitz constant of $\phi$. Compute
\begin{align*} 
& \left| \int_0^T \int_{\bar{\mathcal{O}} \times A} \phi(t,x,a)m_t(dx,da)dt - \int_0^T \int_{\bar{\mathcal{O}} \times A} \phi(t,x,a)m^n_t(dx,da)dt \right|  \\   
&=\left| \mathbb{E}^\mathbb P \left[ \int_0^{\tau} \int_{A} \phi(t,X_t,a)\nu_{t, X_t}(da)dt  - \int_0^{\tau\wedge \tau_{\mathcal{O}}^{X^n}} \int_{A} \phi(t,X_t^n,a)\nu_{t, X_t}(da)dt\right]\right|  \\
&\leq \left| \mathbb{E}^\mathbb P \left[ \int_0^{\tau\wedge \tau_{\mathcal{O}}^{X^n}} \int_{A} (\phi(t,X_t,a)-\phi(t, X_t^n, a))\nu_{t, X_t}(da)dt  \right]\right|  \\
&\quad + \left| \mathbb{E}^\mathbb P \left[ \int_0^{\tau} \int_{A} \phi(t,X_t,a)\nu_{t, X_t}(da)dt  - \int_0^{\tau\wedge \tau_{\mathcal{O}}^{X^n}} \int_{A} \phi(t,X_t,a)\nu_{t, X_t}(da)dt\right]\right|  \\
&\leq C T^{\frac{1}{2}}\left(\mathbb{E}^\mathbb P\left[\sup_{t \leq \tau} |X_t-X_t^n|^2\right]\right)^{\frac{1}{2}} + C \left( \mathbb E^\mathbb P \left[\tau\right]- \mathbb E^\mathbb P \left[\tau\wedge \tau_{\mathcal{O}}^{X^n}\right] \right),\\
\end{align*}
Now, by Lemma \ref{lemma gronwall}, we get the convergence of the first term. The convergence of the second one is trivial under the condition (5)(b) of Assumption \ref{as_mfg}. Suppose now condition (5)(c) of Assumption \ref{as_mfg} holds. Then, by Theorem \ref{SDE rep}, the martingale measure $M$ is replaced by an $\mathbb F$-Brownian motion $W$ and we get
$$X_{t\wedge \tau}= X_0+ \int_0^{t\wedge \tau}\int_A b(t, X_t,\bar m_t, a)\nu_{t, X_t}(da)dt + \int_0^{t\wedge \tau}\sigma(t, X_t, \bar m_t)dW_t,$$
$$X_t^n=X_0+\int_0^t\int_A b(t, X_t^n, \bar m_t^n, a)\nu_{t,X_t}(da)dt + \int_0^t \sigma (t, X_t^n, \bar m_t^n)dW_t.$$
Define $X^0$ as the unique strong solution to
$$X_t^0=X_0+\int_0^t\int_A b(t, X_t^0, \bar m_t, a)\nu_{t,X_t}(da)dt + \int_0^t \sigma (t, X_t^0, \bar m_t)dW_t.$$
By pathwise uniqueness type arguments, we get that $X^0_t=X_t$ on ${t\leq \tau}$, which implies that $\tau_{\mathcal{O}}^{X^0}\geq \tau$ $\mathbb P$-a.s. We have that for all $\delta>0$ and $C>0$, there exists $n_0\geq 1$ such that for all $n\geq n_0$,
$$\mathbb P \left( \sup_{t\in [0, T]} |X_t^n- X_t^0|\geq C \right)<\delta.$$
We have also that, for all $\delta>0$, there exists $R>0$ such that,
$$\mathbb P \left( \sup_{t\in [0, T]} |X_t^0|\geq R \right)<\delta,$$
Using these two last properties, we get by Theorem 5.1 and Remark 5.4 in \cite{shevchenko2015} that $\tau_{\mathcal{O}}^{X^n} \wedge T \overset{\mathbb P}{\underset{n\rightarrow\infty}{\longrightarrow}}\tau_{\mathcal{O}}^{X^0}\wedge T$. To be more precise, by assumption, $\mathcal{O}=]c_1, c_2[$, $c_1<c_2$, then one can choose for the assumptions in \cite{shevchenko2015} the function
$$\varphi(t, x)=(T-t)(x-c_1)(x-c_2).$$ 
Therefore, we get $(\tau, \tau_{\mathcal{O}}^{X^n}\wedge T)\overset{\mathbb P}{\underset{n\rightarrow\infty}{\longrightarrow}}(\tau,\tau_{\mathcal{O}}^{X^0}\wedge T)$ and by the continuous mapping theorem, 
$$\tau\wedge \tau_{\mathcal{O}}^{X^n}=\tau\wedge \tau_{\mathcal{O}}^{X^n}\wedge T\overset{\mathbb P}{\underset{n\rightarrow\infty}{\longrightarrow}}\tau\wedge \tau_{\mathcal{O}}^{X^0}\wedge T =\tau .$$
Since this sequence is uniformly bounded by $T$ we get the convergence in $L^1$. Finally, we can conclude that $m^n\rightarrow m$ in $V$. Now, by the convergence of $m^n$ towards $m$ in $V$ and since $(\mu^n, m^n) \in \mathcal{R}[\bar{m}^n]$, we get that, $\mu^n\rightharpoonup \mu$ (using the same results as for the upper hemicontinuity).
\end{proof}

We now prove an existence result of LP Nash equilibria.

\begin{theorem}[\textit{Existence of LP MFG equilibria}]\label{Nash_exist_LP}
The set of LP MFG equilibria is compact and nonempty.
\end{theorem}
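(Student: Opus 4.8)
The plan is to identify the set of LP MFG Nash equilibria with the fixed-point set of the set-valued map $\Theta$ (as noted in the remark following its definition) and to apply the Kakutani--Fan--Glicksberg fixed point theorem. By Theorem \ref{R0}, $\mathcal{R}_0$ is a nonempty, compact and convex subset of the product of $\mathcal{P}([0,T]\times\bar{\mathcal{O}})$ (a convex subset of the space of finite signed measures with the weak topology) and $V_1$, which is a locally convex Hausdorff topological vector space; moreover $\Theta(\bar\mu,\bar m)\subset\mathcal{R}[\bar m]\subset\mathcal{R}_0$, so $\Theta$ is a self-map. It then remains to verify that $\Theta$ has nonempty convex values and a closed graph, after which the fixed point theorem yields existence, and the closed graph yields compactness.

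For the values, fix $(\bar\mu,\bar m)$. The set $\mathcal{R}[\bar m]$ is compact (it has the same structure as $\mathcal{R}$, by the remark preceding Theorem \ref{R0}) and $(\mu,m)\mapsto\Gamma[\bar\mu,\bar m](\mu,m)$ is affine in $(\mu,m)$. Repeating the argument of Theorem \ref{single agent linear} with the integrand $f(\cdot,\cdot,\bar m_\cdot,\cdot)\in\mathcal{S}$ and the continuous bounded terminal cost $g(\cdot,\cdot,\bar\mu)$ in place of $f$ and $g$ shows that $\Gamma[\bar\mu,\bar m]$ attains its maximum on $\mathcal{R}[\bar m]$, so $\Theta(\bar\mu,\bar m)\neq\emptyset$; since the maximizers of an affine functional over a convex set form a convex set, $\Theta(\bar\mu,\bar m)$ is convex.

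The main obstacle is the closed graph property. Let $(\bar\mu^n,\bar m^n)\to(\bar\mu,\bar m)$ and $(\mu^n,m^n)\in\Theta(\bar\mu^n,\bar m^n)$ with $(\mu^n,m^n)\to(\mu,m)$; we must show $(\mu,m)\in\Theta(\bar\mu,\bar m)$. Since $(\mu^n,m^n)\in\mathcal{R}^\star(\bar\mu^n,\bar m^n)$, the upper hemicontinuity from Step 1 of Proposition \ref{Rcont} gives $(\mu,m)\in\mathcal{R}^\star(\bar\mu,\bar m)=\mathcal{R}[\bar m]$, so $(\mu,m)$ is feasible. For optimality, take any $(\mu',m')\in\mathcal{R}[\bar m]$; by the lower hemicontinuity from Step 2 of Proposition \ref{Rcont} there exist, along a subsequence, $(\mu^{\prime n},m^{\prime n})\in\mathcal{R}[\bar m^n]$ with $(\mu^{\prime n},m^{\prime n})\to(\mu',m')$, and optimality of $(\mu^n,m^n)$ gives
\begin{equation*}
\Gamma[\bar\mu^n,\bar m^n](\mu^{\prime n},m^{\prime n})\leq\Gamma[\bar\mu^n,\bar m^n](\mu^n,m^n).
\end{equation*}
It then suffices to pass to the limit on both sides, reusing the convergence machinery of Section \ref{sec 2.1}. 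For the running cost, writing $f(t,x,\bar m^n_t,a)=\bar f(t,x,\psi^n(t),a)$ with $\psi^n(t)=\int_{\bar{\mathcal{O}}\times A}\hat f(t,y)\bar m^n_t(dy,du)$, Lemma \ref{lemma2bis} gives $\psi^n\to\psi$ in $L^1([0,T];\mathbb R^d)$ and Lemma \ref{lemma3} gives stable convergence of $m^n_t(dx,da)dt$; Lemma \ref{slutsky_stable} then yields convergence of $\int_0^T\int_{\bar{\mathcal{O}}\times A}f(t,x,\bar m^n_t,a)m^n_t(dx,da)dt$ to the corresponding integral against $m$, exactly as in Proposition \ref{Rcont}. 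For the terminal cost, $\int\hat g\,d\bar\mu^n\to\int\hat g\,d\bar\mu$ since $\bar\mu^n\rightharpoonup\bar\mu$ and $\hat g$ is continuous bounded, so $g(\cdot,\cdot,\bar\mu^n)\to g(\cdot,\cdot,\bar\mu)$ continuously; together with $\mu^n\rightharpoonup\mu$ and the boundedness of $\bar g$ this gives convergence of $\int g(t,x,\bar\mu^n)\mu^n(dt,dx)$. Hence both sides of the displayed inequality converge, yielding $\Gamma[\bar\mu,\bar m](\mu',m')\leq\Gamma[\bar\mu,\bar m](\mu,m)$; as $(\mu',m')$ was arbitrary, $(\mu,m)\in\Theta(\bar\mu,\bar m)$.

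Finally, Kakutani--Fan--Glicksberg provides a fixed point of $\Theta$, which is an LP MFG equilibrium, so the equilibrium set is nonempty. Since this set coincides with the fixed-point set of $\Theta$, the closed graph property just proved (applied with $(\bar\mu^n,\bar m^n)=(\mu^n,m^n)$) shows it is closed in the compact set $\mathcal{R}_0$, hence compact. The delicate point throughout is that the objective $\Gamma[\bar\mu,\bar m](\mu,m)$ must be shown jointly continuous in all four arguments, which is precisely where the stable-convergence and $L^1$-moment lemmas of Section \ref{sec 2.1} together with the Slutsky-type Lemma \ref{slutsky_stable} are essential.
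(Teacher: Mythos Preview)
Your proof is correct and follows the same overall strategy as the paper: apply Kakutani--Fan--Glicksberg to $\Theta$ on the compact convex set $\mathcal{R}_0$, after checking nonempty convex values and a closed graph. The only notable difference is in how the closed graph of $\Theta$ is obtained. The paper invokes Berge's Maximum Theorem (Theorem~\ref{berge}) as a black box: it verifies that $\mathcal{R}^\star$ is continuous (Proposition~\ref{Rcont}) and that $((\bar\mu,\bar m),(\mu,m))\mapsto\Gamma[\bar\mu,\bar m](\mu,m)$ is continuous on $\operatorname{Gr}(\mathcal{R}^\star)$, then concludes via Berge and the Closed Graph Theorem. You instead unpack this argument by hand, using the upper hemicontinuity of $\mathcal{R}^\star$ for feasibility of the limit, the lower hemicontinuity to produce approximating competitors, and the joint continuity of $\Gamma$ to pass to the limit in the optimality inequality. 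This is precisely the mechanism inside Berge's theorem, so the two arguments are equivalent in content; your version is slightly more self-contained, while the paper's is more concise. The continuity verifications (stable convergence via Lemmas~\ref{lemma2bis}, \ref{lemma3}, \ref{slutsky_stable} for the running cost, and a Slutsky-type argument---Lemma~\ref{slutsky} in the paper---for the terminal cost) are the same in both.
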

\begin{proof}
The proof is based on Kakutani-Fan-Glicksberg's fixed point theorem for set-valued maps (Theorem \ref{KFG}). Note that the space $\mathcal{R}_0$ is a subset of the locally convex Hausdorff space $\mathcal{M}^s([0, T]\times \bar{\mathcal{O}}) \times V_1$, where $\mathcal{M}^s([0, T]\times \bar{\mathcal{O}})$ is the set of Borel finite signed measures on $[0, T]\times \bar{\mathcal{O}}$. Moreover, $\mathcal{R}_0$ is nonempty, compact and convex (see Theorem \ref{R0}). Remark also that the map $\Theta$ has convex values. Let us show that it has closed graph and nonempty values. To this end, we apply Berge's Maximum Theorem (Theorem \ref{berge}), for which we need the previous result we have shown (Proposition \ref{Rcont}), and the Closed Graph Theorem (Theorem \ref{upperhemi_closedgraph}). Therefore, it only remains to show that 
$$((\bar \mu, \bar m), (\mu, m))\in \operatorname{Gr}(\mathcal{R}^\star)\mapsto \Gamma[\bar \mu, \bar m](\mu, m)$$
is continuous. Let $((\bar \mu^n, \bar m^n), (\mu^n, m^n))_{n\geq 1}\subset \operatorname{Gr}(\mathcal{R}^\star)$ converging to $((\bar \mu, \bar m), (\mu, m))\in \operatorname{Gr}(\mathcal{R}^\star)$, that is $m^n\rightarrow m$ in $V$, $\bar m^n\rightarrow \bar m$ in $V$, $\mu^n \rightharpoonup \mu$ and $\bar \mu^n \rightharpoonup \bar \mu$. Using the same arguments as in Proposition \ref{Rcont} (upper hemicontinuity), we get
$$\int_0^T \int_{\bar{\mathcal{O}}\times A} f(t, x, \bar m_t^n, a) m_t^n(dx, da)dt \underset{n\rightarrow\infty}{\longrightarrow} \int_0^T \int_{\bar{\mathcal{O}}\times A} f(t, x, \bar m_t, a) m_t(dx, da)dt.$$
By Lemma \ref{slutsky}, 
$$\int_{[0, T]\times \bar{\mathcal{O}}} g(t, x, \bar \mu^n) \mu^n (dt, dx) \underset{n\rightarrow\infty}{\longrightarrow}\int_{[0, T]\times \bar{\mathcal{O}}} g(t, x, \bar \mu) \mu (dt, dx).$$
We conclude that 
$$\Gamma[\bar \mu^n, \bar m^n] (\mu^n, m^n) \underset{n\rightarrow\infty}{\longrightarrow} \Gamma[\bar \mu, \bar m] (\mu, m),$$
which shows the continuity.
\end{proof}

\subsection{Nash value and selection of Nash equilibria}

\paragraph{Case of measure independent coefficients} In the case where the coefficients $b$ and $\sigma$ do not depend on the measure, we  can prove uniqueness of the Nash value, which holds under the well known anti-monotonicity conditions on $f$ and $g$.

When the coefficients do not depend on the measure, an LP Nash equilibrium is a pair $(\mu^\star, m^\star) \in \mathcal{R}$ such that for all $(\mu, m) \in \mathcal{R}$,
\begin{equation*}
\Gamma[\mu^\star, m^\star](\mu, m)\leq \Gamma[\mu^\star, m^\star](\mu^\star, m^\star).
\end{equation*}

\begin{theorem}[\textit{Uniqueness of the Nash value}]
Suppose that the coefficients do not depend on the measure. Suppose also that $f$ and $g$ take the following form
$$f(t, x, m, a) = f_1(t, x, a) f_2\left(t, \int_{\bar{\mathcal{O}}\times A}f_1(t, y, u)m(dy, du)\right) + f_3(t, x, a)$$
$$g(t, x, \mu) = g_1(t, x) g_2\left(\int_{[0, T]\times \bar{\mathcal{O}}}g_1(s, y)\mu(ds, dy)\right) + g_3(t, x),$$
where $f_1$, $f_2$, $f_3$, $g_1$, $g_2$, $g_3$ are bounded and measurable, $f_2$ is non-increasing in the second argument and $g_2$ is non-increasing. Let $(\mu^1, m^1)$ and $(\mu^2, m^2)$ be two LP Nash equilibria. Then,
\begin{equation*}
f_2\left(t, \int_{\bar{\mathcal{O}}\times A}f_1(t, y, u)m^1_t(dy, du)\right)= f_2\left(t, \int_{\bar{\mathcal{O}}\times A}f_1(t, y, u)m^2_t(dy, du)\right),
\end{equation*}
almost everywhere on $[0,T]$, and 
$$g_2\left(\int_{[0, T]\times \bar{\mathcal{O}}}g_1(s, y)\mu^1(ds, dy)\right)= g_2\left(\int_{[0, T]\times \bar{\mathcal{O}}}g_1(s, y)\mu^2(ds, dy)\right).$$
In particular they lead to the same Nash value, that is 
\begin{equation*}
\Gamma[\mu^1, m^1](\mu^1, m^1)=\Gamma[\mu^2, m^2](\mu^2, m^2).
\end{equation*}
\end{theorem}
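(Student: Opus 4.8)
The plan is to run the classical Lasry--Lions monotonicity argument, transcribed into the linear programming functional. Since the coefficients are measure independent we have $\mathcal{R}[m]=\mathcal{R}$ for every $m$, so both $(\mu^1,m^1)$ and $(\mu^2,m^2)$ lie in $\mathcal{R}$, and each is a maximizer of its own objective over $\mathcal{R}$. First I would test the two equilibria crosswise: since $(\mu^2,m^2)\in\mathcal{R}$ is admissible against the first equilibrium and $(\mu^1,m^1)\in\mathcal{R}$ against the second,
\begin{align*}
\Gamma[\mu^1,m^1](\mu^2,m^2)&\le \Gamma[\mu^1,m^1](\mu^1,m^1),\\
\Gamma[\mu^2,m^2](\mu^1,m^1)&\le \Gamma[\mu^2,m^2](\mu^2,m^2),
\end{align*}
and then add these two inequalities after rewriting them as ``maximizer minus competitor $\ge 0$''.

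To exploit the sum, introduce the shorthand
$$F_i(t)=\int_{\bar{\mathcal{O}}\times A} f_1(t,y,u)\,m^i_t(dy,du),\qquad G_i=\int_{[0,T]\times\bar{\mathcal{O}}} g_1(s,y)\,\mu^i(ds,dy),\qquad i=1,2.$$
Substituting the explicit forms of $f$ and $g$ into each of the four quantities $\Gamma[\mu^i,m^i](\mu^j,m^j)$ and collecting terms, the additive parts $f_3$ and $g_3$ (which do not feel the mean field) enter with opposite signs and cancel in the sum, and the rearranged inequality collapses to
$$0\le \int_0^T \bigl[f_2(t,F_1(t))-f_2(t,F_2(t))\bigr]\bigl(F_1(t)-F_2(t)\bigr)\,dt + \bigl[g_2(G_1)-g_2(G_2)\bigr](G_1-G_2).$$
This reduction is the crux of the computation and the only place where real care is needed: one must track that in $\Gamma[\bar\mu,\bar m](\mu,m)$ the pair $(\bar\mu,\bar m)$ enters only through the nonlinear factors $f_2,g_2$, while $(\mu,m)$ is the integrating measure, and then verify the $f_3,g_3$ cancellation. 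Finiteness of every integral is guaranteed by boundedness of all the data together with $\int_0^T m^i_t(\bar{\mathcal{O}}\times A)\,dt\le T$ (Lemma \ref{lemma1}) and $\mu^i\in\mathcal{P}([0,T]\times\bar{\mathcal{O}})$.

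Next I would invoke the anti-monotonicity. Since $f_2(t,\cdot)$ and $g_2$ are non-increasing, each of the two summands on the right is pointwise non-positive; being non-positive while summing to a nonnegative quantity, both must vanish. For the stopping term this gives $[g_2(G_1)-g_2(G_2)](G_1-G_2)=0$, whence a short case distinction (if $G_1=G_2$ the values agree trivially; if $G_1\ne G_2$ the vanishing product forces $g_2(G_1)=g_2(G_2)$) yields $g_2(G_1)=g_2(G_2)$. For the running term, a non-positive integrand with zero integral vanishes $dt$-a.e., and the same case distinction gives $f_2(t,F_1(t))=f_2(t,F_2(t))$ for a.e.\ $t\in[0,T]$. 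These are precisely the two claimed identities.

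Finally, to obtain equality of the Nash values I would observe that these identities make the two objective functionals coincide on all of $\mathcal{R}$: for any $(\mu,m)\in\mathcal{R}$ the difference $\Gamma[\mu^1,m^1](\mu,m)-\Gamma[\mu^2,m^2](\mu,m)$ equals a sum of two terms carrying the factors $f_2(t,F_1(t))-f_2(t,F_2(t))$ and $g_2(G_1)-g_2(G_2)$ respectively, both of which now vanish, so $\Gamma[\mu^1,m^1]\equiv\Gamma[\mu^2,m^2]$ on $\mathcal{R}$. Consequently $(\mu^2,m^2)$, being a maximizer of $\Gamma[\mu^2,m^2]=\Gamma[\mu^1,m^1]$, also maximizes $\Gamma[\mu^1,m^1]$, so $\Gamma[\mu^1,m^1](\mu^2,m^2)=\Gamma[\mu^1,m^1](\mu^1,m^1)$; combining this with $\Gamma[\mu^1,m^1](\mu^2,m^2)=\Gamma[\mu^2,m^2](\mu^2,m^2)$ gives the desired equality $\Gamma[\mu^1,m^1](\mu^1,m^1)=\Gamma[\mu^2,m^2](\mu^2,m^2)$. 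The whole argument is standard once the crosswise testing and the $f_3,g_3$ cancellation are set up, and it needs no compactness or continuity beyond what has already been established.
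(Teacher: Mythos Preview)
Your proof is correct and is exactly the standard Lasry--Lions monotonicity argument; the paper itself omits the proof, stating only that it is a slight modification of Theorem 4.4 in \cite{bdt2020}, which is precisely the argument you reproduce. There is nothing to add.
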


\begin{proof}
The proof is a slight modification of the one of Theorem 4.4 in \cite{bdt2020}, therefore we omit it.
\end{proof}

\paragraph{Case of measure dependent coefficients}

When the coefficients depend on the measure, we do not prove the uniqueness of the Nash value, but instead we can show that there exists a maximal Nash value. Let $\mathcal{N}^\star$ be the set of Nash equilibria.

\begin{proposition}\label{maxNash}
There exists $(\mu^\star, m^\star)\in\mathcal{N}^\star$ such that for all $(\mu, m)\in \mathcal{N}^\star$,
$$\Gamma[\mu, m](\mu, m) \leq \Gamma[\mu^\star, m^\star](\mu^\star, m^\star)$$
\end{proposition}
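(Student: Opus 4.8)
The plan is to realize the Nash value as a continuous functional on the compact, nonempty set $\mathcal{N}^\star$ of LP MFG equilibria, and then invoke the extreme value theorem. By Theorem \ref{Nash_exist_LP}, the set $\mathcal{N}^\star$ is nonempty and compact; since $\mathcal{R}_0$ is a subset of a metrizable space, this compactness makes $\mathcal{N}^\star$ sequentially compact, and in particular closed. Thus the whole difficulty reduces to establishing the continuity of the Nash value map
$$\Phi:(\mu, m)\longmapsto \Gamma[\mu, m](\mu, m)$$
on $\mathcal{N}^\star$, after which the conclusion is immediate.

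To obtain this continuity I would reuse the joint continuity already established in the proof of Theorem \ref{Nash_exist_LP}, namely that
$$((\bar\mu, \bar m), (\mu, m))\in \operatorname{Gr}(\mathcal{R}^\star)\longmapsto \Gamma[\bar\mu, \bar m](\mu, m)$$
is continuous. Take a sequence of Nash equilibria $(\mu^n, m^n)_{n\geq 1}\subset \mathcal{N}^\star$ converging to some limit $(\mu, m)$; by closedness of $\mathcal{N}^\star$ the limit is again a Nash equilibrium. By definition of an LP MFG Nash equilibrium, $(\mu^n, m^n)\in \mathcal{R}[m^n]=\mathcal{R}^\star(\mu^n, m^n)$, and likewise $(\mu, m)\in \mathcal{R}[m]=\mathcal{R}^\star(\mu, m)$. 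Hence both diagonal points $((\mu^n, m^n),(\mu^n, m^n))$ and $((\mu, m),(\mu, m))$ lie in $\operatorname{Gr}(\mathcal{R}^\star)$, and the former converges to the latter. Applying the joint continuity recalled above yields
$$\Phi(\mu^n, m^n)=\Gamma[\mu^n, m^n](\mu^n, m^n)\underset{n\to\infty}{\longrightarrow}\Gamma[\mu, m](\mu, m)=\Phi(\mu, m),$$
so $\Phi$ is sequentially continuous, hence continuous, on $\mathcal{N}^\star$.

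With these ingredients the proof closes at once: $\Phi$ is a real-valued continuous function on the nonempty compact metrizable set $\mathcal{N}^\star$, so it attains its supremum, i.e.\ there exists $(\mu^\star, m^\star)\in \mathcal{N}^\star$ with $\Phi(\mu, m)\leq \Phi(\mu^\star, m^\star)$ for every $(\mu, m)\in \mathcal{N}^\star$, which is exactly the asserted maximality of the Nash value. The one point that genuinely requires care — and which I regard as the main obstacle — is guaranteeing that the diagonal evaluation $((\mu, m),(\mu, m))$ remains inside $\operatorname{Gr}(\mathcal{R}^\star)$ in the limit; this is precisely where the closedness of $\mathcal{N}^\star$ (a consequence of Theorem \ref{Nash_exist_LP}) is indispensable, since for a general pair $(\mu, m)$ one has no reason for $(\mu, m)\in \mathcal{R}[m]$ to hold. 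Once closedness is invoked, the continuity of $\Phi$ follows mechanically from the already-proven continuity of $\Gamma$ on the graph of $\mathcal{R}^\star$, and no new estimate is needed.
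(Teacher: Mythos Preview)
Your proof is correct and follows essentially the same approach as the paper: use Theorem \ref{Nash_exist_LP} to get that $\mathcal{N}^\star$ is nonempty and compact, show that the Nash value functional $(\mu,m)\mapsto\Gamma[\mu,m](\mu,m)$ is continuous by reusing the joint continuity of $\Gamma$ on $\operatorname{Gr}(\mathcal{R}^\star)$ established in the proof of Theorem \ref{Nash_exist_LP}, and conclude by the extreme value theorem. Your explicit diagonal argument and the observation that closedness of $\mathcal{N}^\star$ is what keeps the limit point inside $\operatorname{Gr}(\mathcal{R}^\star)$ are exactly the details underlying the paper's terse ``as in Theorem \ref{Nash_exist_LP}, we can show that $v$ is continuous.''
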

\begin{proof}
By Theorem \ref{Nash_exist_LP}, the set $\mathcal{N}^\star$ is compact and nonempty. Consider the functional $v:\mathcal{N}^\star\rightarrow \mathbb R$ defined by
$$v(\mu, m)=\Gamma[\mu, m](\mu, m).$$
As in Theorem \ref{Nash_exist_LP}, we can show that $v$ is continuous. By compactness of $\mathcal{N}^\star$ and continuity of $v$, we conclude the existence of a maximizer.
\end{proof}

\paragraph{Selection of equilibria} In both cases we have not proved uniqueness of Nash equilibria, we study only the Nash value. The natural question arising in this context is how to select the equilibria. In \cite{delarue2018} the authors propose several ways of choosing equilibria in a particular model of MFGs, one of them is to choose the equilibria by maximizing the Nash value. We have shown in Proposition \ref{maxNash} that this method is always possible under our assumptions.

\subsection{Relation with \textit{MFG equilibria in the weak formulation}}

In this section we show the equivalence between linear programming MFG equilibria and MFGs in the weak formulation as defined below.

{
\begin{definition}[\textit{Weak MFG solution with strict optimal stopping/control}]
For $(\mu, m)\in \mathcal{P}([0, T]\times \bar{\mathcal{O}})\times V$, define $\mathcal{U}^W[\mu, m]$ as the set of tuples $U=(\Omega, \mathcal F, \mathbb F, \mathbb P, W, \alpha, \tau, X)$ such that $(\Omega, \mathcal{F}, \mathbb F, \mathbb P)$ is a filtered probability space, $\tau$ is an $\mathbb F$-stopping time such that $\tau\leq T$ $\mathbb P$-a.s., $\alpha$ is an $\mathbb{F}$-progressively measurable process with values in $A$, $W$ is an $\mathbb F$-Brownian motion, $X$ is an $\mathbb F$-adapted process such that
$$dX_t= b(t, X_t, m_t, \alpha_t)dt + \sigma(t, X_t, m_t, \alpha_t)dW_t,\quad t\leq \tau, \quad \mathbb P \circ X_0^{-1}= m_0^*.$$
Let $\mathcal{H}^W[\mu, m]:\mathcal{U}^W[\mu, m]\rightarrow \mathbb R$ defined by
$$\mathcal{H}^W[\mu, m](U)=\mathbb{E}^{\mathbb P}\left[\int_{0}^{\tau \wedge \tau_{\mathcal{O}}^{X}} f\left(t, X_{t}, m_t, \alpha_t \right)dt  + g\left(\tau\wedge \tau_{\mathcal{O}}^{X}, X_{\tau\wedge \tau_{\mathcal{O}}^{X}}, \mu\right)\right]$$
for all $U=(\Omega, \mathcal F, \mathbb F, \mathbb P, W, \alpha, \tau, X)\in\mathcal{U}^W[\mu, m]$. The value of the optimization problem in the weak formulation with strict optimal stopping/control associated to $(\mu, m)$ is defined by 
\begin{equation}\label{value weak strict 2}
V^W[\mu, m] := \sup_{U\in \mathcal{U}^W[\mu, m]} \mathcal{H}^W[\mu, m](U).    
\end{equation}
Moreover, we say that $U^\star=(\Omega, \mathcal F, \mathbb F, \mathbb P, W, \alpha, \tau, X)$ is a \textit{weak MFG Nash equilibrium with strict control} if $U^\star\in \mathcal{U}^W[\mu^\star, m^\star]$, where
\begin{align}\label{c1}
   m_{t}^\star(B \times C)=\mathbb{E}^\mathbb P\left[\mathds{1}_B(X_{t}) \mathds{1}_C(\alpha_t) \textbf{1}_{t \leq \tau\wedge \tau_{\mathcal{O}}^X}\right], \quad B \in \mathcal{B}(\bar{\mathcal{O}}), \quad C \in \mathcal{B}(A), \quad t \in[0, T], 
\end{align}
\begin{align}\label{c2}
\mu^\star=\mathbb P \circ \left(\tau\wedge \tau_{\mathcal{O}}^X, X_{\tau\wedge \tau_{\mathcal{O}}^X}\right)^{-1},
\end{align}
and 
\begin{align}\label{c3}
\mathcal{H}^{W}[\mu^\star, m^\star](U^\star)=V^{W}[\mu^\star, m^\star].
\end{align}
\end{definition}

\begin{definition}[\textit{Weak MFG solution with strict optimal stopping and relaxed control}]
For $(\mu, m)\in \mathcal{P}([0, T]\times \bar{\mathcal{O}})\times V$, define $\mathcal{U}^R[\mu, m]$ as the set of tuples $U=(\Omega, \mathcal F, \mathbb F, \mathbb P, M, \nu, \tau, X)$ such that $(\Omega, \mathcal{F}, \mathbb F, \mathbb P)$ is a filtered probability space, $\tau$ is an $\mathbb F$-stopping time such that $\tau\leq T$ $\mathbb P$-a.s., $\nu$ is an $\mathbb{F}$-progressively measurable process with values in $\mathcal{P}(A)$, $M$ is a continuous $\mathbb F$-martingale measure such that $M^\tau$ has intensity $\nu_t(da)\mathds{1}_{t\leq\tau}dt$, $X$ is an $\mathbb F$-adapted process such that
$$dX_t= \int_A b(t, X_t, m_t, a)\nu_t(da)dt + \int_A \sigma(t, X_t, m_t, a)M(dt, da),\quad t\leq \tau, \quad \mathbb P \circ X_0^{-1}= m_0^*.$$
Let $\mathcal{H}^R[\mu, m]:\mathcal{U}^R[\mu, m]\rightarrow \mathbb R$ defined by
$$\mathcal{H}^R[\mu, m](U)=\mathbb{E}^{\mathbb P}\left[\int_{0}^{\tau \wedge \tau_{\mathcal{O}}^{X}} \int_A f\left(t, X_{t}, m_t, a \right) \nu_t(da)dt  + g\left(\tau\wedge \tau_{\mathcal{O}}^{X}, X_{\tau\wedge \tau_{\mathcal{O}}^{X}}, \mu\right)\right]$$
for all $U=(\Omega, \mathcal F, \mathbb F, \mathbb P, M, \nu, \tau, X)\in\mathcal{U}^R[\mu, m]$. The value of the optimization problem in the weak formulation with strict optimal stopping and relaxed control associated to $(\mu, m)$ is defined by 
\begin{equation}\label{value weak relaxed 2}
V^R[\mu, m] := \sup_{U\in \mathcal{U}^R[\mu, m]} \mathcal{H}^R[\mu, m](U).    
\end{equation}
Moreover, we say that $U^\star=(\Omega, \mathcal F, \mathbb F, \mathbb P, M, \nu, \tau, X)$ is a \textit{weak MFG Nash equilibrium with relaxed control} if $U^\star\in \mathcal{U}^R[\mu^\star, m^\star]$, where
\begin{align}\label{c1}
   m_{t}^\star(B \times C)=\mathbb{E}^\mathbb P\left[\textbf{1}_B(X_{t}) \nu_t(C) \textbf{1}_{t \leq \tau\wedge \tau_{\mathcal{O}}^X}\right], \quad B \in \mathcal{B}(\bar{\mathcal{O}}), \quad C \in \mathcal{B}(A), \quad t \in[0, T], 
\end{align}
\begin{align}\label{c2}
\mu^\star=\mathbb P \circ \left(\tau\wedge \tau_{\mathcal{O}}^X, X_{\tau\wedge \tau_{\mathcal{O}}^X}\right)^{-1},
\end{align}
and 
\begin{align}\label{c3}
\mathcal{H}^{R}[\mu^\star, m^\star](U^\star)=V^{R}[\mu^\star, m^\star].
\end{align}
\end{definition}
}

The above definition is equivalent to the following formulation of MFG equilibrium via the \textit{controlled/stopped martingale problem}.

\begin{definition}[\textit{MFG equilibrium via the controlled/stopped martingale problem}]
Find a filtered probability space $(\Omega, \mathcal{F}, \mathbb F , \mathbb P)$, an $\mathbb F$-stopping time $\tau$ such that $\tau\leq T$ $\mathbb P$-a.s., an $\mathbb{F}$-progressively measurable process $(\nu_t(da))_{t \geq 0}$ with values in $\mathcal{P}(A)$ and an adapted process $X$ such that 
\begin{enumerate}[(1)]
\item $\mathbb P \circ X_0^{-1}=m_0^*$.
\item For all $\varphi\in C^2_b(\mathbb R)$, the process $(M_{t\wedge \tau}(\varphi))_{t\geq 0}$ is an $(\mathbb F, \mathbb P)$-martingale, where
$$M_t(\varphi):=\varphi(X_t)- \int_0^t\int_A \mathcal{L}\varphi(s, X_s, m_s, a)\nu_s(da)ds,$$
and
$$m_{t}(B \times C)=\mathbb{E}^\mathbb P\left[\textbf{1}_B(X_{t}) \nu_t(C) \textbf{1}_{t \leq \tau\wedge \tau_{\mathcal{O}}^X}\right], \quad B \in \mathcal{B}(\bar{\mathcal{O}}), \quad C \in \mathcal{B}(A), \quad t \in[0, T].$$
\item If $(\Omega', \mathcal{F}', \mathbb F', \mathbb P')$ is another filtered probability space, $\tau'$ an $\mathbb F'$-stopping time such that $\tau'\leq T$ $\mathbb P'$-a.s., $(\nu'_t(da))_{t \geq 0}$ an $\mathbb{F}'$-progressively measurable process with values in $\mathcal{P}(A)$, and an adapted process $X'$ such that $\mathbb P'\circ (X_0')^{-1}=m_0^*$ and for all $\varphi\in C^2_b(\mathbb R)$, the process $(M_{t\wedge \tau'}'(\varphi))_{t\geq 0}$ is an $(\mathbb F', \mathbb P')$-martingale, where
$$M_t'(\varphi):=\varphi(X_t')- \int_0^t\int_A \mathcal{L}\varphi(s, X_s', m_s, a)\nu_s'(da)ds,$$
then,
\begin{align*}
&\mathbb{E}^{\mathbb P'}\left[\int_{0}^{\tau'\wedge \tau_{\mathcal{O}}^{X'}} \int_A f\left(t, X_{t}', m_t, a\right)\nu'_t(da)dt  + g\left(\tau'\wedge \tau_{\mathcal{O}}^{X'}, X_{\tau'\wedge \tau_{\mathcal{O}}^{X'}}', \mu\right)\right] \\
&\leq \mathbb{E}^{\mathbb P}\left[\int_{0}^{\tau\wedge \tau_{\mathcal{O}}^X } \int_A f\left(t, X_{t}, m_t, a\right)\nu_t(da)dt  + g\left(\tau\wedge \tau_{\mathcal{O}}^X, X_{\tau\wedge \tau_{\mathcal{O}}^X}, \mu\right)\right],
\end{align*}
where
$$\mu=\mathbb P \circ \left(\tau\wedge \tau_{\mathcal{O}}^X, X_{\tau\wedge \tau_{\mathcal{O}}^X}\right)^{-1}.$$
\end{enumerate}
\end{definition}

\begin{remark}This definition is also equivalent to the problem of finding an MFG equilibrium via the controlled/stopped martingale problem on the canonical space (see \cite{lacker2015}), where the optimization is considered over the set of probabilities on the canonical space instead of all the tuples $(\Omega, \mathcal{F}, \mathbb F , \mathbb P, \tau, \nu, X)$. We refer to \cite{elkaroui2015}, p. 18, for more details on this equivalence.
\end{remark}

\begin{theorem}[\textit{Equivalence between LP MFG equilibria and weak MFG equilibria}]\label{Nash_exist_weak}
Suppose Assumption \ref{as_mfg} with either (5)(b) or (5)(c) holding true. Then, the LP MFG problem and the weak MFG problem are equivalent. More specifically, 
\begin{itemize}
\item[(i)] Given an \textit{LP MFG Nash equilibrium} $(\mu^\star, m^\star)$, there exists a \textit{weak MFG Nash equilibrium} (with Markovian relaxed control) $U^\star\in \mathcal{U}^R[\mu^\star, m^\star]$ such that
\begin{align}\label{c4}
\Gamma[\mu^\star, m^\star] (\mu^\star, m^\star)= \mathcal{H}^{R}[\mu^\star, m^\star](U^\star).
\end{align}
\item[(ii)] Given $U^\star$  a \textit{weak MFG Nash equilibrium}, that is $U^\star\in \mathcal{U}^R[\mu^\star, m^\star]$, with $m^\star$ (resp. $\mu^\star$) given by $\eqref{c1}$ (resp. $\eqref{c2}$), then $(\mu^\star, m^\star)$ is an \textit{LP MFG Nash equilibrium} and \eqref{c4} holds.
\end{itemize} 
\end{theorem}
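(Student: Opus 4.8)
The whole argument rests on a single reduction: once the mean-field arguments are frozen at the equilibrium measures $(\mu^\star, m^\star)$, the coefficients $b(t, x, m^\star_t, a)$, $\sigma(t, x, m^\star_t, a)$, $f(t, x, m^\star_t, a)$ and the terminal reward $g(t, x, \mu^\star)$ no longer depend on any measure, and by the structure imposed in Assumption \ref{as_mfg}(2)--(3) they satisfy Assumption \ref{as_single_agent}; moreover, under (5)(b) or (5)(c) they satisfy Assumption \ref{as exit time}. Consequently $\mathcal{R}[m^\star]$, $\Gamma[\mu^\star, m^\star]$, $\mathcal{U}^R[\mu^\star, m^\star]$ and $V^R[\mu^\star, m^\star]$ coincide with the objects $\mathcal{R}$, $\Gamma$, $\mathcal{U}^R$ and $V^R$ of Section 2 built from these frozen data. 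The MFG equivalence therefore reduces to the single-agent equivalence between the LP formulation and the relaxed weak formulation, and the plan is to invoke the single-agent results of Section 2 and the Appendix verbatim, with the frozen coefficients. In particular I will repeatedly use the frozen value identity $V^R[\mu^\star, m^\star]=\sup_{(\mu, m)\in\mathcal{R}[m^\star]}\Gamma[\mu^\star, m^\star](\mu, m)$, which follows from Step 1 of the proof of Theorem \ref{eq values}; note that that step uses only Proposition \ref{SDE to measures} and Theorem \ref{SDE rep}, hence holds under Assumption \ref{as exit time} rather than requiring $\mathcal{O}=\mathbb{R}$.

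For (i), start from an LP MFG Nash equilibrium $(\mu^\star, m^\star)\in\mathcal{R}[m^\star]$, which by definition maximizes $\Gamma[\mu^\star, m^\star]$ over $\mathcal{R}[m^\star]$, so that $\Gamma[\mu^\star, m^\star](\mu^\star, m^\star)$ equals the frozen LP value. Disintegrating $m^\star_t(dx, da)\,dt=\nu^\star_{t, x}(da)\,m^\star_t(dx, A)\,dt$ and applying the SDE representation Theorem \ref{SDE rep} to the frozen data exactly as in the proof of Theorem \ref{single agent weak}, I obtain a tuple $U^\star=(\Omega, \mathcal F, \mathbb F, \mathbb P, M, \nu^\star_{\cdot, X_\cdot}, \tau, X)$ with Markovian relaxed control whose occupation measures are precisely $m^\star$ and $\mu^\star$; since $\tau\le \tau_\mathcal{O}^X\wedge T$, conditions \eqref{c1}--\eqref{c2} hold and $U^\star\in\mathcal{U}^R[\mu^\star, m^\star]$, while by construction $\mathcal{H}^R[\mu^\star, m^\star](U^\star)=\Gamma[\mu^\star, m^\star](\mu^\star, m^\star)$, which is \eqref{c4}. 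The optimality \eqref{c3} then follows from the frozen value identity: $\mathcal{H}^R[\mu^\star, m^\star](U^\star)=\Gamma[\mu^\star, m^\star](\mu^\star, m^\star)=V^R[\mu^\star, m^\star]$, so $U^\star$ is a weak MFG Nash equilibrium.

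For (ii), let $U^\star\in\mathcal{U}^R[\mu^\star, m^\star]$ be a weak MFG Nash equilibrium with $m^\star, \mu^\star$ defined by \eqref{c1}--\eqref{c2}. Applying the admissibility Proposition \ref{SDE to measures} with the frozen coefficients shows that the occupation measures of $U^\star$ are $(\mu^\star, m^\star)$ itself, that $(\mu^\star, m^\star)\in\mathcal{R}[m^\star]$, and that $\mathcal{H}^R[\mu^\star, m^\star](U^\star)=\Gamma[\mu^\star, m^\star](\mu^\star, m^\star)$, giving \eqref{c4}. Combining the optimality \eqref{c3} of $U^\star$ with the frozen value identity yields
$$
\Gamma[\mu^\star, m^\star](\mu^\star, m^\star)=\mathcal{H}^R[\mu^\star, m^\star](U^\star)=V^R[\mu^\star, m^\star]=\sup_{(\mu, m)\in\mathcal{R}[m^\star]}\Gamma[\mu^\star, m^\star](\mu, m),
$$
which is exactly the statement that $(\mu^\star, m^\star)$ is an LP MFG Nash equilibrium.

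The one genuinely delicate point, which I expect to require care, is the verification that the frozen coefficients fall within the scope of the single-agent theory despite $t\mapsto m^\star_t$ being merely measurable: one must check that $t\mapsto b(t, x, m^\star_t, a)$ is measurable while $(x, a)\mapsto b(t, x, m^\star_t, a)$ is continuous for each fixed $t$, so that the frozen drift lies in $\mathcal{S}$, and likewise for $\sigma$ and $f$. This is precisely where Assumption \ref{as_mfg}(2)--(3) is essential: the factorization through $\bar b, \bar\sigma, \bar f, \bar g$, continuous in all variables for each fixed $t$, composed with the bounded measurable time-dependent moments $\int_{\bar{\mathcal{O}}\times A}\hat b(t, y)\,m^\star_t(dy, du)$ and their analogues, delivers exactly these regularity properties. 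Once this is in place, no new fixed-point or compactness argument is needed, since the equilibrium measures remain frozen throughout and the coupling has already been resolved by the hypothesis that $(\mu^\star, m^\star)$ (resp. $U^\star$) is an equilibrium.
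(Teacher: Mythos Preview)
Your proof is correct and follows the same route as the paper's own proof, which consists of a single sentence: ``Considering measure dependent coefficients, the equivalence follows from Proposition \ref{SDE to measures} and Theorem \ref{SDE rep}.'' You have simply spelled out the details the paper leaves implicit---in particular the verification that the frozen coefficients lie in $\mathcal{S}$ thanks to the factorized structure in Assumption \ref{as_mfg}(2)--(3), and the observation that Step 1 of Theorem \ref{eq values} requires only Assumption \ref{as exit time} rather than $\mathcal{O}=\mathbb R$.
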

\begin{proof}
Considering measure dependent coefficients, the equivalence follows from Proposition \ref{SDE to measures} and Theorem \ref{SDE rep}.
\end{proof}

\begin{corollary}
Suppose Assumption \ref{as_mfg} with either (5)(b) or (5)(c) holding true, then there exists a weak Nash equilibrium (with Markovian relaxed control).
\end{corollary}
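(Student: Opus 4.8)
The plan is to chain together the two main results of this subsection; the corollary asserts nothing beyond what they already provide, so no new machinery is needed and the argument reduces to a one-line composition.

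First I would invoke Theorem \ref{Nash_exist_LP}, which guarantees that the set of LP MFG equilibria is compact and, crucially, nonempty. I would therefore fix an arbitrary LP MFG Nash equilibrium $(\mu^\star, m^\star)$. Next, I would observe that the hypotheses of the corollary---Assumption \ref{as_mfg} with either (5)(b) or (5)(c)---are precisely those under which Theorem \ref{Nash_exist_weak} holds. Applying part (i) of that theorem to $(\mu^\star, m^\star)$ yields a weak MFG Nash equilibrium $U^\star \in \mathcal{U}^R[\mu^\star, m^\star]$ with Markovian relaxed control, satisfying the value-matching relation \eqref{c4}. This $U^\star$ is exactly the desired weak Nash equilibrium with Markovian relaxed control, which completes the proof.

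Since both ingredients are already established, there is no genuine obstacle in the corollary itself; it is purely a packaging statement. The substantive difficulties live upstream. The nonemptiness asserted in Theorem \ref{Nash_exist_LP} rests on the Kakutani--Fan--Glicksberg fixed-point theorem together with the continuity of the set-valued map $\mathcal{R}^\star$ established in Proposition \ref{Rcont}, while the passage to a \emph{Markovian} relaxed control in Theorem \ref{Nash_exist_weak} relies on disintegrating the optimal flow $m^\star$ into a kernel $\nu^\star_{t,x}$ and on the SDE representation result (Theorem \ref{SDE rep}). The corollary simply records the conjunction of these two facts, and its one subtlety is merely to verify that the assumptions feeding Theorem \ref{Nash_exist_weak} coincide with those assumed here, which they do by construction.
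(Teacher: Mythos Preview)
Your proposal is correct and follows exactly the same approach as the paper's proof: invoke Theorem \ref{Nash_exist_LP} to obtain an LP MFG Nash equilibrium, then apply Theorem \ref{Nash_exist_weak}(i) to convert it into a weak MFG Nash equilibrium with Markovian relaxed control. The paper's proof is essentially the one-line composition you describe, without the additional expository commentary on where the upstream difficulties lie.
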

\begin{proof}
By Theorem \ref{Nash_exist_LP} we get the existence of LP MFG Nash equilibrium, which implies by Theorem \ref{Nash_exist_weak} the existence of a weak Nash equilibrium (with Markovian relaxed control).
\end{proof}

\begin{remark}
In the case when there is only control, we recover the existence result of Markovian relaxed controls of \cite[ Corollary 3.8]{lacker2015}. In that paper, this result is shown by using the Mimicking Theorem  (or Markovian projection theorem) from Corollary 3.7. in \cite{brunick2013}, while in our case this result follows naturally by the disintegration
$$m_t(dx, da)dt=\nu_{t, x}(da)m_t(dx, A)dt.$$
\end{remark}

{
\begin{proposition}
Suppose $\mathcal{O}=\mathbb R$ and let Assumption \ref{as_mfg} with either (5)(b) or (5)(c) holding true. Let $(\mu^\star, m^\star)$ be an LP Nash equilibrium. Consider the value function given by 
\begin{equation}\label{value_fct_mfg}
v^\star(t, x)=\sup _{\tau \in \mathcal{T}_t, \alpha \in \mathbb{A}_t} \mathbb{E}\left[\int_{t}^{\tau \wedge \tau_{\mathcal{O}}^{t, x, \alpha}} f\left(s, X_{s}^{t, x, m^\star, \alpha}, m_t^\star, \alpha_s\right) ds + g\left(\tau \wedge \tau_{\mathcal{O}}^{t, x, m^\star,\alpha}, X_{\tau \wedge \tau_{\mathcal{O}}^{t, x, m^\star, \alpha}}^{t, x, m^\star, \alpha}, \mu^\star\right)\right],
\end{equation}
where $(t, x) \in[0, T] \times \mathbb{R}$, $\tau_{\mathcal{O}}^{t, x, m^\star, \alpha} :=\inf \left\{s \geq t : \, X_{s}^{t, x, m^\star, \alpha} \notin \mathcal{O}\right\}$ and $(X^{t, x, m^\star, \alpha}_s)_{s\in [t, T]}$ is the unique  strong solution of the following stochastic differential equation:
$$X^{t, x, m^\star, \alpha}_{s}=x + \int_t^s b\left(r, X^{t, x, m^\star, \alpha}_{r}, m_r^\star, \alpha_r \right) dr+\int_t^s\sigma\left(r, X^{t, x, m^\star, \alpha}_{r}, m_r^\star, \alpha_r\right) dW_{r}, \quad s\in [t, T].$$
We have the following equality:
$$\int_\mathcal{O}v^\star(0, x)m_0^*(dx)=V^W[\mu^\star, m^\star]=V^R[\mu^\star, m^\star]=\Gamma[\mu^\star, m^\star](\mu^\star, m^\star).$$
\end{proposition}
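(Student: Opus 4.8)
The plan is to \emph{freeze the mean field} at the equilibrium and reduce the statement to the single-agent equivalence already established in Theorem \ref{eq values}. Concretely, for the fixed pair $(\mu^\star, m^\star)$ introduce the frozen coefficients
$$b^\star(t,x,a):=b(t,x,m_t^\star,a),\quad \sigma^\star(t,x,a):=\sigma(t,x,m_t^\star,a),\quad f^\star(t,x,a):=f(t,x,m_t^\star,a),$$
and $g^\star(t,x):=g(t,x,\mu^\star)$. With these coefficients the value function $v^\star$ of \eqref{value_fct_mfg} is exactly the single-agent strong value function $v$ of \eqref{value_fct}; the sets $\mathcal{U}^W[\mu^\star,m^\star]$ and $\mathcal{U}^R[\mu^\star,m^\star]$ coincide with the single-agent sets $\mathcal{U}^W$ and $\mathcal{U}^R$, so that $V^W[\mu^\star,m^\star]=V^W$ and $V^R[\mu^\star,m^\star]=V^R$; and the single-agent constraint set $\mathcal{R}$ built from \eqref{constraint_fp} with the frozen generator coincides with $\mathcal{R}[m^\star]$, while the single-agent reward $\Gamma$ coincides with $\Gamma[\mu^\star,m^\star](\cdot)$ (this last identification being precisely the content of the remark following the definition of the LP MFG problem).

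First I would check that the frozen coefficients satisfy Assumption \ref{as values}. For each fixed $t$ the mean-field integral $c_b(t):=\int_{\bar{\mathcal{O}}\times A}\hat b(t,y)m_t^\star(dy,du)$ (and its analogues $c_\sigma(t)$, $c_f(t)$ built from $\hat\sigma$, $\hat f$) is a constant vector in $\mathbb R^d$, so that $b^\star(t,x,a)=\bar b\big(t,x,c_b(t),a\big)$. Hence $b^\star(t,\cdot,\cdot)$ inherits from Assumption \ref{as_mfg}(2) continuity in $(x,a)$ for each $t$ and boundedness, and from Assumption \ref{as_mfg}(1) the Lipschitz property in $x$ uniform in $(t,a)$; joint measurability follows from the Carathéodory structure of $\bar b$ together with the measurability of $t\mapsto c_b(t)$, which holds because $m^\star\in V$ and $\hat b$ is continuous and bounded. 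The same reasoning applies to $\sigma^\star$ and $f^\star$, so that $b^\star,\sigma^\star,f^\star\in\mathcal{S}$; moreover $g^\star(t,x)=\bar g\big(t,x,\int_{[0,T]\times\bar{\mathcal{O}}}\hat g(s,y)\mu^\star(ds,dy)\big)$ is continuous and bounded since $\int\hat g\,d\mu^\star$ is a fixed vector. Finally Assumption \ref{as_mfg}(4) furnishes the required integrability of $m_0^*$, and with $\mathcal{O}=\mathbb R$ the boundary plays no role. Thus Assumption \ref{as values} holds for the frozen problem.

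With this in hand, Theorem \ref{eq values} applied to the frozen single-agent problem yields $V^S=V^W=V^R=V^{LP}$, where, by the identifications above, $V^S=\int_{\mathcal O}v^\star(0,x)m_0^*(dx)$, $V^W=V^W[\mu^\star,m^\star]$, $V^R=V^R[\mu^\star,m^\star]$, and $V^{LP}=\sup_{(\mu,m)\in\mathcal{R}[m^\star]}\Gamma[\mu^\star,m^\star](\mu,m)$. It then remains to identify $V^{LP}$ with the Nash value, which is immediate from the definition of an LP MFG Nash equilibrium: since $(\mu^\star,m^\star)\in\mathcal{R}[m^\star]$ and $\Gamma[\mu^\star,m^\star](\mu,m)\le\Gamma[\mu^\star,m^\star](\mu^\star,m^\star)$ for every $(\mu,m)\in\mathcal{R}[m^\star]$, the supremum defining $V^{LP}$ is attained at $(\mu^\star,m^\star)$, whence $V^{LP}=\Gamma[\mu^\star,m^\star](\mu^\star,m^\star)$. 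Chaining these equalities gives the claim.

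The step I expect to be the main (if modest) obstacle is the verification that the frozen coefficients genuinely lie in the class $\mathcal{S}$ of Assumption \ref{as values}: one must reconcile the joint-measurability and continuity-in-$(x,a)$ requirements with the fact that the equilibrium flow $t\mapsto m_t^\star$ is only measurable, not continuous, in time. The argument hinges precisely on the product structure of $b,\sigma,f$ imposed in Assumption \ref{as_mfg}(2), which confines the time-irregularity to the measurable scalar coefficients $c_b(t),c_\sigma(t),c_f(t)$ while preserving continuity in the spatial and control variables inherited from $\bar b,\bar\sigma,\bar f$.
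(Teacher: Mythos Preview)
Your proposal is correct and follows exactly the same approach as the paper: freeze the mean field at $(\mu^\star,m^\star)$, verify that the resulting single-agent coefficients satisfy Assumption~\ref{as values}, and invoke Theorem~\ref{eq values}. The paper's own proof is a single sentence to this effect; you have simply supplied the details it leaves implicit, in particular the careful verification that the frozen coefficients lie in $\mathcal{S}$ and the observation that the frozen $V^{LP}$ equals the Nash value because $(\mu^\star,m^\star)$ is a maximizer over $\mathcal{R}[m^\star]$.
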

\begin{proof}
Since $(\mu^\star, m^\star)$ is fixed in the functions $b$, $\sigma$, $f$ and $g$, we can apply Theorem \ref{eq values} noticing that Assumption \ref{as values} is satisfied.
\end{proof}}

\begin{proposition}
Suppose that Assumption \ref{as_mfg} with either (5)(b) or (5)(c) holding true and that for all $(t, x, (\mu,m))\in [0, T]\times \bar{\mathcal{O}}\times\mathcal{R}_0$, the subset
$$K[m](t, x):=\{(b(t, x, m_t, a), \sigma^2(t, x, m_t, a), z):a\in A, z\leq f(t, x, m_t, a)\}$$
of $\mathbb R \times\mathbb R_+ \times \mathbb R$ is convex. Then there exist a strict control LP Nash equilibrium and a weak Nash equilibrium with Markovian strict control.
\end{proposition}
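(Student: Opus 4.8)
The plan is to combine the existence of an LP MFG Nash equilibrium from Theorem~\ref{Nash_exist_LP} with the single-agent strict-control construction of Proposition~\ref{strict cont}, and then to exploit the special structure of the mean-field dependence in Assumption~\ref{as_mfg}(2) to argue that the strict-control projection does not destroy the equilibrium property. Concretely, I would first invoke Theorem~\ref{Nash_exist_LP} to obtain an LP MFG Nash equilibrium $(\mu^\star, m^\star)$, so that $(\mu^\star, m^\star)\in\mathcal{R}[m^\star]$ and it maximizes $(\mu,m)\mapsto\Gamma[\mu^\star,m^\star](\mu,m)$ over $\mathcal{R}[m^\star]$.

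Next I would freeze the mean field at $(\mu^\star, m^\star)$. The resulting optimization is exactly a single-agent LP problem with coefficients $b(t,x,m^\star_t,a)$, $\sigma(t,x,m^\star_t,a)$, $f(t,x,m^\star_t,a)$ and terminal reward $g(t,x,\mu^\star)$. One checks that these frozen coefficients satisfy Assumption~\ref{as_single_agent}: boundedness and the uniform Lipschitz property in $x$ come from Assumption~\ref{as_mfg}(1)--(2); continuity in $(x,a)$ for each fixed $t$ holds because $\int_{\bar{\mathcal O}\times A}\hat b(t,y)\,m^\star_t(dy,du)$ is a fixed vector for each $t$ while $\bar b(t,\cdot,\cdot,\cdot)$ is continuous, and likewise for $\sigma$, $f$ and $g$; finally $\int_{\mathcal O}\ln(1+|x|)\,m_0^*(dx)<\infty$ follows from Assumption~\ref{as_mfg}(4). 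Since $K[m^\star](t,x)$ is convex, Proposition~\ref{strict cont} then produces a measurable map $\alpha^\star:[0,T]\times\bar{\mathcal O}\to A$ and a strict control solution $\bar m_t(dx,da)=\delta_{\alpha^\star(t,x)}(da)\,m^\star_t(dx,A)$ with $(\mu^\star,\bar m)\in\mathcal{R}[m^\star]$ and $\Gamma[\mu^\star,m^\star](\mu^\star,m^\star)\le\Gamma[\mu^\star,m^\star](\mu^\star,\bar m)$.

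The crux of the argument, which I expect to be the main obstacle to state cleanly, is to verify that $(\mu^\star,\bar m)$ is itself an LP MFG Nash equilibrium rather than merely a strict-control maximizer of the problem frozen at $m^\star$. The key observation is that by Assumption~\ref{as_mfg}(2) the measure enters $b,\sigma,f$ only through integrals of the form $\int_{\bar{\mathcal O}\times A}\hat b(t,y)\,m_t(dy,du)=\int_{\bar{\mathcal O}}\hat b(t,y)\,m_t(dy,A)$, hence through the state-marginal $m_t(dx,A)$ alone. Since the strict-control projection preserves this marginal, i.e. $\bar m_t(dx,A)=m^\star_t(dx,A)$, we get $b(t,x,\bar m_t,a)=b(t,x,m^\star_t,a)$ and likewise for $\sigma$ and $f$; and $g(\cdot,\cdot,\mu^\star)$ is unchanged because $\mu^\star$ is fixed. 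Consequently $\mathcal{R}[\bar m]=\mathcal{R}[m^\star]$ and $\Gamma[\mu^\star,\bar m]=\Gamma[\mu^\star,m^\star]$ as functionals on $\mathcal{R}_0$. In particular $(\mu^\star,\bar m)\in\mathcal{R}[\bar m]$, and since $(\mu^\star,\bar m)\in\mathcal{R}[m^\star]$ forces $\Gamma[\mu^\star,m^\star](\mu^\star,\bar m)\le\Gamma[\mu^\star,m^\star](\mu^\star,m^\star)$, combining with the reverse inequality from Proposition~\ref{strict cont} yields $\Gamma[\mu^\star,\bar m](\mu^\star,\bar m)=\max_{\mathcal{R}[\bar m]}\Gamma[\mu^\star,\bar m]$. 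Thus $(\mu^\star,\bar m)$ is a strict control LP MFG Nash equilibrium, with the same Nash value as $(\mu^\star,m^\star)$.

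Finally, to obtain a weak MFG Nash equilibrium with Markovian strict control, I would apply the probabilistic representation Theorem~\ref{SDE rep} (equivalently Theorem~\ref{Nash_exist_weak}(i)) to $(\mu^\star,\bar m)$, mirroring the single-agent argument of Theorem~\ref{single agent weak} and Corollary~\ref{weak sol strict cont} with the mean field frozen at $(\mu^\star,\bar m)$. Because the disintegrating kernel of $\bar m$ is the Dirac mass $\nu_{t,x}=\delta_{\alpha^\star(t,x)}$, the resulting relaxed control is the Markovian strict control $\alpha_t=\alpha^\star(t,X_t)$; under Assumption~\ref{as_mfg}(5)(c) one has $\sigma\ge c_\sigma>0$ independent of $a$, so the martingale measure reduces to a Brownian integrator and the tuple lands in $\mathcal{U}^W[\mu^\star,\bar m]$, while under (5)(b) the unattainable-boundary hypothesis gives $\tau\le\tau_{\mathcal O}^X$ and the same representation applies after, if needed, a standard enlargement producing the driving Brownian motion. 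The equilibrium property of this tuple is inherited from Theorem~\ref{Nash_exist_weak}(i) applied to the LP equilibrium $(\mu^\star,\bar m)$, which completes the proof.
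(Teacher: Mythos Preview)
Your proposal is correct and follows essentially the same route as the paper. The paper's proof is a single sentence pointing to Proposition~\ref{strict cont} together with the observation that $b$, $\sigma^2$ and $f$ depend on the measure only through integrals $\int_{\bar{\mathcal O}\times A} h(t,x)\,m_t(dx,da)$ with $h$ independent of the control; you have unpacked this into the explicit marginal-preservation argument $\bar m_t(dx,A)=m^\star_t(dx,A)\Rightarrow\mathcal R[\bar m]=\mathcal R[m^\star]$ and $\Gamma[\mu^\star,\bar m]=\Gamma[\mu^\star,m^\star]$, which is exactly what that sentence encodes.
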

\begin{proof}
The proof is almost the same as that of Proposition \ref{strict cont}; it relies on the fact that the dependence of $b$, $\sigma^2$ and $f$ in the measure is of the form
$$\int_{\bar{\mathcal{O}}\times A}h(t, x)m_t(dx, da),$$
for some function $h$, which is independent of the control.
\end{proof}

\subsection{Relation with \textit{mixed solutions}}

In this subsection, to establish the link with PDE formulation, we shall need the following assumptions:
\begin{assumption}\label{as mixed sol} \leavevmode
\begin{enumerate}[(1)]
\item The domain $\mathcal{O}$ is a bounded open domain of class $C^2$.
\item The volatility $\sigma$  does not depend on the control $a$ and on the measure $m$, and is continuous on $[0, T]\times \bar{\mathcal{O}}$. Moreover, there exists $c_\sigma>0$ such that for all $(t, x)\in [0, T]\times \bar{\mathcal{O}}$, $\sigma(t, x)\geq c_\sigma$.
\item There exists $c_\sigma>0$ such that for all $(t, x)\in [0, T]\times \bar{\mathcal{O}}$, $\sigma(t, x)\geq c_\sigma$.
\item $f$ is measurable, bounded and {\color{black}continuous in $x $ on $\bar{\mathcal{O}}$,} uniformly with respect to $t$, $m$ and $a$.
\item For fixed $(t,x, m)\in [0, T]\times \bar{\mathcal{O}}\times \mathcal{M}(\bar{\mathcal{O}}\times A)$, $a\mapsto b(t,x,m, a)$ and $a\mapsto f(t,x,m,a)$ are continuous.
\item For a fixed $\mu\in \mathcal{P}([0, T]\times \bar{\mathcal{O}})$, $(t, x)\mapsto g(t, x, \mu)\in C^{1, 2}_b([0, T]\times \bar{\mathcal{O}})$ and $g(t, x, \mu)=0$ for $(t, x)\in (0, T)\times \partial \mathcal{O}$.
\item If $(\mu, m)\in \mathcal{R}[\hat m]$ for some $\hat m\in V$, then $m_t(dx, A)dt$ admits an square integrable density with respect to the Lebesgue measure on $[0, T]\times \bar{\mathcal{O}}$.
\end{enumerate} 
\end{assumption}

{
\begin{theorem}[\textit{Relation with mixed solutions}]\label{mixedsol}
Suppose Assumptions \ref{as_mfg} and \ref{as mixed sol} hold true. Let $(\mu^\star, m^\star)$ be an LP Nash equilibrium. Consider the value function given by \eqref{value_fct_mfg}. We have the following relations.
\begin{enumerate}[(1)]
\item Relation with the strong formulation:
$$\int_\mathcal{O}v^\star(0, x)m_0^*(dx)=\Gamma[\mu^\star, m^\star](\mu^\star, m^\star).$$
\item Relation with mixed solutions:
\begin{enumerate}[(a)]
\item 
$$\int_{\mathcal{S}^\star\times A} \left(f+\frac{\partial g}{\partial t}+\mathcal{L} g\right)(t, x, m_t^\star, \mu^\star, a) m_t^\star(dx, da) dt = 0,$$
with $\mathcal{S}^\star:=\{(t, x)\in [0, T]\times \mathcal{O}: v^\star(t, x)=g(t, x, \mu^\star)\}$.
\item 
$$-\int_{\mathcal{C}^\star\times A} f(t, x, m_t^\star, a)m_t^\star(dx, da)dt= \int_{\mathcal{C}^\star\times A} \left(\frac{\partial v}{\partial t}+\mathcal{L} v\right)(t, x, m_t^\star, a)m_t^\star(dx, da)dt,$$
where $\mathcal{C}^\star:=([0, T]\times \mathcal{O})\setminus \mathcal{S}^\star$.
\item For all $C^\infty$ functions $\phi$ such that $\operatorname{supp} (\phi)\subset \mathcal{C}^\star$, the following holds
\begin{equation*}
\int_{\mathcal{O}} \phi(0, x) m_{0}^{*}(dx)+\int_{0}^{T} \int_{\mathcal{O}\times A}\left(\frac{\partial \phi}{\partial t}+\mathcal{L} \phi\right)(t, x, m_t^\star, a) m_t^\star(dx, da) dt = 0.
\end{equation*}
\end{enumerate}
Note that (2)(c) holds true if and only if $\mu^\star(\mathcal{C}^\star)=0$, which is also equivalent to $\mu^\star(\mathcal{S}^\star\cup ([0, T]\times \partial \mathcal{O}))=1$.
\end{enumerate}
\end{theorem}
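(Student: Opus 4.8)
The plan is to reduce the statement to the single-agent analysis of the case $\mathcal{O}$ bounded by \emph{freezing} the equilibrium mean field $(\mu^\star, m^\star)$ in all coefficients. I would introduce the frozen data
$$\tilde b(t,x,a) := b(t, x, m_t^\star, a), \qquad \tilde\sigma(t,x) := \sigma(t,x), \qquad \tilde f(t,x,a) := f(t, x, m_t^\star, a), \qquad \tilde g(t,x) := g(t, x, \mu^\star),$$
using Assumption \ref{as mixed sol}(2) to suppress the (absent) dependence of $\sigma$ on $m$ and $a$. With these coefficients, the MFG constraint set $\mathcal{R}[m^\star]$ coincides term by term with the single-agent set $\mathcal{R}$ built from $(\tilde b, \tilde\sigma)$, since $\mathcal{L}u(t,x,m_t^\star,a)$ is exactly the frozen single-agent generator; likewise $\Gamma[\mu^\star, m^\star]$ coincides with the single-agent objective $\Gamma$ built from $(\tilde f, \tilde g)$, and $v^\star$ in \eqref{value_fct_mfg} is the value function of the frozen single-agent strong problem.

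The key observation is that, by the definition of an LP MFG Nash equilibrium, $(\mu^\star, m^\star) \in \mathcal{R}[m^\star]$ maximizes $\Gamma[\mu^\star, m^\star]$ over $\mathcal{R}[m^\star]$; hence $(\mu^\star, m^\star)$ is precisely an LP maximizer for the frozen single-agent problem. Once I check that the frozen data satisfy Assumption \ref{as PDE}, both conclusions will follow directly from Theorem \ref{PDE}. Part (1) is the single-agent identity $V^S = V^{LP}$, reading here $\int_{\mathcal{O}} v^\star(0,x) m_0^*(dx) = \Gamma[\mu^\star, m^\star](\mu^\star, m^\star)$, the latter because $(\mu^\star, m^\star)$ attains the supremum. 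Parts (2)(a)--(c), and the equivalent statement $\mu^\star(\mathcal{S}^\star \cup ([0,T]\times\partial\mathcal{O})) = 1$, are the corresponding assertions of Theorem \ref{PDE}(2), with the stopping set $\mathcal{S}^\star$ and continuation set $\mathcal{C}^\star$ identical to $\mathcal{S}$, $\mathcal{C}$ for the frozen value function $v^\star$.

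The bulk of the work, and the main obstacle, is thus verifying that $(\tilde b, \tilde\sigma, \tilde f, \tilde g)$ satisfy Assumption \ref{as PDE}. Items (1), (2) and (6) are immediate from Assumption \ref{as mixed sol}(1), (2), (6). For (3), the Lipschitz-in-$x$ and boundedness of $\tilde b, \tilde\sigma$ are inherited, uniformly in $m$, from Assumption \ref{as_mfg}(1)--(2); joint measurability in $(t,x,a)$ follows by writing $\tilde b(t,x,a) = \bar b\big(t, x, \int_{\bar{\mathcal{O}}\times A}\hat b(t,y) m_t^\star(dy,du), a\big)$ and using that $t \mapsto \int \hat b(t,y) m_t^\star(dy,du)$ is measurable (since $m^\star \in V$) together with the continuity of $\bar b$ in its arguments for fixed $t$. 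Items (4) and (5) transfer the continuity of $\tilde f$ in $x$ uniformly in $(t,a)$, and of $\tilde b, \tilde f$ in $a$, from Assumption \ref{as mixed sol}(4)--(5). Item (7) is exactly Assumption \ref{as mixed sol}(7) with $\hat m = m^\star$, since $\mathcal{R}[m^\star]$ is the frozen single-agent $\mathcal{R}$.

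A delicate point worth flagging is that $t \mapsto m_t^\star$ is only measurable, so $\tilde b$ and $\tilde f$ are merely \emph{measurable} in $t$; this is precisely the situation addressed in the remark following Theorem \ref{theoremstrong}, where measurability in $t$ suffices for the measurable-selection argument underlying the HJBVI \eqref{HJBVI}, so that Theorem \ref{theoremstrong}, and therefore Theorem \ref{PDE}, still apply. With Assumption \ref{as PDE} verified, the theorem follows with no further argument.
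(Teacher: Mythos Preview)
Your proposal is correct and follows exactly the paper's own approach: freeze the equilibrium mean field $(\mu^\star, m^\star)$ in the coefficients and apply Theorem \ref{PDE} to the resulting single-agent problem, after noting that the frozen data satisfy Assumption \ref{as PDE}. The paper's proof is in fact a one-line reference to this reduction, whereas you have carefully spelled out the verification of each item of Assumption \ref{as PDE} and flagged the measurability-in-$t$ issue handled by the remark after Theorem \ref{theoremstrong}.
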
}

\begin{proof}
The proof follows by applying Theorem \ref{PDE} taking into account that the inputs $(b, \sigma, f, g)$ depend now on $(m^\star, \mu^\star)$ but still satisfy the required assumptions.
\end{proof}

\begin{corollary}
Let Assumptions \ref{as_mfg} and \ref{as mixed sol} hold true and assume that for all $(t, x, (\mu,m))\in [0, T]\times \bar{\mathcal{O}}\times\mathcal{R}_0$, the subset
$$K[m](t, x):=\{(b(t, x, m_t, a), \sigma^2(t, x, m_t, a), z):a\in A, z\leq f(t, x, m_t, a)\}$$
of $\mathbb R \times\mathbb R_+ \times \mathbb R$ is convex. Let $(\mu^\star, m^\star)$ an LP MFG equilibrium, then, there exists $\alpha^\star(t,x)$ such that $\bar{m}_t(dx) \equiv m^\star_t(dx,A)$ satisfies the following system:
$$
\begin{cases}
\int_{\mathcal{S}} \left(f+\frac{\partial g}{\partial t}+\mathcal{L} g\right)(t, x,m^\star_t, \mu^\star, \alpha^\star(t,x)) \bar{m}_t(dx) dt = 0,\\
\alpha^\star(t, x)\in \arg\max_{a\in A}\left[\mathcal{L} v(t, x, m_t^\star, a) + f(t, x, m_t^\star, a)\right] \quad \bar{m}_t(dx) dt-a.e. \text{ on } \mathcal{C}, \nonumber \\
\int_{\mathcal{O}} \phi(0, x) m_{0}^{*}(dx)+\int_{0}^{T} \int_{\mathcal{O}\times A}\left(\frac{\partial \phi}{\partial t}+\mathcal{L} \phi\right)(t, x, m_t^\star \alpha^\star(t, x)) \bar{m}_t(dx) dt = 0, \\ \text{for all } C^\infty \text{ functions } \phi \text{ such that } \operatorname{supp} (\phi)\subset \mathcal{C}.
\end{cases}
$$
\end{corollary}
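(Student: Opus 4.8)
The plan is to follow the proof of Proposition \ref{strict cont} almost verbatim, now applied to the equilibrium measure $m^\star$ and combined with the three mixed-solution relations of Theorem \ref{mixedsol}. Starting from the LP MFG equilibrium $(\mu^\star, m^\star)$, I would disintegrate $m_t^\star(dx,da)\,dt = \nu_{t,x}^\star(da)\,\bar m_t(dx)\,dt$ with $\bar m_t(dx) = m_t^\star(dx,A)$. For each $(t,x)$, the assumed convexity of $K[m^\star](t,x)$ (closed as in Proposition 3.5 of \cite{haussmann1990}) and Theorem I.6.13 of \cite{warga1972} place the barycenter
$$\left(\int_A b(t,x,m_t^\star,a)\,\nu_{t,x}^\star(da),\ \sigma^2(t,x),\ \int_A f(t,x,m_t^\star,a)\,\nu_{t,x}^\star(da)\right)$$
in $K[m^\star](t,x)$, where the middle coordinate carries no $a$- or $m$-dependence by Assumption \ref{as mixed sol}(2). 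The measurable selection theorem (Theorem A.9 in \cite{haussmann1990}) then produces a measurable $\alpha^\star:[0,T]\times\bar{\mathcal O}\to A$ with
$$\int_A b(t,x,m_t^\star,a)\,\nu_{t,x}^\star(da) = b(t,x,m_t^\star,\alpha^\star(t,x)), \qquad \int_A f(t,x,m_t^\star,a)\,\nu_{t,x}^\star(da) \le f(t,x,m_t^\star,\alpha^\star(t,x)).$$

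With $\alpha^\star$ in hand, I would transfer the three relations of Theorem \ref{mixedsol} from the relaxed control $\nu^\star$ to the strict control by integrating each integrand against $\nu_{t,x}^\star$. The Fokker--Planck relation (2)(c) is immediate: since $\partial_t\phi + \mathcal L\phi$ depends on $a$ only through $b(t,x,m_t^\star,a)\,\partial_x\phi$, the exact barycenter identity for $b$ gives $\int_A(\partial_t\phi + \mathcal L\phi)\,\nu_{t,x}^\star(da) = (\partial_t\phi + \mathcal L\phi)(t,x,m_t^\star,\alpha^\star(t,x))$, so the relation for $\bar m$ holds verbatim. For the arg-max relation, I would first note that Theorem \ref{mixedsol}(2)(b), combined with the HJB inequality $\partial_t v^\star + \mathcal L v^\star + f \le 0$ on $\mathcal C^\star$ for the value function $v^\star$ of \eqref{value_fct_mfg}, forces the nonpositive integrand $f + \partial_t v^\star + \mathcal L v^\star$ to vanish $m_t^\star\,dt$-a.e.\ on $\mathcal C^\star$, i.e.\ $\nu_{t,x}^\star$ is supported on $\arg\max_a[\mathcal L v^\star + f]$. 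Writing $\kappa(t,x) = \sup_a[b(t,x,m_t^\star,a)\partial_x v^\star + f(t,x,m_t^\star,a)]$ and integrating the identity $b(\cdot,a)\partial_x v^\star + f(\cdot,a) = \kappa$ against $\nu_{t,x}^\star$, the barycenter identities give $b(\cdot,\alpha^\star)\partial_x v^\star + f(\cdot,\alpha^\star)\ge\kappa$, hence equality and $\alpha^\star(t,x)\in\arg\max_a[\mathcal L v^\star + f]$, $\bar m_t\,dt$-a.e.\ on $\mathcal C^\star$.

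The hard part will be the complementarity relation on the contact set $\mathcal S^\star$, because there the control enters through $f$, for which the selection yields only the inequality $\int_A f\,\nu_{t,x}^\star \le f(\cdot,\alpha^\star)$, not an equality. One direction is free: integrating against $\nu_{t,x}^\star$ and invoking Theorem \ref{mixedsol}(2)(a) gives
$$\int_{\mathcal S^\star}\Big(f+\tfrac{\partial g}{\partial t}+\mathcal L g\Big)(t,x,m_t^\star,\mu^\star,\alpha^\star(t,x))\,\bar m_t(dx)\,dt \ge 0.$$
For the reverse inequality I would exploit the obstacle-problem structure, exactly as in the proof of Theorem \ref{PDE}: on $\mathcal S^\star = \{v^\star = g(\cdot,\mu^\star)\}$ the first- and second-order derivatives of $v^\star$ and of $g$ coincide Lebesgue-a.e.\ (using $g(\cdot,\mu^\star)\in C^{1,2}_b$ and $v^\star\in W^{1,2,2}$), so the HJB inequality yields $(f + \partial_t g + \mathcal L g)(\cdot,a)\le 0$ for every $a$, a.e.\ on $\mathcal S^\star$, in particular at $a = \alpha^\star$. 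The square-integrable density assumption, Assumption \ref{as mixed sol}(7), is precisely what upgrades this Lebesgue-a.e.\ bound to a $\bar m_t\,dt$-a.e.\ bound, so that the integral is also $\le 0$ and equality holds. The last point to check, handled as in Proposition \ref{strict cont}, is that $\alpha^\star$ can be chosen jointly measurable in $(t,x)$.
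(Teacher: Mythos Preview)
Your proposal is correct and follows essentially the same approach as the paper. The paper's own proof for the analogous single-agent Proposition is the one-liner ``Follows by Theorem \ref{PDE} and a similar argument as in the proof of Proposition \ref{strict cont}'', and the Corollary here is treated in the same spirit (Theorem \ref{mixedsol} in place of Theorem \ref{PDE}); you have simply spelled out those details, including the sign argument on $\mathcal S^\star$ and the argmax identification on $\mathcal C^\star$, more carefully than the paper does.
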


\begin{remark}
The above result gives the link with the notion of mixed solution in the case of optimal stopping/continuous control introduced in \cite{bertucci2017} in a less general framework (in particular, the author considers the drift to be zero and the volatility $\sqrt{2}$).
\end{remark}

\section*{Acknowledgement}Peter Tankov gratefully acknowledges financial support from the ANR (project EcoREES ANR-19-CE05-0042) and from the
FIME Research Initiative.


\begin{thebibliography}{10}

\bibitem{aliprantis2007}
{\sc C.~Aliprantis and K.~Border}, {\em Infinite dimensional analysis: A
  hitchhiker’s guide}, Springer, 2007.

\bibitem{ambrosio2000}
{\sc L.~Ambrosio, N.~Fusco, and D.~Pallara}, {\em Functions of bounded
  variation and free discontinuity problems}, Clarendon Press Oxford, 2000.

\bibitem{adt2020}
{\sc R.~Aïd, R.~Dumitrescu, and P.~Tankov}, {\em The entry and exit game in
  the electricity markets: a mean-field game approach}.
\newblock Arxiv preprint 2004.14057, 2020.

\bibitem{bensoussan1982}
{\sc A.~Bensoussan and J.-L. Lions}, {\em Applications of variational
  inequalities in stochastic control}, North Holland Publishing Company, 1982.

\bibitem{bertucci2017}
{\sc C.~Bertucci}, {\em Optimal stopping in mean field games, an obstacle
  approach}, Journal de Mathématiques Pures et Appliquées, 120 (2017),
  pp.~165--194.

\bibitem{bogachev2007}
{\sc V.~Bogachev}, {\em Measure theory}, Springer Science \& Business Media,
  2007.

\bibitem{bdt2020}
{\sc G.~Bouveret, R.~Dumitrescu, and P.~Tankov}, {\em Mean-field games of
  optimal stopping: A relaxed solution approach}, SIAM Journal on Control and
  Optimization, 58 (2020), pp.~1795--1821.

\bibitem{bdt2020b}
{\sc G.~Bouveret, R.~Dumitrescu, and P.~Tankov}, {\em Technological change
  in water use: A mean-field game approach to optimal investment timing}. Available from SSRN: \texttt{ssrn.com/abstract=3640181},
\newblock 2020.

\bibitem{brunick2013}
{\sc G.~Brunick and S.~Shreve}, {\em Mimicking an It\^o process by a solution of
  a stochastic differential equation}, The Annals of Applied Probability, 23
  (2013), pp.~1584--1628.

\bibitem{campi2018}
{\sc L.~Campi and M.~Fischer}, {\em N-player games and mean-field games with
  absorption}, Annals of Applied Probability, 28 (2018), pp.~2188--2242.

\bibitem{carmona2012}
{\sc R.~Carmona and F.~Delarue}, {\em Mean field forward-backward stochastic
  differential equations}, Electronic Communications in Probability, 18 (2013).

\bibitem{carmona2018}
{\sc R.~Carmona and F.~Delarue}, {\em Probabilistic Theory of Mean Field Games
  with Applications I-II}, Springer, 2018.

\bibitem{carmona2017}
{\sc R.~Carmona, F.~Delarue, and D.~Lacker}, {\em Mean field games of timing
  and models for bank runs}, Applied Mathematics and Optimization, 76 (2017),
  pp.~217--260.

\bibitem{stockbridge2002}
{\sc M.~J. Cho and R.~H. Stockbridge}, {\em Linear programming formulation for
  optimal stopping problems}, SIAM Journal on Control and Optimization, 40
  (2002), pp.~1965--1982.

\bibitem{claisse2019}
{\sc J.~Claisse, Z.~Ren, and X.~Tan}, {\em Mean field games with branching}.
\newblock Arxiv preprint 1912.11893, 2019.

\bibitem{delarue2018}
{\sc F.~Delarue and R.~F. Tchuendom}, {\em Selection of equilibria in a linear
  quadratic mean-field game}, Stochastic Processes and their Applications, 130
  (2018), pp.~1000--1040.

\bibitem{stockbridge2012}
{\sc F.~Dufour and R.~H. Stockbridge}, {\em On the existence of strict optimal
  controls for constrained, controlled Markov processes in continuous time},
  Stochastics, 84 (2012), pp.~55--78.

\bibitem{elkaroui1990}
{\sc N.~El Karoui and S.~Méléard}, {\em Martingale measures and stochastic
  calculus}, Probability Theory and Related Fields, 84 (1990), pp.~83--101.

\bibitem{elkaroui1987}
{\sc N.~El Karoui, D.~Nguyen, and M.~Jeanblanc}, {\em Compactification methods
  in the control of degenerate diffusions: Existence of an optimal control},
  Stochastics, 20 (1987), pp.~169--219.

\bibitem{elkaroui2015}
{\sc N.~El Karoui and X.~Tan}, {\em Capacities, measurable selection and
  dynamic programming part ii: Application in stochastic control problems}.
\newblock Arxiv preprint 1310.3364, 2015.


\bibitem{ethier1986}
{\sc S.~N. Ethier and T.~G. Kurtz}, {\em Markov Processes: Characterization and
  Convergence}, Wiley, New York, 1986.

\bibitem{fu2017}
{\sc G.~Fu and U.~Horst}, {\em Mean field games with singular controls}, SIAM
  Journal on Control and Optimization, 55 (2017), pp.~3833–--3868.

\bibitem{haussmann1990}
{\sc U.~Haussmann and J.~Lepeltier}, {\em On the existence of optimal
  controls}, SIAM Journal on Control and Optimization, 28 (1990), pp.~851--902.

\bibitem{huang2006}
{\sc M.~Huang, R.~Malhamé, and P.~Caines}, {\em Large population stochastic
  dynamic games: closed-loop McKean-Vlasov systems and the Nash certainty
  equivalence principle}, Communications in Information and Systems, 6 (2006),
  pp.~221--252.

\bibitem{jacod1981}
{\sc J.~Jacod and J.~Mémin}, {\em Sur un type de convergence intermédiaire
  entre la convergence en loi et la convergence en probabilité}, Séminaire de
  probabilités de Strasbourg, 15 (1981), pp.~529--546.

\bibitem{jacod2003}
{\sc J.~Jacod and A.~N. Shiryaev}, {\em Limit theorems for stochastic
  processes}, Springer, 2003.

\bibitem{jameson1974}
{\sc G.~J.~O. Jameson}, {\em Topology and Normed Spaces}, John Wiley \& Sons,
  1974.


\bibitem{kurtz1998}
{\sc T.~Kurtz}, {\em Martingale problems for conditional distributions of
  Markov processes}, Electron. J. Probab., 3 (1998), p.~29 pp.

\bibitem{stockbridge1998}
{\sc T.~Kurtz and R.~H. Stockbridge}, {\em Existence of Markov controls and
  characterization of optimal Markov controls}, SIAM Journal on Control and
  Optimization, 36 (1998), pp.~609--653.

\bibitem{stockbridge2017}
{\sc T.~G. Kurtz and R.~H. Stockbridge}, {\em Linear programming formulations
  of singular stochastic control problems: Time-homogeneous problems},  (2017).
\newblock Arxiv preprint 1707.09209.

\bibitem{lacker2015}
{\sc D.~Lacker}, {\em Mean field games via controlled martingale problems:
  Existence of Markovian equilibria}, Stochastic Processes and their
  Applications, 125 (2015), pp.~2856--2894.

\bibitem{lasry2006a}
{\sc J.-M. Lasry and P.-L. Lions}, {\em Jeux à champ moyen. i. Le cas
  stationnaire}, Comptes Rendus Mathématique, 343 (2006), pp.~619--625.

\bibitem{lasry2006b}
{\sc J.-M. Lasry and P.-L. Lions}, {\em Jeux à champ moyen.
  ii. Horizon fini et contr\^ole optimal}, Comptes Rendus Mathématique, 343
  (2006), pp.~679--684.

\bibitem{lasry2007}
{\sc J.-M. Lasry and P.-L. Lions}, {\em Mean field games},
  Japanese Journal of Mathematics, 2 (2007), pp.~229--260.

\bibitem{meleard1992}
{\sc S.~Méléard}, {\em Representation and approximation of martingale
  measures}, Proceedings IFIP W 7/1 International Conference University North
  carolina at Charlotte, L.N. in Control and Information Sciences, Springer.,
  (1992), pp.~188--199.

\bibitem{nutz2018}
{\sc M.~Nutz}, {\em A mean field game of optimal stopping}, SIAM Journal on
  Control and Optimization, 56 (2018), pp.~1206--1221.

\bibitem{revuz1999}
{\sc D.~Revuz and M.~Yor}, {\em Continuous Martingales and Brownian Motion},
  Springer-Verlag Berlin Heidelberg, 1999.

\bibitem{shevchenko2015}
{\sc G.~Shevchenko}, {\em Convergence of hitting times for jump-diffusion
  processes}, Modern Stochastics: Theory and Applications, 2 (2015),
  pp.~203--218.

\bibitem{warga1972}
{\sc J.~Warga}, {\em Optimal control of differential and functional equations},
  Academic Press, New York, 1972.

\end{thebibliography}

\appendix

\section{Structure of $V_1$ and $V$}\label{sec A}

We show in this Appendix that $V_1$ is a Hausdorff locally convex topological vector space and $V$ is metrizable. 

Let $\mathcal{M}^s:=\mathcal{M}^s([0, T]\times\bar{\mathcal{O}}\times A)$ be the set of Borel finite signed measures on $[0, T]\times\bar{\mathcal{O}}\times A$. Endow this set with the weak topology $\tau_w:=\sigma(\mathcal{M}
^s, \mathcal{F}^s)$, where 
$$\mathcal{F}^s:=\left\{\mathcal{M}^s\ni\mu\rightarrow\int\phi d\mu: \phi\in C_b([0, T]\times \bar{\mathcal{O}}\times A)\right\}.$$
In other words, $\tau_w$ is the topology generated by the sets
$$U(\mu, \phi, \varepsilon):=\left\{\nu\in \mathcal{M}^s: \left| \int \phi d\nu - \int \phi d\mu \right|<\varepsilon \right\}, \quad \mu\in \mathcal{M}^s,\;\phi\in C_b([0, T]\times\bar{\mathcal{O}}\times A),\; \varepsilon>0.$$
Since $C_b([0, T]\times\bar{\mathcal{O}}\times A))$ is separating, i.e. for all $\mu, \nu\in \mathcal{M}^s$,
$$\int\phi d\mu=\int\phi d\nu, \quad \forall\phi\in C_b([0, T]\times\bar{\mathcal{O}}\times A))\Rightarrow \mu=\nu,$$
then $\mathcal{F}^s$ is total, which implies that $\tau_w$ is Hausdorff (see p. 48 in \cite{aliprantis2007}). Moreover, $(\mathcal{M}^s, \tau_w)$ is a locally convex topological vector space, since weak topologies with respect to a family of real valued functions make the space locally convex. Define the map $\pi:V_1\rightarrow \mathcal{M}^s$ by
$$\pi(m)=m_t(dx, da)dt.$$
The map $\pi$ is injective since the elements of $V_1$ are identified $t$-a.e. We define $\mathcal{M}^s_1:=\pi(V_1)$ and consider the relative weak topology on $\mathcal{M}^s_1$ which is given by $\sigma(\mathcal{M}^s_1, \mathcal{F}_1^s)$, where $\mathcal{F}_1^s:=\mathcal{F}^s|_{\mathcal{M}^s_1}$ (Lemma 2.53 in \cite{aliprantis2007}). Note that $(\mathcal{M}^s_1,\sigma(\mathcal{M}^s_1, \mathcal{F}_1^s))$ is also a Hausdorff locally convex topological vector space. We have that $\pi:V_1\mapsto \mathcal{M}^s_1$ is a linear bijection. Finally, we endow $V_1$ with the projective topology $\tau_1:=\pi^{-1}(\sigma(\mathcal{M}^s_1, \mathcal{F}_1^s))$ (that is the topology of weak convergence of the associated measures on $[0, T]\times\bar{\mathcal{O}}\times A$). With this definition, $\pi$ is an isomorphim between the topological vector spaces, which implies that $(V_1, \tau_1)$ is a Hausdorff locally convex topological vector space.

The relative topology on $V$ is metrizable since $\pi(V)\subset \mathcal{M}([0, T]\times \bar{\mathcal{O}}\times A)$ and the weak convergence topology on $\mathcal{M}([0, T]\times \bar{\mathcal{O}}\times A)$ is metrizable, in particular, we can define a natural distance on $V$ associated to $\pi$.\\

We recall that the set $\mathcal{P}([0, T]\times \bar{\mathcal{O}})$ endowed with the topology of weak convergence is also metrizable, and hence the product space $\mathcal{P}([0, T]\times \bar{\mathcal{O}})\times V$ is metrizable.

\section{Martingale measures and controlled/stopped martingale problem}\label{sec B}

\subsection{Martingale measures}\label{sec B.1}

For the sake of clarity we present the definition of martingale measures and some related concepts. This content is taken from \cite{elkaroui1990} and \cite{meleard1992}. Throughout the section we fix a filtered probability space $(\Omega, \mathcal{F}, \mathbb F, \mathbb P)$ and a Polish space $A$ with Borel $\sigma$-algebra $\mathcal B(A)$.

\begin{definition}
We say that $M:\Omega\times \mathbb R_+\times \mathcal{B}(A)\rightarrow \mathbb R$ is an (orthogonal) martingale measure if it satisfies the following properties:
\begin{enumerate}[(1)]
\item For all $B\in \mathcal{B}(A)$, $M(\cdot, B)$\footnote{Note that we suppress the argument $\omega\in\Omega$ from the notation as usual in probability theory.} is a square integrable martingale and $M(0, B)=0$.
\item For all $t\in \mathbb R_+$, $B, C\in \mathcal{B}(A)$ such that $B\cap C=\emptyset$, $M(t, B\cup C)=M(t, B)+M(t, C)$ a.s.
\item There exists a non-decreasing sequence of $(A_n)_{n\geq 1}\subset \mathcal{B}(A)$ such that
\begin{itemize}
\item $\cup_{n\geq 1}A_n=A$.
\item For all $t\in \mathbb R_+$, {\color{black}and all $n\geq 1$,}
$$\sup_{B\in \mathcal{B}(A_n)}\mathbb E[M(t, B)^2]<\infty.$$
\item For all $t\in \mathbb R_+$, $n\geq 1$ and $(B_k)_{k\geq 1}\subset \mathcal{B}(A_n)$ a decreasing sequence such that $\cap_{k\geq 1}B_k= \emptyset$,
$$\mathbb E[M(t, B_k)^2]\underset{k\rightarrow\infty}{\longrightarrow}0.$$
\end{itemize}
\item For all $B, C\in \mathcal{B}(A)$ such that $B\cap C=\emptyset$, the martingales $(M(t,B))_{t\geq 0}$ and $(M(t,C))_{t\geq 0}$ are orthogonal, i.e. $(M(t,B)M(t,C))_{t\geq 0}$ is a martingale.
\end{enumerate}
\end{definition}

A martingale measure $M$ is said to be continuous if for all $B\in \mathcal{B}(A)$, $t\mapsto M(t, B)$ is continuous a.s.

\begin{remark}
If $\tau$ is a stopping time and $M$ a martingale measure, then $M^\tau(t, B):=M(t\wedge\tau, B)$ is also a martingale measure.
\end{remark}

\begin{theorem}[Theorem I-4 in \cite{elkaroui1990}]
If $M$ is a martingale measure, then there exists a random $\sigma$-finite positive measure $\nu$ on $\mathbb R_+\times A$, such that for each $B\in \mathcal{B}(A)$, $(\nu([0, t]\times B))_{t\geq 0}$ is the predictable quadratic variation of $(M(t, B))_{t\geq 0}$. The measure $\nu$ is called the intensity of $M$.
\end{theorem}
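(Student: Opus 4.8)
The plan is to build $\nu$ out of the predictable quadratic variations of the real-valued martingales $t\mapsto M(t,B)$ and then to patch these together, over a countable generating family of sets, into a single $\omega$-wise measure on $\mathbb{R}_+\times A$. First I would fix the exhausting sequence $(A_n)$ from property (3). For each $B\in\mathcal{B}(A_n)$ the process $(M(t,B))_{t\geq0}$ is a square-integrable martingale, so the Doob--Meyer decomposition of the submartingale $M(\cdot,B)^2$ yields a unique predictable increasing process, the predictable quadratic variation, which I denote $Q_t(B):=\langle M(\cdot,B)\rangle_t$; it satisfies $\mathbb{E}[M(t,B)^2]=\mathbb{E}[Q_t(B)]$. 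The key algebraic facts come from the remaining axioms: for disjoint $B,C$ the orthogonality property (4) gives $\langle M(\cdot,B),M(\cdot,C)\rangle\equiv0$, and combined with the additivity $M(\cdot,B\cup C)=M(\cdot,B)+M(\cdot,C)$ of property (2), this yields $Q_t(B\cup C)=Q_t(B)+Q_t(C)$ up to a null set; since $Q$ is increasing in $t$ this also forces monotonicity $Q_t(B)\leq Q_t(C)$ for $B\subset C$. Finally, the continuity axiom (third bullet of (3)) together with $\mathbb{E}[Q_t(B_k)]=\mathbb{E}[M(t,B_k)^2]\to0$ gives continuity from above in $L^1$ along decreasing $B_k\downarrow\emptyset$.

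Next I would convert these pathwise data into a random measure. Since $A$ is Polish, fix a countable algebra $\mathcal{A}_0\subset\mathcal{B}(A)$ that generates $\mathcal{B}(A)$ and is compatible with the exhausting sets $A_n$. Selecting right-continuous predictable versions of $Q_\cdot(B)$ for the countably many $B\in\mathcal{A}_0$, there is a single $\mathbb{P}$-null set outside of which the additivity $Q_t(B\cup C)=Q_t(B)+Q_t(C)$ (disjoint $B,C\in\mathcal{A}_0$), the monotonicity, and the bound $Q_t(A_n)<\infty$ hold simultaneously for all rational $t$, hence for all $t$ by right-continuity. For fixed such $\omega$, the map sending the rectangle $(s,t]\times B$ to $Q_t(B)(\omega)-Q_s(B)(\omega)$ defines a nonnegative, finitely additive set function on the semiring of rectangles with $B\in\mathcal{A}_0$; its countable additivity follows by combining the right-continuity of $t\mapsto Q_t(B)(\omega)$ (a Lebesgue--Stieltjes measure in time) with the continuity-from-above axiom transported to an $\omega$-wise statement, which yields countable additivity in the $A$-variable. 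Carathéodory extension then produces a Borel measure $\nu(\omega,\cdot)$ on $\mathbb{R}_+\times A$, $\sigma$-finite because $\nu(\omega,[0,n]\times A_n)=Q_n(A_n)(\omega)<\infty$, and a monotone-class argument extends the identity $\nu(\omega,[0,t]\times B)=Q_t(B)(\omega)$ from $B\in\mathcal{A}_0$ to all Borel $B$.

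The main obstacle is the bookkeeping of null sets: the identity $Q_t(B\cup C)=Q_t(B)+Q_t(C)$ holds only almost surely, with an exceptional set depending a priori on $(B,C,t)$, so the crux is to collapse these into one universal null set. Restricting to the countable algebra $\mathcal{A}_0$ and to rational times makes this a countable union of null sets, after which right-continuity of the predictable quadratic variations and the continuity axiom let me extend all relations to arbitrary $t$ and arbitrary Borel $B$. The other delicate point is deriving $\omega$-wise countable additivity of $B\mapsto Q_t(B)(\omega)$ from the merely $L^1$ continuity in axiom (3): this requires extracting almost-sure convergence along a subsequence for $B_k\downarrow\emptyset$ and then invoking the monotonicity of $Q_t(\cdot)$ to upgrade subsequential convergence to convergence of the whole sequence. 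It is precisely this continuity hypothesis that guarantees the rectangle premeasure is countably additive, and hence that the Carathéodory extension is a genuine measure rather than a mere finitely additive set function; the measurability of $\omega\mapsto\nu(\omega,\cdot)$ is inherited from the predictability of the chosen versions of $Q_\cdot(B)$, $B\in\mathcal{A}_0$, and the fact that evaluation on the generating rectangles determines the measure.
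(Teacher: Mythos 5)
First, note that the paper itself does not prove this statement: it is imported verbatim as Theorem I-4 of \cite{elkaroui1990}, so your attempt can only be measured against the argument in that reference and the standard constructions it relies on. Your overall architecture --- Doob--Meyer brackets $Q_t(B)=\langle M(\cdot,B)\rangle_t$, finite additivity of $B\mapsto Q_t(B)$ from orthogonality plus additivity of $M$, restriction to a countable generating algebra and rational times to collapse null sets, then a pathwise Carath\'eodory extension --- is the natural first attack, and the pieces you assemble (the identity $\mathbb{E}[M(t,B)^2]=\mathbb{E}[Q_t(B)]$, monotonicity of $Q_t(\cdot)$, $\sigma$-finiteness from $Q_n(A_n)<\infty$, the $\lambda$-system extension of the identity $\nu([0,t]\times B)=\langle M(\cdot,B)\rangle_t$ to all Borel $B$) are sound.

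The genuine gap is in the step you yourself flag as delicate: $\omega$-wise countable additivity of the premeasure. Axiom (3) gives, for each \emph{fixed} decreasing sequence $B_k\downarrow\emptyset$ in $\mathcal{B}(A_n)$, that $\mathbb{E}[Q_t(B_k)]\to 0$; since $Q_t(B_k)$ is a.s.\ nonincreasing in $k$, this indeed upgrades to a.s.\ convergence of the whole sequence (no subsequence extraction is even needed), but the exceptional null set depends on the sequence $(B_k)$. Countable additivity of the pathwise set function on the countable algebra $\mathcal{A}_0$ must be verified for \emph{every} decreasing sequence in $\mathcal{A}_0$ with empty intersection, and the family of such sequences has the cardinality of the continuum, so no countable union of per-sequence null sets yields the single universal null set your construction requires. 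This is exactly the obstruction familiar from constructing regular conditional probabilities, and the missing idea is the same as there: exploit that $A$ is Polish through a compact-class (inner regularity) argument. Concretely, one can use the deterministic finite measures $\lambda_t(B):=\mathbb{E}[Q_t(B)]=\mathbb{E}[M(t,B)^2]$ on each $\mathcal{B}(A_n)$ (countably additive precisely by axiom (3)), their Radon inner regularity, and Markov/Borel--Cantelli over countably many triples $(B,\varepsilon,t)$ to obtain, outside one null set, inner regularity of $Q_t(\cdot)$ with respect to a compact class; Neveu's criterion (a finitely additive set function inner regular for a compact class is countably additive) then makes the Carath\'eodory step legitimate. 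Alternatively, one can avoid the pathwise construction altogether, as is effectively done in \cite{elkaroui1990}: form the Dol\'eans measure of the submartingale $M(\cdot,B)^2$ on the predictable $\sigma$-field, check that $(\Gamma,B)\mapsto\theta(\Gamma\times B)$ is a bimeasure countably additive in each argument, extend it to the product $\sigma$-field (bimeasure extension on standard Borel spaces), and disintegrate with respect to $\mathbb{P}$ to obtain the predictable random measure $\nu$; this also delivers the predictability of $t\mapsto\nu([0,t]\times B)$ directly, which in your version rests somewhat loosely on ``the predictability of the chosen versions.'' Without one of these two ingredients, your premeasure is only finitely additive on a set of full probability, and the extension step fails.
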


Let $M$ be a martingale measure with intensity $\nu$ and let $L^2_\nu$ the set of functions $\phi:\Omega\times \mathbb R_+\times A\rightarrow\mathbb R$ measurable with respect to the product of the predictable $\sigma$-algebra and $\mathcal{B}(A)$, such that
$$\mathbb E\left[\int_{\mathbb R_+\times A}\phi^2(s, a)\nu(ds, da)\right]<\infty.$$
Then for any $\phi\in L^2_\nu$ one can construct a stochastic integral of $\phi$ with respect to $M$, which is a function from $\Omega\times \mathbb R_+\times \mathcal{B}(A)$ to $\mathbb R$. It is denoted by $\phi\cdot M$. We will also denote
$$\int_0^t\int_B \phi(s, a)M(ds, da):=(\phi\cdot M)(t, B), \quad t\in\mathbb R_+,\ B\in \mathcal{B}(A).$$
The construction is analogous to the one of the Itô integral.

\begin{proposition}[Proposition I-6 in \cite{elkaroui1990}]
Let $M$ be a martingale measure with intensity $\nu$.
\begin{enumerate}[(1)]
\item If $\phi\in L^2_\nu$, then $\phi\cdot M$ is a martingale measure with intensity $\phi^2(s, a)\nu(ds, da)$. Moreover, if $M$ is continuous, then $\phi\cdot M$ is also continuous.
\item If $\phi, \psi\in L^2_\nu$ and $B, C\in \mathcal{B}(A)$, then for all $t\in\mathbb R_+$,
$$\left< \phi\cdot M(\cdot, B), \psi\cdot M(\cdot, C) \right>_t=\int_0^t\int_{B\cap C}\phi(s, a)\psi(s,a)\nu(ds,da).$$
\end{enumerate}
\end{proposition}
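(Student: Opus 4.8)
The plan is to establish this result by the classical Walsh-type construction of the stochastic integral against an orthogonal martingale measure: build $\phi\cdot M$ first for simple predictable integrands, verify all the claimed properties directly in that case, and then extend to general $\phi\in L^2_\nu$ by an $L^2$-isometry, checking that each property is stable under the limiting procedure.

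First I would introduce the class of simple integrands, namely finite sums $\phi(\omega,s,a)=\sum_{i}X_i(\omega)\mathbf 1_{(s_i,s_{i+1}]}(s)\mathbf 1_{B_i}(a)$ with $X_i$ bounded and $\mathcal F_{s_i}$-measurable and $B_i\in\mathcal B(A_n)$ for some $n$, where $(A_n)$ is the localizing sequence from the definition. For such $\phi$ one sets $(\phi\cdot M)(t,B)=\sum_i X_i\big[M(t\wedge s_{i+1},B_i\cap B)-M(t\wedge s_i,B_i\cap B)\big]$, and I would verify directly, using the martingale property of each $M(\cdot,B_i\cap B)$ together with the $\mathcal F_{s_i}$-measurability of $X_i$, that $t\mapsto(\phi\cdot M)(t,B)$ is a square-integrable martingale null at $0$, that it is finitely additive and orthogonal in the set variable $B$, and that its predictable quadratic variation is $\langle(\phi\cdot M)(\cdot,B)\rangle_t=\int_0^t\int_B\phi^2\,d\nu$. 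The orthogonality axiom of $M$ (part (4) of the definition) is precisely what makes the cross terms over disjoint time intervals and disjoint spatial sets vanish, and thus yields the clean intensity $\phi^2(s,a)\nu(ds,da)$.

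Next I would record the resulting isometry $\mathbb E[(\phi\cdot M)(t,B)^2]=\mathbb E[\int_0^t\int_B\phi^2\,d\nu]$ for simple $\phi$, and use the density of simple integrands in $L^2_\nu$ to extend $\phi\mapsto\phi\cdot M$ to all of $L^2_\nu$ as an isometry into the space of square-integrable martingale measures. Passing to the limit, the martingale property, finite additivity, orthogonality, and the intensity identity all survive, since $L^2$-limits of martingales are martingales and the quadratic-variation functional is continuous for the $L^2_\nu$ norm; this proves part (1). The covariation formula in part (2) for general $B,C$ then follows by bilinearity and polarization: decomposing $B$ and $C$ along $B\cap C$ via finite additivity and using the orthogonality of $M$ on disjoint sets annihilates every contribution except the one supported on $B\cap C$, leaving $\langle \phi\cdot M(\cdot,B),\psi\cdot M(\cdot,C)\rangle_t=\int_0^t\int_{B\cap C}\phi\psi\,d\nu$.

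For the continuity claim, I would note that for simple $\phi$ the integral $(\phi\cdot M)(\cdot,B)$ is a finite linear combination of the continuous martingales $M(\cdot,B_i\cap B)$, hence continuous; Doob's $L^2$ maximal inequality applied to the approximating sequence then gives uniform-in-$t$ convergence along a subsequence, so the limit inherits a.s. continuity. The main obstacle I anticipate is not any single computation but the bookkeeping required to verify the full martingale-measure structure in the limit — in particular the $\sigma$-finiteness decomposition $(A_n)$ and the continuity-along-decreasing-sets condition (part (3) of the definition) — rather than merely the single-martingale properties; this is the step where the orthogonality of $M$ and the localizing sequence $A_n$ must be used carefully to control $(\phi\cdot M)(t,B)$ uniformly over $B\in\mathcal B(A_n)$.
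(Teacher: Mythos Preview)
Your outline is the standard Walsh-type construction and is correct in its broad strokes. However, the paper does not actually prove this proposition: it is stated in Appendix~\ref{sec B.1} purely as a quotation of Proposition~I-6 from \cite{elkaroui1990}, with no accompanying argument, and is used only as a black box to justify the Burkholder--Davis--Gundy inequality and strong SDE existence that follow. So there is no proof in the paper to compare against; your sketch is essentially what one would find in the original reference.
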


A consequence of this proposition is that
$$\left( \int_0^t \int_A \phi(s, a) M(ds,da) \right)_{t\geq 0}$$
is a martingale with quadratic variation
$$\left( \int_0^t \int_A \phi^2(s, a)\nu(ds, da) \right)_{t\geq 0}.$$
This fact allows the use of Burkholder-Davis-Gundy inequality, which can be applied to prove existence of strong solutions to SDEs of the type 
$$X_t=\xi+\int_0^t\int_A b(s, X_s, a)\nu(ds, da) + \int_0^t\int_A \sigma(s, X_s, a)M(ds, da), \quad t\geq 0.$$
under standard assumptions.

\subsection{Controlled/stopped martingale problem}\label{sec B.2}

Recall that the linear operator $\mathcal{L}$ is given by
$$\mathcal{L}\varphi(t, x, a)=b(t, x, a) \varphi'(x) + \frac{\sigma^2}{2}(t, x, a)\varphi''(x), \quad (t, x, a)\in [0, T]\times \mathbb R\times A,\; \varphi\in C^2_b(\mathbb R).$$

\begin{definition}
The tuple $(\Omega, \mathcal{F}, \mathbb F, \mathbb P, \nu, \tau, X)$ is said to be a solution of the controlled/stopped martingale problem if 
\begin{enumerate}[(1)]
\item $(\Omega, \mathcal{F}, \mathbb F, \mathbb P)$ is a filtered probability space supporting an $\mathbb F$-progressively measurable process $\nu$ with values in $\mathcal{P}(A)$, an $\mathbb F$-stopping time $\tau$ and an $\mathbb F$-adapted process $X$.
\item For all $\varphi\in C^2_b(\mathbb R)$, the process $(M_{t\wedge \tau}(\varphi))_{t\geq 0}$ is a martingale, where
$$M_t(\varphi):=\varphi(X_t)- \int_0^t\int_A \mathcal{L}\varphi(s, X_s, a)\nu_s(da)ds.$$
\end{enumerate}
\end{definition}

\begin{theorem}\label{EM}
Let $(\Omega, \mathcal{F}, \mathbb F, \mathbb P, \nu, \tau, X)$ be a solution of the controlled/stopped martingale problem. Suppose that $X_{\cdot\wedge\tau}$ is continuous, $\tau$ is bounded and the coefficients $b$ and $\sigma$ are bounded. Then, on an extension of the filtered probability space, there exists a continuous martingale measure $M$ with intensity $\nu_t(da)\mathds{1}_{t\leq \tau}dt$ such that
$$dX_t=\int_A b(t, X_t, a)\nu_t(da)dt + \int_A\sigma(t, X_t, a)M(dt, da), \quad t\leq \tau.$$
Moreover, there exists a Brownian motion $W$ such that $M(t, A)=M(t\wedge\tau, A)=W_{t\wedge\tau}$. In particular, if $\sigma$ is uncontrolled,
$$dX_t=\int_A b(t, X_t, a)\nu_t(da)dt + \sigma(t, X_t)dW_t, \quad t\leq \tau.$$
\end{theorem}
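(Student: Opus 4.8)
The plan is to reconstruct the martingale measure from the information contained in the martingale problem, following the representation theory of \cite{elkaroui1990,meleard1992}. First I would isolate the continuous martingale part of $X$. Applying the martingale property to functions $\varphi\in C^2_b(\mathbb R)$ that agree with $x\mapsto x$ on $[-n,n]$ and localising along the stopping times $\tau_n=\inf\{t:|X_{t\wedge\tau}|\ge n\}$ (which increase to infinity since $X_{\cdot\wedge\tau}$ is continuous, $\tau$ is bounded and $b,\sigma$ are bounded), I would show that
\[
N_t:=X_{t\wedge\tau}-X_0-\int_0^{t\wedge\tau}\int_A b(s,X_s,a)\nu_s(da)\,ds
\]
is a continuous square-integrable $(\mathbb F,\mathbb P)$-martingale, constant after $\tau$. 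Then, applying the martingale problem to $\varphi(x)=x^2$ (again localised) and comparing with It\^o's formula for $N^2$, I would identify the quadratic variation
\[
\langle N\rangle_t=\int_0^{t\wedge\tau}\int_A \sigma^2(s,X_s,a)\nu_s(da)\,ds .
\]

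The heart of the argument, and the step I expect to be the main obstacle, is to promote the single martingale $N$ with this quadratic variation into a \emph{martingale measure} $M$ with intensity $\nu_t(da)\mathds{1}_{t\le\tau}\,dt$ for which $N_t=\int_0^t\int_A \sigma(s,X_s,a)\,M(ds,da)$. The difficulty is that $N$ only records the one-dimensional projection of the noise along the direction $a\mapsto\sigma(s,X_s,a)$, whereas a martingale measure carries an entire orthogonal family of martingales indexed by the test functions on $A$; the missing orthogonal randomness must be supplied on an enlargement of the probability space. Concretely, writing $h(s,a)=\sigma(s,X_s,a)$ and $\rho_s^2=\int_A h^2(s,a)\nu_s(da)$, I would set, on the set $\{\rho_s>0\}$, the component of $M$ along $h$ to be driven by $dN_s$ through the kernel $h(s,a)\rho_s^{-2}\nu_s(da)$, and build the orthogonal complement (as well as the whole of $M$ on $\{\rho_s=0\}$) from an independent white noise $\tilde M$ of intensity $\nu_t(da)\mathds{1}_{t\le\tau}\,dt$ carried by the extension, after projecting out the $h$-direction. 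Verifying, via the covariation formulas of \cite{elkaroui1990}, that the resulting object is a genuine continuous martingale measure of the prescribed intensity and that $\int_0^t\int_A h\,dM=N_t$ is the technical core; this is exactly the content of the representation theorems of \cite{elkaroui1990,meleard1992}, to which I would appeal rather than redo the white-noise construction from scratch.

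Finally I would read off the two remaining assertions. Taking $B=A$, the intensity gives $\langle M(\cdot,A)\rangle_t=\int_0^{t\wedge\tau}\nu_s(A)\,ds=t\wedge\tau$ since each $\nu_s$ is a probability measure, and $M(\cdot,A)$ is constant after $\tau$; hence $M(t,A)=M(t\wedge\tau,A)$. On the extension I would glue independent Brownian increments after time $\tau$, i.e.\ set $W_t=M(t\wedge\tau,A)+(B_t-B_{t\wedge\tau})$ for an independent Brownian motion $B$, so that $W$ is a continuous martingale with $\langle W\rangle_t=t$ and therefore, by L\'evy's characterisation, a Brownian motion satisfying $W_{t\wedge\tau}=M(t,A)$. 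For the last claim, if $\sigma$ does not depend on $a$ then $\int_A\sigma(s,X_s,a)\,M(ds,da)=\sigma(s,X_s)\,M(ds,A)=\sigma(s,X_s)\,dW_s$ on $[0,\tau]$, which turns the martingale-measure SDE into the stated Brownian SDE $dX_t=\int_A b(t,X_t,a)\nu_t(da)\,dt+\sigma(t,X_t)\,dW_t$ for $t\le\tau$.
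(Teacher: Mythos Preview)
Your approach is correct and essentially the same as the paper's: both arguments reduce to the martingale-measure representation theorem of El Karoui--M\'el\'eard (Theorem~IV-2 in \cite{elkaroui1990}), applied to the stopped process $\tilde X=X_{\cdot\wedge\tau}$ viewed as a solution of an unstopped martingale problem with intensity $q_t(da)=\nu_t(da)\mathds 1_{t\le\tau}$. The paper simply invokes that theorem black-box after this reduction, whereas you unpack its mechanism (isolating $N$, computing $\langle N\rangle$, and sketching the orthogonal decomposition with auxiliary white noise); your more explicit description is faithful to what happens inside the cited result. Two small points where the paper differs: it first replaces $\nu$ by an $\mathbb F$-predictable version (citing \cite{lacker2015}) so that $q$ is predictable as required by \cite{elkaroui1990}, a technicality you should add; and for the Brownian motion it appeals to a Dambis--Dubins--Schwarz type extension (Revuz--Yor, Ch.~V, Thm.~1.7) on the sub-filtration $\mathcal G_t=\mathcal F'_{t\wedge\tau}$ rather than your gluing $W_t=M(t\wedge\tau,A)+(B_t-B_{t\wedge\tau})$, but both constructions are standard and equivalent here.
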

\begin{proof}
Using the same proof as in Lemma 3.2 of \cite{lacker2015}, there exists an $\mathbb F$-predictable process $\bar \nu$ with values in $\mathcal{P}(A)$ such that $\bar\nu_t=\nu_t$ $t$-a.e. on $[0, T]$. In particular, $(\Omega, \mathcal{F}, \mathbb F, \mathbb P, \bar\nu, \tau, X)$ is a solution of the controlled/stopped martingale problem. With some abuse of notation we denote $\bar \nu$ by $\nu$. For all $u\in C_b^2(\mathbb R)$,
$$u(X_{t\wedge\tau})-u(X_0)-\int_0^{t\wedge\tau} \int_A \mathcal{L}u(s, X_s, a)\nu_s(da)ds$$
is an $\mathbb F$-martingale. Define $\tilde X:=X_{\cdot\wedge \tau}$ and $q_t(da)=\nu_t(da)\mathds{1}_{t\leq\tau}$. Then, for all $u\in C_b^2(\mathbb R)$,
$$u(\tilde X_t)-u(\tilde X_0)-\int_0^{t} \int_A \mathcal{L}u(s, \tilde X_s, a)q_s(da)ds$$
is an $\mathbb F$-martingale. Moreover, since the processes $(\nu_t)_{t\in [0, T]}$ and $(\mathds{1}_{t\leq\tau})_{t\in [0, T]}$ are $\mathbb F$-predictable and the map $\pi:\mathbb R_+\times \mathcal{P}(A)\rightarrow \mathcal{M}(A)$ given by $\pi(\lambda, \nu)=\lambda \nu$ is continuous, we get that the process $(q_t)_{t\in [0, T]}$ is $\mathbb F$-predictable. By Theorem IV-2 in \cite{elkaroui1990}, there exists an extension of the filtered probability space, denoted by $(\Omega', \mathcal{F}', \mathbb F', \mathbb P')$ supporting a martingale measure $M$ with intensity $q_t(da)dt$ such that
$$\tilde X_t=\tilde X_0 + \int_0^t\int_A b(s, \tilde X_s, a)q_s(da)ds + \int_0^t\int_A \sigma(s, \tilde X_s, a)M(ds, da), \quad t\geq 0.$$
Since $\nu_t(A)=1$ for all $t\geq 0$, we get that $(M(t, A))_{t\geq 0}$ is a continuous square integrable martingale with quadratic variation $(t\wedge\tau)_{t\geq 0}$. Define $M_t:=M(t, A)$ for $t\geq 0$ and note that since $(M_{t+\tau}-M_\tau)_{t\geq 0}$ is an $(\mathcal{F}_{t+\tau}')_{t\geq 0}$ martingale, 
$$\mathbb E^{\mathbb P'}[M_{t+\tau}-M_\tau]^2=\mathbb E^{\mathbb P'}[\left<M\right>_{t+\tau}-\left<M\right>_{\tau}]=0, \quad t\geq 0,$$
which means that $\mathbb P'$-a.s., $M_t=M_{t\wedge \tau}$, $t\geq 0$. Consider the filtration $\mathbb G$ given by $\mathcal{G}_t:=\mathcal{F}_{t\wedge\tau}'\subset\mathcal{F}_t'$. By Theorem 1.7, Chapter V, in \cite{revuz1999}, on an extension of $(\Omega', \mathcal{F}', \mathbb G, \mathbb P')$ denoted by $(\tilde \Omega, \tilde{\mathcal{F}}, \tilde{\mathbb{F}}, \tilde{\mathbb{P}})$, there exists an $\tilde{\mathbb{F}}$-Brownian motion $W$ such that $W_{t\wedge\tau}=M_{t\wedge\tau}=M_t$, $t\geq 0$. Note that the definition of the stochastic integral
$$\int_0^t\int_A\sigma(s, \tilde X_s, a)M(ds, da)$$
depends on the filtration, but since $\tilde X$ is $\mathbb G$-progressively measurable, its extension is $\tilde{\mathbb F}$-progressively measurable, therefore the integrals in both spaces coincide. Analogously to the standard stochastic integral, the stopped integral is equal to the integral with respect to the stopped martingale measure $M^\tau:=M_{\cdot\wedge\tau}$, which together with $M=M^\tau$, gives
$$\int_0^{t}\int_A\sigma(s, \tilde X_s, a)M(ds, da)=\int_0^{t}\int_A\sigma(s, \tilde X_s, a)M^\tau(ds, da)=\int_0^{t\wedge\tau}\int_A\sigma(s, X_s, a)M(ds, da).$$
We conclude that
$$X_{t\wedge\tau}=X_0 + \int_0^{t\wedge\tau}\int_A b(s, X_s, a)\nu_s(da)ds + \int_0^{t\wedge\tau}\int_A \sigma(s, X_s, a)M(ds, da), \quad t\geq 0.$$
If $\sigma$ is uncontrolled, by the construction of the integral with respect to $M$, one can deduce that
$$\int_0^t\int_A\sigma(s, \tilde X_s)M(ds, da)=\int_0^t\sigma(s, \tilde X_s)dW^\tau_s=\int_0^{t\wedge\tau}\sigma(s, X_s)dW_s,$$
which allows to write,
$$X_{t\wedge\tau}=X_0 + \int_0^{t\wedge\tau}\int_A b(s, X_s, a)\nu_s(da)ds + \int_0^{t\wedge\tau}\sigma(s, X_s)dW_s, \quad t\geq 0.$$
\end{proof}

In the case where the relaxed control $\nu$ is replaced by some strict control $\alpha$, we can also find a SDE representation with respect to a Brownian motion.

\begin{theorem}\label{EK}
Let $(\Omega, \mathcal{F}, \mathbb F, \mathbb P, \nu, \tau, X)$ be a solution of the controlled/stopped martingale problem. Suppose that $X_{\cdot\wedge\tau}$ is continuous, $\nu_t=\delta_{\alpha_t}$ for some $\mathbb F$-progressively measurable process $\alpha$, $\tau$ is bounded and the coefficients $b$ and $\sigma$ are bounded. Then, on an extension of the filtered probability space, there exists a Brownian motion $W$ such that
$$dX_t= b(t, X_t, \alpha_t)dt + \sigma(t, X_t, \alpha_t)dW_t, \quad t\leq \tau.$$
\end{theorem}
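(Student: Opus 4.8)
The plan is to reduce everything to Theorem \ref{EM} and then exploit the fact that when $\nu_t=\delta_{\alpha_t}$ the intensity of the resulting martingale measure is concentrated on the graph of $\alpha$. First I would apply Theorem \ref{EM} directly to the solution $(\Omega,\mathcal F,\mathbb F,\mathbb P,\nu,\tau,X)$: the hypotheses (continuity of $X_{\cdot\wedge\tau}$, boundedness of $\tau$, boundedness of $b$ and $\sigma$) are exactly those required, so on an extension of the probability space we obtain a continuous martingale measure $M$ with intensity $\nu_t(da)\mathds 1_{t\le\tau}dt=\delta_{\alpha_t}(da)\mathds 1_{t\le\tau}dt$ and a Brownian motion $W$ with $M(t,A)=W_{t\wedge\tau}$, satisfying
$$X_{t\wedge\tau}=X_0+\int_0^{t\wedge\tau}\int_A b(s,X_s,a)\nu_s(da)\,ds+\int_0^{t\wedge\tau}\int_A \sigma(s,X_s,a)M(ds,da).$$
Note that since $\sigma$ may here depend on the control, I cannot invoke the uncontrolled-volatility part of Theorem \ref{EM}; the point is to simplify the martingale-measure integral by hand.

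The drift term is immediate: because $\nu_s=\delta_{\alpha_s}$, one has $\int_A b(s,X_s,a)\nu_s(da)=b(s,X_s,\alpha_s)$, so the first integral equals $\int_0^{t\wedge\tau}b(s,X_s,\alpha_s)\,ds$. The substance of the proof is the identification of the diffusion term, namely to show that
$$Y_t:=\int_0^{t\wedge\tau}\int_A \sigma(s,X_s,a)M(ds,da)=\int_0^{t\wedge\tau}\sigma(s,X_s,\alpha_s)\,dW_s=:Z_t.$$
The key observation is that both processes are stochastic integrals against $M$. Indeed $Y$ is the $M$-integral of the integrand $\phi(s,a)=\sigma(s,X_s,a)$, while $Z$, using $M(t,A)=W_{t\wedge\tau}$, is the $M$-integral of the $a$-independent integrand $\psi(s,a)=\sigma(s,X_s,\alpha_s)$. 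By the covariation formula of Proposition I-6 in \cite{elkaroui1990}, the difference $Y-Z$ is a continuous martingale whose quadratic variation is
$$\langle Y-Z\rangle_t=\int_0^t\int_A\big[\sigma(s,X_s,a)-\sigma(s,X_s,\alpha_s)\big]^2\,\delta_{\alpha_s}(da)\mathds 1_{s\le\tau}\,ds,$$
and the inner integral against $\delta_{\alpha_s}$ vanishes identically since the integrand is zero at $a=\alpha_s$. A continuous martingale started at $0$ with zero quadratic variation is a.s.\ null, so $Y\equiv Z$; combining with the drift term yields the asserted SDE on $\{t\le\tau\}$.

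The only delicate point is the rigorous rewriting of $Z$ as an $M$-integral with an $a$-independent integrand and the ensuing appeal to the covariation formula; once this is set up, the vanishing of the intensity off the graph of $\alpha$ makes the quadratic-variation computation routine. I would also note a shortcut that avoids the covariation calculation altogether: since $\sigma(s,X_s,\cdot)$ and the constant $\sigma(s,X_s,\alpha_s)$ agree $\nu_s$-almost everywhere, and the $M$-stochastic integral depends on its integrand only up to $\nu$-null sets (by the $L^2_\nu$ isometry underlying its construction), the two integrals coincide by definition. Either route delivers the result.
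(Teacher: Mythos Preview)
Your argument is correct and takes a genuinely different route from the paper. The paper's proof is a two-line sketch: it invokes an adaptation of Theorem 3.3 of Ethier--Kurtz \cite{ethier1986} (the classical martingale-problem-to-SDE representation) to random coefficients $b(s,X_s,\alpha_s)$, $\sigma(s,X_s,\alpha_s)$ to handle the unstopped case, and then imports the stopping-time extension argument from the proof of Theorem \ref{EM}. In other words, the paper goes around Theorem \ref{EM} by redoing the construction of the Brownian motion directly from the strict-control martingale problem.

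You instead bootstrap from Theorem \ref{EM}: you accept the martingale-measure representation it provides, and then collapse the $M$-integral to a $W$-integral by observing that the intensity $\delta_{\alpha_s}(da)\mathds 1_{s\le\tau}ds$ is supported on the graph of $\alpha$, so the two integrands $\sigma(s,X_s,a)$ and $\sigma(s,X_s,\alpha_s)$ agree $\nu$-a.e.\ and hence give the same $L^2_\nu$-stochastic integral (or, equivalently, their difference has vanishing quadratic variation by Proposition I-6 of \cite{elkaroui1990}). This is more self-contained within the paper's own machinery---no adaptation of Ethier--Kurtz is needed---and arguably cleaner. The paper's route, on the other hand, makes more transparent why no martingale-measure detour is necessary in the strict-control case: one can build $W$ directly from the one-dimensional quadratic variation of the martingale $M_t(\varphi)$, as in the classical proof. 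Both approaches are valid; yours has the advantage of being a corollary of what has already been established rather than a parallel construction.
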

\begin{proof}
Adapting the proof of Theorem 3.3 in \cite{ethier1986} to random coefficients, we get the result for the case without stopping time. Using the same techniques as in Theorem \ref{EM}, we get the result.
\end{proof}

\section{Link between linear programming and the weak formulation}

We have seen in Proposition \ref{SDE to measures} that to any controlled and stopped diffusion we can associate a pair $(\mu, m)\in\mathcal{R}$. In Theorem \ref{SDE rep} we will prove that any $(\mu, m)\in\mathcal{R}$ can be represented in terms of a controlled and stopped diffusion.

\begin{lemma}\label{lemma exit closure}
Consider a filtered probability space $(\Omega, \mathcal{F}, \mathbb F, \mathbb P)$ supporting an $\mathbb F$-Brownian motion $W$. Let $T>0$, $\xi$ an $\mathcal{F}_0$-measurable random variable supported in $\mathcal{O}$, $b$ a bounded $\mathbb F$-progressively measurable process and $\sigma$ a bounded $\mathbb F$-progressively measurable process bounded below by a constant $c_\sigma>0$ and above by a constant $C_\sigma>0$. Let $Y$ be defined by 
$$Y_t=\xi + \int_0^tb_sds + \int_0^t\sigma_s dW_s .$$
Then $\tau_{\mathcal{O}}^Y=\tau_{\bar{\mathcal{O}}}^Y$ $\mathbb P$-a.s.
\end{lemma}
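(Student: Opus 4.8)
The plan is to establish the two inequalities between $\tau_{\mathcal O}^Y$ and $\tau_{\bar{\mathcal O}}^Y$ separately. Since $\mathcal O\subset\bar{\mathcal O}$, the set $\{t:Y_t\notin\mathcal O\}$ contains $\{t:Y_t\notin\bar{\mathcal O}\}$, so taking infima gives $\tau_{\mathcal O}^Y\le\tau_{\bar{\mathcal O}}^Y$ for every path, with no probabilistic input. The whole content is the reverse inequality $\tau_{\bar{\mathcal O}}^Y\le\tau_{\mathcal O}^Y$ almost surely. Set $\tau:=\tau_{\mathcal O}^Y$. On $\{\tau=\infty\}$ the process never leaves $\mathcal O\subset\bar{\mathcal O}$, so $\tau_{\bar{\mathcal O}}^Y=\infty$ as well and there is nothing to prove; hence I work on $\{\tau<\infty\}$. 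Because $Y$ has continuous paths and $\mathbb R\setminus\mathcal O$ is closed, the definition of $\tau$ as an infimum forces $Y_\tau\in\mathbb R\setminus\mathcal O$, while $Y_t\in\mathcal O$ for $t<\tau$ gives $Y_\tau\in\bar{\mathcal O}$; thus $Y_\tau\in\partial\mathcal O$. Writing the interval $\mathcal O=(\ell,r)$, the point $Y_\tau$ is one of the finite endpoints $\ell$ or $r$.

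The crux is the following instantaneous oscillation (regularity) property, which I would prove first: for any $\mathbb F$-stopping time $S<\infty$ and every $\varepsilon>0$, almost surely there exist $s,s'\in(S,S+\varepsilon)$ with $Y_s>Y_S$ and $Y_{s'}<Y_S$. To prove it I look at the post-$S$ process: $\theta_u:=W_{S+u}-W_S$ is a Brownian motion for the filtration $\mathcal G_u:=\mathcal F_{S+u}$, and $M_u:=\int_S^{S+u}\sigma_v\,dW_v$ is a continuous $\mathcal G$-local martingale with $\langle M\rangle_u=\int_S^{S+u}\sigma_v^2\,dv\in[c_\sigma^2 u,\,C_\sigma^2 u]$. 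By the Dambis–Dubins–Schwarz theorem, $M_u=\beta_{\langle M\rangle_u}$ for a Brownian motion $\beta$; since $\langle M\rangle_u>0$ for $u>0$, the local law of the iterated logarithm for $\beta$ at the origin ($\limsup_{v\downarrow0}\beta_v/\sqrt{2v\log\log(1/v)}=1$ and the symmetric liminf $=-1$) produces sequences $u_n\downarrow0$ along which $M_{u_n}$ is of order $\sqrt{u_n\log\log(1/u_n)}$ and of either sign. The drift is controlled by $|\int_S^{S+u}b_v\,dv|\le\|b\|_\infty u$, which is negligible compared with this rate, so $Y_{S+u}-Y_S=M_u+\int_S^{S+u}b_v\,dv$ takes strictly positive and strictly negative values along such sequences. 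This is exactly where the standing hypotheses enter: $\sigma\ge c_\sigma>0$ keeps the clock $\langle M\rangle$ strictly increasing so that $\beta$ is genuinely excited immediately, while the upper bound $C_\sigma$ and boundedness of $b$ guarantee the drift and the time change cannot overwhelm the square-root oscillation.

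With the oscillation property in hand I would conclude as follows. Apply it at $S=\tau$ on $\{\tau<\infty\}$. If $Y_\tau=r$, then for each $\varepsilon>0$ there is $s\in(\tau,\tau+\varepsilon)$ with $Y_s>r$, hence $Y_s\notin[\ell,r]=\bar{\mathcal O}$ and therefore $\tau_{\bar{\mathcal O}}^Y\le\tau+\varepsilon$; letting $\varepsilon\downarrow0$ yields $\tau_{\bar{\mathcal O}}^Y\le\tau$. The case $Y_\tau=\ell$ is symmetric, using $Y_{s'}<\ell$. Combined with the trivial inequality this gives $\tau_{\bar{\mathcal O}}^Y=\tau_{\mathcal O}^Y$ almost surely. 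The main obstacle is the rigorous transfer of Brownian regularity through the Dambis–Dubins–Schwarz time change at a stopping time and the verification that the bounded drift is dominated; the remaining topological bookkeeping (continuity forcing $Y_\tau\in\partial\mathcal O$, and the endpoints of the interval being approached from outside $\bar{\mathcal O}$) is routine. Since the exit times are defined over $t\ge0$, the process is understood to be continued so that the oscillation argument is always available just after $\tau$; in particular the event that $Y$ meets $\partial\mathcal O=\{\ell,r\}$ exactly at a terminal time plays no role.
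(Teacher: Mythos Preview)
Your proof is correct and follows essentially the same approach as the paper: both reduce to showing immediate two-sided oscillation of $Y$ at $\tau_{\mathcal O}^Y$ via the Dambis--Dubins--Schwarz time change together with the fact that the bounded drift $\int_\tau^{\tau+s}b_r\,dr=O(s)$ is negligible against the $\sqrt{s}$ scale of the martingale part. The paper invokes the simpler fact $\limsup_{s\downarrow 0}B_s/\sqrt{s}=+\infty$ (and its symmetric counterpart) rather than the full law of the iterated logarithm, and applies DDS globally followed by the strong Markov property of the time-changed Brownian motion, whereas you restart the filtration at $S$; these are equivalent formulations of the same argument. Your explicit identification of $\mathcal O$ as an interval and the topological step $Y_\tau\in\partial\mathcal O$ make the final implication more transparent than in the paper's proof.
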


\begin{proof}
{\color{black}Let $B=\{\tau_\mathcal{O}^Y=\infty\}$, then for $\omega\in B$ we obtain $\tau_\mathcal{O}^Y(\omega)=\tau_{\bar{\mathcal{O}}}^Y(\omega)$.} We remark that since $b$ is bounded, on the event $B^c$,
$$
\lim_{s\downarrow 0}\frac{\int_{\tau^Y_\mathcal O}^{\tau^Y_\mathcal O+s} b_r dr }{\sqrt{s}} = 0.
$$
By Dambis-Dubbins-Schwarz theorem, there exists a Brownian motion $\widetilde W$ such that $\int_0^t \sigma_s dW_s = \widetilde W_{\int_0^t \sigma_s^2 ds}$ for all $t\geq 0$. Finally, recall the classical result: for a Brownian motion $B$, 
$$
\liminf_{s\downarrow 0} \frac{B_s}{\sqrt{s}} = -\infty\quad \text{and}\quad \limsup_{s\downarrow 0} \frac{B_s}{\sqrt{s}} = +\infty,
$$
holds a.s., and by the strong Markov property, this result holds true at any stopping time. We denote by $C$ the event where this result holds true at time $\tau_\mathcal{O}^Y$, which has probability one. Therefore, using that $0<c_\sigma\leq \sigma_t\leq C_\sigma$, on the event $B^c\cap C$,
\begin{align*}
\limsup_{s\downarrow 0} \frac{Y_{\tau^Y_\mathcal O + s} - Y_{\tau^Y_\mathcal O }}{\sqrt{s}} &=  \limsup_{s\downarrow 0} \frac{\int_{\tau^Y_\mathcal O}^{\tau^Y_\mathcal O+s}\sigma_r dW_r}{\sqrt{s}}\\ 
&= \limsup_{s\downarrow 0} \frac{\widetilde W_{\int_0^{\tau^Y_\mathcal O+s}\sigma^2_r dr}-\widetilde W_{\int_0^{\tau^Y_\mathcal O}\sigma^2_r dr}}{\sqrt{\int_{\tau^Y_\mathcal O}^{\tau^Y_\mathcal O+s}\sigma^2_r dr}}\sqrt{\frac{\int_{\tau^Y_\mathcal O}^{\tau^Y_\mathcal O+s}\sigma^2_r dr}{s}} \\
& = +\infty,
\end{align*}
and similarly, 
$$
\liminf_{s\downarrow 0} \frac{Y_{\tau^Y_\mathcal O + s} - Y_{\tau^Y_\mathcal O }}{\sqrt{s}}  = -\infty. 
$$
Together, these two results imply that on the event $B^c \cap C$, $\tau^Y_\mathcal O = \tau^Y_{\bar{\mathcal O}}$. Since $B\cup (B^c\cap C)$ has probability one we conclude the proof.
\end{proof}

Let us recall some definitions and results of \cite{kurtz1998} Section 2. Let $E$ be a complete, separable metric space. We denote by $B(E)$ the set of bounded and measurable functions from $E$ to $\mathbb R$. Let $L\subset B(E)\times B(E)$ be the graph of an operator $L$ (we abuse of notation as it is usual to identify an operator with its graph). Let $L_S$ be the linear span of an operator $L$.

\begin{definition}
Let $L:\mathcal{D}(L)\subset B(E)\rightarrow B(E)$ an operator and $\nu_0\in \mathcal{P}(E)$. We say that a measurable $\mathcal{P}(E)$-valued function (we endow $\mathcal{P}(E)$ with the Borel $\sigma$-algebra generated by the topology of weak convergence) $\nu$ on $\mathbb R_+$ is solution of the forward equation for $(L, \nu_0)$ if for all $\phi\in \mathcal{D}(L)$ and $t\in \mathbb R_+$,
$$\int_E \phi(x)\nu_t(dx)=\int_E \phi(x)\nu_0(dx) + \int_0^t\int_E L\phi(x)\nu_s(dx)ds.$$
\end{definition}

\begin{definition}
An operator $L\subset B(E)\times B(E)$ is dissipative if $L_S$ is dissipative, that is, for $(f, g)\in L_S$ and $\lambda>0$,
$$\|\lambda f-g\|_\infty\geq \lambda \| f \|_\infty.$$
\end{definition}

\begin{definition}
An operator $L\subset B(E)\times B(E)$ is a pre-generator if $L$ is dissipative and there are sequences of functions $\mu_n:E\rightarrow \mathcal{P}(E)$ and $\lambda_n:E\rightarrow[0, \infty)$ such that for each $(f, g)\in L$
$$g(x)=\lim_{n\rightarrow\infty}\lambda_n(x) \int_E (f(y)-f(x))\mu_n(x, dy).$$
\end{definition}

\begin{proposition}\label{pre-generator}
If $L\subset C_b(E)\times C_b(E)$ and for each $x\in E$, there exists a solution $\nu^x$ of the forward equation for $(L, \delta_x)$ that is right-continuous (in the weak topology) at zero, then $L$ is a pre-generator.
\end{proposition}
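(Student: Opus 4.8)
The plan is to verify directly, from the existence of the forward‑equation solutions $\nu^x$, the two defining properties of a pre‑generator: the pointwise jump representation and dissipativity of the linear span $L_S$. For the representation I would take $\lambda_n(x)=n$ and $\mu_n(x,dy)=\nu^x_{1/n}(dy)$, which are independent of the pair $(f,g)$ as the definition demands. Fixing $(f,g)\in L$, so that $g=Lf$, and choosing $\phi=f$ in the forward equation for $(L,\delta_x)$ gives $\int_E f\,d\nu^x_t=f(x)+\int_0^t\int_E g\,d\nu^x_s\,ds$ for every $t\ge 0$. Therefore
\[
\lambda_n(x)\int_E(f(y)-f(x))\,\mu_n(x,dy)=n\Big(\int_E f\,d\nu^x_{1/n}-f(x)\Big)=n\int_0^{1/n}\Big(\int_E g\,d\nu^x_s\Big)ds .
\]
Since $\nu^x$ is right‑continuous at $0$ with $\nu^x_0=\delta_x$ and $g\in C_b(E)$, the map $s\mapsto\int_E g\,d\nu^x_s$ is right‑continuous at $0$ with value $g(x)$ there; the right‑hand side above is its Cesàro average over $[0,1/n]$, so it converges to $g(x)$. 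This establishes the representation, which extends to $L_S$ by linearity (the same $\lambda_n,\mu_n$ serve all pairs).

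For dissipativity of $L_S$ I would avoid any maximum‑principle argument and use an integrating‑factor estimate. Fix $(f,g)\in L_S$ and $\lambda>0$; by linearity the identity $\int_E f\,d\nu^x_t=f(x)+\int_0^t\int_E g\,d\nu^x_s\,ds$ still holds. Writing $F(t)=\int_E f\,d\nu^x_t$, this shows $F$ is absolutely continuous with $F'(t)=\int_E g\,d\nu^x_t$ a.e., so $t\mapsto e^{-\lambda t}F(t)$ is absolutely continuous with derivative $e^{-\lambda t}\int_E(g-\lambda f)\,d\nu^x_t$. Integrating over $[0,T]$ and using $F(0)=f(x)$ gives
\[
f(x)=e^{-\lambda T}F(T)-\int_0^T e^{-\lambda t}\Big(\int_E(g-\lambda f)\,d\nu^x_t\Big)dt .
\]
Because each $\nu^x_t$ is a probability measure, $|F(T)|\le\|f\|_\infty$ and $\big|\int_E(g-\lambda f)\,d\nu^x_t\big|\le\|\lambda f-g\|_\infty$, whence $f(x)\le e^{-\lambda T}\|f\|_\infty+\lambda^{-1}\|\lambda f-g\|_\infty(1-e^{-\lambda T})$. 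Letting $T\to\infty$ yields $f(x)\le\lambda^{-1}\|\lambda f-g\|_\infty$ for every $x$, hence $\sup_x f\le\lambda^{-1}\|\lambda f-g\|_\infty$. Applying the same reasoning to $(-f,-g)\in L_S$ bounds $\sup_x(-f)$ by the same quantity, so $\lambda\|f\|_\infty=\lambda\max\{\sup_x f,\sup_x(-f)\}\le\|\lambda f-g\|_\infty$, i.e. $L_S$ is dissipative.

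The main obstacle, and the reason I would prefer the exponential‑weighting estimate, is precisely this dissipativity step: the usual route through the positive maximum principle works cleanly only when suprema are attained (e.g. $E$ compact), whereas on a general complete separable metric space a near‑maximizer of $f$ may escape to infinity while the approximating rates $\lambda_n$ blow up, rendering the naive bound on $\int_E(f(y)-f(x))\mu_n(x,dy)$ vacuous. The integrating‑factor computation sidesteps this entirely, using only that each $\nu^x_t$ has total mass one. A secondary point to pin down carefully is the identification $\nu^x_0=\delta_x$, read off from the initial condition in the forward equation together with right‑continuity at $0$: it is exactly what forces the Cesàro limit to equal $g(x)$.
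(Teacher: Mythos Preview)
Your proof is correct. The paper does not actually prove this proposition: it is stated in Appendix~\ref{sec B.2} as a result recalled from \cite{kurtz1998}, Section~2, without argument. Your construction $\lambda_n(x)=n$, $\mu_n(x,\cdot)=\nu^x_{1/n}$ and the Ces\`aro/forward-equation computation for the jump representation, together with the exponential-weighting estimate for dissipativity, are precisely the standard arguments used in Kurtz's original paper (Lemma~2.3 there), so there is no meaningful difference in approach to report. Your remark that the integrating-factor route avoids any reliance on attained suprema is the right justification for why the argument works on a general Polish $E$; the one point worth making fully explicit in a write-up is that the forward equation extends from $L$ to $L_S$ by linearity (you use this when proving dissipativity of $L_S$, not just of $L$), which you note but only in passing.
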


Now, we will show that any $(\mu, m)\in \mathcal{R}$ has a probabilistic representation in terms of a controlled and stopped diffusion. The first part of the proof is based on the works of Stockbridge and coauthors (see e.g. \cite{stockbridge1998,stockbridge2002,stockbridge2017}) with adaptations to our case. The second part uses the equivalence of the stopped/controlled martingale problem and the diffusions.

\begin{theorem}\label{SDE rep}
Suppose that Assumptions \ref{as_single_agent} (1-2) and \ref{as exit time} hold. Suppose that $(\mu, m)\in \mathcal{R}$. Let $\nu_{t, x}(da)$ be such that 
$$m_t(dx, da)dt=\nu_{t, x}(da)m_t(dx, A)dt.$$
Then there exist a filtered probability space $(\Omega, \mathcal{F}, \mathbb F, \mathbb P)$, an $\mathbb F$-adapted process $X$, an $\mathbb F$-stopping time $\tau$ such that $\tau\leq T\wedge\tau_\mathcal{O}^X$ $\mathbb P$-a.s., and an $\mathbb F$-martingale measure $M$ with intensity $\nu_{t, X_t}(da)\mathds{1}_{t\leq \tau}dt$, such that
$$X_{t\wedge \tau}= \int_0^{t\wedge \tau}\int_A b(t, X_t, a)\nu_{t, X_t}(da)dt + \int_0^{t\wedge \tau}\int_A \sigma(t, X_t, a)M(dt, da), \quad \mathbb P \circ X_0^{-1}= m_0^*,$$
$$\mu =\mathbb P \circ (\tau, X_\tau)^{-1},$$
$$m_t(B\times C)= \mathbb E^{\mathbb P}\left[ \mathds{1}_B(X_t)\nu_{t, X_t}(C) \mathds{1}_{t\leq \tau}\right],  \quad B\in \mathcal{B}(\bar{\mathcal{O}}), \quad C\in \mathcal{B}(A), \quad t-a.e.$$
Moreover, if $\sigma$ is uncontrolled or $\nu_{t, x}=\delta_{\alpha(t, x)}$ for some measurable function $\alpha$, then one can replace the martingale measure by a Brownian motion.
\end{theorem}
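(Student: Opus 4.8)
The plan is to proceed in two stages, matching the two relaxation layers encoded in $\mathcal R$: first pass from the linear-programming constraint \eqref{constraint_fp} to a solution of the controlled/stopped martingale problem, and then use the martingale-measure machinery of Appendix \ref{sec B} to produce the stochastic-integral representation and, in the degenerate cases, the driving Brownian motion.

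First I would use the disintegration $m_t(dx,da)\,dt=\nu_{t,x}(da)\,m_t(dx,A)\,dt$ both to read off the candidate Markovian relaxed control $\nu_{\cdot,\cdot}$ and to rewrite the constraint in control-free form. Setting $\bar m_t(dx):=m_t(dx,A)$ and the averaged coefficients
$$\tilde b(t,x):=\int_A b(t,x,a)\,\nu_{t,x}(da),\qquad \tilde a(t,x):=\int_A \sigma^2(t,x,a)\,\nu_{t,x}(da),$$
which are bounded and measurable by Assumption \ref{as_single_agent}(1), the identity \eqref{constraint_fp} becomes, for every $u\in C^{1,2}_b([0,T]\times\bar{\mathcal{O}})$,
$$\int_{[0,T]\times\bar{\mathcal{O}}} u\,d\mu=\int_{\mathcal O} u(0,\cdot)\,dm_0^*+\int_0^T\!\!\int_{\bar{\mathcal{O}}}\Big(\partial_t u+\tilde{\mathcal L}u\Big)\,d\bar m_t\,dt,\qquad \tilde{\mathcal L}u:=\tilde b\,\partial_x u+\tfrac12\tilde a\,\partial_{xx}u.$$
This is exactly the forward equation of an optimal-stopping linear program on the time-augmented state space $[0,T]\times\bar{\mathcal{O}}$, with generator $\partial_t+\tilde{\mathcal L}$, exit distribution $\mu$ and expected occupation measure $\bar m_t\,dt$; along a path, $\tilde{\mathcal L}u=\int_A\mathcal Lu(\cdot,a)\,\nu_{\cdot,x}(da)$, so any martingale-problem solution for $\tilde{\mathcal L}$ is a solution of the controlled/stopped martingale problem of Appendix \ref{sec B.2} with Markovian control $\nu_{\cdot,X_\cdot}$.

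Next I would invoke the Kurtz--Stockbridge representation theory to turn this forward equation into a stopped martingale problem. The continuous-coefficient statement is Cho--Stockbridge \cite{stockbridge2002}, but since $b,\sigma$ (hence $\tilde b,\tilde a$) are only measurable in $t$ I would instead use the measurable-coefficient version \cite[Corollary 1.10]{stockbridge2017} together with \cite{kurtz1998}. The hypotheses to verify are boundedness (Assumption \ref{as_single_agent}(1--2)), that the domain $C^{1,2}_b$ is measure-determining on $[0,T]\times\bar{\mathcal{O}}$, and the pre-generator property, which I would obtain from Proposition \ref{pre-generator}: for each starting point the diffusion with the bounded Lipschitz coefficients $b,\sigma$ admits a solution of the forward equation started at a Dirac mass (right-continuous at $0$), the time-measurability being absorbed into the deterministic time coordinate of the augmented state. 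This yields a filtered probability space carrying $X$, a stopping time $\tau\le T$ and the control $\nu_{\cdot,X_\cdot}$ solving the controlled/stopped martingale problem, with $\mu=\mathbb P\circ(\tau,X_\tau)^{-1}$ and $m_t(B\times C)=\mathbb E[\mathds 1_B(X_t)\nu_{t,X_t}(C)\mathds 1_{t\le\tau}]$ for a.e.\ $t$. To secure $\tau\le\tau_{\mathcal O}^X$ I would split on Assumption \ref{as exit time}: under (1) the boundary is unattainable, so $\tau_{\mathcal O}^{\tilde X}\ge T\ge\tau$; under (2), $\sigma$ is uncontrolled and uniformly elliptic, and Lemma \ref{lemma exit closure} gives $\tau_{\mathcal O}^X=\tau_{\bar{\mathcal{O}}}^X$ a.s., so that stopping within $\bar{\mathcal O}$ coincides with stopping within $\mathcal O$. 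Finally, applying Theorem \ref{EM} to this solution produces, on a possible extension, a continuous martingale measure $M$ with intensity $\nu_{t,X_t}(da)\mathds 1_{t\le\tau}\,dt$ and the claimed representation of $X_{\cdot\wedge\tau}$; when $\sigma$ is uncontrolled the same theorem replaces $M$ by a Brownian motion, and when $\nu_{t,x}=\delta_{\alpha(t,x)}$ the strict-control statement Theorem \ref{EK} does likewise.

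The hard part will be the representation step: verifying the hypotheses of the measurable-coefficient theorem on the closed, time-augmented state space, in particular the pre-generator property for coefficients that are continuous in $(x,a)$ but merely measurable in $t$, and confirming that the exit measure delivered by the theorem is genuinely carried by $(\tau,X_\tau)$ with $\tau\le T$. By contrast, the disintegration reduction and the concluding passage to the SDE via Theorems \ref{EM} and \ref{EK} are comparatively routine given the appendix, as is the boundary dichotomy once Lemma \ref{lemma exit closure} is available.
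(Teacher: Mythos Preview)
Your proposal is correct and follows essentially the same approach as the paper: disintegrate to obtain averaged coefficients, invoke the Kurtz--Stockbridge representation (Corollary 1.10 of \cite{stockbridge2017}, combined with the stopping-to-stationary reduction of \cite{stockbridge2002}) to produce a solution of the controlled/stopped martingale problem, then apply Theorems \ref{EM}/\ref{EK} and the dichotomy of Assumption \ref{as exit time} with Lemma \ref{lemma exit closure}. Two technical points you will need to make explicit in the execution are (i) the stationarization device---the paper augments the state by $(r,u)\in\mathbb R_+\times\{0,1\}$ with an exponential clock so that Corollary 1.10 (a stationary result) applies, exactly as in Theorem 3.3 of \cite{stockbridge2002}---and (ii) a short continuity argument for $X_{\cdot\wedge\tau}$ via its semimartingale characteristics before Theorem \ref{EM} can be invoked.
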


\begin{proof}
We divide the proof in 4 steps. The first one is the redefinition of the coefficients and measures in order to construct an operator and a measure verifying the stationary equation. The second one contains the verification of the conditions to apply Corollary 1.10 in \cite{stockbridge2017}. In the third step we apply this Corollary to obtain a controlled/stopped martingale problem formulation. Finally, in the fourth step, we go from the controlled/stopped martingale problem to the diffusion representation.

\textit{First step: Construction of the operator and the stationary measure.} We extend $\nu_{t, x}$ onto $(\mathbb R_+\times \mathbb R)\setminus ([0, T]\times \bar{\mathcal{O}})$ with the value $\delta_{a_0}$ for an arbitrary $a_0\in A$. Define the coefficients $\bar b:\mathbb R_+\times \mathbb R\rightarrow \mathbb R$ and $\bar\sigma: \mathbb R_+\times \mathbb R\rightarrow \mathbb R_+$ as follows:
$$
\bar b(t, x)=
\begin{cases}
\int_A b(t, x, a) \nu_{t, x}(da),&\quad \text{if } (t, x)\in [0, T]\times \mathbb R\\
0, &\quad \text{otherwise.}
\end{cases}
$$
$$
\bar \sigma(t, x)=
\begin{cases}
\left(\int_A \sigma^2(t, x, a) \nu_{t, x}(da)\right)^{\frac{1}{2}},&\quad \text{if } (t, x)\in [0, T]\times \mathbb R\\
1, &\quad \text{otherwise.}
\end{cases}
$$
Note that these coefficients are bounded and measurable. Define the measures
$$\tilde\mu_\tau(B\times C):=\mu ((B\cap [0, T])\times (C\cap \bar{\mathcal{O}})), \quad  B\in\mathcal{B}(\mathbb R_+), \quad C\in\mathcal{B}(\mathbb R),$$
$$\tilde\mu_0(B\times C):=\int_{B\cap [0, T]} m_t((C\cap \bar{\mathcal{O}})\times A)dt,  \quad  B\in\mathcal{B}(\mathbb R_+), \quad C\in\mathcal{B}(\mathbb R).$$
$$\tilde m_0(B):=m_0^* (B\cap \mathcal{O}),  \quad  B\in\mathcal{B}(\mathbb R).$$
This implies that $(\tilde\mu_\tau, \tilde\mu_0, \tilde m_0)\in \mathcal{P}(\mathbb R_+\times \mathbb R)\times \mathcal{M}(\mathbb R_+\times \mathbb R)\times \mathcal{P}(\mathbb R)$. Define the operator 
$$\hat{\mathcal{L}}(\gamma\phi)(t, x)= \gamma'(t)\varphi(x) + \gamma (t)\left[ \bar b(t, x) \varphi'(x) + \frac{\bar{\sigma}^2}{2}(t, x)\varphi''(x) \right],$$
for all $\gamma\in C^1_b(\mathbb R_+)$, $\varphi\in C_b^2(\mathbb R)$. Then, by definition of $\tilde\mu_\tau$, $\tilde\mu_0$ and $\tilde m_0$,
$$\int_{\mathbb R_+\times \mathbb R} \gamma (t) \varphi(x)\tilde\mu_\tau(dt, dx) = \gamma(0)\int_{\mathbb R} \varphi(x)\tilde m_0(dx) + \int_{\mathbb R_+\times \mathbb R} \hat{\mathcal{L}}(\gamma\varphi)(t, x)\tilde\mu_0(dt, dx).$$
Let $U=\{0,1\}$ and define a new operator $\bar{\mathcal{L}}$ by
\begin{align*}
\bar{\mathcal{L}}(\beta \gamma \varphi)(r, s, x, u)& =u\beta(r)\hat{\mathcal{L}}(\gamma \varphi)(s, x)\\
&\quad + (1-u)\left[\beta(0)\gamma(0)\int_{\mathbb R} \varphi(x)\tilde m_0(dx) - \beta(r)\gamma(s)\varphi(x) + \beta'(r)\gamma(s)\varphi(x) \right],
\end{align*}
where $\beta\in C^1_b(\mathbb R_+)$, $\gamma\in C^1_b(\mathbb R_+)$, $\varphi\in C_b^2(\mathbb R)$ and $(r, s, x, u)\in \mathbb R_+\times \mathbb R_+\times \mathbb R \times U$. We set 
$$\mathcal{D}(\bar{\mathcal{L}})=\{\beta \gamma \varphi: \beta\in C^1_b(\mathbb R_+),\; \gamma\in C^1_b(\mathbb R_+), \;\varphi\in C_b^2(\mathbb R)\}.$$
Define $\bar{\mu}\in \mathcal{P}(\mathbb R_+\times \mathbb R_+\times \mathbb R \times U)$ by
\begin{align*}
\bar{\mu}(dr, ds, dx, du)&=K^{-1}\left[ \delta_1(du)\delta_0(dr) \tilde\mu_0 (ds, dx) +\delta_0(du) e^{-r}\mathds{1}_{\mathbb R^+}(r)dr \tilde\mu_\tau(ds, dx)\right],
\end{align*}
where $K=\tilde\mu_0(\mathbb R_+\times \mathbb R)+1$. The conditional distribution of $u$ given $(r, s, x)$ under $\bar \mu$ is
$$\bar \eta (r, s, x, du)=\delta_1(du)\mathds{1}_{\{0\}}(r) + \delta_0(du)e^{-r}\mathds{1}_{\mathbb R^+}(r).$$
As in Theorem 3.3 of \cite{stockbridge2002}, one can show that $\int \bar{\mathcal{L}}(\beta \gamma \varphi)d\bar\mu=0$ for all $\beta \gamma \varphi\in \mathcal{D}(\bar{\mathcal{L}})$. 

\textit{Second step: Verification of the conditions to apply Corollary 1.10 in \cite{stockbridge2017}.} Let $V=\mathbb R\times \mathbb R_+$. Define the operator
$$\mathcal{L}_0:\mathcal{D}(\mathcal{L}_0):=\mathcal{D}(\bar{\mathcal{L}})\subset C_b(\mathbb R_+ \times \mathbb R_+ \times \mathbb R)\rightarrow C(\mathbb R_+ \times \mathbb R_+\times {\mathbb R}\times U\times V)$$  
\begin{align*}
\mathcal{L}_0(\beta\gamma \varphi)(r, s, x, u, v) &=u\beta(r)\left( \gamma'(s)\varphi(x) + \gamma(s) \left[v^1 \varphi'(x) + \frac{v^2}{2}\varphi''(x)\right] \right) \\
&\quad  + (1-u)\left[\beta(0)\gamma(0)\int_{\mathbb R} \varphi(x)\tilde m_0(dx) - \beta(r)\gamma(s)\varphi(x) + \beta'(r)\gamma(s)\varphi(x) \right]
\end{align*}
where $v=(v^1, v^2)\in V$. We aim to apply Corollary 1.10 in \cite{stockbridge2017} without singular control. Define the transition function $\eta_0$ from $\mathbb R_+ \times \mathbb R_+\times \mathbb R\times U$ to $V$ as
$$\eta_0(r, s, x, u, dv)=\delta_{\bar b(s, x)}(dv^1) \delta_{\bar{\sigma}^2(s, x)}(dv^2).$$
We have,
$$\bar{\mathcal{L}}(\beta\gamma \varphi)(r, s, x, u)=\int_V \mathcal{L}_0(\beta\gamma \varphi)(r, s, x, u, v)\eta_0(r, s, x, u, dv).$$
Let $\psi(r, s, x, u, v)=1+u(|v^1|+v^2)$. We have that  
$$\int_{\mathbb R_+ \times \mathbb R_+\times {\mathbb R}\times U\times V}\psi(r, s, x, u, v)\eta_0(r, s, x, u, dv)\bar \mu(dr, ds, dx, du)\leq 1+\|b\|_\infty+\|\sigma^2\|_\infty<\infty.$$
Let us check that $\psi$ and $\mathcal{L}_0$ verify Condition 1.3 in \cite{stockbridge2017}. Taking $\beta=\gamma=\varphi=1$ we obtain $\mathcal{L}_0(1)=0$. On the other hand we can verify that there exists a constant $a(\beta, \gamma, \varphi)$ such that for all $(r, s, x, u, v)$,
$$|\mathcal{L}_0(\beta\gamma \varphi)(r,s,x,u,v)|\leq a(\beta, \gamma, \varphi)\psi(r, s, x, u, v).$$
One can find a countable subset of $C^1_b(\mathbb R_+)$ approximating any function of $C^1_b(\mathbb R_+)$ under the point-wise convergence of $\beta$ and $\beta'$ (the same holds for $C^2_b(\mathbb R)$ with the point-wise convergence of $\varphi$, $\varphi'$ and $\varphi''$). Then, the controlled martingale problem associated with $\mathcal{L}_0$ is countably generated. Let us prove that for each $(u, v)\in U\times V$, the operator $A_{u, v}(\beta \gamma \varphi)(r, s, x):=\mathcal{L}_0(\beta \gamma \varphi)(r, s, x, u, v)$ is a pre-generator. Suppose first that $u=1$, then 
$$A_{1, v}(\beta \gamma \varphi)(r, s, x)= \beta(r)\left( \gamma'(s)\varphi(x) + \gamma(s) \left[v^1 \varphi'(x) + \frac{v^2}{2}\varphi''(x)\right] \right).$$
For $z=(r, s, x)\in \mathbb R_+ \times \mathbb R_+\times \mathbb R$, define the processes $R_t^{z}=r$, $S_t^z=s+t$ and $X_t^{z}=x+v^1t + \sqrt{v^2}W_t$. For $t\geq 0$ and $z\in \mathbb R_+ \times \mathbb R_+\times \mathbb R$, define the measures 
$$\nu^{z}_t(B\times C\times D)=\delta_{R_t^{z}}(B)\delta_{S_t^{z}}(C)\mathbb P(X_{t}^{z}\in D), \quad B\in\mathcal{B}(\mathbb R_+),\quad C\in\mathcal{B}(\mathbb R_+), \quad D\in\mathcal{B}(\mathbb R).$$
Since $\nu^{z}=(\nu^{z}_t)_{t\geq 0}$ solves the forward equation for $(A_{1,v}, \delta_{z})$ and is right continuous at zero by the continuity in time of each process, we get by Proposition \ref{pre-generator} that $A_{1, v}$ is a pre-generator. 

Suppose now that $u=0$, then
$$A_{0, v}(\beta \gamma \varphi)(r, s, x)= \beta(0)\gamma(0)\int_{\bar{\mathcal{O}}} \varphi(x)\tilde m_0(dx) - \beta(r)\gamma(s)\varphi(x) + \beta'(r)\gamma(s)\varphi(x).$$
We can rewrite the operator as
$$A_{0, v}(h)(z)= \int_{\mathbb R_+ \times \mathbb R_+\times \mathbb R}(h(y)- h(z))\hat \mu(dy) + \partial_r h(z), \quad h\in \mathcal{D}(\mathcal{L}_0), \quad z\in \mathbb R_+ \times \mathbb R_+\times \mathbb R,$$
where
$$\hat \mu(dr, ds, dx)=\delta_0(dr)\delta_0(ds)\tilde{m}_0(dx).$$
{\color{black} By Proposition 10.2 p. 265 in \cite{ethier1986}, for any initial probability distribution $\nu$ on $\mathbb R_+ \times \mathbb R_+\times \mathbb R$, there exists a solution to the martingale problem for $(A_{0, v}, \nu)$ with càdlàg paths. This implies existence of a right continuous at zero solution to the forward equation for $(A_{0,v}, \delta_{z})$, for any $z$, which in turn entails by Proposition \ref{pre-generator} that $A_{0, v}$ is a pre-generator.}

Finally, the set $\mathcal{D}(\mathcal{L}_0)$ is closed under multiplication and separates points since we can use bump functions.

\textit{Third step: Controlled/stopped martingale problem representation.} By Corollary 1.10 in \cite{stockbridge2017}, there exist a complete probability space $(\Omega, \mathcal{F}, \mathbb Q)$ and a stationary $\mathbb R_+ \times \mathbb R_+\times \mathbb R$-valued process $(R, S, Y)$ (which we may assume is defined for all $t\in \mathbb R$) such that
$$\beta (R_t)\gamma(S_t) \varphi(Y_t)-\int_0^t\int_U\bar{\mathcal{L}}(\beta\gamma \varphi)(R_s, S_s, Y_s, u)\bar{\eta}(R_s, S_s, Y_s, du)ds$$
is an $(\bar{\mathcal{F}}_{t+}^{R, S, Y})_t$-martingale for all $\beta \gamma \varphi\in \mathcal{D}(\bar{\mathcal{L}})$, where $(\bar{\mathcal{F}}_{t+}^{R, S, Y})_t$ is the complete and right continuous augmentation of the natural filtration $(\mathcal{F}_{t}^{R, S, Y})_t$.

Following the same proof as Theorem 3.3 in \cite{stockbridge2002}, we arrive to the existence of a complete filtered probability space $(\Omega, \mathcal{F}, \mathbb F, \mathbb P)$, where $\mathbb F$ satisfies the usual conditions, an $\mathbb F$-stopping time $\tau$ with values in $\mathbb R_+$, a process $\tilde S$ with values in $\mathbb R_+$ such that $\tilde S_t \mathds{1}_{t\leq \tau}= t \mathds{1}_{t\leq \tau}$, an $\mathbb F$-progressively measurable process $X$ with values in $\mathbb R$ such that $\mathbb P \circ  X_0^{-1}=\tilde m_0$. Furthermore, 
$$\tilde\mu_\tau=\mathbb P \circ (\tau,  X_\tau)^{-1},$$
$$\tilde\mu_0(\Gamma)=\mathbb E^{\mathbb P}\left[ \int_0^\tau \mathds{1}_{\Gamma}(t, X_t)dt\right], \quad \forall \Gamma\in \mathcal{B}(\mathbb R_+\times \mathbb R),$$
and
\begin{equation}\label{mart_prob}
\gamma (\tilde S_{t\wedge \tau}) \varphi( X_{t\wedge \tau})-\int_0^{t\wedge \tau} \hat{\mathcal{L}}(\gamma \varphi)(\tilde S_s, X_s)ds,
\end{equation}
is an $\mathbb F$-martingale for all $\gamma \in C^1_b(\mathbb R_+)$, $\varphi\in C_b^2(\mathbb R)$.
Note that
$$1 = m_0^*(\mathcal{O}) = \tilde m_0(\mathcal{O})= \mathbb P (X_0\in\mathcal{O}),$$
which implies that $X_0\in \mathcal{O}$ $\mathbb P$-a.s. and
$$m_0^*=\mathbb P \circ X_0^{-1}.$$
On the other hand, since
$$1=\mu ([0, T]\times {\bar{\mathcal{O}}}) = \tilde\mu_\tau([0, T]\times {\bar{\mathcal{O}}}) = \mathbb P (\tau\in [0, T], X_\tau \in\bar{\mathcal{O}}),$$
we conclude that $\tau\leq T$, $X_\tau \in\bar{\mathcal{O}}$ $\mathbb P$-a.s. and
$$\mu =\mathbb P \circ (\tau, X_\tau)^{-1}.$$
Observe also that
$$\int_\Gamma m_t(dx, A)dt=\tilde\mu_0(\Gamma)=\mathbb E^{\mathbb P}\left[ \int_0^\tau \mathds{1}_{\Gamma}(t, X_t)dt\right], \quad \forall \Gamma\in \mathcal{B}([0, T]\times \bar{\mathcal{O}}).$$
Then, for $B\in \mathcal{B}([0, T]), C\in  \mathcal{B}(\bar{\mathcal{O}}), D\in \mathcal{B}(A)$,
$$\int_B\int_{C\times D}m_t(dx, da)dt = \int_{B\times C}\nu_{t, x}(D) m_t(dx, A)dt= \int_B \mathbb E^\mathbb{P} \left[ \mathds{1}_{C}(X_t)\nu_{t, X_t}(D) \mathds{1}_{t\leq \tau}\right]dt,$$
which implies
$$m_t(B\times C)= \mathbb E^{\mathbb P}\left[ \mathds{1}_B(X_t)\nu_{t, X_t}(C) \mathds{1}_{t\leq \tau}\right],  \quad B\in \mathcal{B}(\bar{\mathcal{O}}), \quad C\in \mathcal{B}(A), \quad t-a.e.$$
By the definition of $\tilde \mu_0$ we have
$$0=\tilde\mu_0(\mathbb R_+ \times \bar{\mathcal{O}}^c) =\mathbb E^\mathbb P \left[ \int_0^\tau  \mathds{1}_{\bar{\mathcal{O}}^c}(X_t)dt \right],$$
implying that
\begin{equation}\label{ess exit time}
(\mathbb P \otimes \lambda) (\{(\omega, t)\in \Omega\times [0, T]: X_t(\omega)\in \bar{\mathcal{O}}^c, t\leq \tau(\omega) \})=0.
\end{equation}
Using that $\tilde S_s \mathds{1}_{s\leq \tau}= s \mathds{1}_{s\leq \tau}$ and taking $\gamma=1$ in \eqref{mart_prob}, we get that for all $\varphi\in C_b^2(\mathbb R)$,
$$\varphi(X_{t\wedge\tau})- \int_0^{t\wedge\tau}\hat{\mathcal{L}}(\varphi)(s, X_s)ds $$
is an $\mathbb F$-martingale. Extending $b$ by $0$ and $\sigma$ by $1$ for $t>T$, we obtain that for all $\varphi \in C_b^2(\mathbb R)$, 
$$\varphi(X_{t \wedge \tau})-\varphi(X_0)-\int_{0}^{t \wedge \tau} \int_A \left(b(s, X_s, a)\varphi'(X_s)+ \frac{\sigma^2}{2}(s, X_s, a)\varphi''(X_s)\right) \nu_{s,X_s}(da)ds$$ 
is an $\mathbb{F}$-martingale. 

\textit{Fourth step: SDE representation of the controlled/stopped martingale problem.} Define $\tilde X_t:=X_{t \wedge \tau}$ for all $t\in\mathbb R_+$. Let us show that $\tilde X$ is a continuous process. Setting
$$\hat b_s(\omega):=\mathds{1}_{s\leq \tau(\omega)}\int_A b(s, X_s(\omega), a)\nu_{s, X_s(\omega)}(da)$$
$$\hat c_s(\omega):=\mathds{1}_{s\leq \tau(\omega)}\int_A \sigma^2(s, X_s(\omega), a)\nu_{s, X_s(\omega)}(da),$$
we get that for all $\varphi\in C_b^2(\mathbb R)$, 
$$\varphi(\tilde X_t)- \varphi(\tilde X_0) - \int_0^t \hat b_s \varphi'(\tilde X_{s-})ds - \int_0^t \frac{\hat c_s}{2} \varphi''(\tilde X_{s-})ds$$
is an $\mathbb F$-martingale. We conclude by Theorem II.2.42 from \cite{jacod2003} that $\tilde X$ is a semimartingale with characteristics $(B, C, 0)$ where
$$B_t=\int_0^t\hat b_s ds, \quad C_t=\int_0^t\hat c_s ds.$$
This means that the compensator of the random measure defined by 
$$\mu^{\tilde X}(\omega, dt, dx)=\sum_{s\geq 0}\mathds{1}_{\{\Delta \tilde X_s(\omega)\neq 0\}}\delta_{(s, \Delta \tilde X_s(\omega))}(dt, dx),$$
is equal to $0$ $\mathbb P$-a.s. Applying Theorem II.1.8 (i) from \cite{jacod2003} with $W=1$, we get that $\mu^{\tilde X}(\cdot, \mathbb R_+\times \mathbb R)=0$ a.s., which implies that $\tilde X$ is a continuous process. Using the continuity and \eqref{ess exit time}, we can deduce that $\tilde X$ takes values in $\bar{\mathcal{O}}$. 

Since $(\Omega, \mathcal{F},\mathbb F, \mathbb P, (\nu_{t, X_t})_{t\geq 0}, \tau, X)$ is a solution of the controlled/stopped martingale problem, by Theorem \ref{EM}, on an extension of the filtered probability space, there exists a continuous martingale measure $M$ with intensity $\nu_{t, X_t}(da)\mathds{1}_{t\leq \tau}dt$ such that
$$dX_t=\int_A b(t, X_t, a)\nu_t(da)dt + \int_A\sigma(t, X_t, a)M(dt, da), \quad t\leq \tau.$$
Moreover, there exists a Brownian motion $W$ such that $M(t, A)=M(t\wedge\tau, A)=W_{t\wedge\tau}$. In particular, if $\sigma$ is uncontrolled,
$$dX_t=\int_A b(t, X_t, a)\nu_t(da)dt + \sigma(t, X_t)dW_t, \quad t\leq \tau.$$

Let us prove now that $\tau\leq \tau_\mathcal{O}^X$ $\mathbb P$-a.s. If the first part of Assumption \ref{as exit time} holds, then $\tau_{\mathcal{O}}^{\tilde X}\geq T$ $\mathbb P$-a.s. Since $X_{t}=\tilde X_{t}$ on $\{t\leq \tau\}$, we get that $\tau \leq \tau_{\mathcal{O}}^X$ $\mathbb P$-a.s. If we suppose now that the second part of Assumption \ref{as exit time} holds, then since $\sigma$ is uncontrolled, 
$$X_{t \wedge \tau}=X_{0}+\int_{0}^{t \wedge \tau} \int_A  b(s,X_s, a) \nu_{s, X_s}(da)ds+\int_{0}^{t \wedge \tau}  \sigma(s,X_s) dW_s.$$
We define
$$Y_{t}=X_{0}+\int_{0}^{t} \int_A  b(s,X_s, a) \nu_{s, X_s}(da)\mathds{1}_{s\leq \tau}ds+\int_{0}^{t} \sigma(s,X_s) dW_s.$$
By Lemma \ref{lemma exit closure} we get that $\tau_{\mathcal{O}}^Y=\tau_{\bar{\mathcal{O}}}^Y$ $\mathbb P$-a.s. Using that for all $t\geq 0$, $X_{t \wedge \tau}=Y_{t \wedge \tau}$ and $X_{t \wedge \tau}$ is $\bar{\mathcal{O}}$-valued, we get that $\tau \leq \tau_{\mathcal{O}}^X$ $\mathbb P$-a.s. 

The case where $\nu_{t, x}=\delta_{\alpha(t, x)}$ for some measurable function $\alpha$, follows by the same arguments and replacing Theorem \ref{EM} by Theorem \ref{EK}.
\end{proof}

{\color{black}
\section{Sufficient condition for the existence of a square integrable density for $m_t(dx,A)$}\label{sec D}

\begin{proposition}
Suppose that Assumption \ref{as PDE} (1-6) holds true. Moreover, assume that $\sigma^2$ is Lipschitz continuous on $[0, T]\times \bar{\mathcal{O}}$ and $m_0^*$ has a bounded density with respect to the Lebesgue measure. If $(\mu, m)\in \mathcal{R}$, then $m_t(dx, A)dt$ admits an square integrable density with respect to the Lebesgue measure on $[0, T]\times \bar{\mathcal{O}}$.
\end{proposition}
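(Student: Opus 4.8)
The plan is to combine the probabilistic representation of elements of $\mathcal{R}$ with a Gaussian upper bound for the transition density of the associated diffusion. First I would invoke Theorem \ref{SDE rep}: under Assumption \ref{as PDE} the volatility $\sigma$ is uncontrolled and uniformly nondegenerate, so Assumption \ref{as exit time}(2) is in force and any $(\mu,m)\in\mathcal{R}$ is represented, on some filtered probability space, by a process $X$, a Brownian motion $W$ and a stopping time $\tau\le T\wedge\tau_{\mathcal O}^X$ with
\begin{equation*}
dX_t=\bar b(t,X_t)\,dt+\sigma(t,X_t)\,dW_t,\quad t\le\tau,\qquad \bar b(t,x):=\int_A b(t,x,a)\,\nu_{t,x}(da),
\end{equation*}
and $\bar m_t(dx):=m_t(dx,A)=\mathbb P[X_t\in dx,\ t\le\tau]$ for a.e.\ $t$. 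Here $\bar b$ is bounded and measurable, and since $\tau\le\tau_{\mathcal O}^X$ the stopped path $X_{\cdot\wedge\tau}$ stays in $\bar{\mathcal O}$.

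Next I would dominate $\bar m_t$ by the law of a diffusion without killing. Extend $\bar b$ to a bounded measurable function $\hat b$ on $[0,T]\times\mathbb R$ coinciding with $\bar b$ on $\bar{\mathcal O}$, and let $\hat X$ be the solution on $[0,T]$, driven by the same $W$ and started at $\hat X_0=X_0$, of $d\hat X_t=\hat b(t,\hat X_t)\,dt+\sigma(t,\hat X_t)\,dW_t$. Because $\sigma$ is Lipschitz and nondegenerate and $\hat b$ is bounded and measurable, this SDE admits a unique strong solution and enjoys pathwise uniqueness (Veretennikov--Zvonkin). On $[0,\tau]$ the process $X$ also solves this SDE, the drifts agreeing there because $X_s\in\bar{\mathcal O}$; pathwise uniqueness then gives $X_s=\hat X_s$ for $s\le\tau$, whence $\{X_t\in B,\ t\le\tau\}\subset\{\hat X_t\in B\}$ and
\begin{equation*}
\bar m_t(B)=\mathbb P[X_t\in B,\ t\le\tau]\le \mathbb P[\hat X_t\in B],\qquad B\in\mathcal B(\bar{\mathcal O}).
\end{equation*}
Thus $\bar m_t$ is dominated by the one-dimensional marginal law $p(t,\cdot)$ of the free diffusion.

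It then remains to bound $p(t,\cdot)$. Writing the Fokker--Planck equation for $p$ in divergence form, which is legitimate precisely because $\sigma^2$ is Lipschitz (so that $\tfrac12\partial_{xx}(\sigma^2 p)=\tfrac12\partial_x(\sigma^2\partial_x p)+\tfrac12\partial_x((\partial_x\sigma^2)p)$ has bounded measurable coefficients), one obtains a uniformly parabolic divergence-form operator with bounded lower-order terms, to which Aronson's Gaussian bounds apply: the transition density satisfies $\Gamma(t,x;0,y)\le C t^{-1/2}\exp(-c(x-y)^2/t)$. Since $m_0^*$ has a bounded density $\rho_0$ and $X_0\sim m_0^*$,
\begin{equation*}
p(t,x)=\int_{\mathbb R}\Gamma(t,x;0,y)\rho_0(y)\,dy\le \|\rho_0\|_\infty\int_{\mathbb R}\Gamma(t,x;0,y)\,dy\le C'\|\rho_0\|_\infty,
\end{equation*}
uniformly in $(t,x)$, the last step using that the Gaussian upper bound integrates in $y$ to a constant independent of $t$. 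Hence $\bar m_t$ has a density $\rho(t,\cdot)\le p(t,\cdot)\le C'\|\rho_0\|_\infty$, so $m_t(dx,A)\,dt=\rho(t,x)\,dx\,dt$ with $\rho$ bounded; as $\mathcal O$ is bounded, $[0,T]\times\bar{\mathcal O}$ has finite Lebesgue measure and $\rho\in L^2$, which is the claim.

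The delicate point, and the step I expect to be the main obstacle, is the heat-kernel estimate: one must guarantee that $\hat X_t$ admits a transition density satisfying a Gaussian upper bound despite the drift being only bounded and measurable, and verify the joint measurability of $\rho$. This is handled by the divergence-form reformulation (using the Lipschitz regularity of $\sigma^2$) together with the Nash--Aronson theory, the bounded drift entering merely as a lower-order perturbation that preserves the Gaussian upper bound. The role of the two extra hypotheses is then transparent: the Lipschitz regularity of $\sigma^2$ justifies the divergence-form rewriting, while the boundedness of the initial density is exactly what turns the integrable-in-space but $t^{-1/2}$-singular kernel into a bound uniform in time, and hence into membership in $L^2$.
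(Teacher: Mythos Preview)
Your argument is correct and follows a genuinely different route from the paper. Both proofs begin with the probabilistic representation of Theorem \ref{SDE rep}, but they diverge in how the drift is handled. The paper removes the drift by a Girsanov change of measure: it extends $X$ to a process $Y$ that after $\tau$ follows the driftless SDE $dY_t=\sigma(t,Y_t)\,dW_t$, defines $\mathbb Q$ via the exponential martingale $Z_T$, invokes Remark~5.1 of \cite{bdt2020} to obtain a \emph{bounded} density for the occupation measure of the absorbed driftless diffusion under $\mathbb Q$, and then transfers the bound back to $\mathbb P$ via Cauchy--Schwarz, which yields only $\eta(B\times C)\le C_1\,\tilde\eta(B\times C)^{1/2}$ and hence an $L^2$ (not bounded) density. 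Your approach keeps the drift and appeals directly to Aronson's Gaussian upper bound for the divergence-form Fokker--Planck equation, the Lipschitz hypothesis on $\sigma^2$ being precisely what makes the rewriting in divergence form with bounded lower-order terms legitimate; integrating the Gaussian kernel against the bounded initial density then gives a \emph{uniformly bounded} density on $[0,T]\times\bar{\mathcal O}$, which is a strictly stronger conclusion than the paper's. The trade-off is that the paper's route is purely probabilistic and self-contained modulo the reference to \cite{bdt2020}, while yours imports the Nash--Aronson machinery. One small point to tidy: the lower bound $\sigma\ge c_\sigma$ in Assumption~\ref{as PDE}(2) is asserted only on $[0,T]\times\bar{\mathcal O}$, so to apply both Veretennikov--Zvonkin and Aronson globally you should first modify $\sigma$ outside $\bar{\mathcal O}$ to preserve nondegeneracy and Lipschitz continuity on all of $\mathbb R$; this does not affect anything on $\{t\le\tau\}$ since the stopped path remains in $\bar{\mathcal O}$.
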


\begin{proof}
We set $\eta(dt, dx)=m_t(dx, A)dt$. By  Theorem \ref{SDE rep}, there exist a filtered probability space $(\Omega, \mathcal{F}, \mathbb F, \mathbb P)$, an $\mathbb F$-adapted process $X$, an $\mathbb F$-stopping time $\tau$ such that $\tau\leq T\wedge\tau_\mathcal{O}^X$ $\mathbb P$-a.s., and an $\mathbb F$-Brownian motion $W$, such that
$$X_{t\wedge \tau}= \int_0^{t\wedge \tau}\int_A b(t, X_t, a)\nu_{t, X_t}(da)dt + \int_0^{t\wedge \tau}\sigma(t, X_t)dW_t, \quad \mathbb P \circ X_0^{-1}= m_0^*,$$
$$m_t(B\times C)= \mathbb E^{\mathbb P}\left[ \mathds{1}_B(X_t)\nu_{t, X_t}(C) \mathds{1}_{t\leq \tau}\right],  \quad B\in \mathcal{B}(\bar{\mathcal{O}}), \quad C\in \mathcal{B}(A), \quad t-a.e.$$
We can rewrite $\eta$,
$$\eta(B\times C)=\int_0^T\mathbb E^\mathbb P\left[ \mathds{1}_{B}(t) \mathds{1}_{C}(X_t)\mathds{1}_{t\leq \tau} \right]dt, \quad B\in \mathcal{B}([0, T]),\; C\in \mathcal{B}(\bar{\mathcal{O}}).$$
Since $\tau\leq \tau_\mathcal{O}^X$, we get that 
$$(\lambda\times \mathbb P)(\{(t, \omega): X_t(\omega)\in \partial\mathcal{O}, t\leq \tau(\omega)\})=0,$$
which means that $\eta$ puts $0$ mass on $\partial \mathcal{O}$ and can thus be treated as a measure on $[0, T]\times \mathcal{O}$. By standard arguments of existence of strong solutions to SDEs, there exists a unique process $Y$ such that
$$
Y_t=
\begin{cases}
X_t &\text{if } t\leq \tau\\
X_\tau + \int_\tau^t \sigma(s, Y_s)dW_s &\text{if } t> (\tau, T].
\end{cases}
$$
Let
$$\bar b_s:=\int_A b(s, X_s, a)\nu_{s, X_s}(da)\mathds{1}_{s\leq \tau},$$
and rewrite $Y$ as 
$$Y_{t}=Y_0 + \int_0^t\bar b_s ds +\int_0^t\sigma(s, Y_s)dW_s, \quad s\in [0, T].$$
Define
$$\lambda_s=\frac{\bar b_s}{\sigma(s, X_s)},$$
$$Z_t=\exp\left[ -\int_0^t\lambda_s dW_s -\frac{1}{2}\int_0^t \lambda_s^2 ds\right].$$
Since $\lambda$ is a bounded process, by Girsanov's Theorem, under $\mathbb Q$, 
$$\widetilde{W}_t=W_t+\int_0^t\lambda_s ds$$
is an $(\mathbb F, \mathbb Q)$-Brownian motion, where
$$\frac{d\mathbb Q}{d\mathbb P}=Z_T.$$
The dynamics of $Y$ under $\mathbb Q$, are as follows
$$Y_{t}=Y_0 + \int_0^t\sigma(s, Y_s)d\widetilde{W}_s, \quad s\in [0, T].$$
By Remark 5.1 in \cite{bdt2020},
$$\tilde\eta(B\times C)=\mathbb E^\mathbb Q\left[ \int_0^T \mathds{1}_{B}(t) \mathds{1}_{C}(Y_t)\mathds{1}_{t\leq \tau_\mathcal{O}^Y}dt \right], \quad B\in \mathcal{B}([0, T]),\; C\in \mathcal{B}(\mathcal{O}),$$
admits a bounded density $(t, x)\mapsto \tilde\eta(t, x)$ with respect to the Lebesgue measure on $[0, T]\times \mathcal{O}$, i.e. $\tilde\eta(dt, dx)=\tilde\eta(t, x)dtdx$. Letting $B\in \mathcal{B}([0, T])$, $C\in \mathcal{B}(\mathcal{O})$, we have
\begin{align*}
\eta(B\times C) &= \int_0^T\mathbb E^\mathbb P\left[ \mathds{1}_{B}(t) \mathds{1}_{C}(X_t)\mathds{1}_{t\leq \tau} \right]dt\\
&= \int_0^T\mathbb E^\mathbb P\left[ \mathds{1}_{B}(t) \mathds{1}_{C}(Y_t)\mathds{1}_{t\leq \tau} \right]dt\\
&\leq \int_0^T\mathbb E^\mathbb P\left[ \mathds{1}_{B}(t) \mathds{1}_{C}(Y_t)\mathds{1}_{t\leq \tau_{\mathcal{O}}^Y} \right]dt\\
&= \int_0^T\mathbb E^\mathbb Q\left[ \mathds{1}_{B}(t) \mathds{1}_{C}(Y_t)\mathds{1}_{t\leq \tau_{\mathcal{O}}^Y} Z_T^{-1}\right]dt\\
&\leq \left(\mathbb E^\mathbb Q[Z_T^{-2}]\right)^{1/2} \left(\int_0^T\mathbb E^\mathbb Q\left[ \mathds{1}_{B}(t) \mathds{1}_{C}(Y_t)\mathds{1}_{t\leq \tau_{\mathcal{O}}^Y}\right]dt\right)^{1/2}.\\
&= C_1 \left(\tilde \eta(B\times C)\right)^{1/2},\\
\end{align*}
where $C_1=\left(\mathbb E^\mathbb Q[Z_T^{-2}]\right)^{1/2}$. This allows to deduce that $\eta(dt, dx)=\eta(t, x)dtdx$ for some non-negative $L^1$ function $\eta$. Moreover
$$\int_0^T\int_{\mathcal{O}}\eta^2(t, x)dtdx \leq C_1 \left(\int_0^T\int_{\mathcal{O}}\eta(t, x)\tilde\eta(t, x)dtdx\right)^{1/2}\leq C_1 \left( \|\eta\|_1\|\tilde\eta\|_\infty \right)^{1/2}<\infty,$$
which shows that the density $\eta$ is on $L^2$.
\end{proof}
}

\section{Proof of Theorem \ref{PDE}}\label{sec E}

\begin{proof}
(1)  We define $\tau^{x, \alpha}_\mathcal{O}:=\inf \left\{t \geq 0 : \, X_{t}^{x, \alpha} \notin \mathcal{O}\right\}$. By Theorem \ref{theoremstrong}, for all $x\in \mathcal{O}$,
\begin{equation}\label{value_init}
v(0, x)=\mathbb{E}\left[\int_{0}^{\tau^\star(x)} f\left(s, X_{s}^{x, \alpha^\star(x)}, \alpha_s^\star(x)\right) ds + g\left(\tau^\star(x), X_{\tau^\star(x)}^{x, \alpha^\star(x)}\right) \right].
\end{equation}
Note that, by definition of the value function $v$, we have $\tau^\star(x) \leq \tau_{\mathcal{O}}^{x, \alpha^\star} \wedge T$ a.s. Now consider the measures defined by
\begin{equation*}
\bar m_t(B)=\int_\mathcal{O}\mathbb P \left[\left(X_t^{x, \alpha^\star(x)},\alpha_t^\star(x)\right)\in B, t\leq \tau^\star(x)\right]m_0^*(dx), \quad B\in\mathcal{B}(\bar{\mathcal{O}}\times A),
\end{equation*}
$$\bar\mu(B)=\int_\mathcal{O} \mathbb P\left[\left(\tau^\star(x), X_{\tau^\star(x)}^{x, \alpha^\star(x)}\right)\in B\right]m_0^*(dx), \quad B\in\mathcal{B}([0, T]\times \bar{\mathcal{O}}).$$
Integrating with respect to $m^*_0$ in \eqref{value_init}, we derive that
$$V^S = \Gamma(\bar\mu, \bar m)$$
Since $(\bar \mu, \bar m)\in \mathcal{R}$, we conclude that $V^{S}\leq V^{LP}$.

We now prove that the converse inequality holds. Fix $(\mu, m) \in \mathcal{R}$. Since $v\in W^{1, 2, 2}((0, T)\times \mathcal{O})$, there exists a sequence $(u_n)_{n\geq 1}\subset C^{1, 2}_b([0, T]\times \bar{\mathcal{O}})$ such that $u_n\rightarrow v$ in $W^{1, 2, 2}((0, T)\times \mathcal{O})\cap C([0, T]\times \bar{\mathcal{O}})$. By condition (7) in Assumption \ref{as PDE} and Theorem \ref{SDE rep}, we get that $m_t(dx, A)dt$ has a square integrable density with respect to the Lebesgue measure. In particular we can change the set $\bar{\mathcal{O}}$ by $\mathcal{O}$ in the integrals with respect to $m$. Therefore, we get
\begin{equation}\label{v_fp}
\int_{[0, T]\times \bar{\mathcal{O}}}v(t, x)\mu(dt, dx)= \int_{\mathcal{O}}v(0, x)m_0^*(dx) + \int_0^T\int_{\mathcal{O}\times A} \left(\frac{\partial v}{\partial t}+\mathcal{L} v\right)(t, x, a) m_t(dx, da) dt.
\end{equation}
From the above equality, we derive that
\begin{equation}\label{v_fp2}
V^{S} = \int_{[0, T]\times \bar{\mathcal{O}}}v(t, x)\mu(dt, dx) -\int_{0}^{T} \int_{\mathcal{O}\times A}\left(\frac{\partial v}{\partial t}+\mathcal{L} v\right)(t, x, a) m_{t}(dx, da) dt.
\end{equation}
Now using the HJBVI (\ref{HJBVI}), we get $V^{S}\geq V^{LP}$.\\
(2) Let $(\mu^\star, m^\star)$ be a maximizer of the LP program. As before, $m_t(dx, A)dt$ admits a square integrable density with respect to the Lebesgue measure.\\
(a) By (1) we get $V^{LP}=V^{S}$, that is
\begin{equation*}
\int_{0}^{T} \int_{\mathcal{O}\times A} f(t, x, a) m^\star_t(dx, da) dt + \int_{[0, T]\times \bar{\mathcal{O}}}g(t, x)\mu^\star(dt, dx) = \int_{\mathcal{O}}v(0, x)m_0^*(dx).
\end{equation*}
Since $g\in C^{1, 2}_b([0, T]\times \bar{\mathcal{O}})$,
\begin{equation}\label{g_fp}
\int_{[0, T]\times \bar{\mathcal{O}}}g(t, x)\mu^\star(dt, dx)= \int_{\mathcal{O}}g(0, x)m_0^*(dx) + \int_0^T\int_{\mathcal{O}\times A} \left(\frac{\partial g}{\partial t}+\mathcal{L} g\right)(t, x, a) m_t^\star(dx, da) dt.
\end{equation}
Therefore, using the last two equalities
\begin{equation}\label{mixed_sol}
\begin{split}
&\int_{\mathcal{S}\times A} \left(f+\frac{\partial g}{\partial t}+\mathcal{L} g\right)(t, x, a) m_t^\star(dx, da) dt = \int_{\mathcal{O}}(v-g)(0, x)m_0^*(dx) \\
&\quad - \int_{\mathcal{C}\times A} \left(f+\frac{\partial g}{\partial t}+\mathcal{L} g\right)(t, x, a) m^\star_t(dx, da) dt\\
& \quad \geq \int_{\mathcal{O}}(v-g)(0, x)m_0^*(dx) + \int_{\mathcal{C}\times A} \left(\frac{\partial (v-g)}{\partial t}+\mathcal{L} (v-g)\right)(t, x, a) m_t^\star(dx, da) dt\\
& \quad = \int_{\mathcal{O}}(v-g)(0, x)m_0^*(dx) + \int_0^T\int_{\mathcal{O}\times A} \left(\frac{\partial (v-g)}{\partial t}+\mathcal{L} (v-g)\right)(t, x, a) m_t^\star(dx, da) dt.
\end{split}
\end{equation}
The inequality follows from the HJBVI (\ref{HJBVI}) and the last equality follows from the fact that for all $a\in A$, 
$$\left(\frac{\partial (v-g)}{\partial t}+\mathcal{L} (v-g)\right)(t, x, a)=0, \quad \text{a.e. on } \mathcal{S}.$$
By \eqref{v_fp} and \eqref{g_fp}, we obtain
$$\int_{\mathcal{S}\times A} \left(f+\frac{\partial g}{\partial t}+\mathcal{L} g\right)(t, x, a) m_t^\star(dx, da) dt = \int_{[0, T]\times \bar{\mathcal{O}}}(v-g)(t, x)\mu^\star (dt, dx) \geq 0.$$
Finally, since for all $a\in A$, 
$$\left(f+\frac{\partial g}{\partial t}+\mathcal{L} g\right)(t, x, a)\leq 0\quad \text{a.e. on } \mathcal{S},$$
we conclude that
$$\int_{\mathcal{S}\times A} \left(f+\frac{\partial g}{\partial t}+\mathcal{L} g\right)(t, x, a) m_t^\star(dx, da) dt = 0.$$
(b) The inequality in (\ref{mixed_sol}) is now an equality, so we have 
$$-\int_{\mathcal{C}\times A} f(t, x, a)m^\star_t(dx, da)dt= \int_{\mathcal{C}\times A} \left(\frac{\partial v}{\partial t}+\mathcal{L} v\right)(t, x, a)m^\star_t(dx, da)dt,$$
\begin{equation*}
\int_{\mathcal{O}}(v-g)(0, x)m_0^*(dx) + \int_0^T\int_{\mathcal{O}\times A} \left(\frac{\partial (v-g)}{\partial t}+\mathcal{L} (v-g)\right)(t, x, a) m_t^\star(dx, da) dt = 0.
\end{equation*}
Since
\begin{equation*}
\int_{[0, T]\times \bar{\mathcal{O}}}g(t, x)\mu^\star(dt, dx)= \int_{\mathcal{S}}v(t, x)\mu^\star(dt, dx) + \int_{\mathcal{C}}g(t, x)\mu^\star(dt, dx) + \int_{[0, T]\times \partial\mathcal{O}}v(t, x)\mu^\star(dt, dx),
\end{equation*}
we get 
$$\int_{\mathcal{C}}(v-g)(t, x)\mu^\star(dt, dx)=0.$$
We conclude that $\mu^\star(\mathcal{C})=0$.\\
(c) The result follows since $\mu^\star(\mathcal{C})=0$.\\
\end{proof}

\section{Two technical lemmas}

\begin{lemma}\label{slutsky}
Let $\mathcal{X}$ and $\mathcal{Y}$ complete, separable metric spaces, and let $\varphi:\mathcal{X}\times \mathcal{Y}\rightarrow\mathbb R$ be bounded and continuous. Then, the map 
$$\mathcal{Y}\times \mathcal{M}(\mathcal{X})\ni (y, \nu)\mapsto \int_{\mathcal{X}}\varphi(x, y)\nu(dx)$$
is continuous.
\end{lemma}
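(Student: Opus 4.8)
The plan is to prove joint sequential continuity, which suffices here: since $\mathcal{X}$ is Polish, $\mathcal{M}(\mathcal{X})$ equipped with the weak topology is metrizable, so the product $\mathcal{Y}\times\mathcal{M}(\mathcal{X})$ is metrizable and continuity is equivalent to sequential continuity. Accordingly, I would fix a sequence $(y_n,\nu_n)\to(y,\nu)$, that is $y_n\to y$ in $\mathcal{Y}$ and $\nu_n\rightharpoonup\nu$ in $\mathcal{M}(\mathcal{X})$, and aim to show
$$\int_{\mathcal{X}}\varphi(x,y_n)\nu_n(dx)\underset{n\to\infty}{\longrightarrow}\int_{\mathcal{X}}\varphi(x,y)\nu(dx).$$

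The first step is to split the difference through the triangle inequality:
\begin{align*}
\left|\int_{\mathcal{X}}\varphi(x,y_n)\nu_n(dx)-\int_{\mathcal{X}}\varphi(x,y)\nu(dx)\right|
&\le \int_{\mathcal{X}}|\varphi(x,y_n)-\varphi(x,y)|\,\nu_n(dx)\\
&\quad+\left|\int_{\mathcal{X}}\varphi(x,y)\nu_n(dx)-\int_{\mathcal{X}}\varphi(x,y)\nu(dx)\right|.
\end{align*}
The second term tends to $0$ immediately, since $x\mapsto\varphi(x,y)$ is bounded and continuous and $\nu_n\rightharpoonup\nu$. The substantial work lies in the first term, which I would handle using tightness. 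Testing against $\varphi\equiv 1$ gives $\nu_n(\mathcal{X})\to\nu(\mathcal{X})$, hence $M:=\sup_n\nu_n(\mathcal{X})<\infty$; and since $\nu_n\rightharpoonup\nu$ on the Polish space $\mathcal{X}$, the converse part of Prokhorov's theorem yields that $\{\nu_n\}$ is tight, so for every $\varepsilon>0$ there is a compact $K\subset\mathcal{X}$ with $\sup_n\nu_n(\mathcal{X}\setminus K)<\varepsilon$.

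On the compact set $K$ I would invoke the key uniform convergence fact that $\sup_{x\in K}|\varphi(x,y_n)-\varphi(x,y)|\to0$. This is proved by contradiction: if the supremum stayed $\ge\delta>0$ along a subsequence, one picks witnesses $x_k\in K$, extracts a convergent subsubsequence $x_k\to x_*\in K$, and then joint continuity of $\varphi$ forces both $\varphi(x_k,y_{n_k})\to\varphi(x_*,y)$ and $\varphi(x_k,y)\to\varphi(x_*,y)$, contradicting $|\varphi(x_k,y_{n_k})-\varphi(x_k,y)|\ge\delta$. With this in hand I would bound
$$\int_{\mathcal{X}}|\varphi(x,y_n)-\varphi(x,y)|\,\nu_n(dx)\le M\sup_{x\in K}|\varphi(x,y_n)-\varphi(x,y)|+2\|\varphi\|_\infty\,\varepsilon,$$
let $n\to\infty$ so the first summand vanishes, and then send $\varepsilon\to0$.

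The main obstacle is precisely this first term: a naive pointwise argument $\varphi(\cdot,y_n)\to\varphi(\cdot,y)$ is too weak when $\mathcal{X}$ is non-compact, so the proof hinges on upgrading to uniform convergence on compacts and simultaneously controlling the mass escaping to infinity via tightness. Once these two ingredients are combined, the remainder of the estimate is routine.
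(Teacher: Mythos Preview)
Your proof is correct. Both approaches reduce to sequential continuity and make the same triangle-inequality split; the difference lies in how the ``hard'' first term is handled. You keep the original continuous function $\varphi$ and compensate for the lack of a uniform modulus of continuity by invoking Prokhorov to localize mass on a compact set $K$, then establishing $\sup_{x\in K}|\varphi(x,y_n)-\varphi(x,y)|\to 0$ via a compactness-and-subsequence contradiction. The paper instead reformulates the claim as weak convergence of the product measures $\nu_n\times\delta_{y_n}\rightharpoonup\nu\times\delta_{y}$ and then appeals to the fact that bounded Lipschitz test functions suffice to verify weak convergence; for a Lipschitz $\phi$ the bound $|\phi(x,y_n)-\phi(x,y)|\le L\,d_{\mathcal Y}(y_n,y)$ is uniform in $x$, so the first term is controlled directly by $L\,d_{\mathcal Y}(y_n,y)\sup_n\nu_n(\mathcal X)$ without any tightness argument. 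Your route is more self-contained (no reliance on the Lipschitz-test-function characterization of weak convergence) but uses the converse Prokhorov theorem; the paper's route is shorter and sidesteps tightness entirely at the cost of importing that characterization from Bogachev.
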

\begin{proof}
Let $y^n\rightarrow \bar y$ and $\nu^n\rightharpoonup \nu$, let us prove that
$$\int_{\mathcal{X}}\varphi(x, y^n)\nu^n(dx)\underset{n\rightarrow\infty}{\longrightarrow} \int_{\mathcal{X}}\varphi(x, \bar{y})\nu(dx).$$
It suffices to show that $\nu^n\times \delta_{y^n}\rightharpoonup \nu\times \delta_{\bar{y}}$. By Remark 8.3.1 and Exercise 8.10.71 in \cite{bogachev2007} (Volume 2), it is sufficient to use bounded and Lipschitz functions as test functions. Consider a bounded and Lipschitz function $\phi:\mathcal{X}\times \mathcal{Y}\rightarrow \mathbb R$ and denote by $L$ the Lipschitz constant of $\phi$. We have
\begin{align*}
\left|\int_{\mathcal{X}\times \mathcal{Y}}\phi(x,y)\delta_{y^n}(dy)\nu^n(dx) - \int_{\mathcal{X}\times \mathcal{Y}}\phi(x, y)\delta_{\bar{y}}(dy)\nu(dx)\right|&=
\left| \int_{\mathcal{X}}\phi(x, y^n)\nu^n(dx) - \int_{\mathcal{X}}\phi(x, \bar{y})\nu(dx) \right|\\ &\leq \left| \int_{\mathcal{X}}\phi(x, y^n)\nu^n(dx) - \int_{\mathcal{X}}\phi(x, \bar{y})\nu^n(dx) \right|\\
&\quad + \left| \int_{\mathcal{X}}\phi(x, \bar{y})\nu^n(dx) - \int_{\mathcal{X}}\phi(x, \bar{y})\nu(dx) \right|
\end{align*}
The second term converges to $0$ since $\nu^n\rightharpoonup \nu$. For the first term we get
$$\left| \int_{\mathcal{X}}\phi(x, y^n)\nu^n(dx) - \int_{\mathcal{X}}\phi(x, \bar{y})\nu^n(dx) \right|\leq L d_\mathcal{Y}(y^n, \bar{y})\sup_{n\geq 1}\nu^n(\mathcal{X}),$$
which also converges to $0$ since $\nu^n(\mathcal{X})$ converges to $\nu(\mathcal{X})$, which gives the uniform boundedness of $(\nu^n(\mathcal{X}))_{n\geq 1}$.
\end{proof}

\begin{lemma}\label{slutsky_stable}
Let $\Theta$, $\mathcal{X}$ complete, separable metric spaces. Let $\eta\in \mathcal{M}(\Theta)$. Let $\varphi:\Theta \times \mathcal{X}\times \mathbb R^d\rightarrow\mathbb R$, with $d\in\mathbb N^*$, be a bounded measurable map and assume that for every $t\in \Theta$, $\varphi(t, \cdot)$ is continuous. Suppose that a sequence of measurable functions $\psi^n:\Theta\rightarrow \mathbb R^d$ converges in $L^1(\Theta, \eta)$ to a measurable function $\psi:\Theta\rightarrow \mathbb R^d$ and that $(\nu^n_t(dx)\eta(dt))_{n\geq 1}\subset \mathcal{M}(\Theta\times \mathcal{X})$ converges to $\nu_t(dx)\eta(dt)$ in the stable topology, where $(\nu^n)_{n\geq 1}$ and $\nu$ are transition kernels from $\Theta$ to $\mathcal{X}$. Suppose also that there exists a constant $C>0$ such that $\eta$-a.e. $\sup_{n\geq 1}\nu_t^n(\mathcal{X})\leq C$. Then,
$$\int_\Theta \int_{\mathcal{X}}\varphi(t, x, \psi^n(t))\nu^n_t(dx)\eta(dt)\underset{n\rightarrow\infty}{\longrightarrow} \int_\Theta \int_{\mathcal{X}}\varphi(t, x, \psi(t))\nu_t(dx)\eta(dt).$$
\end{lemma}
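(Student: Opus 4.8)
The plan is to isolate the two sources of movement — the measure $\nu^n_t$ and the argument $\psi^n(t)$ — by comparing with the intermediate quantity
$$J_n:=\int_\Theta\int_{\mathcal X}\varphi(t,x,\psi(t))\,\nu^n_t(dx)\,\eta(dt),$$
where I denote by $I_n$ the left-hand side and by $I$ the right-hand side of the asserted convergence. I would write $I_n-I=(I_n-J_n)+(J_n-I)$ and treat the two differences by completely different means. The term $J_n-I$ is handled directly by the assumed stable convergence of $\nu^n_t(dx)\eta(dt)$: the function $(t,x)\mapsto\varphi(t,x,\psi(t))$ is bounded by $\|\varphi\|_\infty$, measurable in $t$ (as the composition of the jointly measurable $\varphi$ with the measurable $\psi$), and continuous in $x$ for each fixed $t$, since $\varphi(t,\cdot)$ is continuous. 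It is therefore an admissible test function for the stable topology, and $J_n\to I$ for the whole sequence.

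For the term $I_n-J_n$ I would use the subsequence principle: it suffices to show that an arbitrary subsequence admits a further subsequence along which $I_n-J_n\to0$. Along any subsequence $\psi^n\to\psi$ still holds in $L^1(\Theta,\eta)$, so I extract a sub-subsequence (not relabelled) with $\psi^n(t)\to\psi(t)$ for $\eta$-a.e. $t$. The structural input I need is tightness in the $\mathcal X$ variable: stable convergence implies weak convergence of $\gamma^n:=\nu^n_t(dx)\eta(dt)$ to $\gamma:=\nu_t(dx)\eta(dt)$ as finite measures on $\Theta\times\mathcal X$, whose total masses are bounded by $C\,\eta(\Theta)$, so by Prokhorov's theorem the $\mathcal X$-marginals $\beta^n(\cdot):=\int_\Theta\nu^n_t(\cdot)\,\eta(dt)$ form a tight family.

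Then, fixing $\varepsilon>0$, I would choose a compact $K\subset\mathcal X$ with $\sup_n\int_\Theta\nu^n_t(\mathcal X\setminus K)\,\eta(dt)<\varepsilon$, and set $h_n(t,x):=|\varphi(t,x,\psi^n(t))-\varphi(t,x,\psi(t))|\le 2\|\varphi\|_\infty$. Splitting the bound $|I_n-J_n|\le\int_\Theta\int_{\mathcal X}h_n\,\nu^n_t\,\eta$ over $K$ and $\mathcal X\setminus K$, the tail over $\mathcal X\setminus K$ is at most $2\|\varphi\|_\infty\varepsilon$ uniformly in $n$ by the choice of $K$ and $h_n\le2\|\varphi\|_\infty$. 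For the part over $K$, I fix $t$ with $\psi^n(t)\to\psi(t)$: since $\varphi(t,\cdot)$ is uniformly continuous on the compact set $K\times\overline{B}(\psi(t),1)$ and $\psi^n(t)$ eventually lies in that ball, $\sup_{x\in K}h_n(t,x)\to0$; combined with $\nu^n_t(K)\le\nu^n_t(\mathcal X)\le C$ this gives $\int_K h_n(t,x)\,\nu^n_t(dx)\le C\sup_{x\in K}h_n(t,x)\to0$ for $\eta$-a.e. $t$. As this inner integral is dominated by the constant $2\|\varphi\|_\infty C\in L^1(\Theta,\eta)$ (here I use that $\eta$ is finite), dominated convergence over $\Theta$ makes the part over $K$ vanish. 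Thus $\limsup_n|I_n-J_n|\le 2\|\varphi\|_\infty\varepsilon$, and letting $\varepsilon\downarrow0$ closes the sub-subsequence.

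The main obstacle, and the reason a naive application of stable convergence fails, is that the integrating measure $\nu^n_t$ and the argument $\psi^n(t)$ both move with $n$, while $\varphi$ is only measurable (not continuous) in $t$, so no uniform-in-$t$ modulus of continuity is available, and $\mathcal X$ is not assumed compact. The device that resolves this is precisely the separation above: the non-compact tail is controlled uniformly in $n$ by tightness of the marginals together with the uniform mass bound $C$, whereas on the compact core only the pointwise-in-$t$ uniform continuity of $\varphi(t,\cdot)$ is required, the $t$-aggregation being supplied by dominated convergence against the finite measure $\eta$.
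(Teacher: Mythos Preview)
Your proof is correct but follows a genuinely different route from the paper's. The paper does not argue directly on $\varphi$: instead it lifts the problem to the product space $\Theta\times\mathcal X\times\mathbb R^d$ by considering the measures $\delta_{\psi^n(t)}(dy)\,\nu^n_t(dx)\,\eta(dt)$, and shows that these converge \emph{stably} to $\delta_{\psi(t)}(dy)\,\nu_t(dx)\,\eta(dt)$ via Corollary~2.9 of Jacod--M\'emin. The desired convergence then follows by integrating the admissible test function $\varphi$ against this stable limit. To check the weak-convergence hypothesis of that corollary, the paper uses the same intermediate splitting $I_n-J_n$, $J_n-I$ as you do, but only for \emph{bounded Lipschitz} $\phi$; the Lipschitz property gives the clean bound $|I_n-J_n|\le CL\int_\Theta\|\psi^n-\psi\|\,d\eta$, so the $L^1$ convergence of $\psi^n$ is used directly, with no subsequence extraction, no a.e.\ convergence, and no tightness argument in $\mathcal X$.

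What each approach buys: the paper's argument is shorter and avoids Prokhorov and dominated convergence entirely, at the price of invoking the Jacod--M\'emin machinery as a black box. Your argument is more self-contained and elementary --- it needs only the definition of stable convergence, Prokhorov on Polish spaces, and dominated convergence --- but pays for this with the subsequence principle and the compact-core/tight-tail decomposition. Both make essential use of the uniform mass bound $\sup_n\nu^n_t(\mathcal X)\le C$: the paper to control the Lipschitz estimate, you to dominate the inner integral over $K$.
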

\begin{proof}
We need to prove that
$$\int_{\Theta\times\mathcal{X}\times \mathbb R^d}\varphi(t, x, y)\delta_{\psi^n(t)}(dy)\nu^n_t(dx)\eta(dt)\underset{n\rightarrow\infty}{\longrightarrow} \int_{\Theta\times\mathcal{X}\times \mathbb R^d}\varphi(t, x, y)\delta_{\psi(t)}(dy)\nu_t(dx)\eta(dt).$$
It suffices to show that $\delta_{\psi^n(t)}(dy)\nu^n_t(dx)\eta(dt)$ converges to $\delta_{\psi(t)}(dy)\nu_t(dx)\eta(dt)$ in the stable topology. We are going to use Corollary 2.9 in \cite{jacod1981}. Since $\delta_{\psi^n(t)}(dy)$ has mass 1, the first condition of the Corollary follows by stable convergence of $\nu^n_t(dx)\eta(dt)$. Now, we need to show that $\delta_{\psi^n(t)}(dy)\nu^n_t(dx)\eta(dt)\rightharpoonup \delta_{\psi(t)}(dy)\nu_t(dx)\eta(dt)$. As in the previous Lemma, it is sufficient to use bounded and Lipschitz functions as test functions. Consider a bounded and Lipschitz function $\phi:\Theta\times \mathcal{X}\times \mathbb R^d\rightarrow \mathbb R$ and denote by $L$ the Lipschitz constant of $\phi$. We have
\begin{align*}
& \left| \int_{\Theta\times \mathcal{X}}\phi(t, x, \psi^n(t))\nu^n_t(dx)\eta(dt) - \int_{\Theta\times \mathcal{X}}\phi(t, x, \psi(t))\nu_t(dx)\eta(dt) \right| \\
&\leq \left| \int_{\Theta\times \mathcal{X}}\phi(t, x, \psi^n(t))\nu^n_t(dx)\eta(dt) - \int_{\Theta\times \mathcal{X}}\phi(t, x, \psi(t))\nu^n_t(dx)\eta(dt) \right|\\
&\quad + \left| \int_{\Theta\times \mathcal{X}}\phi(t, x, \psi(t))\nu^n_t(dx)\eta(dt) - \int_{\Theta\times \mathcal{X}}\phi(t, x, \psi(t))\nu_t(dx)\eta(dt) \right|\\
\end{align*}
The second term converges to $0$ since $\nu^n_t(dx)\eta(dt)$ converges to $\nu_t(dx)\eta(dt)$ in the stable topology. For the first term we get
$$\left| \int_{\Theta\times \mathcal{X}}\phi(t, x, \psi^n(t))\nu^n_t(dx)\eta(dt) - \int_{\Theta\times \mathcal{X}}\phi(t, x, \psi(t))\nu^n_t(dx)\eta(dt) \right|\leq CL \int_\Theta \|\psi^n(t)-\psi(t)\|\eta(dt),$$
which also converges to $0$.
\end{proof}

\section{Some results on set-valued analysis}\label{sec G}

Let us recall some theory about set-valued analysis, which can be found in Chapter 17 of \cite{aliprantis2007}. For the next definitions, consider a metric space $(X, d)$ and a set valued map $\varphi:X\rightarrow 2^X$. The graph of $\varphi$ is defined as the following set:
$$\operatorname{Gr}(\varphi):=\{(x, y)\in X^2: y\in \varphi(x)\}.$$

\begin{definition}\label{def up}
The correspondence $\varphi$ is said to be \textit{upper hemicontinuous} if for any sequence $(x_n, y_n)_{n\geq 1}$ in the graph of $\varphi$ such that $x_n\rightarrow x$, the sequence $(y_n)_{n\geq 1}$ has a limit point in $\varphi(x)$.
\end{definition}

\begin{theorem}[Closed Graph Theorem, Theorem 17.11 in \cite{aliprantis2007}]\label{upperhemi_closedgraph}
If $X$ is compact, the following statements are equivalent:
\begin{enumerate}[(i)]
\item $\varphi(x)$ is closed for all $x\in X$ and $\varphi$ is upper hemicontinuous.
\item The graph of $\varphi$ is closed.
\end{enumerate}
\end{theorem}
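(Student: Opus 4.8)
The plan is to prove the two implications separately, using throughout that $X$ is a metric space, so that closedness of a subset of $X$ or of $X\times X$ is equivalent to sequential closedness, and that upper hemicontinuity may be tested along sequences exactly as in Definition \ref{def up}.

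First I would establish (i) $\Rightarrow$ (ii). Let $(x_n, y_n)_{n\geq 1}\subset \operatorname{Gr}(\varphi)$ converge to some $(x, y)$ in $X\times X$; the goal is to show $(x, y)\in \operatorname{Gr}(\varphi)$, i.e. $y\in\varphi(x)$. Since $x_n\to x$ and each $y_n\in\varphi(x_n)$, upper hemicontinuity guarantees that the sequence $(y_n)_{n\geq 1}$ has a limit point lying in $\varphi(x)$. But $(y_n)_{n\geq 1}$ converges to $y$, so its unique limit point is $y$; hence $y\in\varphi(x)$, which means $(x, y)\in\operatorname{Gr}(\varphi)$. This shows the graph is sequentially closed, hence closed. (Note that this implication does not even require the compactness of $X$.)

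Next I would prove (ii) $\Rightarrow$ (i). Closedness of each fiber $\varphi(x)$ is immediate from closedness of the graph: if $y_n\in\varphi(x)$ with $y_n\to y$, then $(x, y_n)\in\operatorname{Gr}(\varphi)$ and $(x, y_n)\to(x, y)$, so $(x, y)\in\operatorname{Gr}(\varphi)$, that is $y\in\varphi(x)$. For upper hemicontinuity, take $(x_n, y_n)_{n\geq 1}\subset\operatorname{Gr}(\varphi)$ with $x_n\to x$. Here is where I would invoke the compactness of $X$: the sequence $(y_n)_{n\geq 1}\subset X$ then admits a convergent subsequence $y_{n_k}\to y$ for some $y\in X$. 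Consequently $(x_{n_k}, y_{n_k})\to(x, y)$, and since each term lies in the graph and the graph is closed, $(x, y)\in\operatorname{Gr}(\varphi)$, i.e. $y\in\varphi(x)$. Thus $y$ is a limit point of $(y_n)_{n\geq 1}$ belonging to $\varphi(x)$, which is precisely upper hemicontinuity in the sense of Definition \ref{def up}.

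The only genuine obstacle is the extraction step in the implication (ii) $\Rightarrow$ (i): without the compactness of $X$, the sequence $(y_n)_{n\geq 1}$ need not possess any limit point and the conclusion would fail, so this is exactly where the hypothesis is used and used only once. Everything else is a routine transcription of the sequential characterizations of closedness and of upper hemicontinuity into statements about convergent sequences in the product metric space $X\times X$.
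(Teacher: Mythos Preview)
Your proof is correct and follows the standard argument. Note that the paper does not actually prove this theorem; it is stated in Appendix~\ref{sec G} as a cited result from \cite{aliprantis2007} (Theorem~17.11) without proof, so there is no paper proof to compare against.
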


\begin{definition}\label{def low}
The correspondence $\varphi$ is said to be \textit{lower hemicontinuous} if whenever $x_n\rightarrow x$ and $y\in \varphi(x)$, there exists a subsequence $(x_{n_k})_{k\geq 1}$ of $(x_n)_{n\geq 1}$ and a sequence $(y_k)_{k\geq 1}$, such that $y_k\in\varphi(x_{n_k})$ and $y_k\rightarrow y$.
\end{definition}

\begin{definition}\label{Defcont}
We say that $\varphi$ is \textit{continuous} if it is both upper hemicontinuous and lower hemicontinuous.
\end{definition}

\begin{theorem}[Berge's Maximum Theorem, Theorem 17.31 in \cite{aliprantis2007}]\label{berge}
Let $(X, d)$ be a metric space. Consider $\mathcal{R}^\star:X\rightarrow 2^X$ a continuous correspondence with nonempty compact values and $F: \operatorname{Gr}(\mathcal{R}^\star)\rightarrow \mathbb R$ a continuous function. Define the function $\Theta:X\rightarrow 2^X$ by
$$\Theta(x)= \underset{y\in\mathcal{R}^\star(x)}{\arg \max } \, F(x, y).$$
Then $\Theta$ is upper hemicontinuous and has nonempty compact values.
\end{theorem}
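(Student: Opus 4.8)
The plan is to prove the two assertions---nonempty compact values, and upper hemicontinuity---separately, the second being the substantive one. Throughout I would write $m(x):=\max_{y\in\mathcal{R}^\star(x)}F(x,y)$ for the value function, and work with the sequential formulations of hemicontinuity given in Definitions \ref{def up} and \ref{def low}.

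First I would dispatch the easy claims. Fix $x\in X$. The set $\mathcal{R}^\star(x)$ is nonempty and compact by hypothesis, and $y\mapsto F(x,y)$ is the restriction of the continuous map $F$ to this compact set, hence continuous on it; by the Weierstrass extreme value theorem the maximum $m(x)$ is attained, so $\Theta(x)=\{y\in\mathcal{R}^\star(x):F(x,y)=m(x)\}\neq\emptyset$. Moreover $\Theta(x)$ is the intersection of the compact set $\mathcal{R}^\star(x)$ with the closed set $\{y:F(x,y)=m(x)\}$, which is closed because $F(x,\cdot)$ is continuous; a closed subset of a compact set is compact, so $\Theta(x)$ is compact.

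For upper hemicontinuity I would take a sequence $(x_n,y_n)\in\operatorname{Gr}(\Theta)$ with $x_n\to x$ and produce a limit point of $(y_n)$ lying in $\Theta(x)$. Since $y_n\in\Theta(x_n)\subset\mathcal{R}^\star(x_n)$ and $\mathcal{R}^\star$ is upper hemicontinuous, the sequence $(y_n)$ has a limit point $y\in\mathcal{R}^\star(x)$; passing to a subsequence (not relabelled) I may assume $y_n\to y$. Continuity of $F$ on $\operatorname{Gr}(\mathcal{R}^\star)$ then gives $F(x_n,y_n)\to F(x,y)$, and since $y\in\mathcal{R}^\star(x)$ one has trivially $F(x,y)\le m(x)$. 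It remains to prove the reverse inequality $F(x,y)\ge m(x)$, and this is precisely where \emph{lower} hemicontinuity of $\mathcal{R}^\star$ must be used; I expect this coupling of the two directions of continuity to be the main obstacle, since neither alone suffices.

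To close the gap I would choose $z\in\mathcal{R}^\star(x)$ with $F(x,z)=m(x)$. By lower hemicontinuity of $\mathcal{R}^\star$ there exist a further subsequence $(x_{n_k})$ and points $z_k\in\mathcal{R}^\star(x_{n_k})$ with $z_k\to z$. Because $y_{n_k}$ maximizes $F(x_{n_k},\cdot)$ over $\mathcal{R}^\star(x_{n_k})$ and $z_k$ is feasible for that problem,
$$F(x_{n_k},y_{n_k})\ge F(x_{n_k},z_k).$$
Letting $k\to\infty$ and using continuity of $F$ on both sides (the left side converging to $F(x,y)$ as a subsequence of the already convergent sequence, the right side to $F(x,z)=m(x)$), I obtain $F(x,y)\ge m(x)$. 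Combined with $F(x,y)\le m(x)$ this yields $F(x,y)=m(x)$, so $y\in\Theta(x)$ is the required limit point, establishing upper hemicontinuity and completing the proof.
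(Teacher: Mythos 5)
Your proof is correct. Note that the paper itself gives no proof of this statement: it is quoted verbatim, with attribution, from Theorem 17.31 of \cite{aliprantis2007}, so the only comparison available is with the textbook argument. That argument is two-staged: one first proves that the value function $m(x)=\max_{y\in\mathcal{R}^\star(x)}F(x,y)$ is continuous (upper semicontinuity of $m$ from upper hemicontinuity plus compactness of values, lower semicontinuity of $m$ from lower hemicontinuity), and then deduces upper hemicontinuity of $\Theta$ by writing it as the intersection of $\mathcal{R}^\star$ with a closed-graph correspondence. You instead prove upper hemicontinuity directly in the sequential form, which is legitimate since $X$ is metric, and which has the advantage of matching exactly the sequential Definitions \ref{def up} and \ref{def low} adopted in the paper; the key exchange in your argument --- playing the optimality of $y_{n_k}$ against the lower-hemicontinuity approximants $z_k$ of a maximizer $z$ and passing to the limit --- is precisely the content of the lower-semicontinuity half of the textbook proof, so nothing essential is bypassed. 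What you give up is the continuity of $m$ (not asserted in the statement, though the paper reproves it ad hoc in Proposition \ref{maxNash}); what you gain is a shorter, self-contained proof. Two points of hygiene, both already implicit in your write-up: since $F$ is defined only on $\operatorname{Gr}(\mathcal{R}^\star)$, the set whose closedness you invoke should be read as $\{y\in\mathcal{R}^\star(x):F(x,y)=m(x)\}$, closed in $\mathcal{R}^\star(x)$ as the preimage of $\{m(x)\}$ under the continuous restriction of $F(x,\cdot)$ to $\mathcal{R}^\star(x)$; and when you take limits along $(x_{n_k},z_k)\to(x,z)$ and $(x_{n_k},y_{n_k})\to(x,y)$, both sequences and both limit pairs lie in $\operatorname{Gr}(\mathcal{R}^\star)$, so the subspace continuity of $F$ does apply at every step.
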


\begin{theorem}[Kakutani-Fan-Glicksberg, Corollary 17.55 in \cite{aliprantis2007}]\label{KFG} Let $K$ be a nonempty compact convex subset of a locally convex Hausdorff space, and let the correspondence $\Theta:K\rightarrow 2^K$ have closed graph and nonempty convex values. Then the set of fixed points of $\Theta$ is compact and nonempty.
\end{theorem}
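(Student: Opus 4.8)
The plan is to prove the two assertions separately: first that the fixed-point set $F:=\{x\in K: x\in\Theta(x)\}$ is compact, which is elementary, and then that it is nonempty, which is the genuine content and requires a finite-dimensional approximation combined with Brouwer's theorem. For the compactness I would argue that $F$ is closed in $K$: if a net $(x_\alpha)\subset F$ converges to $x$, then $(x_\alpha,x_\alpha)\in\operatorname{Gr}(\Theta)$ converges to $(x,x)$, and since $\operatorname{Gr}(\Theta)$ is closed by hypothesis, $(x,x)\in\operatorname{Gr}(\Theta)$, i.e.\ $x\in\Theta(x)$. A closed subset of the compact set $K$ is compact, so $F$ is compact once we know it is nonempty. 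Note also that, $K$ being compact, the Closed Graph Theorem (Theorem \ref{upperhemi_closedgraph}) lets me pass freely between the closed-graph hypothesis and upper hemicontinuity of $\Theta$ with closed values, which I shall use in the limiting step.

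For nonemptiness I would embed $K$ in the ambient locally convex Hausdorff space $E$ and exploit that its topology is generated by the Minkowski functionals of convex symmetric neighborhoods of the origin. Fix such a neighborhood $U$. By compactness choose a finite subcover $K\subset\bigcup_{i=1}^{n}(x_i+U)$ with $x_i\in K$, let $(\phi_i)_{i=1}^n$ be a continuous partition of unity subordinate to it (built from the Minkowski functional of $U$), and select arbitrary points $y_i\in\Theta(x_i)$, which exist because the values are nonempty. Define $f_U(x):=\sum_{i=1}^n\phi_i(x)\,y_i$. Since each $y_i\in K$ and $K$ is convex, $f_U$ is a continuous self-map of $K$ whose range lies in the finite-dimensional compact convex simplex $S:=\operatorname{conv}\{y_1,\dots,y_n\}$. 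Restricting $f_U$ to $S$ gives a continuous self-map of a compact convex subset of a finite-dimensional space, so Brouwer's fixed-point theorem yields $z_U\in S$ with $z_U=f_U(z_U)=\sum_{i}\phi_i(z_U)y_i$. By construction $\phi_i(z_U)>0$ forces $z_U\in x_i+U$, equivalently $x_i\in z_U+U$ ($U$ being symmetric), so $z_U$ is an approximate fixed point at scale $U$.

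The final step is to pass to the limit along the net indexed by the convex symmetric neighborhoods $U$ directed by reverse inclusion. By compactness of $K$ the net $(z_U)$ has a cluster point $x^\star\in K$; I claim $x^\star\in\Theta(x^\star)$. Fix any open neighborhood $W$ of $0$ and a convex symmetric open $W'$ with $W'+W'\subset W$. Upper hemicontinuity of $\Theta$ at $x^\star$ gives a neighborhood $V$ of $x^\star$ with $\Theta(x)\subset\Theta(x^\star)+W'$ for all $x\in V$. For $U$ small enough and $z_U$ close enough to $x^\star$, every active index satisfies $x_i\in z_U+U\subset V$, whence $y_i\in\Theta(x^\star)+W'$; since $\Theta(x^\star)$ is convex, $z_U=\sum_i\phi_i(z_U)y_i\in\operatorname{conv}(\Theta(x^\star)+W')=\Theta(x^\star)+W'$, and therefore $x^\star\in z_U+W'\subset\Theta(x^\star)+W'+W'\subset\Theta(x^\star)+W$. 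As $W$ was arbitrary and $\Theta(x^\star)$ is closed, $x^\star\in\Theta(x^\star)$, so $F\neq\emptyset$.

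I expect the main obstacle to be precisely this limiting passage in the locally convex (rather than metric or finite-dimensional) setting: one must organize the approximate fixed points into a net and track the two independent small parameters---the mesh $U$ of the cover and the modulus $W'$ of upper hemicontinuity---so that convexity of $\Theta(x^\star)$ turns the convex combination $z_U$ into a point of $\Theta(x^\star)+W'$. The use of convexity of the values (to obtain $\operatorname{conv}(\Theta(x^\star)+W')=\Theta(x^\star)+W'$) together with closedness of $\Theta(x^\star)$ (to absorb the error) is essential, and it is exactly where the finite-dimensional Brouwer theorem is upgraded to the infinite-dimensional correspondence statement.
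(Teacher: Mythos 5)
The paper does not prove this statement at all: it is quoted verbatim from Aliprantis--Border (Corollary 17.55) as background material in Appendix \ref{sec G}, so there is no in-paper proof to compare against. What you have written is a correct, self-contained proof, and it is in fact the classical Fan--Glicksberg argument: finite subcover $K\subset\bigcup_i(x_i+U)$, partition of unity via Minkowski functionals, the single-valued approximation $f_U(x)=\sum_i\phi_i(x)y_i$ with $y_i\in\Theta(x_i)$, Brouwer on the finite-dimensional simplex $\operatorname{conv}\{y_1,\dots,y_n\}$, and a limiting net of approximate fixed points. The compactness half (fixed-point set closed via closed graph) is also right. Two small points deserve explicit mention. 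First, when you invoke ``upper hemicontinuity'' in the form $\Theta(x)\subset\Theta(x^\star)+W'$ for $x\in V$, you cannot literally appeal to the paper's Theorem \ref{upperhemi_closedgraph}: the paper states it for metric spaces with a sequential definition of upper hemicontinuity, whereas your ambient space is a general (possibly non-metrizable) locally convex Hausdorff space. You need the net version: if $\Theta(x_\alpha)\ni y_\alpha$ with $x_\alpha\to x^\star$ and $y_\alpha\notin\Theta(x^\star)+W'$, compactness of $K$ yields a convergent subnet $y_\beta\to y\in K\setminus(\Theta(x^\star)+W')$, and the closed graph forces $y\in\Theta(x^\star)$, a contradiction. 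This is a two-line fix (and is what Aliprantis--Border's general Closed Graph Theorem actually asserts), but as written the citation is to a weaker statement. Second, the Hausdorff hypothesis is silently used twice: to guarantee that the finite-dimensional affine hull of $\{y_1,\dots,y_n\}$ carries the Euclidean topology, so that Brouwer applies, and to conclude $x^\star\in\overline{\Theta(x^\star)}=\Theta(x^\star)$ from $x^\star\in\bigcap_W\bigl(\Theta(x^\star)+W\bigr)$ (here closedness of the values, which follows from the closed graph, is what you correctly use). With these clarifications your proof is complete; it is strictly more informative than the paper, which takes the theorem as a black box.
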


\end{document}